\documentclass{amsart}
\usepackage{mathrsfs}
\usepackage{amssymb,amsmath}
\usepackage{graphicx}
\usepackage{comment}
\usepackage{tikz}
\usepackage{tikz-cd}
\usepackage{pgfplots}
\usetikzlibrary{math}
\usepackage[pdftex]{hyperref}
\usepackage{flowchart}
\usepackage[all,cmtip]{xy}
\usetikzlibrary{matrix,calc,arrows}

\newcommand{\bC}{{\mathbb C}}
\newcommand{\bP}{{\mathbb P}}
\newcommand{\bR}{{\mathbb R}}
\newcommand{\bZ}{{\mathbb Z}}
\newcommand{\bN}{{\mathbb N}}

\newcommand{\bT}{{\mathbb T}}
\newcommand{\bF}{{\mathbb F}}
\newcommand{\bfL}{{\mathbf L}}

\newcommand{\bfH}{{\mathbf H}}

\newcommand{\fF}{\pi}

\newcommand{\cF}{\mathcal F}

\newcommand{\cD}{\mathcal D}

\newcommand{\cM}{\mathcal M}
\newcommand{\cO}{\mathcal O}

\newcommand{\cV}{\mathcal V}

\newcommand{\cR}{\mathcal R}

\newcommand{\cH}{\mathcal H}

\newcommand{\scrX}{\mathscr X}

\newcommand{\scrO}{\mathscr O}

\newcommand{\fd}{\mathfrak{d}}
\newcommand{\Mbar}{\overline{\cM}}

\newcommand{\Pin}{\operatorname{Pin}}

\newcommand{\Ob}{\operatorname{Ob}}

\newcommand{\id}{\operatorname{id}}

\newcommand{\Ch}{\operatorname{Ch}}

\newcommand{\Hom}{\operatorname{Hom}}

\newcommand{\Fuk}{\operatorname{Fuk}}

\newcommand{\val}{\operatorname{val}}

\newcommand{\ro}{\operatorname{o}}

\newcommand{\Color}{\operatorname{Col}}
\newcommand{\rmodz}{{\mathbb{R}/\mathbb{Z}}}
\newcommand{\novr}{\Lambda_{\geq 0}}
\newcommand{\novf}{\Lambda}

\DeclareMathOperator*{\Cech}{\operatorname{\check{C}ech}}
\def\co{\colon\thinspace}

\numberwithin{equation}{section}

\newtheorem{thm}{Theorem}[section]
\newtheorem{cor}[thm]{Corollary}
\newtheorem{lem}[thm]{Lemma}
\newtheorem{lemma}[thm]{Lemma}
\newtheorem{prop}[thm]{Proposition}
\newtheorem{defin}[thm]{Definition}
\newtheorem{def-lem}[thm]{Definition-Lemma}

\theoremstyle{remark}

\newtheorem{rem}[thm]{Remark}

\newtheorem{example}[thm]{Example}
\newtheorem{examples}[thm]{Examples}

\author{Mohammed Abouzaid}
\address{Department of Mathematics, Stanford University, Stanford, CA, USA}
\email{abouzaid@stanford.edu}

\author{Yoel Groman}
\address{Einstein Institute of Mathematics, The Hebrew University of Jerusalem, Jerusalem, Israel}
\email{yoel.groman@mail.huji.ac.il}

\author{Umut Varolgunes}
\address{Department of Mathematics, Koç University, Istanbul, Turkey}
\email{uvarolgunes@ku.edu.tr}
\title{Fukaya category with supports}
\date{\today}

\begin{document}

\begin{abstract}
We associate to each compact subset $K$ of a closed symplectic manifold $M$
an $A_\infty$-category $\Fuk_K$, called the Fukaya category with support on $K$. The objects are tautologically unobstructed global Lagrangians endowed with appropriate decorations. The dependence on $K$ is detected by the morphism complexes which can be computed by completed telescopes associated with acceleration data for $K$. In particular, these vanish for Lagrangians disjoint of $K$.  For $K=M$ this recovers the usual Fukaya category of tautologically unobstructed objects. 
These categories admit restriction functors under inclusions of compact
subsets. We prove a Mayer--Vietoris descent theorem
for weakly involutive covers. The construction is carried out over the Novikov ring and is established whenever classical transversality methods
apply.
\end{abstract}
\maketitle
\tableofcontents

\section{Introduction}
\label{sec:introduction}
Let $M$ be a closed symplectic manifold. The goal of this paper is to assign to each compact subset $K$ of $M$ an $A_\infty$ category $$K\mapsto \Fuk_K$$ called the Fukaya category with supports, and to establish some of its fundamental properties: the existence of restriction functors and a generalization of the descent property \cite{Varolgunes2021} (see Section~\ref{sec:def-fuk-with-support} and Theorem~\ref{thm-descent}). This structure has an immense amount of quantitative information even for the case $M=T^2$. The paper provides a complete proof in the cases where classical transversality methods apply.

The object set of $\Fuk_K$ is independent of $K$ and consists, to a first approximation,  of 
embedded Lagrangians $L\to M$ that do not bound pseudo-holomorphic discs. This is a first approximation because we in fact allow the possibility that they bound discs whose Maslov index is $2$ or more, and equip the Lagrangians with brane data (see Section~\ref{sec:lagrangian-branes}). In addition to the main innovation of considering a support condition, the paper has the following features:
\begin{enumerate}
\item We do not impose any transversality condition between Lagrangians, so that the Fukaya category that we associate to the torus consists, for example, of \emph{all} essential curves.
\item We perform all constructions over the Novikov ring, i.e. without inverting the Novikov parameter.
\end{enumerate}

These features are not completely new, as it is plausible that they could be achieved by the methods of \cite{Fukaya2021GH}, but our methods, while similar in spirit (take a limit over perturbations) are quite different in practice. In particular, our construction results in morphism spaces between two Lagrangians $(L_\ell,L_r)$ that take into account all other Lagrangians;  these are truly gigantic chain complexes, which are constructed, as in \cite{Abouzaid2023}, by the use of methods of homotopical algebra, with operads being replaced by multicategories, and the $2$-sided bar construction serving as an explicit model for (weighted) homotopy colimits. We verify that our construction has the expected properties by showing that our morphism spaces are homotopy equivalent to those introduced in the last author's thesis, as a (completed) direct limit of Floer cohomology groups over sequences of Hamiltonians that converge to an indicator function of $K$.

In the remainder of this introduction, we provide a more explicit description of the Fukaya category (continuing to omit technical details), then we provide a precise statement of our main results.

\subsection{A description of the construction}

In our quest to simplify the discussion, let us presume that we are in a situation where, for some choices of almost complex structures which we will suppress in this introduction, Floer theory assigns to a pair of Lagrangians $(L_\ell,L_r)$, and a Hamiltonian $H$ whose time-$1$ flow maps $L_\ell$ to a Lagrangian which is transverse to $L_r$, a chain complex $CF^*(L_\ell,L_r; H)$ over the Novikov ring (i.e. formal series $\sum_{0 \leq \lambda} a_\lambda T^\lambda $ with the property that the set of exponents $\lambda$ with non-vanishing coefficient $a_\lambda$ is discrete); this is a relatively standard outcome of Floer theory, once we have dealt with the possibility that $L_\ell$ and $L_r$ bound holomorphic discs, chosen regular almost complex structures, and fixed some auxiliary data such as local systems on the Lagrangians, or bulk structures on the ambient symplectic manifold. We may presume as well that, given a homotopy $H_s$ of Hamiltonians parametrized by $s \in \bR$, which is monotone in the sense that $\partial_s H_s$ is nowhere negative, and limits to Hamiltonians $H_-$ and $H_+$,  the same Floer-theoretic methods yield a \emph{continuation map}
\begin{equation} \label{eq:continuation_maps}
  CF^*(L_\ell,L_r; H_-) \to CF^*(L_\ell,L_r; H_+).
\end{equation}

In this setting, \cite{Varolgunes2021} defines the Floer cohomology $HF_K(L_\ell,L_r)$ with support on $K$ (initially called the local Floer cohomology) as the cohomology of the completion of the direct limit in Equation \eqref{eq:continuation_maps}, taken over a sequence of Hamiltonians which converge to the indicator function which vanishes on $K$ and is $+\infty$ on its complement. It is important at this point to realize that this formulation only makes sense over the Novikov ring (i.e. before inverting the Novikov parameter), although there are alternative approaches over the Novikov field, by requiring that the maps do not decrease the valuations. In either case, the construction makes sense exactly because of our assumption that the homotopy is monotone, which ensures that Equation \eqref{eq:continuation_maps} is defined prior to inverting this parameter.

There are three issues that arise:
\begin{enumerate}
\item Ensuring that we can define this complex without assuming that the Lagrangians are transverse.
\item Defining the restriction maps associated to inclusions of compact subsets $K \subset K'$.
\item Constructing $A_\infty$ operations on these chain complexes (in a way compatible with these restriction maps).
\end{enumerate}
It turns out that the first issue is completely straightforward to resolve: the natural transversality condition is not on the pair $L_\ell$ and $L_r$, but on the triple $(L_\ell, L_r, H)$, and for any pair of Lagrangians we can find sufficiently many time-dependent functions $H$ to ensure that the image of $L_\ell$ under the Hamiltonian flow of $H$ is transverse to $L_r$. In this way, the (ordinary) Floer cohomology of a pair of Lagrangians which are not transverse can be defined as the direct limit of the Floer cohomology groups of all strictly negative Hamiltonians (cf.\ Fukaya \cite{Fukaya2010}).

If the subsets $K$ and $K'$ are fixed in advance, we may arrange for an appropriate interleaving of the sequences of smooth functions approximating the indicator functions of $K$ and $K'$. Analyzing what happens for a triple of Lagrangians becomes more complicated, so it is tempting to consider the partially ordered set of all Hamiltonians which are non-positive on a given subset of $M$. The set associated to $K'$  is a strict subset of the one associated to $K$, so one can reasonably hope that a chain level map realizing the restriction $ HF_{K'}(L_\ell,L_r) \to HF_K(L_\ell,L_r)$ can be defined essentially without any choices. There is only one wrinkle in this part of the story: at the chain level, the continuation map in Equation \eqref{eq:continuation_maps} depends on the choice of the interpolation of Hamiltonians.  The approach we take, which seems inevitable once  essentially all Hamiltonians are admitted, is to record the data of all Hamiltonians, as well as all continuation maps between them, and all homotopies between such continuation maps, which, unsurprisingly, leads to considering families of continuation maps of arbitrary dimension. We choose to encode this data in the language of cubically enriched categories, as we did in \cite{Abouzaid2023} for Hamiltonian Floer theory, though alternative approaches are also possible.

This brings us to the third issue which requires a generalization, not so extensive in the grand scheme of things, of the constructions of \cite{Abouzaid2023}, to account for the presence of multiple Lagrangians, but replacing operads by multicategories. Heuristically, the goal now is to define a product operation arising from the pair of pants product
\begin{equation} \label{eq:pair_of_pants_product} CF^*(L_\ell,L_1; H_{\ell 1})  \otimes CF^*(L_1,L_r; H_{1r}) \to CF^*(L_\ell,L_r; H_{\ell r}), \end{equation}
for appropriate choices of Hamiltonians $H_{\ell 1}$, $H_{1r}$, and $H_{\ell r}$, but the fundamental issue is again that we have to record all such choices, all the ways that they can be composed, and all the possible (higher) homotopies between them. This is what the multicategory records. 






Concretely, the chain complex of morphisms from $L_\ell$ to $L_r$ in $\Fuk_M(K)$ is defined by a procedure involving discrete choices over the edges (and leaves) of a tree, and continuous choices over its vertices: for the discrete parts, recall that a levelled planar tree is the datum of a finite sequence of composable order-preserving surjective maps between totally ordered sets, consisting of at least one such map, so that the target of the last map is a singleton. In other words, we have a positive integer $k$ (the length of the sequence), a collection of positive integers $n_1,\ldots,n_k$ and order-preserving surjective maps $$[n_k] \twoheadrightarrow \ldots \twoheadrightarrow [n_1]\twoheadrightarrow [1],$$ where $[n]$ is the set $\{1,\ldots, n\}$. Such  data can be encoded by a tree, properly embedded in the plane,  with $n_k$ semi-infinite incoming edges and $1$ semi-infinite outgoing edge. We visualize the tree as going from top to bottom.

\begin{figure}
    \centering
   \begin{tikzpicture}[
    scale=1,
    every node/.style={transform shape},
    blob/.style={circle, draw, thick, minimum size=1cm, inner sep=0pt, font=\Large\boldmath},
    edge label/.style={midway, font=\large, auto}
]

    
    \node[blob] (O) at (0,0) {$\mathcal{O}$};

    \node[blob] (H_L1) at (-2, 2) {$\mathcal{H}$};
    \node[blob] (H_R1) at (2, 2) {$\mathcal{H}$};

    \node[blob] (H_L2) at (-3, 4.5) {$\mathcal{H}$};

    \node[blob] (H_M2) at (0.5, 4.5) {$\mathcal{H}$};
    \node[blob] (H_R2) at (3.5, 4.5) {$\mathcal{H}$};

    \draw[thick] (O) -- (0, -2);


    \draw[thick] (O) -- (H_L1) node[edge label, left=5pt] {$H_{\ell 3}$};
    \draw[thick] (O) -- (H_R1) node[edge label, right=5pt] {$H_{3r}$};

    \draw[thick] (H_L1) -- (H_L2) node[edge label, left=5pt] {$H_{\ell 3}$};

    \draw[thick] (H_R1) -- (H_M2) node[edge label, left=5pt] {$H_{32}$};
    \draw[thick] (H_R1) -- (H_R2) node[edge label, right=5pt] {$H_{2r}$};


    \draw[thick] (H_L2) -- ++(-1.5, 3) node[midway, left] {$H_{\ell 4}$} node[above, font=\large] {$CF^*$};
    \draw[thick] (H_L2) -- ++(0.8, 3) node[midway, right] {$H_{43}$} node[above, font=\large] {$CF^*$};

    \draw[thick] (H_M2) -- ++(0, 3) node[midway, left] {$H_{32}$} node[above, font=\large] {$CF^*$};

    \draw[thick] (H_R2) -- ++(-1.0, 3) node[midway, left] {$H_{21}$} node[above, font=\large] {$CF^*$};
    \draw[thick] (H_R2) -- ++(1.5, 3) node[midway, right] {$H_{1r}$} node[above, font=\large] {$CF^*$};

    
    \node[font=\LARGE] at (-3.3, 6.5) {$L_4$};
    
    \node[font=\LARGE] at (-1, 6.5) {$L_3$};
    
    \node[font=\LARGE] at (1.5, 6.5) {$L_2$};
    
    \node[font=\LARGE] at (3.5, 6.5) {$L_1$};
    
    \node[font=\LARGE] at (-5.5, 2.5) {$L_{\ell}$};
    \node[font=\LARGE] at (5.5, 2.5) {$L_{r}$};

\end{tikzpicture}
    \caption{A Lagrangian and Hamiltonian labelled planar tree with underlying maps $[5]\to [3]\to [2]\to [1]$..}
    \label{fig:labelledtree}
\end{figure}

Given a levelled planar tree, represented by a planar embedding $T\subset \mathbb{R}^2$, we define a Lagrangian labelling to be a labelling of the connected components of $\mathbb{R}^2\setminus T$ by Lagrangians. A Hamiltonian labelling is a labelling of each of the edges $e$ of $T$ by a function $H_e:M\times [0,1]\to \mathbb{R}$ such that the time-$1$ map of $H_e$ maps the Lagrangian to the left of $e$ to a submanifold of $M$ which is transverse to the Lagrangian to the right of $e$. This completes the description of the discrete data that goes into the definition of the multicategory of Lagrangian and Hamiltonian data.
To describe the continuous data, note that we may associate to each vertex $v$ of valence $n_v+1$ a sequence $(L_{v;0}, \cdots, L_{v;n_v})$ of Lagrangians labelling the adjacent regions, as well as Hamiltonians $H_{v;i,i+1}:M\times [0,1]\to \mathbb{R}$ for $i=0,\ldots,n_v-1$ and $H_{v;0,n_v}:M\times [0,1]\to \mathbb{R}$ satisfying the previously described transversality condition. In this paper, we shall construct a chain complex
 $$\mathcal{H}_v:=C_{-*}(L_{v;0},\ldots, L_{v;n_v};H_{v;0,1},\ldots, H_{v;n_v-1,n_v}; H_{v; 0,n_v})$$
 of all cubical families of monotone Floer data on an $(n_v+1)$-punctured disc: the $0$-chains are (linear combinations of) choices of Hamiltonians (and almost complex structures, which we are suppressing), parametrized by a (possibly broken) $(n_v+1)$-punctured disc, with boundary segments labelled by the Lagrangians $L_{v;k}$, and so that the Hamiltonian data is given, near each puncture in strip-like coordinates, by the Floer equation with Hamiltonian data $H_{v; i,i+1}$. The $1$-chains are homotopies between such data for varying modulus (appropriately defined to account for breaking), and the higher chains generalize the construction to higher dimensions.

 We pause here to remind the reader that this is essentially the data that goes in the usual construction of the Fukaya category: the difference is that the traditional construction is inductive and essentially corresponds to a family of choices of Hamiltonian data parametrized by the moduli space of punctured discs, so that we have one datum for each possible element of the moduli space. Here, instead, we allow all such choices.

 Inspecting Figure \ref{fig:labelledtree} should give an indication for how these complexes are used: for each discrete choice (tree labelled by Hamiltonians and Lagrangians) we shall take the tensor product (as complexes of abelian groups) of:
 \begin{enumerate}
 \item  the Lagrangian Floer cochain complexes $CF^*(L_i, L_{i+1}; H_{i,i+1})$ indexed by the incoming semi-infinite edges,
 \item the chain complexes $\mathcal{H}_v$ described above at each vertex $v$ which is not adjacent to the outgoing edge (we keep the notation short by not writing the relevant data).
 \item the complex $\mathcal{O}_{n_v}:=C_{-*}(Disks_{n_1,1})$,  of cubical families of $(n_1+1)-$punctured disks whenever $v$ is the unique vertex which is adjacent to the outgoing semi-infinite edge (recall that $n_1$ is the valence of this vertex).
 \end{enumerate}

 The Floer complex for the pair of Lagrangians $L_\ell$ and $L_r$ will be the (completed) direct sum, over all Lagrangian and Hamiltonian labelled planar trees of these tensor products (shifted by one less than the number of levels). The differential on this total complex is upper triangular with respect to the ordering of trees by the number of levels: the diagonal entries are given by the differential on the tensor product $ \mathcal{O} \otimes \bigotimes_{v} \mathcal{H}_v \otimes \bigotimes_{i} CF^*(L_i, L_{i+1}; H_{i,i+1})$. The off-diagonal entries are only non-zero for labelled trees which differ by collapsing a level. There are three cases to consider:
 \begin{enumerate}
 \item Collapsing the top-most level (the one carrying all the incoming leaves) corresponds to actions of the first level of Hamiltonian data on Lagrangian Floer cochains, i.e. the tensor product of the maps
   \begin{equation}
   \cH_{v} \otimes \bigotimes_{i=0}^{n_v-1} CF^*(L_{v;i}, L_{v;i+1}; H_{v;i,i+1}) \to CF^*(L_{v;0}, L_{v;n_v}; H_{v;0,n_v}).
 \end{equation}
\item Collapsing an inner level corresponds to the map on Floer data given by concatenation (not gluing!) of Riemann surfaces  $ \bigotimes_{v_i} \mathcal{H}_{v_i} \to \mathcal{H}_v$, where the $v_i$ are the vertices in the source tree that are collapsed.
  
\item Collapsing  the bottom level corresponds to applying the forgetful map to $\mathcal{O}$ (i.e. remembering only the underlying punctured disc, not the Floer data), to the last level of complexes $\cH_v$, followed by the map of cubical chains induced by concatenation of stable discs:
  \begin{equation} \bigotimes_i \cH_{v_i} \to \bigotimes_i \cO_{n_{v_i}} \to  \cO_{\sum n_{v_i}}. \end{equation}
\end{enumerate}
Taking the completion of this complex (where the Hamiltonians are again always assumed to be non-positive on $K$) yields the morphism space $\Fuk_K(L_\ell,L_r)$ between these Lagrangians in the Fukaya category with support on $K$. This assignment of a tensor factor to each edge and each vertex of a labelled tree is summarized in Figure \ref{fig:labelledtree}.

  \begin{rem}
    Describing the signs in this total complex can be quite cumbersome. Instead, we proceed in a more standard way by first defining a simplicial chain complex and then applying geometric realization.
  \end{rem}

\begin{figure}
  \centering
  \begin{tikzpicture}[
    scale=1,
    puncture/.style={circle, draw, thick, fill=white, inner sep=1.4pt}
  ]
    \draw[thick] (0,0) circle (1.5);
    \node[font=\large] at (0,-0.6) {$\cO_{3}$};
    \node[puncture] at (0,-1.5) {};

    \begin{scope}[shift={(-1.556,1.556)}]
      \draw[thick] (0,0) circle (0.7);
      \node at (0,0) {$\cO_{2}$};
      \node[puncture] at (315:0.7) {};
      \node[puncture] at (105:0.7) {};
      \node[puncture] at (165:0.7) {};
    \end{scope}

    \begin{scope}[shift={(0,2.2)}]
      \draw[thick] (0,0) circle (0.7);
      \node at (0,0) {$\cO_{1}$};
      \node[puncture] at (270:0.7) {};
      \node[puncture] at (90:0.7) {};
    \end{scope}

    \begin{scope}[shift={(1.556,1.556)}]
      \draw[thick] (0,0) circle (0.7);
      \node at (0,0) {$\cO_{3}$};
      \node[puncture] at (225:0.7) {};
      \node[puncture] at (15:0.7) {};
      \node[puncture] at (60:0.7) {};
      \node[puncture] at (105:0.7) {};
    \end{scope}
  \end{tikzpicture}
  \caption{The nodal disc underlying the operadic structure map \eqref{eq:operadic_composition} for $k=3$.}
  \label{fig:nodal-disc}
\end{figure}

  In this context, it is straightforward to define the structure of an algebra over the operad $\cO$, i.e. construct maps
  \begin{equation} \Fuk_K(L_{k-1},L_{k}) \otimes \cdots \otimes  \Fuk_K(L_0,L_{1}) \otimes \cO_{k} \to   \Fuk_K(L_0,L_{k}) \end{equation}
  which are compatible with operadic composition. These maps are defined as the completion of a map defined on the summands of each factor of the left hand side which, as discussed earlier, are  indexed by labelled trees. There are essentially two ingredients to the construction. The first is to use the operadic structure maps
  \begin{equation} \label{eq:operadic_composition} \cO_{n_{v_k}}  \otimes \cdots \otimes  \cO_{n_{v_1}} \otimes \cO_{k} \to   \cO_{\sum n_{v_i}} =  \cO_{n_{v}}  \end{equation}
 on the bottom nodes of each tree. The geometry underlying these structure maps is that of a nodal disc, as drawn in Figure \ref{fig:nodal-disc}.

\begin{figure}
  \centering
  \begin{tikzpicture}[
    scale=0.45,
    every node/.style={transform shape},
    oblob/.style={circle, draw, thick, minimum size=1.0cm, inner sep=1pt, font=\normalsize},
    hblob/.style={circle, draw, thick, minimum size=0.65cm, inner sep=0pt, font=\small},
    idnode/.style={circle, fill, inner sep=1.6pt}
  ]
    \begin{scope}[shift={(-9,7.6)}]
      \node[oblob] (A) at (0,0) {$\cO_{1}$};
      \draw[thick] (A) -- (0,-1.3);
      \node[hblob] (AH) at (0,1.6) {$\cH$};
      \draw[thick] (A) -- (AH);
      \draw[thick] (AH) -- ++(-0.7,1.4);
      \draw[thick] (AH) -- ++(0.7,1.4);
    \end{scope}
    \begin{scope}[shift={(-3,7.6)}]
      \node[oblob] (B) at (0,0) {$\cO_{1}$};
      \draw[thick] (B) -- (0,-1.3);
      \node[hblob] (BH) at (0,1.6) {$\cH$};
      \draw[thick] (B) -- (BH);
      \node[hblob] (BH1) at (-0.8,3.1) {$\cH$};
      \node[hblob] (BH2) at (0.8,3.1) {$\cH$};
      \draw[thick] (BH) -- (BH1);
      \draw[thick] (BH) -- (BH2);
      \draw[thick] (BH1) -- ++(0,1.4);
      \draw[thick] (BH2) -- ++(0,1.4);
    \end{scope}
    \begin{scope}[shift={(3,7.6)}]
      \node[oblob] (C) at (0,0) {$\cO_{2}$};
      \draw[thick] (C) -- (0,-1.3);
      \node[hblob] (CH1) at (-0.9,1.6) {$\cH$};
      \node[hblob] (CH2) at (0.9,1.6) {$\cH$};
      \draw[thick] (C) -- (CH1);
      \draw[thick] (C) -- (CH2);
      \draw[thick] (CH1) -- ++(-0.5,1.4);
      \draw[thick] (CH1) -- ++(0.5,1.4);
      \draw[thick] (CH2) -- ++(-0.5,1.4);
      \draw[thick] (CH2) -- ++(0.5,1.4);
    \end{scope}
    \begin{scope}[shift={(9,7.6)}]
      \node[oblob] (D) at (0,0) {$\cO_{3}$};
      \draw[thick] (D) -- (0,-1.3);
      \draw[thick] (D) -- ++(-1.1,1.9);
      \draw[thick] (D) -- ++(0,1.9);
      \draw[thick] (D) -- ++(1.1,1.9);
    \end{scope}

    \begin{scope}[shift={(-8.8,0)}]
      \node[oblob] (O1) at (0,0) {$\cO_{4}$};
      \draw[thick] (O1) -- (0,-1.3);
      \node[hblob] (a1) at (-3.2,1.1) {$\cH$};
      \draw[thick] (O1) -- (a1);
      \draw[thick] (a1) -- (-3.75,5.3); \draw[thick] (a1) -- (-2.65,5.3);
      \node[idnode] at (-3.344,2.2) {}; \node[idnode] at (-3.056,2.2) {};
      \node[idnode] at (-3.488,3.3) {}; \node[idnode] at (-2.912,3.3) {};
      \node[idnode] at (-3.632,4.4) {}; \node[idnode] at (-2.768,4.4) {};
      \draw[thick] (O1) -- (-1.0,1.1);
      \node[idnode] at (-1.0,1.1) {};
      \node[hblob] (b1) at (-1.0,2.2) {$\cH$};
      \draw[thick] (-1.0,1.1) -- (b1);
      \node[hblob] (b1l) at (-1.55,3.3) {$\cH$}; \node[hblob] (b1r) at (-0.45,3.3) {$\cH$};
      \draw[thick] (b1) -- (b1l); \draw[thick] (b1) -- (b1r);
      \draw[thick] (b1l) -- (-1.55,5.3); \draw[thick] (b1r) -- (-0.45,5.3);
      \node[idnode] at (-1.55,4.4) {}; \node[idnode] at (-0.45,4.4) {};
      \draw[thick] (O1) -- (1.3,1.1);
      \node[idnode] at (1.3,1.1) {};
      \node[idnode] at (1.3,2.2) {};
      \node[idnode] at (1.3,3.3) {};
      \draw[thick] (1.3,1.1) -- (1.3,3.3);
      \node[hblob] (c1) at (1.3,4.4) {$\cH$};
      \draw[thick] (1.3,3.3) -- (c1);
      \draw[thick] (c1) -- (0.9,5.3); \draw[thick] (c1) -- (1.7,5.3);
      \draw[thick] (O1) -- (3.0,1.1);
      \node[idnode] at (3.0,1.1) {};
      \node[idnode] at (3.0,2.2) {};
      \node[idnode] at (3.0,3.3) {};
      \draw[thick] (3.0,1.1) -- (3.0,3.3);
      \node[hblob] (d1) at (3.0,4.4) {$\cH$};
      \draw[thick] (3.0,3.3) -- (d1);
      \draw[thick] (d1) -- (2.6,5.3); \draw[thick] (d1) -- (3.4,5.3);
    \end{scope}
    \begin{scope}[shift={(0.0,0)}]
      \node[oblob] (O2) at (0,0) {$\cO_{4}$};
      \draw[thick] (O2) -- (0,-1.3);
      \draw[thick] (O2) -- (-3.2,1.1);
      \node[idnode] at (-3.2,1.1) {};
      \node[hblob] (a2) at (-3.2,2.2) {$\cH$};
      \draw[thick] (-3.2,1.1) -- (a2);
      \draw[thick] (a2) -- (-3.75,5.3); \draw[thick] (a2) -- (-2.65,5.3);
      \node[idnode] at (-3.395,3.3) {}; \node[idnode] at (-3.005,3.3) {};
      \node[idnode] at (-3.59,4.4) {}; \node[idnode] at (-2.81,4.4) {};
      \node[hblob] (b2) at (-1.0,1.1) {$\cH$};
      \draw[thick] (O2) -- (b2);
      \draw[thick] (b2) -- (-1.55,2.2); \draw[thick] (b2) -- (-0.45,2.2);
      \node[idnode] at (-1.55,2.2) {}; \node[idnode] at (-0.45,2.2) {};
      \node[hblob] (b2l) at (-1.55,3.3) {$\cH$}; \node[hblob] (b2r) at (-0.45,3.3) {$\cH$};
      \draw[thick] (-1.55,2.2) -- (b2l); \draw[thick] (-0.45,2.2) -- (b2r);
      \draw[thick] (b2l) -- (-1.55,5.3); \draw[thick] (b2r) -- (-0.45,5.3);
      \node[idnode] at (-1.55,4.4) {}; \node[idnode] at (-0.45,4.4) {};
      \draw[thick] (O2) -- (1.3,1.1);
      \node[idnode] at (1.3,1.1) {};
      \node[idnode] at (1.3,2.2) {};
      \node[idnode] at (1.3,3.3) {};
      \draw[thick] (1.3,1.1) -- (1.3,3.3);
      \node[hblob] (c2) at (1.3,4.4) {$\cH$};
      \draw[thick] (1.3,3.3) -- (c2);
      \draw[thick] (c2) -- (0.9,5.3); \draw[thick] (c2) -- (1.7,5.3);
      \draw[thick] (O2) -- (3.0,1.1);
      \node[idnode] at (3.0,1.1) {};
      \node[idnode] at (3.0,2.2) {};
      \node[idnode] at (3.0,3.3) {};
      \draw[thick] (3.0,1.1) -- (3.0,3.3);
      \node[hblob] (d2) at (3.0,4.4) {$\cH$};
      \draw[thick] (3.0,3.3) -- (d2);
      \draw[thick] (d2) -- (2.6,5.3); \draw[thick] (d2) -- (3.4,5.3);
    \end{scope}
    \begin{scope}[shift={(8.8,0)}]
      \node[oblob] (O3) at (0,0) {$\cO_{4}$};
      \draw[thick] (O3) -- (0,-1.3);
      \draw[thick] (O3) -- (-3.2,1.1);
      \node[idnode] at (-3.2,1.1) {};
      \node[idnode] at (-3.2,2.2) {};
      \node[idnode] at (-3.2,3.3) {};
      \draw[thick] (-3.2,1.1) -- (-3.2,3.3);
      \node[hblob] (a3) at (-3.2,4.4) {$\cH$};
      \draw[thick] (-3.2,3.3) -- (a3);
      \draw[thick] (a3) -- (-3.75,5.3); \draw[thick] (a3) -- (-2.65,5.3);
      \node[hblob] (b3) at (-1.0,1.1) {$\cH$};
      \draw[thick] (O3) -- (b3);
      \node[hblob] (b3l) at (-1.55,2.2) {$\cH$}; \node[hblob] (b3r) at (-0.45,2.2) {$\cH$};
      \draw[thick] (b3) -- (b3l); \draw[thick] (b3) -- (b3r);
      \draw[thick] (b3l) -- (-1.55,5.3); \draw[thick] (b3r) -- (-0.45,5.3);
      \node[idnode] at (-1.55,3.3) {}; \node[idnode] at (-0.45,3.3) {};
      \node[idnode] at (-1.55,4.4) {}; \node[idnode] at (-0.45,4.4) {};
      \draw[thick] (O3) -- (1.3,1.1);
      \node[idnode] at (1.3,1.1) {};
      \node[idnode] at (1.3,2.2) {};
      \draw[thick] (1.3,1.1) -- (1.3,2.2);
      \node[hblob] (c3) at (1.3,3.3) {$\cH$};
      \draw[thick] (1.3,2.2) -- (c3);
      \draw[thick] (c3) -- (0.9,5.3); \draw[thick] (c3) -- (1.7,5.3);
      \node[idnode] at (1.08,4.4) {}; \node[idnode] at (1.52,4.4) {};
      \draw[thick] (O3) -- (3.0,1.1);
      \node[idnode] at (3.0,1.1) {};
      \node[idnode] at (3.0,2.2) {};
      \draw[thick] (3.0,1.1) -- (3.0,2.2);
      \node[hblob] (d3) at (3.0,3.3) {$\cH$};
      \draw[thick] (3.0,2.2) -- (d3);
      \draw[thick] (d3) -- (2.6,5.3); \draw[thick] (d3) -- (3.4,5.3);
      \node[idnode] at (2.78,4.4) {}; \node[idnode] at (3.22,4.4) {};
    \end{scope}
  \end{tikzpicture}
  \caption{The top row shows three Lagrangian and Hamiltonian labelled planar trees, and a corolla with three inputs. The bottom row shows three of the twelve terms of the resulting composition; the inserted identity nodes are drawn as small filled dots.}
  \label{fig:tree-complex}
\end{figure}

The other ingredient is the Eilenberg-Zilber map: we insert ``identities'' in each factor to separate the levels of the trees being composed. The detailed formulas are provided in Section~\ref{sec-review-left-Kan}, but a representative visualization is provided in Figure \ref{fig:tree-complex}: acting by $\cO_3$ on the displayed summands has twelve terms (indexed by the shuffles of the levels of the three trees), of which we display three.

    There is only one remaining step to construct an $A_\infty$ category in the usual sense. The most explicit way to do this is the following: choose an isomorphism of operads between the moduli space of discs and the space of ribbon metric trees. The latter has a standard cubulation, with coordinates given by the negative of the exponential of the lengths of edges, and this cubulation is compatible with the operadic structure maps. The sum of the cubical chains represented by the top dimensiona curves in each moduli space then represent a (relative) fundamental chain for the moduli space, which is compatible with operadic composition. The sub-operad generated by these operations is spanned, in parity $k$, by all possible compositions of the higher operations $\mu_{d}$ that can be applied to a string of $d$ inputs, with differential given by the usual quadratic formula:
    \begin{equation}
      \delta [\mu_d] = \sum_{d_- + d_+ = d +1 // 1 \leq k \leq d_- } \pm  [\mu_{d_-}] \circ_{k} [\mu_{d_+}]
    \end{equation}
    where the signs are specified by any of many possible conventions.

    The outcome is a map of operads
    \begin{equation} \label{eq:boardman_vogt}
      A_\infty \to \scrO
    \end{equation}
which makes every algebra over $\scrO$ an $A_\infty$ algebra in the usual sense.

\subsection{Statement of the main results}

Let $M$ be a closed symplectic manifold such that every spherical class of positive symplectic area has non-negative Chern number (see Definition~\ref{def:weakly-monotone_M}), possibly equipped with a bulk class as described in Section~\ref{sec:bulk-branes}, and let $\rho$ be a potential value (corresponding to fixing the value of the weighted count of Maslov $2$ holomorphic discs, as defined in Section~\ref{sec-superpotential}). The following result is proved at the end of Section~\ref{sec:def-fuk-with-support}.

\begin{thm}\label{thm-restriction-functors}
  For every pair of compact subsets $K\subset K'$ of $M$, there is a strict $A_\infty$ functor
  \begin{equation}
    \mathrm{Fuk}_{K',\rho}\to \mathrm{Fuk}_{K,\rho},
  \end{equation}
  which is the identity map on objects, and is strictly compatible with triple inclusions $K \subset K' \subset K''$. Moreover, the symplectormophism group of $M$ acts on these Fukaya categories: every symplectomorphism $\phi$ induces a functor
  \begin{equation}
    \mathrm{Fuk}_{K,\rho} \to  \mathrm{Fuk}_{\phi(K),\rho},
  \end{equation}
which is strictly compatible with compositions (the bulk and grading data on the source and target of this functor and given by pushforward under $\phi$).
\end{thm}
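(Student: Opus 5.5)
The plan is to exploit the fact that, by construction, the morphism complexes of $\Fuk_{K,\rho}$ are completions of direct sums indexed by Lagrangian and Hamiltonian labelled levelled trees, in which every Hamiltonian label is required to be non-positive on $K$. Let $\mathcal{T}_K(L_\ell,L_r)$ denote the set of such labelled trees. If $K\subset K'$, a Hamiltonian non-positive on $K'$ is in particular non-positive on $K$. Hence $\mathcal{T}_{K'}\subset\mathcal{T}_K$, and the set of allowed Floer data at each vertex (the complexes $\cH_v$ of cubical families of monotone Floer data with the given asymptotics) is the same for both. I would therefore define the restriction functor on morphisms as the map of completed sums induced by inclusion of summands. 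On objects it is the identity, since the object set is independent of the support.

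The key steps, in order, are as follows.

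\begin{enumerate}
\item Check that the inclusion of summands is a chain map before completion. The differential is upper triangular: the diagonal part is the tensor product differential, and the off-diagonal parts come from collapsing levels (action on $CF^*$, concatenation of Floer data, and the forgetful map to $\cO$). Each off-diagonal term sends a tree in $\mathcal{T}_{K'}$ to a tree whose edge labels form a subset of the original ones, so it stays in $\mathcal{T}_{K'}$. It follows that $\bigoplus_{\mathcal{T}_{K'}}$ is a subcomplex of $\bigoplus_{\mathcal{T}_K}$, realised on the simplicial level before geometric realization, so no sign issues arise.
\item Check compatibility with the filtration by valuation. The energy filtration on each summand is intrinsic to the summand, so the inclusion is filtration-preserving and extends to completions.
\item Check that the $\scrO$-algebra structure maps are preserved on the nose. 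These maps are built from the operadic compositions \eqref{eq:operadic_composition} on the bottom vertices and the Eilenberg--Zilber shuffle insertions of identities. Both only concatenate trees and insert identity vertices whose adjacent edges carry Hamiltonians already present. So the structure maps on $\Fuk_K$ restrict to those on $\Fuk_{K'}$ summandwise, and hence after completion.
\item Pulling back along \eqref{eq:boardman_vogt}, we obtain a map of $A_\infty$ categories whose higher components vanish, i.e.\ a strict functor.
\item Strict compatibility for $K\subset K'\subset K''$ is then immediate, since a composite of inclusions of summands is the inclusion of summands.
\end{enumerate}

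For the symplectomorphism action, $\phi$ is applied to all data simultaneously. Each Lagrangian $L$ is sent to $\phi(L)$ with its brane data pushed forward, each Hamiltonian $H$ to $H\circ\phi^{-1}$, and each almost complex structure $J$ to $\phi_*J$, with the bulk class pushed forward. Then $H\circ\phi^{-1}$ is non-positive on $\phi(K)$ exactly when $H$ is non-positive on $K$. Transversality of $\phi^{H}_1(L_\ell)$ with $L_r$ is preserved, and $\phi$ induces bijections of moduli spaces of Floer solutions that preserve energy, Maslov indices and orientations. This gives isomorphisms of the complexes $CF^*$ and $\cH_v$ that commute with all structure maps and fix $\cO$, since $\cO$ does not see $M$. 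Hence we obtain a strict functor, and functoriality $(\phi\psi)_*=\phi_*\psi_*$ holds on the nose.

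The main obstacle I expect is to confirm that nothing in the construction uses $K$ beyond the edge-label condition. For instance, one must check that the regularity choices and perturbation data (almost complex structures, bulk perturbations, potential value $\rho$) were fixed globally, independently of $K$, rather than chosen per support. I would verify this first. If some choice did depend on $K$, I would fix one universal system of choices for $K=M$-indexed data and show that it restricts to every $K$. I would also check that the pushforward by $\phi$ preserves the coherent orientations and gradings used to define $\rho$.
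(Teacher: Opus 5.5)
Your proposal is correct and is essentially the paper's argument. The paper observes that $\mathrm{Floer}^{\mathrm{reg}}_{K',\rho}$ is a full sub-multicategory of $\mathrm{Floer}^{\mathrm{reg}}_{K,\rho}$ on which $\mathrm{CF}_K$ restricts to $\mathrm{CF}_{K'}$. Functoriality of $(\Delta\pi)_*$, geometric realization and $T$-adic completion then gives a strict map of $C_*(\mathrm{Lag}_\rho)$-algebras, evaluated on the same chains $f([\mu_k])$. Your summand-by-summand checks are this argument unpacked, and your symplectomorphism argument matches the paper's.

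The obstacle you anticipate does not arise. $\mathrm{Floer}^{\mathrm{reg}}$ consists of \emph{all} regular data rather than a fixed choice, so the only dependence on $K$ is the edge-label condition. In the paper that condition is strict, $H<0$ on $K$; this changes nothing in your argument.
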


As discussed earlier, potential applications of our approach will rely on a comparison of our groups with those constructed earlier in the literature. The following is proved in Section~\ref{sec:acceleration-data}.

\begin{thm}\label{thm-intro-telescope}
    The morphism spaces between any two Lagrangian branes receive a canonical quasi-isomorphism from the completed telescope of an acceleration datum.
\end{thm}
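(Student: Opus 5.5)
We compare the completed two-sided bar construction defining $\Fuk_K(L_\ell,L_r)$ with the completed telescope along a cofinal sequence, in the spirit of the argument in \cite{Abouzaid2023} for Hamiltonian Floer theory. An acceleration datum for $K$ consists of a sequence $H_1\le H_2\le\cdots$ of Hamiltonians, each non-positive on $K$ and with $(L_\ell,L_r,H_i)$ transverse, converging pointwise to the indicator function that is $0$ on $K$ and $+\infty$ off $K$. It also includes monotone continuation data between consecutive terms. Its completed telescope is $\widehat{\mathrm{Tel}}(CF^*(L_\ell,L_r;H_i))$.

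The map will be defined summand by summand. A generator of the telescope in position $i$ is a Floer cochain for $H_i$. We send it to the summand indexed by the one-level tree with a single vertex $v$ and leaf labelled $H_i$, tensored with the unit of $\cO_1$. A telescope generator on the segment $[i,i+1]$ goes to the two-level summand. There $\cH_v$ carries the chosen continuation datum, viewed as a $0$-chain of monotone Floer data on the $2$-punctured disc, and the bottom vertex carries the unit of $\cO_1$. Checking that this is a chain map reduces to the fact that collapsing the top level realizes the continuation map. Collapsing the bottom level applies the forgetful map to $\cO_1$, which recovers the identity on $CF^*(L_\ell,L_r;H_i)$. The map preserves valuations, because monotone data do not decrease energy, so it extends to the completions. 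Canonicity (independence from the choice of continuation data up to contractible choice) follows from contractibility of the spaces of monotone continuation data, encoded by the cubical chains $\cH$.

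To prove it is a quasi-isomorphism, first filter the bar construction by the number of inputs. The arity-$1$ part, consisting of trees in which every vertex has one input, is a homotopy colimit over the cubically enriched category $\mathcal{C}_K$ of Hamiltonians non-positive on $K$ with morphisms given by monotone Floer data. The contributions of higher arity should be handled by a contracting homotopy built from the unit of $\cO$ and the gluing, i.e.\ operadic composition with $\cO_k$, acting on the bottom vertex. This homotopy retracts each summand onto the corresponding arity-$1$ summand via the augmentation of the bar construction. Next, show that the inclusion of the sequence $\bN\to\mathcal{C}_K$ determined by the acceleration datum is homotopy cofinal. For this, each comma category must be contractible: given any $H$ non-positive on $K$, the space of $i$ together with a monotone path from $H$ to $H_i$ is nonempty for large $i$. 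Nonemptiness fails on $K$, where $H$ may exceed $H_i$ slightly. The fix is to use the $\epsilon$-room allowed by strict negativity together with the fact that the $H_i$ converge to the indicator function, and its cubical spaces are contractible by convexity of monotone homotopies. Cofinality then identifies the homotopy colimits before completion.

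The main obstacle is passing to completions. Homotopy cofinality yields a quasi-isomorphism of uncompleted complexes, but completion does not preserve quasi-isomorphisms in general. I would therefore upgrade it to a filtered chain homotopy equivalence, with homotopies that do not decrease valuation, by constructing the cofinality homotopies explicitly out of monotone data. The comparison then holds modulo $T^\lambda$ for every $\lambda$ and passes to the inverse limit. A Mittag-Leffler argument controls the $\lim^1$ terms, since the reductions mod $T^\lambda$ are surjective.
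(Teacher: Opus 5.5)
\textbf{The gap is in how you dispose of the higher-arity summands.} The operations you propose to use are inserting units of $\cO$ and composing in $\cO$ at the bottom vertex. These never decrease the number of leaves and never introduce Floer data. So they cannot retract a summand such as $CF^*(L_\ell,L_1;H_{01})\otimes CF^*(L_1,L_r;H_{1r})\otimes C_*(\overline{\cR}_3)$ onto a one-leaf summand. In the bar complex, the only way to go from $m\geq 2$ leaves to fewer is through collapsing the top level, i.e.\ the action of multimorphisms of $\mathrm{Floer}$ with several inputs.

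Concretely, consider the cycle $(x\otimes y)\otimes[\mathrm{pt}]$ in the arity-$2$ graded piece. To kill it you need a $1$-simplex $(x\otimes y)\otimes\fd\otimes e$, where $\fd$ is a pair-of-pants Floer datum whose output $\bfH_{02}$ dominates its inputs. Controlling all such choices coherently, across higher simplices and different outputs, is exactly the content of Proposition~\ref{lmFrgtHtpyEq}: $\pi:\mathrm{Floer}_\bullet(\vec{\bfH},\bfH_{0m})\to\mathbf{\Box}_\bullet(\overline{\cR}_{m+1})$ is a homotopy equivalence whenever $\bfH_{0m}>\vec{\bfH}$ in the sense of Definition~\ref{defHamOrdering}, for every $m$. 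This is not mere convexity. The $2^{m-1}$ domination and the weight bound \eqref{eq:weight_bound} are needed even for existence (Lemma~\ref{lmExMonH}).

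Nor is there an augmentation to contract against. The extra degeneracy that contracts a two-sided bar construction $B(R,\cM,A)$ needs $R=\cM$ (or $A$ free). Here the left factor is $\mathbb{F}_{\cO}\circ\Color(\pi)_*$, with $\pi$ not an isomorphism. Your arity filtration itself is harmless: the graded pieces with $m\geq 2$ leaves are in fact acyclic. But proving this uses the multi-input statement above together with cofinality over shorter sequences. Your cofinality argument only treats $m=1$, where it reduces to contractibility of continuation data.

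The paper avoids arity bookkeeping altogether. It passes to the dg-category $P(\cdot)$ of sequences, so that $\mathbb{L}\pi_*\mathrm{CF}_K(\mathbb{L}_0,\mathbb{L}_1)$ becomes the weighted homotopy colimit of $\mathrm{CF}_K$ over $PC_*(\mathrm{Floer}^{\mathrm{reg}}_{K,\rho})$ with weight $\vec{\bfH}\mapsto C_*(\overline{\cR}_{m+1})$. It then applies the Quillen~A type Lemma~\ref{lem-quillen-thma} to the sequence $i\mapsto H_i$. For an object $\vec{\bfH}$ of length $m$, the hypothesis of that lemma is precisely that $C_*\mathrm{Floer}(\vec{\bfH};H_i)\to C_*(\overline{\cR}_{m+1})$ is a quasi-isomorphism for $i\gg0$. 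That is Proposition~\ref{lmFrgtHtpyEq}, applicable once $H_i>\vec{\bfH}$. Your arity-$1$ cofinality is the case $m=1$ of this.

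On completion, you are working harder than necessary. All complexes involved are free over $\Lambda_0$, so the cone of the uncompleted quasi-isomorphism is acyclic and torsion-free. Hence it stays acyclic modulo every $T^\lambda$, and Mittag--Leffler gives the completed statement \cite[Corollary~2.3.6]{Varolgunes2021}. No valuation-preserving homotopy equivalence needs to be constructed.
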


Finally, we prove the following descent property in Section~\ref{sec:mayer-vietoris}.

\begin{thm}\label{thm-descent}
  If $(K_1, \ldots,  K_N)$ is a weakly involutive cover of $K$ then the canonical functor
  \begin{equation} \Fuk_{K}\to \Cech(\Fuk; K_1,\ldots,K_N)\end{equation} is a quasi-isomorphism of $A_\infty$-categories.
\end{thm}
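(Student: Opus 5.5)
Since the functor $\Fuk_K\to \Cech(\Fuk; K_1,\ldots,K_N)$ is the identity on objects, and the \v{C}ech category has the same objects with morphism complexes given by the totalization of the \v{C}ech cube
\begin{equation*}
\prod_{\emptyset\neq I\subset [N]} \Fuk_{K_I}(L_\ell,L_r)[-|I|+1], \qquad K_I=\bigcap_{i\in I}K_i,
\end{equation*}
with differential built from the restriction functors of Theorem~\ref{thm-restriction-functors}, it suffices to prove that for every pair of Lagrangian branes $(L_\ell,L_r)$ the map
\begin{equation*}
\Fuk_K(L_\ell,L_r)\to \operatorname{Tot}\Bigl(\prod_{I}\Fuk_{K_I}(L_\ell,L_r)\Bigr)
\end{equation*}
is a quasi-isomorphism. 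The $A_\infty$ compatibility is automatic, because the restriction functors are strict and strictly compatible with triple inclusions. This reduces the theorem to a statement about chain complexes. That statement is the Lagrangian analogue of the Mayer--Vietoris property of \cite{Varolgunes2021}.

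For the chain-level statement I would use Theorem~\ref{thm-intro-telescope} to replace each $\Fuk_{K_I}(L_\ell,L_r)$ by the completed telescope of an acceleration datum for $K_I$. The first key step is to choose these acceleration data compatibly. Following \cite{Varolgunes2021}, I would pick, for every $I$, a cofinal sequence of Hamiltonians $H^I_n$ such that:
\begin{itemize}
\item $H^I_n=\max_{i\in I}H^{\{i\}}_n$ after smoothing (or $\min$, depending on sign conventions);
\item the whole cube of continuation maps is strictly functorial in $I$, with restriction maps realized as continuation maps.
\end{itemize}
The canonical quasi-isomorphisms of Theorem~\ref{thm-intro-telescope} should then be natural in $I$. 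Here I would exploit the fact that both sides are defined through the poset of Hamiltonians nonpositive on the relevant set, so naturality reduces to an inclusion of indexing categories. With this naturality, the \v{C}ech complex above is quasi-isomorphic to the totalization of a cube of completed telescopes.

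The second step is the local computation. Completion commutes with finite totalization, and a quasi-isomorphism of uncompleted complexes with suitable torsion bounds remains one after completion, which can be checked mod $T^\lambda$. So it suffices to show that, mod $T^\lambda$ and for each finite stage $n$, the cube
\begin{equation*}
CF^*(L_\ell,L_r;H^K_n)\to \prod_I CF^*(L_\ell,L_r;H^I_n)
\end{equation*}
is exact up to maps that become zero in the telescope. The weakly involutive hypothesis is used exactly here. As in \cite{Varolgunes2021}, it guarantees that the Hamiltonians $\max_{i\in I} H^i$ can be chosen as commuting Poisson-type families, so that the Floer data split. Concretely, I would argue with an action/energy filtration, using the integrated maximum principle or Hamiltonian bounds. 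The goal is to show that generators of $\max$ and $\min$ Hamiltonians relate through a short-exact-sequence-type cube. The relevant maps are induced by the identity on generators, up to terms of positive valuation that die in the completed colimit.

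I expect the main obstacle to be this last exactness argument in the Lagrangian setting with boundary. One must adapt the ``boundary accelerated'' Mayer--Vietoris of \cite{Varolgunes2021} so that the Lagrangian chords, rather than orbits, of the combined Hamiltonians localize near each $K_i$ with controlled energy. One must also check that the telescope comparison of Theorem~\ref{thm-intro-telescope} is genuinely functorial in $I$ up to coherent homotopy. The fallback is to prove this first for $N=2$ and induct on $N$ using the iterated cone description of the \v{C}ech cube.
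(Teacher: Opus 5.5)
Your outline follows the paper's route: reduce to morphism complexes, replace each $\Fuk_{K_I}$ by a completed telescope (natural only up to homotopy), take acceleration data $\min(g_1,g_2)$, $g_1$, $g_2$, $\max(g_1,g_2)$ built from the weakly Poisson commuting functions, prove each square slice acyclic, complete, and reduce to $N=2$. The induction needs Lemma~\ref{lem-weak-Pois}, so that unions and intersections stay weakly Poisson commuting. The gap is that the step carrying the content, producing Hamiltonians and Floer data for which each slice is acyclic, is not supplied, and your one concrete suggestion there fails.

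\emph{Smoothing the maximum.} If you replace $\max(H^1,H^2)$ by a smoothing $\phi(H^1,H^2)$, then on the smoothing region the Hamiltonian vector field is $\partial_1\phi\,X_{H^1}+\partial_2\phi\,X_{H^2}$. Its chords are chords of neither $H^1$ nor $H^2$. Acyclicity of the square rests on every chord of $\min$ and $\max$ literally being a chord of $g_1$ or $g_2$, along which the adjacent corner Hamiltonians coincide. The paper achieves this by tangentialization (Lemma~\ref{lem-tang-equality-region}). After composing $(f_1,f_2)$ with $(x_g,y_g)$, the two functions agree on a closed region with interior that separates dominance regions with disjoint closures. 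Then $\min$ and $\max$ are smooth without any smoothing, and Poisson commutativity, hence chord separation, survives.

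\emph{Nondegeneracy.} You never address it. Autonomous commuting functions typically have degenerate chords, and a generic perturbation destroys the commutativity that gave separation and the exact $\min/\max$ structure. The paper perturbs jointly on the flow-invariant interior of the equality locus, then separately and downward in each dominance region. It recovers separation from a quantitative bound on $\{f_1-f_2,\,g_m-\tilde f_m\}$ (Lemma~\ref{lem-main-separated-admissible}).

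\emph{Regularity of the identity terms.} In the paper's classical-transversality setting, the identity maps you want as leading terms come from constant continuation strips at chords where the two Hamiltonians agree. There $\partial_sH=0$, varying $J$ contributes nothing to the linearization, and regularity cannot be achieved generically. The paper instead imposes second-order vanishing of $\partial_sH$ along such chords (acceptable continuation data). It proves injectivity of the index-zero operator by an energy argument (Lemma~\ref{lem-const-reg}), and carries this through the square homotopies and the three-dimensional cube. That cube cannot be strictly functorial, as you propose; it needs homotopy-coherent filling data, including a null-homotopy of the hexagon, built from acceptable regular data.

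\emph{Mechanism of acyclicity.} Positive-valuation terms do not die in the completed telescope, and ``exact up to maps that vanish in the telescope'' is not what is needed. Each slice is acyclic on the nose: it is a finite free $\Lambda_0$-complex whose valuation-zero part is a sum of cones of identities, hence acyclic. The remaining terms have valuation bounded below by some $\epsilon>0$. Acyclicity of the telescope then follows by filtering by the index $i$, and completion preserves it for free complexes. The relevant phenomenon is separation of chords between the two dominance regions, not localization of chords near the $K_i$.
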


\begin{rem}
  There are many additional properties of these categories which can be proved. In the next section, which focuses on examples and computations, we provide one such general property, namely the vanishing of every object of $\Fuk_{K}$, if its support is disjoint from $K$. A deeper follow-up property is the following: let $\phi$ be a Hamiltonian diffeomorphism, and $K \subset K'$ a nested pair so that $\phi K \subset K'$. We obtain a diagram
  \begin{equation}
    \begin{tikzcd}[column sep=large]
      & \Fuk_{K'} \arrow[dl] \arrow[dr] & \\
      \Fuk_{K} & & \Fuk_{K}, \arrow[ll, "\phi_*"']
    \end{tikzcd}
  \end{equation}
  in which the two downward arrows are the restriction functors of Theorem~\ref{thm-restriction-functors} associated to the inclusions $\phi K \subset K'$ and $K \subset K'$, and the bottom arrow is the functor $\phi_*$ induced by $\phi$, and which does not commute. The point, however, is that it commutes up to homotopy, after inverting the Novikov variable. If the Hamiltonian isotopy can be chosen so that the image of $K$ never leaves $K'$, then we do not have to invert the Novikov variable.
\end{rem}

We complete this introduction with an outline of the paper. The examples and computations announced above are collected in Section~\ref{sec-basic-computations}, which immediately follows this introduction and has two parts: we first compute, by means of the completed telescope, the Floer cohomology of a Lagrangian section of the projection of the square torus supported on an annulus, and we then show the vanishing of the Floer cohomology with support if one of the branes is disjoint from the support, and that, if instead one of the branes is contained in the support, the Floer cohomology with support is simply the global Floer cohomology.

Section~\ref{sec:prel-sympl-manif} collects the preliminaries from Lagrangian Floer theory that we shall use. The two sections which follow provide the analogue, in the Lagrangian setting, of the constructions which were implemented for Hamiltonian Floer theory in \cite{Abouzaid2023}:  an open string Hamiltonian indexing multicategory, whose objects record the Floer data of a pair of branes and whose multimorphisms are cubes of Hamiltonian data, and the Floer functor, which assigns to each object the associated Floer cochains, and to each cube of Floer data its action on the Floer chains. Section~\ref{sec:def-fuk-with-support} reviews the operadic left Kan extension and gives the complete definition of the Fukaya category with support $\Fuk_{K,\rho}^f$. This is also where the restriction functors and the symplectomorphism action of Theorem~\ref{thm-restriction-functors} are defined. Section~\ref{sec:acceleration-data} proves Theorem~\ref{thm-intro-telescope}, comparing the morphism spaces of these categories, and the maps induced on them by the restriction functors, with the completed telescope of an acceleration datum; the comparison passes through the identification of a homotopy left Kan extension with a weighted homotopy colimit, discussed in Section~\ref{sec-weighted-leftkan}. Section~\ref{sec:mayer-vietoris} is dedicated to the, rather long, proof of Theorem~\ref{thm-descent}.  The paper ends with two appendices: Appendix~\ref{sec:trees-discs} fixes conventions about ribbon trees and pre-stable punctured discs with strip-like ends, while Appendix~\ref{sec:monotonicity-inequality} discusses an integral monotonicity condition, which we use to bound the geometric energy of a Floer solution by its topological energy.

\subsection*{Acknowledgements}
M.A. was supported by NSF grants DMS-2103805 and DMS-2506145. Y.G. was supported by ISF grant No. 3605/24. U.V. was supported by TÜBİTAK grants no. 124F451 and 225N333. The authors used large language models for
proofreading, improving the exposition, and critically reviewing the arguments.

\section{Basic computations}
\label{sec-basic-computations}

In this section, we list some example computations of morphism spaces in Fukaya categories with support.

\subsection{The computation for a Lagrangian section}

Let $\bT=(\rmodz)^2$ have coordinates $(p,q)$ and symplectic form
$\omega=dp\wedge dq$, and let
\[
\pi\colon \bT\longrightarrow \rmodz,\qquad \pi(p,q)=p.
\]
Fix a proper closed interval $P=[a,b]\subset \rmodz$, and put
$K=\pi^{-1}(P)$. We take
\[
L=\rmodz\times\{0\}
\]
with its standard grading, Pin structure, and trivial rank-one local
system. Thus $L$ is a Lagrangian section of $\pi$. We assume that the
bulk is empty. Our aim is only to compute the underlying supported
Floer cohomology with support
\[
HF_K^*(L,L;\novf)
:=
H^*\!\left(
\operatorname{Fuk}_K^*(L,L)\otimes_{\novr}\novf
\right).
\]
By
Proposition~\ref{prop-tel-quasi}, it is enough to compute the $T$-adically completed
telescope associated to an acceleration datum.

Choose an acceleration datum $H_i=h_i\circ\pi$, where
$h_i\colon\rmodz\to\mathbb R$ is negative on $P$, converges to $0$ on
$P$, and converges to $+\infty$ on
$\rmodz\setminus P$. See Figure \ref{fig:shape-h}. We choose the functions $h_i$ so that, for every
fixed $N\in\mathbb Z$ and all sufficiently large $i$, the equation
\[
-h_i'(p)=N
\]
has precisely two non-degenerate solutions. Since
\[
X_{H_i}=-h_i'(p)\partial_q,
\]
the corresponding Hamiltonian chords from $L$ to itself are
\[
x(t)=(p,Nt).
\]
The connected components of the path space from $L$ to itself are
indexed by $N\in\mathbb Z$, and Floer differentials and continuation
maps preserve this index. We denote the two chords in class $N$ by
$\alpha_{i,N}$ and $\beta_{i,N}$, choosing the notation so that
\[
|\alpha_{i,N}|=0,
\qquad
|\beta_{i,N}|=1.
\]
Equivalently, they are respectively the index-$0$ and index-$1$ zeros
of the closed one-form
\[
\eta_{i,N}=dh_i+N\,dp
\]
on $\rmodz$. Thus, for fixed $N$ and sufficiently large $i$, the
entire Floer complex has the form
\[
CF^*(L,L;H_i)_N
=
\novr\,\alpha_{i,N}\oplus\novr\,\beta_{i,N}.
\]

\begin{figure}
    \centering
    \includegraphics[width=\linewidth]{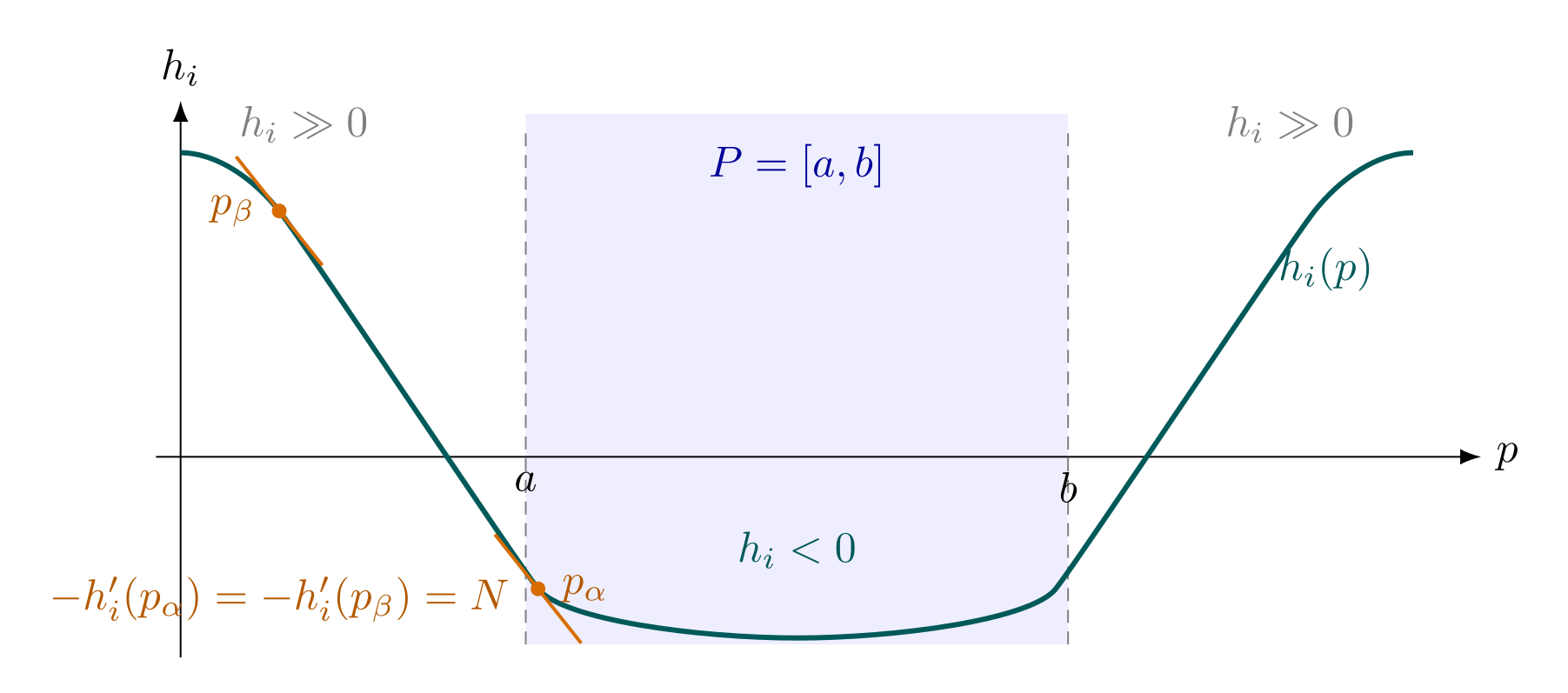}
    \caption{The shape of functions used in the computation}
    \label{fig:shape-h}
\end{figure}

\begin{prop}
    
\begin{equation}\label{eq:section-final}
HF_K^*(L,L;\novf)
\cong
\widehat{\bigoplus}_{N\in\mathbb Z}
\novf\,\alpha_N,
\qquad
|\alpha_N|=0.
\end{equation}
In particular,
\[
HF_K^j(L,L;\novf)=0
\qquad\text{for }j\neq0.
\]
\end{prop}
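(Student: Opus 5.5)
The plan is to use Proposition~\ref{prop-tel-quasi} to replace $\operatorname{Fuk}_K(L,L)$ by the $T$-adically completed telescope of the sequence
\[
CF^*(L,L;H_1)\to CF^*(L,L;H_2)\to\cdots
\]
of Floer complexes and monotone continuation maps. Differentials and continuation maps preserve the class $N\in\mathbb Z$ of a chord. Hence the telescope splits as the completion of $\bigoplus_N \mathrm{Tel}_N$. For fixed $N$ we may drop finitely many initial stages, since this does not change the telescope up to quasi-isomorphism. So we may assume every stage is $\novr\alpha_{i,N}\oplus\novr\beta_{i,N}$.

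I would also normalize the functions $h_i$ so that the constants below can be controlled uniformly in $N$, and so that completion commutes with the splitting. This is what produces the completed direct sum $\widehat{\bigoplus}_N$ in \eqref{eq:section-final}.

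The first step is to compute each stage. Floer strips for the split Hamiltonian $h_i\circ\pi$ on the section $L$ reduce to the two gradient arcs of the closed form $\eta_{i,N}$ on $\rmodz$. These join the index-$0$ zero $\alpha_{i,N}$ to the index-$1$ zero $\beta_{i,N}$. Thus
\[
d\alpha_{i,N}=\pm\bigl(T^{E_{i,N}}-T^{E_{i,N}+|N|}\bigr)\beta_{i,N},\qquad d\beta_{i,N}=0 .
\]
The opposite orientation signs are those of the Morse complex of the circle. The two energies differ by $\int_{\rmodz}\eta_{i,N}=N$, and $E_{i,N}\ge h_i(\beta_{i,N})-h_i(\alpha_{i,N})+O(N)\to\infty$. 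In particular $\novr\beta_{\bullet,N}$ spans a subcomplex of the telescope, with quotient spanned by the $\alpha$'s.

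The second step, which I expect to be the main obstacle, is computing the continuation maps $CF(H_i)\to CF(H_{i+1})$ in class $N$. I would choose the sequence so that $h_{i+1}=h_i+\delta_i$ with $\delta_i\ge0$, with $\delta_i$ small on $P$ and large outside $P$. I would also make the zeros of $\eta$ move only slightly. Then the continuation map is upper triangular with respect to the action filtration, and its diagonal entries are
\[
\alpha_{i,N}\mapsto T^{\epsilon_{i,N}}\alpha_{i+1,N}+\cdots,\qquad \beta_{i,N}\mapsto T^{c_{i,N}}\beta_{i+1,N}.
\]
Here $\epsilon_{i,N}\approx \delta_i(\alpha_{i,N})$ is summable in $i$, while $c_{i,N}\approx\delta_i(\beta_{i,N})$ satisfies $\sum_i c_{i,N}=\infty$. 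To establish this I would use the topological energy identity together with the monotonicity bound of Appendix~\ref{sec:monotonicity-inequality}, plus an explicit constant-strip count when $\delta_i$ is locally constant near the chords. If a direct count is awkward, the fallback is to compare with an adiabatic Morse continuation of $\eta$ on the circle.

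The third step is to conclude. The completed telescope of $\novr$ under multiplication by $T^{c_i}$ with $\sum c_i=\infty$ is acyclic: the colimit is the divisible module $\novf$, whose $T$-adic completion vanishes, and the $\lim^1$ term also vanishes. The completed telescope of the $\alpha$-quotient is quasi-isomorphic to $\novr$ in degree $0$, because $\sum\epsilon_i<\infty$. The long exact sequence of the subcomplex-quotient pair, applied to completed telescopes (completion is exact here because all terms are $T$-torsion free), gives one class $\alpha_N$ in degree $0$ for each $N$ and nothing in degree $1$. Assembling over $N$ and tensoring with $\novf$ yields \eqref{eq:section-final}, together with the vanishing of $HF^j_K$ for $j\ne0$.
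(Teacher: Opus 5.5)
Your proposal is correct. It uses the same inputs as the paper: the Morse--Novikov form of the differential, $\sum_i\epsilon_{i,N}<\infty$ for the $\alpha$-continuation coefficients, and $\sum_i c_{i,N}=\infty$ for the $\beta$ ones. Where it differs is the final homological algebra.

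\emph{The paper's route.} It reduces modulo $T^E$ first. Because the valuation $A_{i,N}$ of $d\alpha_{i,N}$ (your $E_{i,N}$) tends to infinity, the differential vanishes mod $T^E$ after finitely many stages. Each truncated ray therefore splits into an $\alpha$-ray and a $\beta$-ray, whose colimits are computed directly, and one then takes $\lim_E$ after summing over $N$.

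\emph{Your route.} You use that the $\beta$'s span a subcomplex of the telescope for degree reasons. The completed $\beta$-telescope is acyclic, because its colimit is divisible and torsion free, so every reduction mod $T^E$ is acyclic. You then read the answer off the $\alpha$-quotient. This never uses the action gap, treats $N=0$ and $N\neq0$ uniformly, and makes visible that for $N\neq0$ the uncompleted telescope has no $H^0$ at all: the class $\alpha_N$ is created by the completion.

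\emph{What each route costs.} You need an independent energy estimate for $\sum_i c_{i,N}=\infty$. The paper gets this for $N\neq0$ from the chain-map identity $c_{i,N}-\epsilon_{i,N}=A_{i+1,N}-A_{i,N}$, together with $A_{i,N}\to\infty$. You could use this too, since you already note the action gap, leaving only the short $\alpha$-trajectory and the case $N=0$ to analyse by hand.

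Three small points.
\begin{itemize}
\item Acceleration data are strictly increasing, so $\epsilon_{i,N}>0$. The colimit of the $\alpha$-ray is then $\bigcup_i T^{-\sigma_i}\novr$ with $\sigma_i\uparrow\sigma_\infty$ not attained. Up to a valuation shift this is $\novf_{>0}$, not $\novr$; this is the paper's $\novf_{>0}/\novf_{>E}$. The slip is harmless after tensoring with $\novf$.
\item No uniformity in $N$ is needed to assemble the summands. The cohomology of $\bigoplus_N(\cdot)\otimes_{\novr}\novr/T^E$ splits over $N$, and the transition maps in $E$ are surjective, so the Milnor sequence gives the completed direct sum directly.
\item You assert, rather than prove, that Floer strips reduce to gradient trajectories. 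The paper proves it by writing $q=Nt+r$ on the universal cover and applying the maximum principle to $r_{ss}+r_{tt}-h_i''(p)r_s=0$.
\end{itemize}
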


Here the completed direct sum in
Equation~\eqref{eq:section-final} consists of formal sums
$\sum_N a_N\alpha_N$ satisfying
\[
\#\bigl\{
N\in\mathbb Z:\operatorname{val}(a_N)<E
\bigr\}<\infty
\qquad
\text{for every }E\in\mathbb R;
\]
equivalently,
\[
\operatorname{val}(a_N)\longrightarrow+\infty
\qquad\text{as }|N|\longrightarrow\infty.
\]
\begin{proof}
The inclusion $\pi_1(L)\to\pi_1(\bT)$ is injective, and hence
\[
\pi_2(\bT,L)=0.
\]
For the Floer differential of $H_i$, the
equation on the strip reduces to a one-dimensional Morse--Novikov equation, which
allows us to determine it explicitly.

To see this reduction, lift a strip in class $N$ so that its boundary
lies on $q=0$ and $q=N$, and write
\[
q(s,t)=Nt+r(s,t).
\]
The Floer equation becomes
\[
p_s=r_t+N+h_i'(p),
\qquad
p_t=-r_s.
\]
It follows that
\[
r_{ss}+r_{tt}-h_i''(p)r_s=0.
\]
Since $r=0$ on the boundary and at the two strip-like ends, the
Hopf maximum (and minimum) principle gives $r=0$. Thus every Floer strip is a spun
gradient trajectory
\[
u(s,t)=(\gamma(s),Nt),
\qquad
\dot\gamma(s)=h_i'(\gamma(s))+N.
\]
In particular, the Floer complex in class $N$ is
exactly the Morse--Novikov complex of $\eta_{i,N}$.

The two rigid strips from $\alpha_{i,N}$ to $\beta_{i,N}$ correspond
to the two gradient trajectories on the circle, which are shown pictorially in Figure \ref{fig:tilt}. Their orientation
signs are opposite, and their topological actions differ by
\[
\left|\int_{\rmodz}\eta_{i,N}\right|=|N|.
\]
It follows, after changing a generator by a unit of $\novr$ if
necessary, that
\begin{equation}\label{eq:section-differential}
d\alpha_{i,N}
=
T^{A_{i,N}}\bigl(1-T^{|N|}\bigr)\beta_{i,N},
\qquad
d\beta_{i,N}=0,
\end{equation}
where $A_{i,N}$ is the smaller of the two topological actions. For
$N=0$, the two contributions have the same action and cancel, so
Equation~\eqref{eq:section-differential} simply says
\[
d\alpha_{i,0}=0.
\]
For $N\neq0$, the element $1-T^{|N|}$ is a unit in $\novr$, and hence
\[
CF^*(L,L;H_i)_N\otimes_{\novr}\novf
\]
is acyclic. 

\begin{figure}
    \centering
    \includegraphics[width=\linewidth]{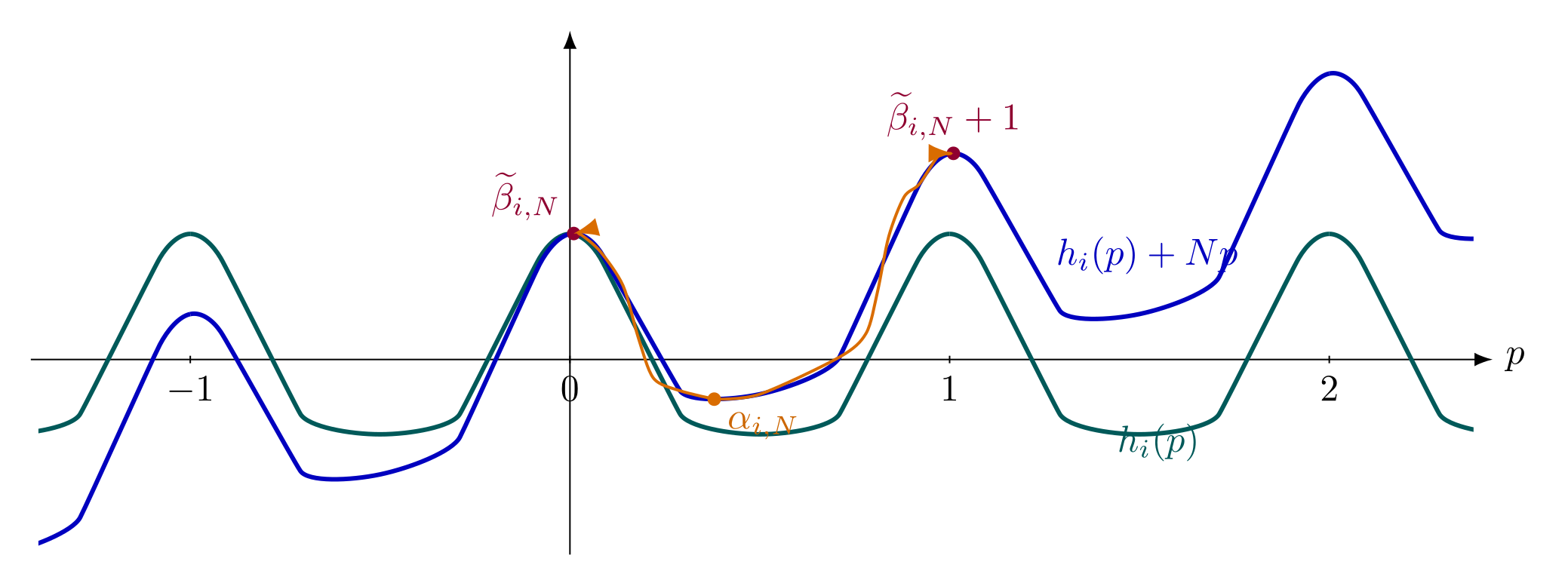}
    \caption{The gradient trajectories of the tilted function}
    \label{fig:tilt}
\end{figure}

The controlled choice of the acceleration datum gives
\begin{equation}\label{eq:section-action-gap}
A_{i,N}\longrightarrow+\infty
\qquad (i\longrightarrow\infty)
\end{equation}
for every fixed $N$.

Since continuation maps $c_{i,N}$ have degree $0$ and there is only one
generator in each degree:
\begin{equation}\label{eq:section-continuation}
\begin{aligned}
c_{i,N}(\alpha_{i,N})
&=
u_{i,N}T^{\delta_{i,N}}\alpha_{i+1,N},\\
c_{i,N}(\beta_{i,N})
&=
v_{i,N}T^{\Delta_{i,N}}\beta_{i+1,N},
\end{aligned}
\end{equation}
where $u_{i,N},v_{i,N}\in\novr^\times$ have valuation $0$. The short
continuation trajectory following the $\alpha$-chord shows that
\begin{equation}\label{eq:section-alpha-energy}
\sum_{i\geq i_0}\delta_{i,N}<\infty.
\end{equation}
On the other hand,
\begin{equation}\label{eq:section-beta-energy}
\sum_{i\geq i_0}\Delta_{i,N}=+\infty.
\end{equation}
For $N\neq0$, this last assertion follows from the
chain-map equation as follows. Taking valuations in
\[
c_{i,N}d=dc_{i,N}
\]
and using Equation~\eqref{eq:section-differential} gives
\[
\Delta_{i,N}-\delta_{i,N}
=
A_{i+1,N}-A_{i,N}.
\]
For $N=0$, Equation~\eqref{eq:section-beta-energy} follows directly
from the divergence of the values of the functions on the $\beta$-critical points.

Fix $E>0$ and write
\[
R_E=\novr/T^E\novr.
\]
Set
\[
C_L^*(N,E)
:=
\underset{i}{\operatorname{colim}}\,
\bigl(
CF^*(L,L;H_i)_N\otimes_{\novr}R_E
\bigr).
\]
Here we use the standard quasi-isomorphism from the telescope of the
truncated ray to its ordinary filtered colimit. By
Equation~\eqref{eq:section-action-gap}, after discarding finitely many
terms the differential in this ray vanishes modulo $T^E$. It
therefore splits as the direct sum of the $\alpha$-ray and the
$\beta$-ray.

After harmless unit changes of basis, the connecting maps in these
two rays are multiplication by $T^{\delta_{i,N}}$ and
$T^{\Delta_{i,N}}$, respectively.
Equation~\eqref{eq:section-beta-energy} implies that every element of
the $\beta$-ray is eventually mapped into $\novf_{\geq E}$, so its
direct limit is zero. Equation~\eqref{eq:section-alpha-energy}, with
the usual normalization in which the $\alpha$-actions approach their
limiting value strictly from one side, gives
\begin{equation}\label{eq:section-truncated}
C_L^*(N,E)
\cong
\frac{\novf_{>0}}{\novf_{> E}}\,\alpha_N,
\qquad
|\alpha_N|=0.
\end{equation}
The maps associated to decreasing $E$ are the evident truncation
maps.

At each finite stage only finitely many classes $N$ occur.
Consequently,
\[
\underset{i}{\operatorname{colim}}\,
\bigl(
CF^*(L,L;H_i)\otimes_{\novr}R_E
\bigr)
\cong
\bigoplus_{N\in\mathbb Z}C_L^*(N,E).
\]
Taking the inverse limit over $E$, which is the completed-telescope
construction of the supported morphism complex, and then extending
scalars to $\novf$, gives \eqref{eq:section-final}.
\end{proof}

$HF_K^0(L,L;\novf)$ is also equipped with an associative $\novf$-algebra structure. Let us now describe it  without proofs. 

A \emph{rigid analytic annulus} of modulus $m\geq 0$ is defined as the analytic domain defined by $0\leq val(t)\leq m$ (or equivalently, $a\leq val(t)\leq b$ for any $b-a=m)$ inside the rigid analytic affine line with coordinate $t.$ By definition, the algebra of functions on this rigid analytic annulus is the algebra $$\Lambda\langle x,y\rangle/(xy-T^m),$$ where the triangular brackets indicate that we are taking the Tate algebra with those generators: formal power series in $\novf$ with the valuation of the coefficients going to infinity. An algebra that recieves a surjective map from a Tate algebra is called an \emph{affinoid algebra.} 

Using this terminology, it can be shown that  $HF_K^0(L,L;\novf)$ is isomorphic to the algebra of functions on the non-archimedean annulus of modulus $b-a.$ This is a manifestation of the mirror duality between $T^2$ on the $A-$side and a non-archimedean elliptic curve on the $B-$side but this is outside scope of this paper. 

\begin{rem}
    Perhaps a more familiar (in mirror symmetry) representation of the algebra of functions is as the Kontsevich-Soibelman algebra of non-archimedean analytic functions convergent on $[a,b]$: $$\{\sum_{n\in\bZ} \beta_nX^{n}\mid \beta_n\in\novf\text{ and } val(\beta_n)+nr\to \infty \text{ for all } r\in [a,b]\}.$$ The convergence condition should be thought of as the condition for the formal power series to converge (i.e. define an element of $\novf$) when we substitute $T^r$ (or any other element of $\novf$) for  $X$. 
\end{rem}

\subsection{Vanishing results}
 We fix two
objects $\mathbb L_0,\mathbb L_1$ in the same $\rho$-summand, and denote
their underlying Lagrangian submanifolds by $L_0,L_1$.  In this section
we assume that the zero Hamiltonian is admissible for
$(\mathbb L_0,\mathbb L_1)$.  In particular, $L_0$ and $L_1$ intersect
transversely. We ignore the bulk, local-system, and orientation factors associated
to a rigid continuation solution since they do not affect the Novikov valuation of the corresponding
continuation maps.

\begin{prop}
\label{prop-brane-disjoint-support}
Let $K\subset M$ be compact.  Suppose that the underlying Lagrangian
submanifold of either $\mathbb L_0$ or $\mathbb L_1$ is disjoint from
$K$.  Then
\[
    \Fuk_K^*(\mathbb L_0,\mathbb L_1)
\]
is acyclic.
\end{prop}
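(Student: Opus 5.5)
By Proposition~\ref{prop-tel-quasi}, $\Fuk_K^*(\mathbb L_0,\mathbb L_1)$ receives a quasi-isomorphism from the $T$-adically completed telescope $\widehat{\mathrm{Tel}}(CF^*(L_0,L_1;H_i))$ of an acceleration datum $(H_i)$ for $K$. The plan is to choose this datum so that every continuation map in the ray has strictly positive, divergent valuation, and then show the completed telescope is acyclic.

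By symmetry, assume $L_0\cap K=\emptyset$; the case of $L_1$ is identical. Choose an open neighbourhood $U$ of $K$ with $\overline U\cap L_0=\emptyset$. Then choose autonomous $H_i$ that are negative on $K$, converge to $0$ on $K$, increase to $+\infty$ elsewhere, and are \emph{locally constant} near $L_0$, equal to a constant $c_i$ on a neighbourhood $V$ of $L_0$ with $c_i\nearrow+\infty$. Since $X_{H_i}=0$ near $L_0$, the time-one flow fixes $L_0$. Because the zero Hamiltonian is admissible for the pair, the generators of $CF^*(L_0,L_1;H_i)$ are the points of $L_0\cap L_1$ for every $i$, and the complex is canonically identified with $CF^*(L_0,L_1;0)$ as a module. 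The differential agrees with the unperturbed one: $H_i$ is constant near the chords, and a Floer strip for a Hamiltonian constant on $L_0$ is a holomorphic strip whose action is shifted by the constant. The regularity of the almost complex structures should be arranged along the way.

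The key step is to estimate the continuation maps $c_i\colon CF(H_i)\to CF(H_{i+1})$, using monotone homotopies $H_s$ with $\partial_sH_s\ge0$ that stay constant on $V$. The topological energy of a continuation solution $u$ from $x$ to $y$ is
\[
E_{top}(u)=\omega(u)+\int_{\bR} \partial_s H_s(u(s,0))\,ds\ \ge E_{geo}(u)\ge 0,
\]
where I evaluate the Hamiltonian term along the boundary on $L_0$ through the action functional. With the normalisation in which the Hamiltonian enters the action through its value on the $L_0$ end of a chord, each chord at $x\in L_0\cap L_1$ has action shifted by $-c_i$ for $H_i$. The Novikov exponent of each term of $c_i$ is therefore at least $c_{i+1}-c_i$, because the action difference between the two ends contributes this gap and the geometric energy is nonnegative. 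I expect this to be the main obstacle: getting the sign and normalisation of the action right, and justifying the inequality $E_{top}\ge E_{geo}$ for time-dependent monotone data. For the latter I would appeal to the monotonicity inequality of Appendix~\ref{sec:monotonicity-inequality}. The conclusion is that $c_i$ has valuation at least $c_{i+1}-c_i$, so the composite $c_{j-1}\circ\cdots\circ c_i$ has valuation at least $c_j-c_i\to\infty$.

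To conclude, reduce modulo $T^E$. Every element of $CF(H_i)\otimes R_E$ is killed by some finite composite of continuation maps, so the colimit of the truncated ray vanishes. The telescope is quasi-isomorphic to this colimit, so the telescope modulo $T^E$ is acyclic for every $E$. The completed telescope is the inverse limit over $E$ of these acyclic complexes, and the transition maps are surjective, so the Mittag-Leffler condition holds and the inverse limit is acyclic as well. Hence $\Fuk_K^*(\mathbb L_0,\mathbb L_1)$ is acyclic.
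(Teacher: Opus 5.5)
\textbf{The key valuation estimate is not justified.} Your architecture matches the paper's: Hamiltonians locally constant near $L_0$, generators equal to $L_0\cap L_1$, a positive valuation for each continuation map, vanishing of the truncated colimits, then passage to the limit. The failure is in the bound on the continuation maps.

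For a continuation strip $u$ between $x,y\in L_0\cap L_1$ one has $E_{top}(u)=\omega(u)+(c_{i+1}-c_i)$. The inequality $E_{top}(u)\geq E_{geo}(u)\geq 0$ only yields $\omega(u)\geq -(c_{i+1}-c_i)$. Your claim ``valuation $\geq c_{i+1}-c_i$'' is equivalent to $\omega(u)\geq 0$, and there is no reason for this when the strip is not holomorphic.

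Concretely, $E_{top}(u)=E_{geo}(u)+\int\partial_sH_s(u)\,ds\,dt$, and the last integral equals $c_{i+1}-c_i$ only if $u$ stays in $V$. Suppose instead a strip leaves $V$ and spends its transition time near $K$. There $H_{i+1}-H_i$ is small, as it must be since $H_i\to 0$ on $K$. The integral can then be far below $c_{i+1}-c_i$. All your argument gives is $E_{top}(u)\geq E_{geo}(u)$, and you have no uniform positive lower bound on $E_{geo}(u)$. Evaluating the Hamiltonian term only along the $L_0$-boundary, and speaking of actions ``shifted by $-c_i$'', hides exactly this: the relative symplectic area of a non-holomorphic strip has no sign.

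\textbf{The missing ingredient} is a uniform lower bound on the geometric energy of strips that exit the region where the equation is homogeneous. The paper takes $L_0\subset V\Subset U\Subset M\setminus K$, with the $H_i$ and their interpolations spatially constant on $U$ and increments $a_i\geq 1$. It then splits into two cases:
\begin{itemize}
\item A strip contained in $U$ has $E_{top}=E_{geo}+a_i\geq 1$.
\item A strip with asymptotics in $V$ that meets $M\setminus U$ solves the unperturbed Cauchy--Riemann equation on $U$. The Lagrangian-boundary monotonicity lemma (Sikorav) then gives $E_{geo}\geq\hbar$, with $\hbar$ independent of $i$.
\end{itemize}
Hence every continuation map $\kappa_i$ has valuation at least $\delta=\min\{1,\hbar\}$, and $n$-fold composites have valuation at least $n\delta$. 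This is all your mod-$T^E$ and Mittag--Leffler argument needs; the paper instead inverts $1-\kappa$ on the completion via $\sum_n\kappa^n$.

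Two smaller points:
\begin{itemize}
\item The constant continuation strips at points of $L_0\cap L_1$ cannot be made regular by choosing $J$ generically, since $\partial_s u=0$. The paper handles them with Lemma~\ref{lem-const-reg} after subtracting the spatially constant part of $H_s$.
\item Your claim that the differential on $CF^*(H_i)$ agrees with the unperturbed one is unjustified for Floer strips leaving $V$, though the argument does not need it.
\end{itemize}
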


\begin{proof}
We treat the case
\[
    L_0\cap K=\varnothing;
\]
the case of $L_1$ is identical.

Choose open sets
\[
    L_0\subset V\Subset U\Subset M\setminus K.
\]
Choose a smooth function
\[
    q:M\longrightarrow[0,\infty)
\]
such that
\[
    q^{-1}(0)=K,
    \qquad
    q|_U=1.
\]
Let $\epsilon_i>0$ decrease to zero and set
\[
    H_i=iq-\epsilon_i.
\]
Then
\[
    H_i|_K=-\epsilon_i\longrightarrow0,
\]
while, for every $x\notin K$,
\[
    H_i(x)\longrightarrow+\infty.
\]
Moreover,
\[
    H_{i+1}-H_i
    =
    q+\epsilon_i-\epsilon_{i+1}>0.
\]
Thus $(H_i)$ is an acceleration datum for $K$.

On $U$ we have
\[
    H_i=i-\epsilon_i,
\]
so $X_{H_i}=0$ there.  Since $L_0\subset U$, uniqueness for the
Hamiltonian ODE implies that every Hamiltonian trajectory of $H_i$
issuing from $L_0$ is constant.  Hence the Hamiltonian chords of $H_i$
from $L_0$ to $L_1$ are precisely the points of
\[
    L_0\cap L_1.
\]
In particular they are nondegenerate and admissible by our standing
assumption.

Choose regular monotone continuation data from $H_i$ to $H_{i+1}$,
with the Hamiltonian on $U$ given by a monotone interpolation between
the constants $i-\epsilon_i$ and $i+1-\epsilon_{i+1}$.  The stationary
continuation strips cause no additional regularity issue: subtracting
this spatially constant function from the continuation Hamiltonian does
not change its Hamiltonian vector field, the continuation equation, or
its linearization, so their regularity follows from
Lemma~\ref{lem-const-reg}.  The remaining continuation data may be
chosen regular in the usual way.

Put
\[
    a_i=(i+1-\epsilon_{i+1})-(i-\epsilon_i).
\]
Then $a_i\geq 1$.  Let $u$ be a rigid continuation strip.

If
\[
    u(\mathbb R\times[0,1])\subset U,
\]
then
the energy identity gives
\[
    E_{\mathrm{top}}(u)
    =
    E_{\mathrm{geo}}(u)+a_i
    \geq 1.
\]

Suppose instead that $u$ leaves $U$.  Choose
\[
    L_0\subset V \Subset U.
\]
Both asymptotic chords lie in $L_0\cap L_1\subset V$, while the image
of $u$ meets $M\setminus U$.  On $U$ the Hamiltonian vector field
vanishes, so there the continuation equation is the ordinary
pseudo-holomorphic strip equation.  The Lagrangian-boundary version of Sikorav's Monotonicity Lemma \cite{Sikorav1994} therefore gives a constant
\[
    \hbar>0,
\]
independent of $i$, such that every such strip which crosses from $V$
to $M\setminus U$ satisfies
\[
    E_{\mathrm{geo}}(u)\geq\hbar.
\]
Since the continuation is monotone,
\[
    E_{\mathrm{top}}(u)\geq E_{\mathrm{geo}}(u)\geq\hbar.
\]

Thus every rigid contribution to the continuation map
\[
    \kappa_i:
    CF^*(\mathbb L_0,\mathbb L_1;H_i)
    \longrightarrow
    CF^*(\mathbb L_0,\mathbb L_1;H_{i+1})
\]
has topological energy at least
\[
    \delta:=\min\{1,\hbar\}>0.
\]
Consequently
\[
    \kappa_i(CF^*(H_i))
    \subset
    T^\delta CF^*(H_{i+1})
\]
for every $i$.

Let
\[
    C=\widehat{\bigoplus_i CF^*(H_i)}
\]
and let $\kappa:C\to C$ denote the shift induced by the continuation
maps.  Then
\[
    \kappa^n(C)\subset T^{n\delta}C,
\]
so $\kappa$ is topologically nilpotent and
\[
    (1-\kappa)^{-1}
    =
    \sum_{n\geq0}\kappa^n
\]
converges $T$-adically.  Hence the completed telescope, which is the
cone of $1-\kappa$ up to the usual sign convention, is acyclic.
Proposition~\ref{prop-tel-quasi} identifies this telescope with
$\Fuk_K^*(\mathbb L_0,\mathbb L_1)$.

\end{proof}

\begin{rem}
\label{rem-disjoint-nontransverse}
The transversality hypothesis above is imposed only to keep the
proof elementary: it allows the acceleration Hamiltonians to be
spatially constant near the Lagrangian disjoint from $K$, allowing us to use Monotonicity for $J$-holomorphic curves.  The vanishing statement also holds without this hypothesis, by introducing
a small perturbing Hamiltonian near that Lagrangian. In that case we use a Lagrangian version of Hein's Lemma \cite{Hein12} for the corresponding local Floer energy estimate. 
\end{rem}

\begin{prop}
\label{prop-branes-contained-support}
Suppose
\[
    K'=K\cup K_2
\]
is a weakly involutive cover and that the underlying Lagrangian
submanifold of either $\mathbb L_0$ or $\mathbb L_1$ is disjoint from
$K_2$.  Then restriction induces a quasi-isomorphism
\[
    \Fuk_{K'}^*(\mathbb L_0,\mathbb L_1)
    \longrightarrow
    \Fuk_K^*(\mathbb L_0,\mathbb L_1).
\]

\end{prop}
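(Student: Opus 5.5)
The plan is to use the descent theorem, Theorem~\ref{thm-descent}, together with the vanishing result, Proposition~\ref{prop-brane-disjoint-support}. We apply Theorem~\ref{thm-descent} to the weakly involutive cover $(K,K_2)$ of $K'$. It says that the canonical functor $\Fuk_{K'}\to \Cech(\Fuk;K,K_2)$ is a quasi-isomorphism. For a cover with two members, the \v{C}ech construction is expected to compute morphism complexes as a homotopy fiber product. Concretely, on morphisms from $\mathbb L_0$ to $\mathbb L_1$, I expect a homotopy pullback (Mayer--Vietoris) square
\[
\Fuk_{K'}^*(\mathbb L_0,\mathbb L_1)\to \operatorname{cone}\Bigl(\Fuk_K^*(\mathbb L_0,\mathbb L_1)\oplus\Fuk_{K_2}^*(\mathbb L_0,\mathbb L_1)\to \Fuk_{K\cap K_2}^*(\mathbb L_0,\mathbb L_1)\Bigr)[-1].
\]
In this square the first component of the map is the restriction functor of Theorem~\ref{thm-restriction-functors} for $K\subset K'$. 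The map inside the cone is the difference of the restrictions to $K\cap K_2$. I will first read off this two-term description from the definition of $\Cech(\Fuk;K_1,\ldots,K_N)$ in Section~\ref{sec:mayer-vietoris}. I will also check that composing the canonical functor with the projection to the $K$-component is exactly the restriction $\Fuk_{K'}\to\Fuk_K$, using the strict compatibility with triple inclusions from Theorem~\ref{thm-restriction-functors}.

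Next I use the hypothesis. Suppose, say, that $L_0\cap K_2=\varnothing$. Then $L_0$ is also disjoint from $K\cap K_2\subset K_2$. Both sets are compact, so Proposition~\ref{prop-brane-disjoint-support} applies to each of them. Hence $\Fuk_{K_2}^*(\mathbb L_0,\mathbb L_1)$ and $\Fuk_{K\cap K_2}^*(\mathbb L_0,\mathbb L_1)$ are acyclic. The same argument works if instead $L_1$ is the Lagrangian disjoint from $K_2$. If $K\cap K_2$ is empty, its morphism complex should vanish directly, since the acceleration data then diverge everywhere; this is also covered by the proposition.

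With those two terms acyclic, the homotopy fiber above is quasi-isomorphic to $\Fuk_K^*(\mathbb L_0,\mathbb L_1)$ via the projection. Composing with the quasi-isomorphism from Theorem~\ref{thm-descent} shows that restriction is a quasi-isomorphism on this morphism complex. The argument should go through over $\novr$ itself: the acyclicity in Proposition~\ref{prop-brane-disjoint-support} holds over $\novr$, because $1-\kappa$ is invertible $T$-adically, so we never need to invert $T$.

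The main obstacle is the first step. I need to extract honestly from the \v{C}ech category the two-term homotopy pullback on morphism complexes, and to identify the relevant projection with the restriction map. The \v{C}ech category's morphism complexes may be built from a bar-type or totalization model, not a literal cone. In that case I would filter the model by \v{C}ech degree and argue by a spectral sequence, or by a finite filtration, that the terms indexed by subsets containing $K_2$ are acyclic. What remains is then the $K$-term. The standing assumption that the zero Hamiltonian is admissible must also hold in order to invoke Proposition~\ref{prop-brane-disjoint-support}; if it is not assumed in the present statement, I would appeal to the nontransverse version in Remark~\ref{rem-disjoint-nontransverse}.
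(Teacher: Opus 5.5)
Your proposal is correct and is essentially the paper's argument: apply the Mayer--Vietoris theorem to the binary cover $K'=K\cup K_2$, use Proposition~\ref{prop-brane-disjoint-support} to show that the $K_2$ and $K\cap K_2$ terms of the \v{C}ech complex are acyclic, and observe that the resulting quasi-isomorphism onto $\Fuk_K^*(\mathbb L_0,\mathbb L_1)$ is the restriction map. Your caveat about the standing assumption that the zero Hamiltonian is admissible is apt, since the vanishing proposition is proved under that hypothesis.
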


\begin{proof}
By the Mayer--Vietoris theorem, the morphism complex over $K'$ is
computed by the binary Cech complex associated to the cover
$K'=K\cup K_2$.  Proposition~\ref{prop-brane-disjoint-support} gives
\[
    \Fuk_{K_2}^*(\mathbb L_0,\mathbb L_1)\simeq0
\]
and, since $K\cap K_2\subset K_2$,
\[
    \Fuk_{K\cap K_2}^*(\mathbb L_0,\mathbb L_1)\simeq0.
\]
The Cech complex therefore reduces, up to quasi-isomorphism, to
\[
    \Fuk_K^*(\mathbb L_0,\mathbb L_1),
\]
and the resulting map is the restriction map.
\end{proof}

\begin{cor}
    For an inclusion $K\Subset K'$ so that one of the Lagrangians is contained in $K$, the restriction map from $K'$ to $K$ induces a quasi isomrphism
    \[
    \Fuk_{K'}^*(\mathbb L_0,\mathbb L_1)
    \longrightarrow
    \Fuk_K^*(\mathbb L_0,\mathbb L_1).
\]
\end{cor}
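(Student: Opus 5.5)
The plan is to reduce the corollary to Proposition~\ref{prop-branes-contained-support} by finding a second compact set $K_2$ with $K' = K\cup K_2$ a weakly involutive cover and one of the Lagrangians disjoint from $K_2$. Say $L_0\subset K$; the case of $L_1$ is symmetric. Since $K\Subset K'$, we read the hypothesis as saying that $L_0$ lies in the interior of $K$. This is the version of "contained" that allows us to separate $L_0$ from the rest of $K'$.

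First, choose an open set $W$ with $L_0\subset W\Subset \operatorname{int}(K)$. Then set
\[
K_2 := K'\setminus W .
\]
This set is compact, since it is closed in the compact set $K'$. It also satisfies $K\cup K_2 = K'$ and $K_2\cap L_0=\varnothing$.

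Second, verify that $(K,K_2)$ is weakly involutive. This is the main obstacle, because weak involutivity is a genuine condition defined later in the paper (Section~\ref{sec:mayer-vietoris}). The structure one uses there is a Poisson-commuting pair of functions cutting out the pieces. My first attempt would be to choose $W$ of the form $\{f<c\}$, where $f$ is a smooth function equal to $0$ near $L_0$ and equal to $1$ outside $\operatorname{int}(K)$. Then $K$ and $K_2$ are, near the overlap $K\cap K_2=K\setminus W$, cut out by sublevel and superlevel sets of the single function $f$ (intersected with $K'$). Two functions of the form $\phi\circ f$ Poisson commute, so the cover is a "one-function" cover of the kind that should qualify as involutive. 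If the definition requires that the boundaries of $K$ and $K'$ themselves be described compatibly, I would instead pass to an intermediate compact set $\tilde K$ with $W\Subset\tilde K\Subset \operatorname{int}K$, defined by a level set of $f$. I would then apply the argument to $\tilde K\subset K'$ and to $\tilde K\subset K$ separately, using the cover $K'=\tilde K\cup\{f\geq c\}\cap K'$ and its analogue for $K$.

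Third, apply Proposition~\ref{prop-branes-contained-support}. In the main route this gives directly that restriction $\Fuk_{K'}^*\to\Fuk_K^*$ is a quasi-isomorphism. In the fallback route it gives that both $\Fuk_{K'}^*\to\Fuk_{\tilde K}^*$ and $\Fuk_{K}^*\to\Fuk_{\tilde K}^*$ are quasi-isomorphisms. Restriction functors compose strictly for triple inclusions by Theorem~\ref{thm-restriction-functors}, so two-out-of-three then yields that $\Fuk_{K'}^*\to\Fuk_K^*$ is a quasi-isomorphism.
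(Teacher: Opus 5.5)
Your strategy is the one the paper intends. The corollary is stated as an immediate consequence of Proposition~\ref{prop-branes-contained-support}: write $K'=K\cup K_2$ with $K_2$ missing one Lagrangian, then apply the proposition. Two points in your execution need correcting, though the argument as a whole survives.

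\textbf{The interior hypothesis.} $L_0\subset\operatorname{int}(K)$ does not follow from $K\Subset K'$; it is an extra hypothesis, presumably the intended meaning of ``contained''. It is also unavoidable for this route. Since $K\subset\operatorname{int}(K')$, every point of $\partial K$ is a limit of points of $K'\setminus K$. Hence any closed $K_2$ with $K\cup K_2=K'$ contains $\partial K$, and so meets $L_0$ whenever $L_0\cap\partial K\neq\varnothing$. You should state this as an assumption rather than attribute it to $K\Subset K'$.

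\textbf{Weak involutivity in the main route.} Your justification here is wrong. The overlap $K\cap K_2=K\setminus W$ contains $\partial K$, where $f\equiv 1$. Near $\partial K$ the set $K$ is therefore not a level set of $f$, and for an arbitrary compact $K$ no defining functions of the form $\phi\circ f$ exist.

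The main route is still correct, for a different reason, and this is where $K\Subset K'$ is actually used. We have $\partial K_2\subseteq\partial K'\cup\{f=c\}$, and this is disjoint from $\partial K$, because $\partial K\subset\operatorname{int}(K')$ and $f=1\neq c$ on $\partial K$. Now take $f_{m,i}=-2^{-i}+i\chi_{m,i}$, where:
\begin{itemize}
\item $\chi_{m,i}$ vanishes near $K_m$ and equals $1$ outside the $\delta_i$-neighbourhood of $K_m$;
\item $\chi_{m,i}$ is nondecreasing in $i$;
\item $2\delta_i<d(\partial K,\partial K_2)$.
\end{itemize}
These satisfy Definition~\ref{def-weak-inv}, and their differentials have disjoint supports, so they Poisson commute trivially. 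This is the toy case mentioned just before that definition.

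\textbf{The fallback route.} This is sound as written. Near the common boundary $\{f=c\}$ both pieces are level sets of $f$. The functions cutting out $\tilde K=\{f\le c\}$ can be taken of the form $\phi_i\circ f$, locally constant near $\partial K'$ and $\partial K$. The functions for the complementary piece can be functions of $f$ wherever the first family is nonconstant. The two-out-of-three step, using strict functoriality for $\tilde K\subset K\subset K'$ (Theorem~\ref{thm-restriction-functors}), is fine. Note that the fallback never uses $K\Subset K'$, so it proves the statement for any inclusion $K\subset K'$ with one Lagrangian in $\operatorname{int}(K)$.
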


\begin{rem}
In fact, it can be proven that the restriction is a quasi-isomorphism for arbitary inclusions $K\subset K'$
 whenever both underlying Lagrangians are contained in $K$  directly using appropriate
acceleration data. We do not pursue this here. 
\end{rem}

\section{Preliminaries from Lagrangian Floer Theory}
\label{sec:prel-sympl-manif}
Fix a ground ring $\Bbbk$.  We denote by $\Lambda$ the Novikov ``field'' consisting of formal sums
\begin{equation}
  \sum_{\lambda \in \bR}  a_{\lambda} T^{\lambda},
\end{equation}
with $a_{\lambda} \in \Bbbk$, which are required to satisfy the property that 
the set $$\{\lambda\in\bR:a_\lambda\neq 0\}$$  is discrete, closed and bounded below in the subspace topology inherited from the real line. The ring $\Lambda$ is a field if and only if the ring $\Bbbk$ is a field, but the term Novikov ring is reserved for the subring $\Lambda_0$ consisting of elements for which $a_\lambda$ vanishes when $\lambda < 0$. We shall also use $U_{\Lambda}$ for the unitary elements, which is the multiplicative subgroup of elements of $\Lambda_0$ for which $a_0$ is invertible. Note that, when $\Bbbk$ is a field, $U_\Lambda$ is the set of elements of norm $1$ with respect to the norm defined by the valuation 
$$\val\left( \sum_{\lambda \in \bR}  a_{\lambda} T^{\lambda}\right):=\min\{\lambda:a_\lambda\neq 0\}.$$

\subsection{Bulk and grading structures}
\label{sec:bulk-branes}
We shall introduce a notion of bulk deformation which is more general than the existing notions in the literature. Our point of view relies on the notion of a co-orientation with respect to an abelian group $G$, whose group structure we refer to as multiplication. 

 Given a vector bundle $\pi: E\to B$, the integral vertically compactly supported cohomology groups of the total space in degree $\mathrm{rk}(E)$  define a locally constant sheaf (of abelian groups) over $B$ which assigns to an open susbet $U$ the group $H_{vc}^{\mathrm{rk}{(E)}}(\pi^{-1}(U))$. The fiber of the corresponding local system over $b\in B$ is canonically identified with $H^{\mathrm{rk}(E)}_c(E_b)$. Denoting by $\underline{G}$ the constant sheaf, we can also consider the hom sheaf to $\underline{G}$, which  gives a local system whose fibers are canonically identified with $\Hom\left(H^{\mathrm{rk}(E)}_c(E_b),G\right)$.
\begin{defin}
  Let $Y$ be a smooth manifold of dimension $n$ and $f \co Z \to Y$ a smooth map from a manifold of dimension $n-d$. We define a \emph{$G$-valued co-orientation} of $Z$ in $Y$ to be a global section of the  local system of abelian groups over $Z$ whose fibre at $z \in Z$ is
  \begin{equation}
  H^{n}_c(T_{f(z)}Y) \otimes \Hom\left( H^{n-d}_c(T_zZ),G \right). 
  \end{equation}
  \end{defin}

We shall use the fact that $G$-cooriented manifolds of codimension $d$ have well-defined intersection numbers, valued in $G$, with oriented $d$-dimensional manifolds mapped into the ambient manifold. From our point of view, it is clearest to describe the situation locally: if we are given an oriented manifold $D$ of dimension $d$ in the usual sense, a smooth map $u:D\to Y$, and a pair $(p,z) \in D \times Z$ such that $f$ and $u$ are transversely intersecting at these points (in particular, $u(p)=f(z)$), we can define the local intersection number as follows. The transversality assumption implies that $du$ and $df$ are respectively injective at $p$ and $z$, and 
\begin{equation}
H^{d}_{c}((N_YZ)_{z};G) \cong  H^{n}_c(T_{f(z)}Y) \otimes \Hom\left( H^{n-d}_c(T_zZ),G \right)
\end{equation}
where $(N_YZ)_z =  T_{f(z)}Y/ df_z(T_zZ)$. On the other hand, an orientation of $D$ gives us a fundamental class $[D]_p$ in the locally finite homology group
  $$H_{d}^{lf}(T_pD;\mathbb{Z})\simeq H_{d}^{lf}((N_YZ)_{z};\mathbb{Z}),$$ where the isomorphism is given by $du_p$ composed with the quotient map. The local intersection number is then given by the pairing
  \begin{equation}
H^{d}_{c}((N_YZ)_{z};G)\otimes H_{d}^{lf}((N_YZ)_{z};\mathbb{Z})\to  G.
  \end{equation}If $u$ is transverse to $Z$ and intersects it at finitely many points, we obtain a global intersection number $u \cdot Z$ as the product of these local intersection numbers.

  In general, we obtain an intersection number after perturbation. In order for this number to be well-defined and invariant, some assumption must be imposed:
  \begin{lem}
    If $f$ is proper and the fibre product $Z \times_{Y} D $ is compact,  then the intersection number $u \cdot Z$ is invariant under compactly supported perturbations of $u$. \qed 
  \end{lem}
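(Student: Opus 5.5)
The plan is to reduce the statement to the case where the two competing perturbations differ only on a compact set, and then to construct an explicit cobordism between the intersection points. Assume $u_0$ and $u_1$ are two perturbations of $u$ that agree with $u$ outside a compact set $C \subset D$ and are both transverse to $f$. Connect them by the linear homotopy $u_t$ in local charts, or more generally by any homotopy $U\co D\times[0,1]\to Y$ supported in $C\times[0,1]$. Compactly supported perturbations of $U$, relative to its ends, then make $U$ transverse to $f$. The fibre product
\[
W = Z\times_Y (D\times[0,1])
\]
is then a $1$-manifold with boundary $Z\times_Y D_0 \sqcup Z\times_Y D_1$. The first step is compactness of $W$. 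Points of $W$ lying over $(D\setminus C)\times[0,1]$ lie over the original fibre product $Z\times_Y D$, which is compact. Over $C\times[0,1]$, compactness follows from properness of $f$: the image $U(C\times[0,1])$ is compact, so its preimage under $f$ is compact, and $W$ restricted there is a closed subset of a product of compacts. One caveat is that small perturbations should not create intersections escaping to infinity. To handle this, I would first choose $C$ to be a compact neighbourhood of the projection of $Z\times_Y D$ to $D$, and require all perturbations to be $C^0$-small. Then, using properness of $f$, no new intersection points appear near the boundary of $C$ or outside it.

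The second step is to orient $W$ relative to the $G$-co-orientation. The co-orientation of $Z$, pulled back along the transverse map $D\times[0,1]\to Y$, gives a $G$-valued co-orientation of $W$ inside the oriented manifold $D\times[0,1]$. Equivalently, each component of $W$ carries a $G$-valued "orientation": a section of a rank-one local system with fibre $G$, as in the local pairing defined above. On a compact $1$-manifold each component is an arc or a circle. For an arc, the local system over it is trivialised by parallel transport. The boundary contributions at its two ends are then local intersection numbers whose product is the identity of $G$, with the convention that the $t=0$ end is counted with the inverse sign. This is the usual boundary-orientation argument, with $\pm1$ replaced by the values $g^{\pm1}$ of a section.

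Consequently the product of local intersection numbers of $u_0$ equals that of $u_1$, since arcs with both ends on the same slice contribute cancelling pairs $g\cdot g^{-1}$. The main obstacle is getting the sign/inverse bookkeeping in the second step right. Concretely, one must check that the boundary map of the $G$-valued co-orientation identifies the outward normal with the $[0,1]$ direction, so that the two ends of an arc contribute mutually inverse elements. I would verify this in a local model, where $Y=\bR^n$, $Z$ is a linear subspace, and $u_t$ is an affine family.
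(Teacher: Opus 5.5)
Your proposal is correct and is the standard cobordism argument: a homotopy relative to the ends made transverse to $f$, compactness of the $1$-dimensional fibre product $W$ via properness of $f$, and cancellation of the boundary contributions $g\cdot g^{-1}$ along arcs using the $G$-valued orientation of $W$. This is what the paper leaves implicit, since the lemma is stated without proof; your $C^0$-smallness caveat is not needed for compactness, because properness of $f$ already controls $W$ over $C\times[0,1]$ for any homotopy supported there, and smallness is only needed to guarantee that $u_0$ and $u_1$ are joined by such a homotopy (e.g.\ by geodesic interpolation, which is how your ``linear homotopy in local charts'' should be read).
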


  We now apply the above general considerations to our specific situation: let $M$ be a closed symplectic manifold of dimension $2n$.
  \begin{defin}
    A \emph{bulk \textbf{b}} on $M$ is a (possibly empty) proper smooth map $f \co B \to  M$ whose source is a smooth manifold of dimension $2n-2$, which is equipped with a $U_\Lambda$-valued (i.e. unitary) coorientation.
  \end{defin}

\begin{rem}
  Our notion of bulk unifies that of background classes (which appear in discussions of signs \cite{Wehrheim2015}) and bulk deformations as in \cite{Fukaya2009}, and has a natural direct sum operation given by unions of manifolds. In the first case, a background class is defined to be an element of $H^2(M; \bZ/2)$, and Thom's theorem \cite{Thom1954} that every cohomology class with $\bZ/2$ coefficients is represented by a manifold, together with the canonical homomorphism $\bZ/2 \to U_{\Lambda}$ taking the generator to $-1$ shows that background classes are recovered by our framework. To recover the notion of bulk from  \cite{Fukaya2009}, we recall that they formulate bulk classes as elements of $H^2(M, \Lambda)$ of strictly positive valuations. Such elements give elements of $H^2(M; U_\Lambda)$ by exponentiation. Since they also restrict attention to characteristic $0$, the fact that integral cohomology classes are represented by co-oriented submanifolds readily leads to a lift of such classes to our setting.

  What is patently missing from our framework are bulk classes associated to elements of $U_{\Lambda}$ of odd torsion, but which do not lift to $H^2(M; \bZ)$. Such classes can be constructed from classes in $H^2(M; \bZ/p)$ on which the Bockstein homomorphism does not vanish, but cannot generally be represented by a manifold, so that a generalization of our approach is required, which we expect can be achieved by Baas-Sullivan theory.
\end{rem}

\begin{rem}
The reader may have encountered the more commonly used approach to formulating bulks in the  mirror symmetry literature that uses de Rham theory. In this alternative formalism the ground ring is the complex numbers (or any field extension of real numbers) and the bulk is represented by a closed $2$-form $\beta$ with values in $\Lambda_0$. In place of twisted local systems (introduced below in Definition \ref{def-twisted-local}), one considers principal $U_{\Lambda}$ bundles with connections on Lagrangians whose curvature is equal to the restriction of $-\beta$. The product of the holonomy around the boundary with the exponential of the integral of $\beta$ over the disk is then part of the contribution of each curve. Restricting to embedded bulks for simplicity of exposition, our formalism is related to this one by associating to a submanifold the corresponding Thom form.  We introduce our version since we want to be able to work over more general ground rings but the de Rham framework would allow us to remove some transversality assumptions in later discussions.
  \end{rem}

  We also fix the following structure on $M$ throughout the paper:
\begin{defin} \cite{Seidel2000}
A \emph{$\mathbb{Z}/2$-grading structure} is given by a smooth fiber bundle $\mathcal{G}_2\to M$ which factors through the Lagrangian Grassmannian bundle $\text{LGr}\to M$ such that over each $x\in M,$ the map on the fibers $\mathcal{G}_{2,x}\to \text{LGr}_x $ is a connected two-fold covering space.
\end{defin}
We define a compatible \emph{$\mathbb{Z}$-grading structure} to be a smooth fiber bundle $\mathcal{G}_\infty\to M$ which factors through $\mathcal{G}_2\to M$ such that over each $x\in M,$ the map on the fibers $\mathcal{G}_{\infty, x}\to \mathcal{G}_{2,x} $ is a universal covering space. If we are given a compatible $\mathbb{Z}$-grading structure, we say that we are in the $\mathbb{Z}$-graded setup.

\begin{rem}
Isomorphism classes of $\mathbb{Z}/2$-grading structures are in one-to-one correspondence with $H^1(M;\mathbb{Z}_2),$ which classifies real line bundles on $M$. Given a real line bundle $E\to M,$ we can construct $\mathcal{G}_2\to M$ as the double cover of $\text{LGr}$ with fiber over $(x,l)$ given by the isomorphisms modulo positive scaling between $\det_\mathbb{R}(l)$ and $E_x.$
\end{rem}

\subsection{Lagrangian branes}
\label{sec:lagrangian-branes}

For the next definition, we fix a (commutative) ring $R$, and a multiplicative subgroup $U_R$ of invertible elements, which will give us a special case of the abelian group $G$ considered in the previous section. We shall use the notion of a \emph{divisorial pseudo-cycle} over $Y$: a topological space stratified by manifolds of dimension at most $\dim Y -2$, none of which have dimension $\dim Y -3$:
\begin{defin}\label{def-twisted-local}
  Let $Y$ be a smooth manifold and $f \co  Z  \to Y$ a map from a codimension-$2$ manifold whose top stratum is equipped with a $U_R$-valued coorientation, whose image is the top stratum of a properly embedded divisorial pseudo-cycle. A \emph{$Z$-twisted rank-$1$ $U_R$-local system} $E$ of free $R$-modules on $Y$ is a collection of $R$-modules $E_x$ for every point $x$ in the complement of the image of $Z$, and an isomorphism
  \begin{equation}
   g_\gamma: E_{\gamma(0)} \to E_{\gamma(1)}
  \end{equation}
for every piecewise smooth path $\gamma:[0,1]\to  Y \setminus f(Z)$ such that
  \begin{enumerate}
        \item (Identity) if $\gamma$ is constant, $g_\gamma=\mathrm{Id}_{E_\gamma(0)},$
        \item (Concatenation) if $\gamma(1)=\tau(0),$ then $g_{\gamma\cdot \tau}=g_\tau \circ g_\gamma,$
          \item (Unitarity) if $\gamma$ is a loop, then $g_{\gamma}$ lies in $U_R$, and
        \item (Homotopy) if $\gamma$ and $\tau$ are homotopic in $Y$ by a homotopy whose intersection number with  $Z$ is $g \in U_R$,  then $$g_\gamma=g\cdot g_\tau.$$
    \end{enumerate}
\end{defin}
Note that the intersection number is well defined because the homotopy may be assumed to be smooth in the interior and transverse to $Z$. This is where the condition of non-existence of codimension $3$ strata arises. This intersection number may a priori itself depend on the homotopy class of the homotopy.

\begin{example}
    Let us consider the case where $Y$ is the real projective plane and $Z$ consists of finitely many points each equipped with its non-zero $\mathbb{Z}/2$-coorientation. Assume that $-1\in R$ is contained in $U_R$ and consider the group homomorphism $\mathbb{Z}/2\to U_R$, which gives us a $U_R$-coorientation. It is elementary to see that up to isomorphism, $Z$-twisted rank one $U_R$-local systems of free $R$-modules on $Y$ correspond to elements $a\in U_R$ such that $a^2=(-1)^{|Z|}.$ In particular whenever $Z$ has an even number of elements, we simply have a local system on $Y$, whereas in the odd case, we have a genuinely different structure. Let us also note that for $Y$ a two-sphere with $Z$ an odd number of points, we have another phenomenon, namely that there do not exist any rank-$1$ $Z$-twisted local systems unless the characteristic is $2$. This phenomenon was explored in detail in \cite{Rezchikov2019}.
\end{example}

Our next definition is designed to admit, in the Fukaya category, objects which are supported on Lagrangians $L$ that are more general than those which admit $\Pin$ structures. In order to do this, we shall need to identify a divisorial pseudo-cycle $W_2(L)$, representing the second Stiefel-Whitney class $w_2(L)$, whose complement has a natural $\mathrm{Pin}$ structure with the property that extending this $\mathrm{Pin}$ structure from a circle in the complement to a bounding disc is given by the intersection number with this cycle.

We start by considering the universal problem: what is the space of vector bundles on which the obstruction universally vanishes? It is well-known that the second Stiefel-Whitney class vanishes on all (real) line bundles, so that the universal bundle over $\bR \bP^\infty$ admits a $\Pin$ structure. Stabilizing by a trivial factor yields a $\Pin$ structure on the space of vector bundles which can be written as the direct sum of a trivial factor with a line bundle. Switching to a Grassmannian model for the classifying space of vector bundles, we can identify an open dense set: let $\bR^{N-n+1} \subset \bR^{N}$ denote the subspace on which the last $n-1$ coordinates vanish, and consider the subvariety $W_2$ of the Grassmannian of $n$-planes in $\bR^{N}$ consisting of subspaces meeting $\bR^{N-n+1}$ in a subspace of dimension at least $2$. The remaining subspaces meet $\bR^{N-n+1}$ transversely, i.e. in a $1$-dimensional subspace, and their projection to $\bR^{n-1}$ is surjective, so that the complement of $W_2$ precisely classifies vector bundles with a splitting into a trivial factor and a line bundle (after taking the limit $N \to +\infty$).  A classical computation \cite[Theorem~12.1]{Milnor-Stasheff} shows: \begin{lem} The cycle $W_2$ represents the second Stiefel-Whitney class of the tautological bundle. \qed \end{lem}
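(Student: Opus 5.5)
The lemma asserts that the Poincaré dual of the subvariety $W_2$ of $\mathrm{Gr}_n(\bR^N)$ is $w_2$ of the tautological bundle $\gamma_n$. I will run the classical Schubert-cycle computation with $\bZ/2$ coefficients. The plan has three steps: identify $W_2$ with the codimension-$2$ Schubert variety for the partition $(1,1)$; check that its closure is a $\bZ/2$-cycle; and identify its dual class as $w_2(\gamma_n)$.

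\emph{Step 1: the Schubert cell structure.} Fix the flag $F_k=\bR^k\subset\bR^N$, so that $\bR^{N-n+1}=F_{N-n+1}$. The locus of $n$-planes $V$ with $\dim(V\cap F_{N-n+1})\geq 2$ is, by the standard Schubert conditions, the closure of a single cell of codimension $2$. Equivalently, it is the degeneracy locus where the composite $V\to\bR^N\to\bR^N/F_{N-n+1}\cong\bR^{n-1}$ has rank at most $n-2$. For a generic $n\times(n-1)$ matrix, the corank-$\geq 1$ locus has codimension $(n-(n-2))((n-1)-(n-2))=2$. Its singular strata, where the corank is at least $2$, have codimension $6$. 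Hence $W_2$ is a divisorial pseudo-cycle in the sense above: it has no codimension-$3$ strata, and it defines a $\bZ/2$-cycle because real Schubert cells carry mod-$2$ fundamental classes.

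\emph{Step 2: identify the dual class.} I use the Thom--Porteous principle, or equivalently Milnor--Stasheff's identification of Stiefel--Whitney classes with the mod-$2$ duals of degeneracy loci of generic sections. Consider the $n-1$ sections $s_1,\dots,s_{n-1}$ of the dual bundle $\gamma_n^*$ given by the last $n-1$ coordinate functionals restricted to $V$. The locus where these sections are linearly dependent is exactly $W_2$. By \cite[Theorem~12.1]{Milnor-Stasheff}, or its dual form for sections of $\gamma_n^*$, this locus represents $w_{n-(n-1)+1}(\gamma_n^*)=w_2(\gamma_n^*)$. Finally, $\gamma_n^*\cong\gamma_n$ via the Euclidean metric, so the dual class is $w_2(\gamma_n)$.

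\emph{Step 3: stability in $N$.} The construction is compatible with the inclusions $\mathrm{Gr}_n(\bR^N)\subset\mathrm{Gr}_n(\bR^{N+1})$, since $F_{N-n+1}$ is replaced by $F_{N-n+2}$ and the last $n-1$ coordinates are unchanged. The identification therefore passes to the colimit $BO(n)$.

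The main obstacle is Step 2, specifically the genericity hypothesis in Milnor--Stasheff. The sections $s_i$ must be in general position, meaning the dependency locus is reached transversally. My first approach is a local chart computation near a point of the top stratum of $W_2$. There the map $V\mapsto$ (matrix of $V\to\bR^{n-1}$) is a submersion onto $\mathrm{Hom}(\bR^n,\bR^{n-1})$, and the corank-$1$ locus is a smooth codimension-$2$ submanifold of matrices. Transversality then follows, and the mod-$2$ count agrees with the Porteous formula.
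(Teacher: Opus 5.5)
Your argument follows the paper's route: identify $W_2$ with the locus where the $n-1$ last-coordinate sections of $\gamma_n^*$ become dependent, then apply the obstruction-theoretic characterization of $w_{n-k+1}$ from \cite[Theorem~12.1]{Milnor-Stasheff} with $k=n-1$ (the Schubert-cell and stability remarks are extra bookkeeping). One small correction to your transversality check: the chart map $V\mapsto\bigl(V\to\bR^{n-1}\bigr)$ is \emph{not} a submersion onto $\Hom(\bR^n,\bR^{n-1})$ when $V$ meets the span of the last $n-1$ basis vectors (e.g.\ $V=\langle e_1,e_2,e_N\rangle$ for $n=3$), but you only need surjectivity onto the normal space $\Hom\bigl(V\cap\bR^{N-n+1},\operatorname{coker}\bigr)$, and this holds because $\pi(V)+\pi(V^\perp)=\bR^{n-1}$, where $\pi\colon\bR^N\to\bR^{n-1}$ is the projection to the last $n-1$ coordinates.
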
 From this viewpoint, universality of the construction immediately yields the claim that extending a $\Pin$ structure from a circle to its bounding disc is given by the intersection number with $W_2$.

In order to apply this to our situation, we choose a classifying map which is transverse to $W_2$ (this makes sense because $W_2$ is smooth away from codimension $2$, as can be seen by computing the dimension of the locus where the intersection has dimension $3$), the inverse image in $L$ is then our chosen representative of $W_2(L)$.   We denote by $P$ the distinguished  $\mathrm{Pin}:=\mathrm{Pin}^+$ structure on the complement of  $W_2(L)$ in $L$, which is obtained by pullback.

\begin{rem} In subsequent discussions, we shall require that various evaluation maps be transverse to $W_2(L)$. This should be understood in the weakest stratified sense as transverse to all of its strata. In particular, two and three dimensional manifolds mapping to $L$ are generically disjoint from the singular locus of $W_2(L)$.
\end{rem}

Let us now take a $\mathbb{Z}_2$-graded symplectic manifold $M$ and a bulk \textbf{b} with underlying map $B\to M.$

\begin{defin} \label{def:Lagrangian_brane}

 A \emph{Lagrangian brane} $\bfL$  is a compact connected Lagrangian submanifold $L \subset M$ that is transverse to $B$ (whose preimage we denote $B(L)$), equipped with the following structures:
  \begin{enumerate}
  \item if $\mathrm{char}(\Bbbk) \neq 2$, a codimension-$2$ manifold $W_2(L) \to L$ whose Poincar\'e dual is the second Stiefel-Whitney class $w_2(L)$; if $\mathrm{char}(\Bbbk) = 2$, we declare $W_2(L)$ to be empty.
  \item a lift of the canonical section of $\text{LGr}\mid_L\to L$ to a section of $\mathcal{G}_2|_L\to L$
      \item a $W_2(L) \sqcup B(L)$-twisted rank one $U_\Lambda$-local system $E_{\bfL}$ of free $\Lambda_0$-modules on $L$,
  \item an $\omega$-tame almost complex structure $J_L$. 
  \end{enumerate}
  In the $\mathbb{Z}$-graded setup, if in addition, we have a section of $\mathcal{G}_\infty|_L\to L$ lifting $\mathcal{G}_2|_L\to L,$ we say that $\bfL$ is a $\mathbb{Z}$-graded Lagrangian brane.
  \end{defin}

  Note that $B(L)$ is a codimension-$2$ $U_\Lambda$-cooriented manifold over $L$ by the transversality requirement.  We call the image of $W_2(L)$ in the definition the \emph{pinless locus} of the brane, and note that the fact that $w_2(L)$ is the obstruction to the existence of a $\mathrm{Pin}$ structure implies that the $\mathrm{Pin}$ structure on the complement satisfies the following property: for each Riemann surface with boundary mapping to $L$, whose boundary is disjoint from $W_2(L)$, the intersection number with $W_2(L)$ agrees with the count, modulo $2$, of the number of boundary components on which the $\mathrm{Pin}$ structure on $TL$ does not bound.

 \begin{rem}
   A good example to keep in mind for the above complicated setup is that of an Enriques surface $X$, which admits a symplectic structure so that there is a nodal Lagrangian torus fibration $X\to \mathbb{R}\mathbb{P}^2$ admitting a Lagrangian section. To obtain $\bZ/2$ gradings, we choose the bundle $\mathcal{G}_2\to X$ to be the non-trivial one. Note that the non-trivial real line bundle on $X$ can be obtained by pulling back the determinant line bundle of $T\mathbb{R}\mathbb{P}^2\to \mathbb{R}\mathbb{P}^2,$ for which the Lagrangian section admits the desired lift $\mathcal{G}_2\mid_L\to L$ tautologically.  If we consider the $\mathbb{Z}$-grading structure of $X$ given by the vertical Lagrangian distribution of the fibration (whose corresponding $\bZ$-grading extends over the singularities of the fibration), then there is a preferred $\mathbb{Z}$-grading on the Lagrangian section.

  For the choice of bulk, recall that $w_2(\mathbb{R}\mathbb{P}^2)$ is not zero and moreover the restriction map sends $w_2(X)$ to $w_2(\mathbb{R}\mathbb{P}^2)$. The latter holds whenever $\bR \bP^2$ is a totally real submanifold of a symplectic manifold $X$, using the fact that $w_2(\mathbb{R}\mathbb{P}^2)=w_1(\mathbb{R}\mathbb{P}^2)\cup w_1(\mathbb{R}\mathbb{P}^2)$ and the Whitney sum property of Stiefel-Whitney classes. 

      Our setup allows us to incorporate the section as an object of the Fukaya category in two essentially distinct ways:
      \begin{enumerate}
      \item We may choose the bulk to be any submanifold representing $w_2(TX)$ (with its non-zero $\mathbb{Z}/2$-coorientation). In this case twisted local systems are usual local systems.
        \item Another option is to take the empty background class, but consider twisted local systems on the Lagrangian section. In this latter case, we need $\Bbbk$ to contain a square root of $-1$ in order to have any twisted local systems, as first discovered by Rezchikov.
        \end{enumerate}
        We expect these two different choices to correspond to Brauer classes in the mirror Enriques surface.
 \end{rem}

\subsection{Weakly unobstructed Lagrangian branes}

In general, Floer-theoretic constructions involving Lagrangian branes require the use of virtual fundamental chains. However, there are certain classes that can be handled using simpler methods. In order for the notion to be reasonable to check, one should impose some condition on holomorphic spheres, and the standard way to do so is to impose the following condition:
\begin{defin} \label{def:weakly-monotone_M}
A symplectic manifold \((M,\omega)\) is spherically nonnegative if every class \(A\in\pi_2(M)\) with \(\omega(A)>0\) satisfies$$ \langle c_1(TM),A\rangle\ge 0. $$We assume throughout that \(M\) satisfies this condition.
\end{defin}

For a tame almost complex structure $J_L$ we consider the moduli space $\Mbar_{1}(L,J_L)$ of finite energy stable pseudo-holomorphic discs with $1$ boundary marked point whose boundary maps to $L$. This moduli space admits a natural evaluation map
\begin{equation} \label{eq:evaluation_one-gon-to-fibre-product}
  \Mbar_{1}(L; J_L) \to L.
\end{equation}
For the next definition, we recall that the (virtual) dimension of the union $\Mbar_{1}(L; J_L , m) $ of components of $ \Mbar_{1}(L; J_L)$ consisting of curves of Maslov index $m$ is $m+n-2$. In particular, the expected codimension of the  image in Equation \eqref{eq:evaluation_one-gon-to-fibre-product} is $2 - m $. Note as well that the next definition does not use any data about the brane other than the almost complex structure. 

\begin{defin} \label{def:weakly-unobstr-lagr}
   A Lagrangian brane  $\bfL$ is \emph{weakly tautologically unobstructed} if
     there are no elements of  $\Mbar_1(L; J_L)$ of Maslov index strictly smaller than $2$, and the union of components consisting of discs of index $2$ satisfies the following property: 
     \begin{equation} \label{eq:Biran-Cornea-conditions}
       \parbox{30em}{ there is a subset $Z(L; J_L) \subset L$ which is a countable union of manifolds of codimension $2$ so that all elements of $\Mbar_{1}(L; J_L , 2)$ whose image under evaluation lies outside this subset are regular.  }
     \end{equation}
   \end{defin}
It is important to note that our regularity assumption is stated for curves without constraint at the marked point. We will refer to the locus $Z(L; J_L) \subset L$ from Condition \eqref{eq:Biran-Cornea-conditions} as the \emph{irregular locus} of the brane.
Let us use the phrase \emph{tautologically unobstructed} if instead of Condition \eqref{eq:Biran-Cornea-conditions} we simply have no element of  $\Mbar_1(L; J_L)$ of index less than or equal to $2$.

 
\begin{prop}[Biran-Cornea] \label{prop:BS-transversality}
  If $L \subset M$ is a monotone Lagrangian submanifold with minimal Maslov index greater than or equal to $2$, then, for a generic choice of almost complex structure $J_L$, Condition \eqref{eq:Biran-Cornea-conditions} holds. \qed
\end{prop}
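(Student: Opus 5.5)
We must prove Proposition~\ref{prop:BS-transversality}: for a monotone Lagrangian $L$ with minimal Maslov index at least $2$ and generic $J_L$, the moduli space $\Mbar_1(L;J_L)$ has no discs of Maslov index below $2$, and the index-$2$ discs are regular away from a countable union $Z(L;J_L)$ of codimension-$2$ submanifolds. The plan follows the classical argument of Biran--Cornea (and Oh), organised in three steps.

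\emph{Step 1: no discs of Maslov index below $2$.} Monotonicity gives $\mu(u)=\lambda\,\omega(u)$ with $\lambda>0$, so every nonconstant disc has positive area and hence positive Maslov index. Since the minimal Maslov number is at least $2$, every nonconstant disc has $\mu\ge 2$. This holds for any tame $J_L$ and requires no genericity.

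\emph{Step 2: reduction of index-$2$ discs to simple ones.} Let $u$ be a stable disc of total Maslov index $2$. Since every disc component has $\mu\ge 2$, $u$ has exactly one nonconstant disc component. A sphere bubble $A$ satisfies $2c_1(A)=\lambda\,\omega(A)>0$ by monotonicity of $M$ (which follows from that of $L$), so $c_1(A)\ge 1$. Hence any sphere bubble forces the disc component to have Maslov index at most $0$, which is impossible. So $u$ is a single smooth disc of index $2$.

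By the Kwon--Oh / Lazzarini decomposition, a non-simple disc $u$ decomposes into simple discs $v_j$ with $[u]=\sum m_j[v_j]$ in $H_2(M,L)$ and multiplicities $m_j\ge 1$. Each $v_j$ has $\mu\ge 2$. If the decomposition were nontrivial, we would get $\mu(u)\ge 4$ unless there is a single $v$ with $m=1$. In that case the image of $u$ equals that of $v$, $u$ factors through a branched cover of degree $1$, and $u=v\circ\phi$ is simple. So every index-$2$ disc is simple.

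\emph{Step 3: genericity.} For simple discs, the standard Sard--Smale argument over the universal moduli space, with $J$ ranging over tame structures of class $C^\ell$ and then Taubes' trick, produces a comeager set of $J_L$ for which all simple discs with $\mu\le 2$ are regular. Granting regularity of all index-$2$ discs, Condition~\eqref{eq:Biran-Cornea-conditions} holds even with $Z(L;J_L)=\varnothing$. The flexibility of allowing $Z$ is only needed if one insists on a restricted class of $J$. A natural example is $J$ fixed near $L$, where somewhere-injectivity is only guaranteed on an open set possibly missing the perturbation region. In that setting the irregular discs form a family of dimension at most $n-1$ mapping to $L$ under evaluation at the boundary point, modulo reparametrization. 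Because they are non-regular, this family has codimension at least one less than expected. Its image is then a countable union of images of manifolds of dimension $\le n-2$, which gives $Z$.

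\emph{Main obstacle.} The main obstacle is Step 2, specifically the structure theorem for non-simple holomorphic discs with totally real boundary. Unlike for spheres, the Lazzarini/Kwon--Oh decomposition is a nontrivial result, and I would cite it rather than reprove it. The only remaining care is checking that every decomposition piece has positive Maslov index, which monotonicity supplies.
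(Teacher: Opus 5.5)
\textbf{There is a genuine gap in Step 2.} Your Step 2 misses a class of elements of $\Mbar_1(L;J_L,2)$: a \emph{constant} disc component carrying the boundary marked point, with a single $J_L$-sphere of Chern number $1$ attached at an interior node. This configuration is stable, since the ghost disc has one boundary and one interior special point, and its total Maslov index is $2c_1(A)=2$. Your inequality only excludes sphere bubbles attached to a \emph{nonconstant} disc component.

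Such configurations do occur for generic $J_L$ whenever Chern-$1$ spheres meet $L$, for example exceptional spheres in a monotone blow-up. So your conclusion that every Maslov $2$ stable disc is a smooth simple disc, and hence that one can take $Z(L;J_L)=\varnothing$, is not justified. Your closing explanation of where $Z$ comes from is also not a valid argument: non-regularity gives no bound of the form ``codimension one less than expected''.

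The missing step is precisely the source of the irregular locus in the monotone case. Every nonconstant sphere has $c_1\geq 1$. Hence Chern-$1$ spheres cannot be multiply covered and cannot bubble further. So for generic $J_L$, the moduli space of Chern-$1$ spheres with one marked point is a compact manifold of dimension $2n-2$ whose evaluation map is transverse to $L$. The preimage of $L$ has dimension $n-2$, and its image is the codimension-$2$ set $Z(L;J_L)$.

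Off $Z$, every element of $\Mbar_1(L;J_L,2)$ is a smooth disc, which is simple and hence regular by Sard--Smale. This is also what the paper needs later: in the proof that the superpotential value is well defined, a generic path in $L\setminus Z$ must only encounter smooth discs. That is the reason the condition allows a codimension-$2$ exceptional set rather than none.

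\textbf{A smaller point.} For discs, failure to be simple does not mean factoring through a branched cover, so ``degree-$1$ cover, hence simple'' is not the right deduction. The relevant statement of Lazzarini (and Kwon--Oh) is that a non-simple disc $u$ contains in its image a simple disc $v$ with $\omega(v)<\omega(u)$. Monotonicity then gives $0<\mu(v)<2$, contradicting the minimal Maslov index assumption.

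With these two corrections, your Step 1 and the Sard--Smale part of Step 3 go through as written.
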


\begin{prop}[Fukaya]
  If $(L,J_L)$ satisfies Condition \eqref{eq:Biran-Cornea-conditions}, then any Lagrangian sufficiently $C^\infty$ close to $L$ admits an almost complex structure satisfying this condition. 
\end{prop}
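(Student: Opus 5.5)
The plan is to argue by Gromov compactness and openness of transversality, treating the Maslov index $<2$ condition and the index $2$ regularity condition separately. Let $L'=\psi(L)$, where $\psi$ is a diffeomorphism of $M$ that is $C^\infty$ close to the identity (for instance, generated by a small closed one-form via Weinstein's neighbourhood theorem). Set $J_{L'}:=\psi_*J_L$, which is $C^\infty$ close to $J_L$ and therefore still $\omega$-tame. Equivalently, $J_{L'}$-discs with boundary on $L'$ correspond bijectively, via $\psi$, to $\psi^*J_{L'}=J_L$-discs with boundary on $L$. So it suffices to analyse the pair $(L,J)$ with $J=J_L$, and to note that $\omega$-tameness and energy bounds are only perturbed slightly: the symplectic area of a disc with boundary on $L$ depends only on its relative class, and for $\psi$ near the identity $\psi^*\omega$ is close to $\omega$. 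The Maslov index is a topological invariant of the relative class and is preserved under $\psi$.

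First, exclude discs of Maslov index $<2$. Suppose no such discs exist for $(L,J_L)$ but $J_{L_k}$-discs $u_k$ of Maslov index $\le 1$ exist for a sequence $L_k\to L$. I would bound their energy using the spherical nonnegativity of Definition~\ref{def:weakly-monotone_M} together with the following fact: by compactness of $\Mbar_1(L;J_L)$ in each energy window, there is a lower bound $\hbar$ on the energy of nonconstant discs and spheres, uniform for nearby $J$ by monotonicity. The difficulty is that low-Maslov discs could have unbounded area. To handle this, I will assume (as is implicit in Condition~\eqref{eq:Biran-Cornea-conditions}) that the relevant statement is for discs of energy bounded by a fixed constant, or else use that $L$ and $L_k$ have the same relative homotopy classes so that the index-area relation on $L$ transfers. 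With a bound in hand, Gromov compactness gives a limiting stable $J_L$-disc. Its disc components have total Maslov index $\le 1$, while its sphere components have $c_1\ge 0$ by spherical nonnegativity. Hence some disc component has Maslov index $<2$, a contradiction.

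Second, the index $2$ part. Take $Z(L')=\psi(Z(L))$, again a countable union of codimension-$2$ submanifolds. Regularity of an index-$2$ disc $u$ for $(L,J_L)$ is the surjectivity of the linearized operator $D_u$, and pushing forward by $\psi$ identifies $D_{\psi\circ u}$ with $D_u$ exactly. Hence regularity off $Z(L')$ transfers verbatim, and this step is essentially tautological once $J_{L'}$ is chosen as $\psi_*J_L$.

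The main obstacle I expect is the first step, specifically the energy control needed to invoke Gromov compactness; the transport trick above sidesteps it entirely. With the choice $J_{L'}=\psi_*J_L$, discs for $(L',J_{L'})$ are literally the $\psi$-images of discs for $(L,J_L)$, their Maslov indices coincide because $\psi$ is isotopic to the identity (preserving $c_1$ and the Maslov class), and no compactness argument is needed. I would therefore present that as the proof. The compactness argument is only needed if one insists that $J_{L'}$ be close to a prescribed structure rather than obtained by transport.
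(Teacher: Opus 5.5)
Your final transport argument is correct; the paper gives no proof, and this is the standard argument (``Fukaya's trick'') that the attribution presumably refers to. Choose $\psi$ $C^1$-close to the identity with $\psi(L)=L'$; then $\psi_*J_L$ still tames $\omega$, since it tames $(\psi^{-1})^*\omega$, which is $C^0$-close to $\omega$, and tameness is open on compact $M$. The map $\psi$ then identifies the stable disc moduli spaces, their linearized operators, and the irregular loci. Maslov indices agree because $TL$ stays Lagrangian along the nondegenerate path $(1-s)\omega+s\,\psi^*\omega$; this is the justification you need, since $\psi$ is not symplectic.

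Delete the Gromov-compactness paragraph rather than keep it as a fallback. It has a genuine gap: with $M$ only spherically nonnegative there is no a priori area bound for Maslov $\le 1$ discs, and Condition~\eqref{eq:Biran-Cornea-conditions} does not supply one.
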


\begin{examples}
  If $\dim_{\bR} M \leq 4$, $c_1(M) = 0$, and $L$ is an embedded Lagrangian vanishing Maslov class, then a generic choice of $J_L$ ensures that $\Mbar_1(L; J_L)$ is empty whenever $L$ is embedded.
\end{examples}

\begin{example}
    Let $M$ be a smooth compact Fano toric variety with complex structure $J$ and let $\omega$ be a monotone toric  K\"ahler form. Then all the torus fibers equipped with the complex structure $J$ are weakly tautologically unobstructed. Indeed, denote by $D$ the toric divisor and let $L\subset M\setminus D$ be any fiber. Then by the maximum principle there are no non-constant holomorphic discs in $M\setminus D$ with boundary on $L$. By positivity of intersections any non-constant $J$-holomorphic disc with boundary in $L$ has Maslov index $\geq 2$ since $D$ is the anti-canonical divisor. Since \(M\) is Fano, every non-constant \(J\)-holomorphic sphere has positive first Chern number. Hence every stable \(J\)-holomorphic disc with boundary on \(L\) has Maslov index at least \(2\). Moreover, a stable disc of Maslov index \(2\) has no non-constant sphere components. Indeed the only other possibility is a sphere $C$ in $M$ such that $C\cdot D=1$. This would give rise to a non-constant algebraic map $\bC\to(\bC^*)^n$ which is impossible since each coordinate would be a nowhere-vanishing polynomial on \(\mathbb C\), hence constant.  Finally, by the regularity theorem of Cho--Oh
    \cite[Theorem~6.1]{Cho2006}, all $J$-holomorphic discs with boundary on a toric fiber are Fredholm regular. Hence Condition \eqref{eq:Biran-Cornea-conditions} holds with empty irregular locus.

   Observe further that this conclusion concerning weak tautological unobstructedness is independent of the choice of toric K\"ahler form. Indeed, for any toric K\"ahler form compatible with the same complex structure $J$, the torus orbits remain Lagrangian, while the moduli spaces of $J$-holomorphic discs, their Maslov indices, and their linearized operators are unchanged. Thus $(L,J)$ remains weakly tautologically unobstructed in the sense of Definition~\ref{def:weakly-unobstr-lagr}, although such a symplectic form need not satisfy the standing assumption of Definition~\ref{def:weakly-monotone_M}.
    
\end{example}

\begin{example}
Consider the conic fibration $M=\{xy=f(u_1,\dots,u_n)\}\subset \bC^2\times(\bC^*)^n$. The authors of \cite{Abouzaid2016b} construct a Kahler form $\omega$ and Lagrangian torus fibration with base $\bR^{n+1}$ whose singular locus is the amoeba of $f=0$ inside $\{0\}\times \bR^n$. The torus fibers that do not meet the conics over the hypersurface $\{f=0\}\subset (\bC^*)^n$ are tautologically unobstructed \cite[Proposition 5.1]{Abouzaid2016b}.
The potentially obstructed fibers lie on a closed domain diffeomorphic to the product of $\bR$ with the amoeba.

The mirror dual $Y$ of this example is the Gross fibration $\pi:Y\to\bR^{n+1}$ on the complement $X\setminus D$ of an anti-canonical divisor in a  toric Calabi-Yau variety $X$ introduced in \cite{Gross2001b}. 
$Y$ carries a global holomorphic function $w_0:Y\to\bC$ and all the torus fibers that avoid the zero locus of this function are tautologically unobstructed
\cite[Lemma 8.1]{Abouzaid2016b} or \cite[Lemma 4.27]{Chan2012b}. In fact $\pi(w_0^{-1}(0))$ is a hypersurface in $\bR^{n+1}$ diffeomorphic to a hyperplane. 

In the latter example, one can also consider the torus fibers as submanifolds in $X$ \cite{Auroux2007,Chan2012b}. The fibers off the wall $\pi(w_0^{-1}(0))$ only have discs of Maslov index $\geq 2$. One can further perturb the pair $(J,w_0)$ to a pair of an almost complex structure $J'$ and a $J'$-holomorphic function $w_0$. This perturbation can be supported away from $w_0^{-1}(0)$ as well as the divisor $D$ and still achieve transversality for simple discs with boundary on torus fibers off the wall. For such $J'$ we still have that all the discs have Maslov index $\geq 2$. The argument for Condition \eqref{eq:Biran-Cornea-conditions} then proceeds as before.
\end{example}

\subsection{Superpotential values for weakly tautologically unobstructed branes}\label{sec-superpotential}

Our goal in this section is to associate an element of $\Lambda_0$ to each weakly tautologically unobstructed brane $\bfL=(L,J_L,\ldots)$, by an appropriate weighted count of $J_L$-holomorphic disks with boundary on $L$ that send the point $1\in \mathbb{D}$ to a chosen  point lying in the complement of the irregular locus $Z:= Z(L; J_L) \subset L$. Our main result is the independence of this count from all choices.

Let $p \in L \setminus Z$ denote a point in the complement of the irregular locus, which is moreover a regular value of the evaluation map, and write $\Mbar_p(\bfL)$ for the moduli space of stable $J_L$ discs, of Maslov index $2$, whose image under the evaluation map is $p$. 
By the requirements for being weakly tautologically unobstructed, this is a countable discrete set, equipped with an evaluation map
\begin{equation} \label{eq:evaluation_discs_at_p}
  \Mbar_p(\bfL) \times S^1 \to L,
\end{equation}
mapping $1 \in S^1$ to $p$. In particular, we have:
\begin{lem}
  After generic perturbation, the evaluation map in Equation \eqref{eq:evaluation_discs_at_p} is disjoint from the images of the codimension $2$ manifolds  $W_2(L)$  and $B(L)$. \qed
\end{lem}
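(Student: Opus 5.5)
The statement is a transversality claim: the map $\Mbar_p(\bfL)\times S^1\to L$ can be made disjoint from $W_2(L)$ and $B(L)$ by a generic perturbation. The plan is a dimension count plus a Sard-type argument. The source is a countable disjoint union of circles, since $\Mbar_p(\bfL)$ is countable and discrete. The targets are codimension-$2$ subsets of $L$. These are the manifold $B(L)$, which is the transverse preimage of the bulk, and the divisorial pseudo-cycle $W_2(L)$, whose strata all have codimension at least $2$. A one-dimensional family meeting a codimension-$2$ stratum has expected dimension $-1$. So generically the intersection is empty, and the only real work is saying what we perturb and checking that genericity is available.

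First I will fix the parameter space. The most natural choice is to perturb the representatives $W_2(L)$ and $B(L)$ rather than the discs, since the discs are pinned at $p$ and $J_L$ is part of the brane data.

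For $W_2(L)$, its construction pulls back the universal cycle $W_2$ under a classifying map chosen transverse to $W_2$. I will vary this classifying map within a finite-dimensional family whose evaluation at points of $L$ is a submersion onto the Grassmannian. For $B(L)$, I will vary the bulk map $f\colon B\to M$ by compactly supported isotopies, or equivalently move $B(L)$ inside $L$ by a family of diffeomorphisms of $L$ isotopic to the identity that fix a neighbourhood of $p$.

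Fixing $p$ is harmless. Each curve $\partial u$, for $u\in\Mbar_p(\bfL)$, passes through $p$. We may also shrink $W_2(L)$ and $B(L)$ away from $p$ beforehand, since $p$ is chosen in the complement of $Z$ and may be chosen off these codimension-$2$ sets.

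Next comes the parametric transversality step, applied one disc at a time. For each $u$, consider the universal evaluation map $S^1\times \mathcal P\to L$, where $\mathcal P$ is the perturbation parameter space. Because the perturbations act by a submersive family of diffeomorphisms, this map is a submersion away from a neighbourhood of $1\in S^1$, and near $1$ the boundary curve sits at $p$, which is already disjoint from the sets. So it is transverse to each stratum $S$ of $W_2(L)$ and of $B(L)$. The preimage of $S$ is then a submanifold of codimension $\operatorname{codim} S\geq 2$, and by Sard's theorem the projection to $\mathcal P$ has regular values. At a regular value $P$, the slice $S^1\times\{P\}$ meets $S$ in a manifold of dimension $1-\operatorname{codim}S<0$, i.e. not at all.

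Finally, I will pass to all discs and all strata at once. There are countably many discs, each of the two sets has countably many strata, and each condition holds on a residual set of parameters. By Baire, the intersection of these residual sets is still residual, which gives a generic perturbation that works for every element of $\Mbar_p(\bfL)$ simultaneously.

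I expect the main obstacle to be compatibility, not the dimension count. Perturbing $W_2(L)$ or $B(L)$ must keep the rest of the brane data meaningful. The brane's $W_2(L)\sqcup B(L)$-twisted local system lives on the complement, so moving these sets requires transporting the local system along the isotopy. I will handle this by using diffeomorphisms isotopic to the identity and pulling back the twisted local system. This preserves the identity, concatenation, unitarity and homotopy axioms, because intersection numbers are invariant under isotopy.

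A second point to check is that the disc moduli space stays unchanged. This holds because $J_L$ and $p$ are not touched. If we instead allowed $p$ to vary, it would be enough to take $p$ generic, because the total evaluation map $\Mbar_1(L;J_L,2)\times S^1\to L$ has a source of dimension $n+1$ that we would need to intersect with codimension-$2$ strata, which is a more delicate count. That is why I prefer to perturb the cycles.
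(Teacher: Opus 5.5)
Your codimension count and the Sard/Baire mechanism are the right ingredients, but you perturb the wrong object, and this is not a cosmetic choice. In the paper the brane data ($W_2(L)$, the $\mathrm{Pin}$ structure pulled back to $L\setminus W_2(L)$, the twisted local system $E_{\bfL}$) and the bulk $\mathbf b$ are fixed. What is perturbed is the disc $u$ itself, as a smooth map with boundary on $L$; see Section~\ref{sec:regular-floer-data-1}: ``exactly as in the definition of the potential value, a generic perturbation of a rigid element $u$ \ldots''. Lemma~\ref{lem:weight_indep_perturbation} then asserts independence of the weight from \emph{that} perturbation.

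Your reason for avoiding this is unfounded. Once $u\in\Mbar_p(\bfL)$ is selected, holomorphicity and $J_L$ play no further role, since all four factors of the weight are topological (the area is unchanged because $\omega|_L=0$). The constraint $1\mapsto p$ is kept by using perturbations supported away from $p$.

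By moving the cycles instead, you prove disjointness for a different brane and a different bulk. That is not the statement, and it would need a further comparison of weights. Worse, your treatment of the bulk breaks the construction. Moving $B(L)$ inside $L$ by diffeomorphisms of $L$ is \emph{not} equivalent to moving $B$, since $B(L)$ must remain the preimage of $B$. Lemma~\ref{lem:weight_indep_perturbation} relies precisely on the fact that a boundary crossing of $B(L)$ changes the monodromy by the same element as the change in the interior intersection $u\cdot B$. Decoupling $B(L)$ from $B$ destroys this. Moving $B$ by an ambient isotopy instead alters global data used by every Floer complex in the paper. (Changing the classifying map also changes the $\mathrm{Pin}$ structure on $L\setminus W_2(L)$, which you do not transport.)

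There is also an internal inconsistency, and it points to the fix. If only the cycles move, your ``universal evaluation map $S^1\times\mathcal P\to L$'' does not depend on $\mathcal P$, so it cannot be a submersion. The coherent object is the incidence map $(\theta,P)\mapsto\phi_P^{-1}(\partial u(\theta))$, which is already a perturbation of the evaluation map. So use it as such:
\begin{itemize}
\item Take $p$ off the closed sets $W_2(L)$ and $B(L)$.
\item Choose a finite-dimensional family $\phi_P$ of diffeomorphisms of $L$ near the identity, supported away from a neighbourhood of $p$, with $P\mapsto\phi_P(x)$ submersive for every $x$ outside that neighbourhood.
\item Then $(\theta,P)\mapsto\phi_P(\partial u(\theta))$ is transverse to every stratum. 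Sard gives $P$ for which $\phi_P\circ\partial u$ misses all strata, which have codimension at least $2$.
\item Extend $\phi_P$ to a diffeomorphism of $M$ close to the identity and compose with $u$ to get the perturbed disc. It can additionally be made transverse to $B$ in the interior, as the factor $u\cdot B$ requires.
\end{itemize}
Since each disc is now perturbed independently, you need no Baire argument over $\Mbar_p(\bfL)$. You also need no transport of local systems, $\mathrm{Pin}$ structures or bulk.
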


The appropriate weight $a_u \in \Lambda_0$  of each element $u \in  \Mbar_p(\bfL)$ will be a product of the following terms: \begin{enumerate}
    \item The area of the disc $T^{\int u^*\omega}$
      \item The (weighted) intersection with $B$, which we denote  $ u\cdot B \in U_\Lambda$
   \item the sign $\pm$ arising from the Pin structure on the Lagrangian boundary conditions $L\setminus W_2(L)$ (see \cite{Seidel2008a}).
    \item the monodromy of the twisted local system $g_{\partial u}\in U_\Lambda$.
\end{enumerate}

\begin{lem} \label{lem:weight_indep_perturbation}
  The weight of each element of $ \Mbar_p(\bfL) $ is independent of the choice of perturbation making the boundary disjoint from   $W_2(L)$  and $B(L)$.
\end{lem}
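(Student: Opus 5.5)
Fix $u \in \Mbar_p(\bfL)$. Let $\gamma_0,\gamma_1 \co S^1 \to L$ be two perturbations of the boundary loop $\partial u$, both based at $p$ and both disjoint from $W_2(L)$ and $B(L)$. I will connect them by a generic homotopy $\Gamma \co [0,1]\times S^1 \to L$, transverse to all strata of $W_2(L)$ and $B(L)$. For genericity to suffice one needs the divisorial pseudo-cycle property: the singular strata have codimension at least $4$ in $L$, so a $2$-dimensional family avoids them. Hence $\Gamma$ meets only the top strata, transversely, in finitely many points. The area factor $T^{\int u^*\omega}$ does not depend on the perturbation at all, since it is computed from $u$ itself. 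What remains is to compare the other three factors for $\gamma_0$ and $\gamma_1$.

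For each of these factors the plan is to use the annulus $\Gamma$ and compute the change as the intersection number of $\Gamma$ with the relevant codimension-$2$ cycle. The perturbed disc is $u_i = u \cup (\text{thin collar to } \gamma_i)$, and I extend the perturbation into the interior by a collar. Then the bulk weight $u_i \cdot B$ changes by the $U_\Lambda$-valued intersection number $\Gamma \cdot B(L)$. The point is that $B$ is transverse to $L$, so the cooriented $B(L)$ is the transverse preimage, and pushing the collar slightly off $L$ in the $J_L$-normal direction identifies the two intersection counts, with the coorientation conventions of the earlier local model. By the homotopy axiom (4) of Definition~\ref{def-twisted-local}, the local system monodromies satisfy $g_{\gamma_1} = (\Gamma\cdot(W_2(L)\sqcup B(L)))^{\pm1}\, g_{\gamma_0}$. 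Here the sign of the exponent is fixed by orienting $\Gamma$ compatibly with the disc collar. Finally, the Pin sign for the boundary condition $L\setminus W_2(L)$ changes by $(-1)^{\Gamma\cdot W_2(L)}$. This is the universal property of $W_2$ recorded above: extending the Pin structure across a surface is governed by the intersection with $W_2$, so the relative spin/pin orientation of the disc \cite{Seidel2008a} transported along $\Gamma$ acquires exactly this sign.

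Multiplying the three factors, the change in $a_u$ is
\[
(\Gamma\cdot B(L))\,(-1)^{\Gamma\cdot W_2(L)}\,\bigl(\Gamma\cdot (W_2(L)\sqcup B(L))\bigr)^{-1}.
\]
This equals $1$, provided $W_2(L)$ carries the $\bZ/2$-coorientation pushed to $U_\Lambda$ via $-1$. That is the same convention used in defining the twisted local system, and $(-1)^{-1}=-1$.

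I expect the main obstacle to be the sign bookkeeping. One must make the orientation conventions of $\Gamma$, the disc collar, and the coorientations of $B$ versus $B(L)$ consistent, so that the exponents genuinely cancel rather than double. I would first check this in the local model of a single transverse crossing, with one point of $W_2$ or of $B(L)$ in a disc in $L$. A secondary point is that the choice of homotopy $\Gamma$ does not matter. This follows because any two choices differ by a torus, and for each factor the contribution of the torus is again the same cancelling expression. Alternatively, only the cancellation is needed, so no independence of $\Gamma$ has to be argued separately.
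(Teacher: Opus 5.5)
Your proposal is correct and is essentially the paper's argument. Both connect two perturbations by a generic homotopy, which meets $W_2(L)$ and $B(L)$ only finitely often. Each $W_2(L)$-crossing changes the twisted-local-system monodromy and the Pin sign by the same $-1$, and each $B(L)$-crossing changes the monodromy by the inverse of the change in the interior intersection with $B$. You differ only in bundling all crossings into one intersection number of the annulus $\Gamma$, and in spelling out the exponent convention in the homotopy axiom, which the paper leaves implicit.
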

\begin{proof}
  Under a generic homotopy of perturbations, the intersection with  $W_2(L)$  and $B(L)$ will change at finitely many times. In the first case, the monodromy of the twisted local system and the sign contribution will both change by the intersection number with $W_2(L)$, and in the second case, the monodromy will change by the same element of $U_{\Lambda}$ as the interior intersection with $B$. We conclude that the product of the four contributions does not change.  
\end{proof}

\begin{defin}
    The superpotential value of a weakly tautologically unobstructed brane $\bfL$ is the sum $\sum_{u\in \Mbar_p}a_u$ (which makes sense by Gromov compactness) for any regular value $p$ of the evaluation map, lying in $L \setminus Z.$
\end{defin}

\begin{thm}
    The superpotential value is well-defined, in the sense  that it is independent of the point in $L$.
\end{thm}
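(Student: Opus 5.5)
We will prove independence of the point by a cobordism argument. Given two admissible points $p_0,p_1 \in L\setminus Z$ that are regular values of the evaluation map, we join them by a smooth path $\gamma\colon[0,1]\to L$. Since $Z$ is a countable union of codimension-$2$ submanifolds, a generic path misses $Z$ entirely, and we may also take $\gamma$ transverse to the evaluation map $\mathrm{ev}\colon \Mbar_1(L;J_L,2)\to L$. Consider the parametrized moduli space
\begin{equation*}
  \cM_\gamma = \{(t,u) : u \in \Mbar_1(L;J_L,2),\ \mathrm{ev}(u)=\gamma(t)\}.
\end{equation*}
All its elements evaluate outside $Z$, so they are regular by Condition \eqref{eq:Biran-Cornea-conditions}. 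Hence $\cM_\gamma$, in each energy window, is a smooth $1$-manifold with boundary $\Mbar_{p_1}(\bfL)\sqcup -\Mbar_{p_0}(\bfL)$, once we know it is compact there.

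The compactness step is where the standing hypotheses enter, and it is the main one to get right. By Gromov compactness, for each energy bound $E$ the part of $\cM_\gamma$ with $\omega$-area at most $E$ is compact modulo bubbling. We must rule out the boundary strata.
\begin{itemize}
\item \emph{Disc bubbling.} A stable disc of Maslov index $2$ splitting into two nonconstant disc components would produce a disc of Maslov index $\le 0$ on one side, since each nonconstant component has Maslov index at least $2$ by the absence of elements of index $<2$. So disc bubbling is excluded.
\item \emph{Sphere bubbling.} A sphere bubble has positive area, so by Definition~\ref{def:weakly-monotone_M} it has $c_1\ge 0$. The remaining disc then has Maslov index $\le 2$. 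If that index is $<2$ it is excluded outright. The case of index exactly $2$ with a $c_1=0$ sphere attached is a codimension-$2$ phenomenon: the simple underlying configuration has evaluation image of codimension $\ge 1$ in the $1$-parameter family. I would argue that it lies in a codimension-$2$ subset of $L$ (absorbing it into $Z$, or using that such stable maps are among the elements of $\Mbar_1(L;J_L,2)$ to which Condition~\eqref{eq:Biran-Cornea-conditions} applies) so that a generic $\gamma$ avoids it.
\end{itemize}
I expect this to be the main obstacle: making precise that the strata of nodal configurations evaluate into a codimension-$2$ locus avoided by the generic path, using the fact that the regularity hypothesis is imposed on the unconstrained moduli space.

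Finally, we check that the weights are constant along the components of $\cM_\gamma$. The area $\int u^*\omega$ is locally constant on a connected component because the boundary stays on the Lagrangian $L$. After a generic perturbation of the boundary loops of the family, the boundary traced by the $1$-parameter family (a $2$-dimensional surface mapping to $L$) meets $W_2(L)$ and $B(L)$ transversely, in finitely many points. Each crossing changes the Pin sign and the local-system monodromy by compensating factors; for $B$, the interior intersection number changes to match. This is exactly the argument of Lemma~\ref{lem:weight_indep_perturbation}.

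Orientations come from the Pin structure, so the weighted signed counts at the two ends agree modulo $T^E$. Letting $E\to\infty$ gives equality in $\Lambda_0$, so the superpotential value is independent of the chosen point.
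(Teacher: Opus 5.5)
Your proposal is correct and follows essentially the paper's argument: a generic path in $L\setminus Z$, the one-dimensional moduli space of Maslov $2$ discs over it, constancy of area on each component by Stokes, and compensation of the Pin sign against the twisted local system and bulk factors as in Lemma~\ref{lem:weight_indep_perturbation}, where arcs returning to the same fibre cancel in sign. The one difference is that you spell out the compactness and bubbling analysis, including the Chern-zero sphere stratum. The paper leaves this implicit when it asserts that $\Mbar_\gamma$ is a one-dimensional manifold with boundary; your second option is the natural justification, since regularity outside $Z$ places that stratum in codimension $2$, so a generic path misses it.
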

\begin{proof}
  Let $p$ and $p'$ be such regular values. Since $Z$ has codimension $2$, and we assumed that $L$ is connected, we may choose a generic path $\gamma$ connecting them in the complement of $Z$, so that the moduli space of Maslov $2$ discs passing through elements of $\gamma$, which we denote $\Mbar_{\gamma}$ is a $1$-dimensional manifold with boundary. This manifold is equipped with an evaluation map
  \begin{equation} 
  \Mbar_{\gamma}\times S^1 \to L.
\end{equation}
Let us decompose $  \Mbar_{\gamma}$ into components: these consist either of circles (which do not contribute to our argument) or of paths which either connect $\Mbar_p$  and $\Mbar_{p'}$ or have both endpoints in the same moduli space. In any case, the standard application of Stokes's theorem shows that the area contribution is constant on each component. 

To understand the remaining contributions, choose a perturbation of the evaluation map which is transverse to   $W_2(L)$  and $B(L)$. The same argument as in Lemma \ref{lem:weight_indep_perturbation} now shows that (i) two points in $\Mbar_p$  and $\Mbar_{p'}$ which are connected by a path have the same contribution and (ii) two points in $\Mbar_p$ (or $\Mbar_{p'}$) which are connected by a path have contributions that differ by a sign, and therefore exactly cancel each other out in the algebraic sum. 
\end{proof}


\section{The open string Hamiltonian indexing category}
 Fix a $\mathbb{Z}_2$-graded symplectic manifold $M$ and a bulk \textbf{b}.
\subsection{Regular Floer data: strips}
\label{sec:regular-floer-data-1}

Consider a pair of branes $\bfL_0=(L_0,J_0,\ldots)$ and $ \bfL_1=(L_1,J_1,\ldots)$:
\begin{defin} \label{def:admissible_Hamiltonian}
A Hamiltonian $H_{01} \co M \times [0,1] \to \bR$ is \emph{admissible} for $(\bfL_0, \bfL_1)$ if its time-$1$ map maps $L_0$ transversely to $L_1$ and each time-$1$ chord $\gamma:[0,1]\to M$ satisfies the following: \begin{itemize}
\item the image of $\gamma$ is disjoint from the bulk $B \subset M$. 
\item $\gamma(i)$ is disjoint from the pinless locus $W_2(L_i)$, from the irregular locus $Z({L_i}; J_i)$ for $i = 0, 1$, and from the critical values of the evaluation map of the moduli space of Maslov $2$ discs with boundary on $L_i$.
\end{itemize}
\end{defin}

To preview how the first property will be used, let $E^i$ be the local system on $L_i$, twisted with respect to $B(L_i) \sqcup W_2(L_i)$, and $u \co  \bR\times [0,1]  \to M$ a strip with boundary conditions on $L_0$ and $L_1$, and asymptotic conditions $x$ and $y$ given by time-$1$ chords of $H_{01}$. Generically perturbing $u$ within the space of maps with the given asymptotic and Lagrangian boundary conditions, we obtain a map $u'$ which is transverse to $B$ and whose boundary is disjoint from $W_2(L_i)$. The product of the intersection number of $u'$ with $B$ and the parallel transport maps along the boundary defines a homomorphism:
\begin{equation} \label{eq:map_associated_to_strip}
  \Hom_{\Lambda_0}(E^{0}_{x(0)}, E^{1}_{x(1)}) \to   \Hom_{\Lambda_0}(E^{0}_{y(0)}, E^{1}_{y(1)}).
\end{equation}

Unlike the usual situation, in which the homomorphism in Equation \eqref{eq:map_associated_to_strip} is independent of perturbations, the signed contribution in our case may vary as the boundary of the strip crosses the divisor $B(L_i) \sqcup W_2(L_i)$. This will be accounted for by the signed action on the orientation line.

Consider as well a family $J_{01}$ of almost complex structures on $M$ parametrized by the interval, which agrees at the ends with the complex structures $J_0$ and $J_1$ associated to the branes $\bfL_0$ and $\bfL_1$. This determines a Floer equation
\begin{equation} \label{eq:Floer-equation}
J_{01} \circ \left( du - X_{H_{01}} \otimes dt \right) = \left( du - X_{H_{01}} \otimes dt \right) \circ  j  
\end{equation}
on the strip $\mathbb{R}\times [0,1]$.

For time $1$-chords $x,y$ of $H_{01}$ and a relative homology class $\beta$ we denote by $ \Mbar(x,y,\beta) $ the moduli space of stable (unparametrized) Floer strips from $x$ to $y$. That is, stable configurations consisting of a Floer strip with spheres attached in the interior and discs attached to the boundary (modulo translations of the strip). When $x=y$ this includes the constant strip with discs attached.
Let us now assume that the branes are in addition weakly tautologically unobstructed.
\begin{defin}
We say that the stable moduli spaces $ \Mbar(x,y,\beta) $ are regular (transversely cut) if the following conditions hold:
\begin{enumerate}
    \item The moduli spaces of Floer strips of virtual dimension $< 2$\footnote{note that the dimension of the moduli space of parametrized Floer strips is one more than this} are cut out transversely.
    \item The moduli space $\Mbar_{1}(J_{01})$ of simple $J_{01}(t)$-holomorphic spheres is cut out transversely by the section defined by the $[0,1]$-parametrized family of $J_{01}(t)$-induced Cauchy-Riemann operators over an appropriate functional completion of maps $S^2\to M$ crossed with $[0,1]$.
    \item The subset of the moduli spaces $\Mbar_{1}(L_i;J_{L_i})$, consisting of stable Maslov $2$ discs mapping away from the irregular locus,  is cut out transversely. 
    \item The intersections of strips with the discs and spheres are transverse. 
\end{enumerate}

The pair $(H_{01},J_{01})$ is \emph{regular} if for each relative homology class $\beta$ of strips,  the stable moduli spaces $ \Mbar(x,y,\beta) $ are regular (transversely cut) whenever the virtual dimension is less than $2.$ 
 \end{defin}

\begin{lem}
Given weakly-tautologically unobstructed branes $(\bfL_0, \bfL_1)$ and an admissible Hamiltonian $H_{01}$, there is a path of almost complex structures $J_{01}$ so that $(H_{01},J_{01})$ is regular.
\end{lem}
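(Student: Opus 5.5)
The plan is the standard Sard--Smale argument of Floer, Hofer and Salamon, adapted to the stable configurations in the definition of regularity, with the perturbation taken in the interior of the interval. Fix the endpoint values $J_{01}(0)=J_0$ and $J_{01}(1)=J_1$. The only freedom is then a path of $\omega$-tame almost complex structures on $(0,1)$, which we let range over a Banach manifold $\mathcal{J}^\ell$ of $C^\ell$ paths (Floer's $C^\varepsilon$ space would also do). We show that each of the four conditions holds for a comeagre subset of $\mathcal{J}^\ell$, intersect these countably many subsets over relative classes $\beta$ and pairs of chords $x,y$, and then pass to smooth paths by Taubes' trick.

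Condition (3) does not involve $J_{01}$ at all. It holds because the branes are weakly tautologically unobstructed and, by admissibility of $H_{01}$, the chord endpoints avoid $Z(L_i;J_i)$. The Maslov $2$ discs that can bubble off strips with endpoints at such chords are regular, and their evaluation maps are well-behaved near these endpoints. For condition (2), the key input is the universal moduli space of simple $J_{01}(t)$-holomorphic spheres, parametrized by $t\in[0,1]$ and $J_{01}\in\mathcal{J}^\ell$. The difficulty is at $t=0,1$, where $J_{01}$ cannot be varied. I would handle this by noting that the parametrized problem only needs to be transverse as a $1$-parameter family. Generic endpoints are not required: near $t=0$ we only need the spheres to be transverse in the family, and varying $J_{01}$ for $t$ slightly positive, together with $\partial_t$, spans the cokernel. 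Spheres that are $J_i$-holomorphic at exactly $t\in\{0,1\}$ only matter where they meet strips, and those meetings lie in the interior of the strip, where $t\in(0,1)$.

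For condition (1), take the universal moduli space of Floer strips of a given class. Nonconstant strips with $x\neq y$ are somewhere injective in the sense of Floer--Hofer--Salamon, with regular points dense in the interior. The $t$-dependence of $J_{01}$ then lets perturbations supported near a regular point $(s_0,t_0)$, with $t_0\in(0,1)$, kill the cokernel. The usual unique continuation argument for the Floer equation (Oh's argument for Lagrangian boundary conditions) supplies these regular points. Constant strips at a transverse chord are automatically regular, since the linearized operator is an isomorphism. Condition (4), transversality of the evaluation maps from strips to the sphere and disc moduli spaces, follows from the same universal construction applied to fibre products with marked points. Here we use simplicity of the spheres together with the freedom in $J_{01}$ at interior points of the strip. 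For boundary discs we use instead the freedom to move along the one-parameter family of boundary evaluation points, combined with regularity of $\Mbar_1(L_i;J_{L_i})$ away from $Z$.

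The main obstacle is the dimension count guaranteeing that in virtual dimension $<2$ the only stable configurations are those covered above. Non-simple (multiply covered) spheres cannot be made regular, so they must be excluded by dimension arguments instead. Spherical nonnegativity (Definition~\ref{def:weakly-monotone_M}) lets us replace a multiply covered sphere by its underlying simple sphere, which has nonnegative Chern number. The resulting configuration, after forgetting the multiplicity, therefore has dimension no larger than the original, and transversality of the simple version forces the virtual dimension to be at least $2$. Similarly, disc bubbles have Maslov index at least $2$ because there are no discs of index $<2$, so a strip with a disc bubble has a principal component of dimension at most $d-2$. Such a component is empty if $d<1$ and constant or transversely cut otherwise. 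Carrying out this bookkeeping carefully, including constant strips with Maslov $2$ discs attached when $x=y$, is where I expect most of the work to lie.
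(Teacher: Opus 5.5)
Your proposal is correct and follows essentially the same route as the paper. A generic choice of $J_{01}$ over the open interval $(0,1)$ makes the strips of virtual dimension $<2$ and the simple spheres regular. Spherical nonnegativity together with the dimension count rules out interior sphere bubbles; the paper phrases this as one-marked Chern-zero spheres sweeping out a codimension-$4$ subset of $[0,1]\times M$, which is your fibre-product count. The absence of discs of Maslov index $<2$, combined with the index count, leaves only Maslov~$2$ discs on constant strips, and admissibility of $H_{01}$ makes those regular.

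One small correction: at $t\in\{0,1\}$ only the one-dimensional direction $\partial_t J_{01}$ is available, so transversality of the sphere moduli space cannot be forced there. It is obtained only over $(0,1)$, which, as you note, is all that matters.
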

\begin{proof}
  This is a standard argument, proceeding as follows: Given a path $J_{01}$, consider the moduli space $\Mbar_{1}(J_{01})$ of $J_{01}(t)$ spheres, equipped with its evaluation map
  \begin{equation}
    \Mbar_{1}(J_{01}) \to [0,1] \times M.
  \end{equation}
  By Definition \ref{def:weakly-monotone_M}, the inverse image of the interior \((0,1)\times M\) contains no spheres of negative Chern number. For a generic path, the parametrized moduli space of one-marked simple Chern-zero spheres has dimension $\dim M-3$, which is four less than the dimension of the evaluation target $M\times[0,1]$. We then impose a further genericity condition by requiring that the union of the moduli spaces $\cM(x,y,\beta)$ of strips (rather than stable strips) with virtual dimension less than $2$ be regular, and that the evaluation map to $[0,1] \times M$ be disjoint from the moduli space of Chern $0$ spheres. There is thus no interior bubbling for moduli spaces of this virtual dimension. 

  The assumption of tautological unobstructedness rules out any disc bubbles of index $<2$, which are the only ones that can contribute to a stable configuration of virtual dimension $\leq 1$ in which the strip and sphere components all have non-negative index. 
\end{proof}

Given the weakly tautologically unobstructed branes $(\bfL_0, \bfL_1)$, admissible  Hamiltonian $H_{01}$ and a regular path of almost complex structures $J_{01}$, we define a free $\Lambda_0$ module
$$CF^*(\bfL_0,\bfL_1; H_{01},J_{01}),$$
as the direct sum, over all time-$1$ $H_{01}$-chords $x$ starting at $L_0$ and ending on $L_1$ of the rank-$1$ $\bZ/2$-graded free $\Lambda_0$-modules
\begin{equation}
  \Hom_{\Lambda_0}(E^{0}_{x(0)}, E^{1}_{x(1)})\otimes_{\bZ}  \ro_x,
\end{equation}
where we recall that $\ro_x$ is the graded orientation line associated to the lifts of $L_0$ and $L_1$ to our chosen $2$-fold cover of the Grassmannian of Lagrangians. 

Exactly as in the definition of the potential value, a generic perturbation of a rigid element $u$  of $  \Mbar(x,y,\beta) $ determines a map
  \begin{equation}
   d_u \co   \Hom_{\Lambda_0}(E^{0}_{x(0)}, E^{1}_{x(1)}) \otimes_{\bZ} \ro_x \to  \Hom_{\Lambda_0}(E^{0}_{y(0)}, E^{1}_{y(1)}) \otimes_{\bZ} \ro_y,
   \end{equation}
   which is the product of
   \begin{enumerate}
    \item the topological action of the strip $T^{\int_\Sigma \left( u^*\omega + d(u^*H_{01})\wedge dt \right)}$
    \item the weighted intersection with $B$,
          \item the parallel transport map on the local systems $E^0$ and $E^1$, and
   \item the morphism $\ro_x \to \ro_y$ arising from the Pin structure on the Lagrangian boundary conditions $L_i \setminus W_2(L_i)$ and the grading on $L_i$ (see \cite{Seidel2008a}).
\end{enumerate}

\begin{lem}
The homomorphism $d_u$ is independent of the choice of perturbation. \qed
\end{lem}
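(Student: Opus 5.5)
The plan is to show that every ingredient entering $d_u$ is either fixed by $u$ itself or changes under a perturbation by a factor that the orientation-line contribution cancels. This is the same mechanism as in Lemma~\ref{lem:weight_indep_perturbation}.

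First, the topological action $T^{\int u^*\omega + d(u^*H_{01})\wedge dt}$ depends only on the relative homotopy class of $u$ with fixed asymptotics $x,y$ and Lagrangian boundary conditions. Two perturbations $u'$, $u''$ of $u$ are homotopic rel these conditions, so by Stokes' theorem this factor is unchanged.

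Next, join $u'$ and $u''$ by a generic path $u_\tau$, $\tau\in[0,1]$, within maps with the same asymptotic and boundary conditions. The path is chosen transverse to $B$ in the interior, with boundary arcs transverse to $B(L_i)\sqcup W_2(L_i)$ in the stratified sense. Because the strata of a divisorial pseudo-cycle have codimension $\ge 2$ and none has codimension $3$, a generic two-parameter family misses the singular strata. It also misses the asymptotic chord endpoints: by admissibility (Definition~\ref{def:admissible_Hamiltonian}), $x(i)$ and $y(i)$ avoid $W_2(L_i)$, and the chords avoid $B$. Hence the factors can change only at finitely many times $\tau_j$, where a boundary arc of $u_\tau$ crosses either $B(L_i)$ or $W_2(L_i)$ transversally. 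We treat the two types of crossing in turn.

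When a boundary arc crosses $B(L_i)$, the homotopy axiom of Definition~\ref{def-twisted-local} changes the parallel transport along that arc by the local intersection number $g\in U_\Lambda$. Simultaneously, the interior intersection number $u_\tau\cdot B$ changes by the inverse factor, because an intersection point of the strip with $B$ enters or exits through the boundary, with orientation conventions matching those of the twisting. Their product is preserved. Here we must check that the coorientation conventions for $B(L_i)$ as a submanifold of $L_i$ agree with those of $B$ in $M$; transversality of $L_i$ to $B$ identifies the normal bundles and should make this compatibility automatic.

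When a boundary arc crosses $W_2(L_i)$, the parallel transport on $E^i$ changes by $-1$, since the local system is twisted by $W_2(L_i)$ with its $\bZ/2$-coorientation. The Pin structure on $L_i\setminus W_2(L_i)$ pulled back along the boundary arc also changes: by the universal property of $W_2$ (extending the Pin structure across a disc changes sign according to the intersection number with $W_2$), the induced isomorphism $\ro_x\to\ro_y$ from \cite{Seidel2008a} flips sign. The two signs cancel.

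The main obstacle is this last step: making precise that the orientation-line map built from the Pin structure on the complement of $W_2(L_i)$ changes by exactly $-1$ under one transverse crossing. I would reduce it to a local model. Near the crossing, take a small disc in $L_i$ meeting $W_2(L_i)$ transversely in a single point, and compare the two boundary paths that differ by pushing across this disc. By the universal computation on the Grassmannian, the two Pin trivializations along these paths differ by the nontrivial element, and the gluing description of $\ro_x\to\ro_y$ then shows that the resulting map is multiplied by $-1$. Combining all crossings along $\tau$ yields $d_{u'}=d_{u''}$.
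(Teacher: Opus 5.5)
Your proposal is correct and matches the paper's approach. The paper states this lemma without proof, and the intended argument is the one given for Lemma~\ref{lem:weight_indep_perturbation}: take a generic homotopy between perturbations, along which the boundary crosses $B(L_i)\sqcup W_2(L_i)$ finitely often. Each $B(L_i)$-crossing is compensated by the change in the interior intersection number with $B$, and each $W_2(L_i)$-crossing changes the local-system monodromy and the Pin/orientation-line sign by the same factor. You supply more detail than the paper (the absent codimension-$3$ strata, admissibility at the asymptotic chords, and a local model for the Pin sign flip), but the route is the same.
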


The sum of these operators defines a map
\begin{equation}
d \co  CF^*(\bfL_0,\bfL_1; H_{01},J_{01}) \to CF^*(\bfL_0,\bfL_1; H_{01},J_{01}).
\end{equation}
Letting $\rho_0$ and $\rho_1$ denote the superpotential values of $\bfL_0$ and $\bfL_1$, we have:
\begin{thm}
  The differential $d$ is a matrix factorization for $\rho_1 - \rho_0$ in the sense that
  \begin{equation}
    d^2 x = (\rho_1 - \rho_0) x.
  \end{equation}
\end{thm}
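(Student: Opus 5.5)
The plan is the standard Oh/Biran–Cornea analysis of the boundary of one-dimensional moduli spaces of Floer strips, adapted to the weighted counts above. Fix chords $x,z$ and consider the components of $\Mbar(x,z,\beta)$ of virtual dimension $1$. The coefficient of $d^2$ from $x$ to $z$ is the sum over broken configurations $(u_1,u_2)$ of $d_{u_2}\circ d_{u_1}$. The goal is to show that this sum equals the signed, weighted count of the remaining boundary points of these one-dimensional spaces. Those remaining points should be exactly the configurations consisting of a constant strip at $x=z$ with a Maslov~$2$ disc attached at $x(0)$ on $L_0$ or at $x(1)$ on $L_1$.

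\textbf{Step 1: compactness.} By the regularity assumptions, the components of virtual dimension $1$ are compact $1$-manifolds with boundary. The possible limits are as follows.
\begin{itemize}
\item Sphere bubbling does not occur. Spheres of negative Chern number are excluded by spherical nonnegativity. Chern-zero spheres are avoided by the evaluation-map genericity in the regularity lemma. Spheres of positive Chern number drop the index below zero.
\item Disc bubbles of Maslov index $<2$ are excluded by weak tautological unobstructedness.
\item A Maslov~$2$ disc bubble drops the index by $2$, so the remaining strip has index $-1$. The only such strip is the constant strip, which forces $x=z$ and makes the configuration a constant strip with one Maslov~$2$ disc attached at $x(0)\in L_0$ or at $x(1)\in L_1$.
\end{itemize}
Admissibility places $x(i)$ away from the irregular locus $Z(L_i;J_i)$ and away from critical values of the evaluation map. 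Hence these boundary points correspond bijectively to $\Mbar_{x(0)}(\bfL_0)\sqcup \Mbar_{x(1)}(\bfL_1)$, and gluing (regularity of the discs plus transverse evaluation) shows they are honest boundary points of the $1$-manifold.

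\textbf{Step 2: weights and signs.} Summing over the boundary of each compact $1$-manifold gives zero, provided the weights are constant along components. The topological action is constant by Stokes. The bulk intersection, parallel transport, and orientation contributions are constant by the same perturbation argument as in Lemma~\ref{lem:weight_indep_perturbation}: crossings of the boundary with $W_2(L_i)$ change the local-system monodromy and the Pin sign simultaneously, and crossings with $B$ are compensated by monodromy. The disc-bubble endpoints then contribute the weighted disc counts defining the superpotential values $\rho_1$ and $\rho_0$, each multiplied by the identity on $\Hom(E^0_{x(0)},E^1_{x(1)})\otimes\ro_x$. The theorem on superpotential values shows these counts do not depend on the chosen points, so they equal $\rho_i$.

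\textbf{Step 3: the relative sign, which I expect to be the main obstacle.} Once the coefficient is shown to be $\pm\rho_1 \mp \rho_0$, I must check that the two sides enter with opposite signs, giving $\rho_1-\rho_0$ rather than $\rho_0+\rho_1$. The reason is that a disc on the $L_0$ side sits at $t=0$ and one on the $L_1$ side sits at $t=1$, so boundary orientations of the strip moduli space differ. I would follow Seidel's orientation conventions \cite{Seidel2008a}: compare the gluing orientation at each end with the canonical orientation of $\ro_x$, and use that the boundary of the strip is oriented oppositely along $\bR\times\{0\}$ and $\bR\times\{1\}$. As a sanity check I would first verify the sign in the case $L_0=L_1$, where $d^2$ must vanish. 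The twisted setting (pinless locus) needs the extra input that the Pin sign and the monodromy of the twisted local system change together, which was already used above.
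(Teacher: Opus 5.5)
Your proposal follows the same route as the paper, whose proof only records that the one-dimensional moduli spaces are compact by admissibility and index considerations, with boundary consisting of broken strips and Maslov-$2$ disc bubbles on $s$-constant strips; your Steps 2 and 3 spell out the invariance of the weights and the relative sign between the $L_0$ and $L_1$ contributions, both of which the paper leaves implicit. One small imprecision: a bubbling Chern-one sphere also leaves a Maslov-$0$ principal strip, which may be the constant strip, so that configuration is ruled out not by negativity of the strip's index but by transversality of the sphere evaluation map with the chord (it has expected dimension $-1$, since sphere bubbling is codimension two), in the same way you handle Chern-zero spheres.
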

\begin{proof}
  Compactness of the one-dimensional moduli space follows from the disjointness from the irregular loci and basic index considerations. Since the boundary consists of broken strips and bubbling index-2 disks along $s$-constant strips, the result follows.
\end{proof}
\begin{cor}\label{cor-same-superpotential}
  If $\bfL_0$ and $\bfL_1$ have the same superpotential value, then $d^2 = 0$. \qed
\end{cor}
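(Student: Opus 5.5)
This is immediate from the preceding theorem, which asserts that for weakly tautologically unobstructed branes $\bfL_0,\bfL_1$, with admissible $H_{01}$ and regular $J_{01}$, the differential satisfies
\begin{equation*}
d^2 x = (\rho_1-\rho_0)\,x \qquad\text{for every } x\in CF^*(\bfL_0,\bfL_1;H_{01},J_{01}).
\end{equation*}
If $\rho_0=\rho_1$, the right-hand side is $0\cdot x=0$. Since this holds for every generator $x$ and $d$ is $\Lambda_0$-linear, we get $d^2=0$ on the whole complex.

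The only point to check is that the hypotheses match. In the corollary, $\bfL_0,\bfL_1$ are the weakly tautologically unobstructed branes already fixed in this subsection, and the superpotential values $\rho_i$ are those of Section~\ref{sec-superpotential}. Those values are independent of the choice of base point, by the theorem there. Hence the scalar $\rho_1-\rho_0$ appearing in the matrix factorization identity is intrinsic to the branes, and it vanishes exactly under the stated assumption.

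The real work is in the matrix factorization theorem, which we take as given. There, the boundary of the one-dimensional moduli spaces of strips consists of broken strips, contributing $d^2$, and index-$2$ disc bubbles on constant strips, contributing the $\rho_i$ terms. The relative sign between these two bubble contributions is what produces the difference $\rho_1-\rho_0$ rather than a sum. For the corollary itself there is no obstacle beyond this bookkeeping.
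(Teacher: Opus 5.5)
Your proposal is correct and matches the paper's approach: the corollary is stated as an immediate consequence of the preceding matrix-factorization theorem $d^2 x = (\rho_1-\rho_0)\,x$, which vanishes when $\rho_0=\rho_1$. Your remark that the $\rho_i$ are intrinsic to the branes, by the base-point independence theorem of Section~\ref{sec-superpotential}, is an appropriate check of the hypotheses.
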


\subsection{The (genus-$0$) Lagrangian Floer multicategory}
\label{sec:genus-0-lagrangian}

Our aim is now to define a multicategory which we denote $\mathrm{Floer}$, enriched in symmetric cubical sets, associated to a $\mathbb{Z}_2$- or $\mathbb{Z}$-graded symplectic manifold $M$ and a bulk $\textbf{b}$. 

The objects of $\mathrm{Floer}$ are triples $\bfH = (\bfL_0, (J_{01},H_{01}), \bfL_1)$ where 
\begin{itemize}
\item $\bfL_i$ are Lagrangian branes, 
\item $H_{01}$ is admissible for $(\bfL_0,\bfL_1)$ 
\item The datum $(J_{01},H_{01})$ is regular. 
\end{itemize}
We will refer to these objects also as \emph{boundary conditions}.

In the rest of this section, we construct the $0$-dimensional part of the multimorphisms (Definition \ref{def-final-mult-dim-0}) and their multicompositions (Definition \ref{def:mult-dimens-0}) in this multicategory.

The multimorphism cubical sets are empty except possibly in the following situation: we are given a sequence $\{ \bfL_i\}_{i=0}^{k}$ of branes, and the input sequence is $\vec{\bfH} = \{\bfH_{i,i+1}\}_{i=0}^{k-1}$ where the
underlying pair of Lagrangian branes for the object $\bfH_{i,i+1}$ is $(\bfL_i, \bfL_{i+1}) $, while the output object $\bfH_{0k}$ has $(\bfL_0, \bfL_k)$ as its underlying pair of branes. We call $\vec{\bfH} = \{\bfH_{i,i+1}\}_{i=0}^{k-1}$ a \emph{composable sequence}, and refer to $\bfH_{0k}$ as a \emph{horizontal composition} of $\vec{\bfH}$. A horizontal composition is not unique, as the data $(J_{0k}, H_{0k}) $ is not determined by the inputs.

Let $T$ be a pre-stable ribbon tree with $k$ inputs and one output as in Definition \ref{defRibbonTree}. An \emph{admissible $\mathrm{Floer}$  labeling} of $T$ is a labeling of the edges $e$ of $T$ by objects $\mathbf{H}_e$ so that the following condition holds. Let the input edges be labeled by a composable sequence  $\vec{\bfH} = \{\bfH_{i,i+1}\}_{i=0}^{k-1}$. Then we require that for any edge $e$ emanating from a vertex $v$ the object $\mathbf{H}_e$ is a  horizontal composition of the incoming objects. Imposing this requirement inductively we see that the incoming objects at each vertex indeed form a composable sequence. Similarly the output edge is a horizontal composition of $\vec{\bfH}$. We will refer to $T$ with such a labeling as a $\mathrm{Floer}$ labeled tree with input $\vec{\bfH}$ and output $\mathbf{H}_{0k}$. 

Given a pre-stable disc $\Sigma$ with $k$ inputs and one output (see Definition \ref{def-pre-stable-disc}) we shall refer to its underlying tree by $T_{\Sigma}$. We remind the reader that the data of a prestable disc with $k$ inputs and one output includes, for every vertex $v$ of $T_\Sigma$, a choice of strip-like ends $\epsilon^{\pm}$ for the component $\Sigma^v$ of $\Sigma$ (see Definition \ref{def-strip-like}). Given an {admissible $\mathrm{Floer}$  labeling} of $T_\Sigma$ with input $\vec{\bfH}$ and output $\mathbf{H}_{0k}$, each input of each component $\Sigma^v$ of $\Sigma$ is indexed by an incoming edge $e$ at $v$, and is labelled the object $\bfH_e$. The output is labeled by a horizontal composition of these inputs. In particular, considering $\Sigma^v$ as a disc with boundary punctures, we obtain an induced labeling of the components of $\partial\Sigma^v$ by Lagrangian branes.


We then have:

\begin{defin}\label{def:pre-multimorphism-H}
A \emph{pre-multimorphism}  with input $\vec{\bfH} = \{\bfH_{i,i+1}\}_{i=0}^{k-1}$ and output $\mathbf{H}_{0k}$ consists of 

\begin{enumerate}
\item a pre-stable disc $\Sigma$ with $k$ inputs and one output
  \item an admissible $\mathrm{Floer}$ labeling of $T:=T_{\Sigma}$ with input  $\vec{\bfH}$ and output $\mathbf{H}_{0k}$, and
    \item for each vertex $v$ of $T$, \begin{itemize}
  
    \item a  smooth function  $H^v \co \Sigma^v \times M \to \bR$, 
    \item an almost complex structure $J^v$ on $M$, smoothly parametrized by $\Sigma^v$, and
    \item a closed $1$-form $\alpha^v$ on $\Sigma^v$.
    \end{itemize}
\end{enumerate}
These data are required to satisfy the following properties for each vertex $v$ of $T$:
\begin{enumerate} 
    \item the restriction of $\alpha^v$ to each boundary component of $\Sigma^v$ vanishes,
    \item $J^v$  agrees with $J_{i}$ at any point of a boundary component of $\Sigma^v$ labeled by  $L_i$,
    \item $H^v$ satisfies the monotonicity inequality (see Appendix \ref{sec:monotonicity-inequality})   
    \begin{equation}
\label{eq:strict-monotonicity}
    \int_{\Sigma^v} \min_{x\in M} \left(d_{\Sigma^v} (H^v(\cdot, x)) \wedge \alpha^v\right) > 0,
\end{equation}
    \item for each edge $e$ of $T$ adjacent to $v$ and the corresponding puncture $p$ of $\Sigma^v$, the almost complex structure $J^v$ pulls back under the strip-like end $\epsilon_p$ to the time dependent  almost complex structure $J^e$ encoded in $\bfH_e$. Moreover, there is a positive real number $w_p$ called the \emph{weight}, such that we have
  \begin{align}
\label{eqTransInvDat0}
     \epsilon_p^{*}\alpha^v& =w_pdt \\
\label{eqTransInvDat}
        \epsilon_p^{*}(H^v\alpha^v) & =H^{e}dt,        
  \end{align}
  \item for each internal edge of $T$ with endpoints $v_-$ and $v_+$, the weights of $\alpha^{v_-}$ and $\alpha^{v_+}$ at the punctures associated to $(v_-,e)$ and $(v_+,e)$ agree.

    \item if $p_{out}$ is the output of $\Sigma^v$ and $p$ is an input of $\Sigma^v$:     \begin{equation}\label{eq:weight_bound}\frac{w_{p}}{w_{p_{out}}}\geq \frac{1}{2^{|E_{in}(v)|-1}}.\end{equation}
\end{enumerate}
\end{defin}

See \cite[Remark 2.2]{Abouzaid2023} for motivation of various parts of this definition.

 \begin{defin} Two pre-multimorphisms are \emph{equivalent} if there is an isomorphism of the underlying pre-stable discs as in Definition \ref{def-isom-disc} that intertwines the data of $\bfH_e$, and the data of $\{(H^v,J^v,\alpha^v)\}$ up to the equivalence relation
 \begin{equation}
 \{(e^{r}H^v,J^v,e^{-r}\alpha^v)\}\sim \{(H^v, J^v,\alpha^v)\}\quad\forall r\in\bR.
\end{equation}\end{defin}

This leads to the following notion:
\begin{defin}\label{def-final-mult-dim-0}
  A \emph{multimorphism of dimension $0$} is an equivalence class of pre-multimorphisms. We denote by $\mathrm{Floer}_0(\vec{\bfH},\bfH_{0k})$ the set of all multimorphisms of dimension $0$. For  a pre-stable ribbon tree $T$ with $m$ inputs and one output we denote by $\mathrm{Floer}_{0,T}(\vec{\bfH},\bfH_{0m})$ the subset of multimorphisms defined on pre-stable discs modeled on $T$.
\end{defin}


For the definition of multicomposition, we consider composable sequences $\vec{\bfH}_1\in\mathrm{Floer}^{k_1}$ and $\vec{\bfH}_2\in\mathrm{Floer}^{k_2}$, where the notation indicates the cartesian product on the object sets. We further assume that the $i$th component of $\vec{\mathbf{H}}_1$ is a horizontal composition of $\mathbf{\vec{H}}_2$.  We then write
\begin{equation} \label{eq:multi-composition-Ham}
  \vec{\bfH}_1 \circ_i \vec{\bfH}_2  
\end{equation}
for the composable sequence with $k_1 + k_2 - 1$ elements obtained by replacing the $i$th component of $\vec{\bfH}_1$ with the vector $\vec{\bfH}_2$. 

Given two admissible $\mathrm{Floer}$ labeled trees $T_1$ and $T_2$ such that $i$th input label of $T_1$ is the same as the output label of $T_2$, we can construct a new admissible $\mathrm{Floer}$ labeled tree $T_1 \circ_i T_2$  by taking the disjoint union of $T_1$ and $T_2$ and identifying the output edge of $T_2$ with the $i$th input edge of $T_1.$ 

This operation lifts to a multicomposition operation on multimorphisms. 
\begin{defin} \label{def:mult-dimens-0} 
  The multicomposition
\begin{equation}
\circ_i:\mathrm{Floer}_0(\vec{\bfH}_2;(\bfH_1)_{i-1,i})\times \mathrm{Floer}_0(\vec{\bfH}_1;(\bfH_1)_{0k_1})\to\mathrm{Floer}_0(\vec{\bfH}_1 \circ_i \vec{\bfH}_2 ;(\bfH_1)_{0k_1})
\end{equation} takes $(\fd_2,\fd_1)$ to the multimorphism defined by attaching the output of $\fd_2$ to the $i$th input of $\fd_1$.
\end{defin}
The compatibility condition  in Definition \ref{def:pre-multimorphism-H} between the two data at the edge of $T_1 \circ_i T_2$ along which the trees are attached is ensured by choosing representatives of $\fd_1$ and $\fd_2$ such that the output weight of $\fd_2$ agrees with the $i$th input weight of $\fd_1$. To avoid any confusion, we reiterate that this definition does not involve any gluing of Riemann surfaces (or Hamiltonian data).

\subsection{Higher cubes of multimorphisms} 
\label{sec:high-cubes-mult}

As in \cite{Abouzaid2023}, rather than the naive strategy of defining a smooth structure on the set of multimorphisms of dimension $0$ and considering the smooth singular cubes of this target we shall directly construct positive dimensional cubes of multimorphisms as cube families of multimorphisms of dimension $0$ which are obtained by gluing near the boundary strata.

We first introduce some notation for the gluing operation that will be used in the definition of higher dimensional cubes. We consider an element $\fd\in\mathrm{Floer}_0(\vec{\bfH},\bfH_{0n})$ with underlying $\mathrm{Floer}$  labeled tree $T$. 

\begin{defin} The gluing $\Gamma_{\vec{r}}(\fd)$ of $\fd$ with parameters $\vec{r}\in [0,1)^{E_{int}(T)}$ is the multimorphism obtained by gluing the underlying  pre-stable disc, equipped with the restriction of the data $(H^{v_-},J^{v_-},\alpha^{v_-})$ and $(H^{v_+},J^{v_+},\alpha^{v_+})$ for each $e\in E_{int}(T)$ connecting vertices $v_-$ and $v_+$ with $r_e>0$ (see Definition \ref{def-gluing-general}). 
\end{defin}

Note that the compatibility condition of weights along the two sides of the node, which we imposed in Definition \ref{def:pre-multimorphism-H},  ensures that the Hamiltonian data on the glued Riemann surface are well-defined after gluing. In addition, the equality $2^{n-1}\cdot 2^{m-1}=2^{(n+m-1)-1}$ shows that Condition \eqref{eq:weight_bound} is satisfied.

Given a possibly broken $\fd\in \mathrm{Floer}_0(\vec{\bfH},\bfH_{0n})$ we call the triple $(T,\fd',\vec{r})$ of a $\mathrm{Floer}$ labeled pre-stable ribbon tree $T$, $\fd'\in \mathrm{Floer}_{0,T}(\vec{\bfH},\bfH_{0n})$ and  $\vec{r}:{|E_{int}(T)|}\to [0,1)$ such that \begin{equation}
\fd=\Gamma_{\vec{r}}(\fd') 
\end{equation}\emph{a gluing decomposition} of $\fd$. A given element $\fd$ of $ \mathrm{Floer}_0(\vec{\bfH},\bfH_{0n})$ may have more than one gluing decomposition.

\begin{lemma}\label{lmGluingMonotonicity} Any two gluing decompositions of $\fd$ with the same underlying ribbon tree $T$ and 
Hamiltonian datum $\fd'\in \mathrm{Floer}_{0,T}(\vec{\bfH},\bfH) $ have equal gluing parameters. 
\end{lemma}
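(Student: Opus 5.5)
The plan is to read off the gluing parameters from the glued disc $\fd = \Gamma_{\vec r}(\fd')$ itself, using data that does not depend on the decomposition. Fix an internal edge $e$ of $T$ joining $v_-$ and $v_+$. If $r_e = 0$, the node corresponding to $e$ survives in $\fd$. If $r_e > 0$, the node is replaced by a finite neck. So the first step is to show that the set $\{e : r_e = 0\}$ is determined by $\fd$ and $T$. It is exactly the set of edges of $T$ that appear as nodes of the pre-stable disc underlying $\fd$, whose tree is the contraction of $T$ along the edges with $r_e > 0$. Since $T$ is fixed, this contraction, and hence the zero pattern of $\vec r$, is determined. (Some care is needed because the contraction is only recovered up to isomorphism of ribbon trees; but ribbon trees with labelled leaves have no nontrivial automorphisms, so the edges are matched canonically.)

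For edges with $r_e > 0$, I would recover $r_e$ from the glued neck. In the gluing of Definition~\ref{def-gluing-general}, the strip-like ends $\epsilon^{+}$ at $(v_-,e)$ and $\epsilon^{-}$ at $(v_+,e)$ are truncated at length $\ell_e = -\log r_e$ (or whatever monotone function of $r_e$ the appendix uses) and identified. The glued surface therefore contains an embedded finite strip $[0,\ell_e]\times[0,1]$ on which the data is translation invariant, equal to $(H^e,J^e)$ with $\alpha = w_e\,dt$. The key invariant is the conformal modulus of the annular region separating the two components. Because $\fd'$ is fixed, the complements of the truncated ends in $\Sigma^{v_\pm}$ are fixed. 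The glued surface is then the union of these fixed pieces with necks of length determined by $r_e$, and the conformal class of the glued disc, as a point in the moduli space of discs, determines these lengths. Concretely, I would compare two decompositions with parameters $\vec r$ and $\vec r\,'$ and use the isomorphism of Definition~\ref{def-isom-disc} between $\Gamma_{\vec r}(\fd')$ and $\Gamma_{\vec r\,'}(\fd')$. This isomorphism must intertwine the Floer data, and it must also carry each fixed piece of one surface to the corresponding fixed piece of the other, because it is compatible with the boundary labelling by branes and with the puncture labels. From this I expect to conclude that it maps each neck to the corresponding neck, forcing $\ell_e = \ell'_e$.

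The main obstacle is this last rigidity step. A biholomorphism between the glued surfaces need not obviously respect the decomposition into pieces, because the fixed pieces also contain strip-like ends where the data is translation invariant and can slide. To handle this I would use an argument by injectivity of the gluing map into the Deligne--Mumford type moduli space of discs, as set out in Appendix~\ref{sec:trees-discs}. Standard gluing charts are injective for fixed $T$ and a fixed pre-glued disc, since the gluing coordinates are genuine boundary coordinates near the stratum $T$. Therefore the underlying discs of $\Gamma_{\vec r}(\fd')$ and $\Gamma_{\vec r\,'}(\fd')$ coincide in moduli only if $\vec r = \vec r\,'$.

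The equivalence relation rescaling $(H,\alpha)$ by $e^{\pm r}$ does not affect the underlying disc, so it plays no role. Should the stability needed for the DM-chart argument fail, because components of pre-stable discs may be unstable strips, I would fall back on the Floer datum. On an unstable component the data is not translation invariant, by the strict monotonicity inequality~\eqref{eq:strict-monotonicity}. That breaks the translation symmetry, and the same chart-injectivity argument goes through with the data included.
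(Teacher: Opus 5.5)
There is a genuine gap at the central step. Your argument rests on the claim that $\vec r\mapsto[\Gamma_{\vec r}(\Sigma)]$ is injective into the Deligne--Mumford space. That claim is unavailable exactly where the lemma is needed.

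\begin{itemize}
\item The trees here are pre-stable. If $T$ is a chain of strips, every glued surface is $\bR\times[0,1]$ and $\overline{\cR}_2$ is a point.
\item More generally, gluing a strip onto an end of any component leaves the conformal class of the punctured disc unchanged and only alters its strip-like end. Since $\fF$ collapses unstable components, the underlying discs cannot see those necks at all.
\item Even for stable $T$, gluing charts are known to be injective only for small gluing parameters (long necks). You need injectivity on all of $[0,1)^{E_{int}(T)}$, for arbitrary strip-like ends.
\end{itemize}

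Your fallback is that the Floer datum breaks translation symmetry, so ``the same chart-injectivity argument goes through with the data included''. That is not an argument. There is no moduli space of discs-with-data with a known injective gluing chart, and producing an invariant of the glued datum that detects $r_e$ is exactly what must be done.

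Your first step also needs repair. For pre-stable trees the contraction $T\to T/S$ is not determined by the isomorphism type of $T/S$: contracting either internal edge of a chain of three strips gives the same tree. You must instead use the edge labels $\bfH_e$, which are pairwise distinct along a chain of strips by strict monotonicity.

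The missing idea, which is how the paper proceeds, is to read $r_e$ off the glued Floer datum intrinsically.

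\begin{itemize}
\item \textbf{The invariant.} For an edge $e$ collapsed into a vertex $v$, consider holomorphic embeddings $E\colon(\log p,-\log p)\times[0,1]\to\Sigma^v$ with $E(0,\cdot)=\epsilon_e$, $E^*\alpha^v=c\,dt$ and $E^*(H^v\alpha^v)=H^e\,dt$. Let $p_e$ be the infimum of such $p$. This records the longest translation-invariant neck through the gluing seam.
\item \textbf{Disjointness.} Such necks for distinct edges $e\neq f$ have disjoint images. On an overlap, the transition map preserves both the $s$- and $t$-level sets, so the overlap consists of full $s$-level sets. This forces $\Sigma^v$ to be a strip and the transition map to be a translation, hence $H^e=H^f$, which the strict monotonicity inequality \eqref{eq:strict-monotonicity} rules out.
\item \textbf{Consequences.} Disjointness makes $p_e\circ\Gamma_{\vec r}(\fd')$ depend only on $r_e$. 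The same level-set argument, applied to two necks for the same edge, shows that the maximal neck is unique and grows with the glued neck length, so $p_e$ is strictly monotone in $r_e$.
\end{itemize}

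This handles stable and unstable components uniformly, and it is where strict monotonicity is genuinely used. Your proposal alludes to this in its last sentence but supplies neither the invariant nor the disjointness argument.
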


\begin{proof}
  Consider the subset $\cF(\fd')\subset \mathrm{Floer}_{0}(\vec{\bfH},\bfH)$ consisting of multimorphisms obtained from $\fd'$ by gluing. We will construct for each edge $e$ a function $p_e:\cF(\fd')\to \bR$. We will then  show that $p_e\circ\Gamma_-(\fd'):[0,1)^{E_{int}(T)}\to \mathbb{R}$ depends only on $r_e$ and is strictly monotone in $r_e$. This will prove the desired result. 
  
Note that an edge $e$ collapsed to $v$ gives a canonical embedding $$\epsilon_e: [0,1]\to \Sigma^v$$ where the boundaries of the two pieces are glued together as in Definition \ref{dfGluing0}.  

If $e$ is not collapsed, we define $p_e(\fd)=0$. If it is collapsed to vertex $v,$ we define $p_e(\fd)$ as the infimum 
of $p \in (0,1)$ such that there is a holomorphic embedding $E:(\log(p),-\log(p))\times [0,1]\to \Sigma^v$, sending boundary to boundary and satisfying the following properties
\begin{align*}
E(0,\cdot) &=\epsilon_e \\
  E^* \alpha^v & = c dt \textrm{ for some }    c>0 \\
  E^*\left(  H^{v}_{x} \alpha^v \right) & =  H^e dt.
\end{align*}
Since the gluing parameter is less than $1$, such embeddings always exist.

Take such embeddings for two distinct edges $e$ and $f.$ We claim that images of the embeddings must be entirely disjoint. Assume the contrary, namely that the embeddings $E_e:(\log(p_e),-\log(p_e))\times [0,1]\to \Sigma^v$ and $E_f:(\log(p_f),-\log(p_f))\times [0,1]\to \Sigma^v$ intersect. Here we used the fact that if there is intersection, $e$ and $f$ must collapse to the same vertex.

Take the intersection of the images of the two embeddings. We obtain a biholomorphism $\phi$ from an open subset $U\subset (\log(p_e),-\log(p_e))\times [0,1]$ to an open subset $U'\subset (\log(p_f),-\log(p_f))\times [0,1]$ which sends level sets of $t$ to level sets of $t$ (because $\phi^*dt$ is a positive real multiple of $dt$), and more importantly, the sends the level sets of $s$ to level sets of $s$ (because $\phi^*ds=\phi^*(j^*dt)=j^*(\phi^*dt)$ is a positive real multiple of $ds=j^*dt$). Moreover, if $x_1,x_2,\ldots$ is a sequence in $U$, which converges in $(\log(p_e),-\log(p_e))\times [0,1]$, but not to a point in $U$, then $\phi(x_1),\phi(x_2),\ldots$ must not be convergent in $(\log(p_f),-\log(p_f))\times [0,1]$. The same statement holds for $\phi^{-1}$ as well. Otherwise, the two limit points would also map to the same point in $\Sigma^v$ and hence would have to be inside $U$ to begin with. We deduce that for both $U$ and $U'$, if they intersect a level set of $s$, then they must contain the entire level set. In particular, we obtain that the free homotopy classes of $e$ and $f$ must be the same. This proves that $\Sigma^v$ is an infinite strip (one input and one output) and without loss of generality, we can assume that $f$ is closer to the root of $T$ than $e$.

Moreover, by integrating $\alpha^v$ in this free homotopy class, we see that in fact $\phi^*dt=dt$ and again by holomorphicity $\phi^*ds=ds$. Therefore, $\phi$ must be a translation in $s$ and therefore $H^e=H^f$ since $\phi^*(H^fdt)=H^edt.$ By the monotonicity inequality, this is a contradiction. We have proved the desired disjointness. 

Let $v_\pm$ be the vertices adjacent to an interior edge $e$ of $T$. The disjointness we proved implies that the image of such an $E_e$ is contained in the regions obtained from the Riemann surfaces $\Sigma^{v_-}$ and $\Sigma^{v_+}$ (after removing gluing regions associated to the other punctures). It follows that  $p_e\circ \Gamma$  depends only on the gluing parameter $r_e$.

In fact, in the proof of the disjointness statement, if we took embeddings $E_e$ and $E_e'$ both for the same edge $e$, we would prove in the same way that the inverse image of locus of intersection under either map contains every level set of $s$ which it intersects. This implies that $\phi$ must be a translation in $s$, and because they both extend $\epsilon_e$, the map $\phi$ is actually the identity map. Therefore, for any given such embedding $E_e$, taking $E_e'$ to be a gluing annulus extension of $\epsilon_e$ whose image lies inside the image of $E_e$, we see that $E_e$ extends $E_e'$. Therefore, we can conclude that  $ p_e\circ\Gamma$  is a monotone function of $r_e$.

\end{proof}




Before defining the higher dimensional cubes of multimorphims, we need to describe their local models which determine the relevant data near all faces of the cube:  \begin{defin}\label{dfLocModMult}
  A \emph{local model for a codimension $k$ corner of a family of multimorphisms} with input $\vec{\bfH}$ and output $\bfH$ consists of the following data:

\begin{enumerate}
\item (Domain of maximal breaking) a smooth manifold $C$, 
\item  (Collar neighbourhood) an open neighbourhood  $U$ of the origin in $[0,1]^k$ (in the case $k=0$ we write $U=\{0\}$).
\item (Choice of broken curves) A $\mathrm{Floer}$ labeled tree $T$, and a smooth map
  \begin{equation} \label{eq:broken_curve_family}
        b \co C\times U \to\mathrm{Floer}_{0,T}(\vec{\bfH},\bfH).
  \end{equation}
\item (Gluing data) A smooth map
  \begin{equation} \label{eq:gluing_data}
   g:C\times U\to[0,1)^{|E_{int}(T)|}     
  \end{equation}
whose components vanish identically on $C\times\{0\}$.  Moreover, for each  face $\sigma$ of $U$, any component $g_i$ of $g$ either vanishes identically or is nowhere $0$ on the interior of $C\times\sigma$.
\end{enumerate}
\end{defin}

To define cubes of multimorphisms in $\mathrm{Floer}$, we require one additional choice, as well as some notation:  we fix a positive number $\epsilon_0 < 1/2$. For each $n$ denote by $F_n$ the set of faces of the $n$-cube $[0,1]^n$ (we include the top stratum among them, so that  $F_n$ has $3^n$ elements). For each $f\in F_n$, denote by $f^o$, the interior of $f$.  For each $f\in F_n$, let $W_f$ be the image in the $n$-cube of the canonical affine embedding of 
\begin{equation} \label{eq:decomposition_W_f}
V_f\times U_f \;\cong\; f^o\times[0,\epsilon_0)^{\mathrm{codim}(f)}.
\end{equation}
Denote by  $\cV_n$ the open cover of $[0,1]^n$  given by $\cV_n=\{W_f\}_{f\in F_n}$. Note that there are natural identifications $F_n=(F_1)^n$ and $\cV_n=(\cV_1)^n$ where an $n$-tuple $(W_{f_1},\dots,W_{f_n})\in(\cV_1)^n$ is identified with the Cartesian product $(W_{f_1}\times \dots\times W_{f_n})=W_{f_1\times \dots\times f_n}$. Thus the cover $\cV_n$ is compatible with intersection with faces of the $n$-cube and is equivariant with respect to the action of the symmetric group $S_n$ by transposition of the coordinates of the cube. Finally, in the next definition, we exclude the case where the input and output agree, this will correspond to the formal units we discuss at the end of this section.

\begin{defin}\label{dfCuMultMor}
 An $n$-cube $\fd$ in $\mathrm{Floer}(\vec{\bfH},\bfH)$, whenever $\vec{\bfH}$ is not a singleton which agrees with $\bfH$,  consists of the choice $(T_f, b_f,g_f)$ of a local model for each face $f \in F_n$ of the cube, with domain the set $W_f$, equipped with the decomposition from Equation \eqref{eq:decomposition_W_f} so that the following property holds: given an inclusion $f_0\subset f_1$, let $E$ be the set of edges of $T_{f_0}$ that are collapsed under the map $T_{f_0}\to  T_{f_1}$. Let $g^E_{f_0}:W_{f_0}\to[0,1)^{E}$ be the composition of $g_{f_0}$ with the projection $[0,1)^{|E_{int}(T)|}\to [0,1)^{E}$. Then we have
\begin{equation}\label{eqGluingCompatibility}
b_{f_1}|_{W_{f_0}}=\Gamma_{g^E_{f_0}}(b_{f_0}).
\end{equation}
\end{defin}
The reader may want to  note that from this data we get, for each inclusion $f_1\subset f_2$ of faces, a surjective map $\sigma_{12}:V(T_{f_1})\to V(T_{f_2})$. If $v_1$ and $v_2$ are adjacent vertices of $T_{f_1}$, then whenever the corresponding gluing parameter does not vanish (so that the components are glued), we have $\sigma_{12}(v_1)=\sigma_{12}(v_2)$. Conversely,  if the gluing parameter of the edge connecting them vanishes, these vertices remain distinct in $T_{f_2}$.

An $n$-cube determines a map of sets
  \begin{equation}\label{eqUnderlyingMap}
        b:[0,1]^n\to\mathrm{Floer}_0(\vec{\bfH},\bfH),
      \end{equation}
      which is given by the formula
\begin{equation}\label{eqLocModMult}
b(p)=\Gamma_{g_f(x,y)}(b_f(x,y)). 
\end{equation}
whenever $p=(x,y)\in V_f\times U_f=W_f$. 

We verify that this expression is well-defined. It suffices to compare
the expressions associated to a pair of faces $f_0\subset f_1$. Fix
$p\in W_{f_0}\cap W_{f_1}$, and let $f_p$ be the unique face whose
relative interior contains $p$. Thus
\[
f_0\subset f_1\subset f_p.
\]
For $i<j$, denote by $E_{ij}$ the set of edges collapsed under the
map $T_{f_i}\to T_{f_j}$. Applying Equation~\eqref{eqGluingCompatibility} to these three
inclusions gives
\[
 b_{f_p}(p)
 =
 \Gamma_{g_{f_0}^{E_{0p}}(p)}(b_{f_0}(p))
 =
 \Gamma_{g_{f_1}^{E_{1p}}(p)}(b_{f_1}(p))
 =
 \Gamma_{\left(g_{f_0}^{E_{01}}(p),
                    g_{f_1}^{E_{1p}}(p)\right)}(b_{f_0}(p)),
\]
where the last equality uses the independence of the order of
gluing. The first and last expressions give gluing decompositions
with the same source $b_{f_0}(p)$. Lemma~\ref{lmGluingMonotonicity} therefore implies that
their gluing parameters agree. The components corresponding to
edges which survive in $T_{f_p}$ vanish at $p$. It follows that,
after identifying the surviving edges of $T_{f_0}$ and $T_{f_1}$,
the full gluing parameters agree, and hence
\[
 \Gamma_{g_{f_0}(p)}(b_{f_0}(p))
 =
 \Gamma_{g_{f_1}(p)}(b_{f_1}(p)).
\]
Thus Equation~\eqref{eqLocModMult} is independent of the chosen gluing chart.

\begin{rem}\label{remNonUniqueCube}
  If $b$ contains broken pre-stable discs, it is possible to construct distinct $n$-cubes whose underlying map is $b$. The simplest example is the following: we will set $n=1$, and fix a pair of Lagrangian branes $\bfL_0$ and $\bfL_1$. At the endpoint $\{0\}$ of the interval, say that we are given a triple of Hamiltonian data $(H_0,J_0)$, $(H_1, J_1)$ and  $(H_2, J_2)$, and a pair of continuation maps $(H^{-}_{s,t}, J^{-}_{s,t})$ and  $(H^{+}_{s,t}, J^{+}_{s,t})$ on the infinite strip $\Sigma = [0,1] \times \bR$, which for specificity are constant away from $[0,1] \times [-1,1]$. In our definition of Floer data, we require that the strips be equipped with strip-like ends that are disjoint from the region where the data is not constant; we assume that the images are $[0,1] \times (-\infty, -2]$ and $[0,1] \times [2,\infty)$.

  There is a standard way to extend this choice from the endpoint to the interior: set the data $b_{\{0\}}$ to be constant, and the gluing data $g_{\{0\}}$ to be the inclusion  $[0,\epsilon_0) \subset [0,1)$. There are many different charts $(T_{0}, b'_{\{0\}} ,g'_{\{0\}} )$ for a gluing atlas that results in the same global map $b$; one way to analyse this is to associate to each continuation map two modular parameters: the modulus $r$ of the complement of the strip-like ends (given by a rectangle $[0,1] \times (0,r)$) and the modulus $\rho$ of its intersection with the largest possible negative strip-like end (a subrectangle $[0,1] \times (0,\rho)$).
  The datum of $ b'_{\{0\}}$ thus has four modular parameters, and the gluing parameter $g'_{\{0\}} $ provides one more. Requiring that this chart yield $b$ after gluing imposes three equations; one is the modular parameter of the largest region in which the data agrees with $(H_1, J_1)$, and the other two are as above (complements of the strip-like ends, and intersection with the largest possible negative strip-like ends). We thus expect that there are two free parameters that recover the same data after gluing.

For an explicit example, over a point $r \in [0,\epsilon_0)$, we let $b'_{\{0\}}$ consist of the same data as $b_{\{0\}}$, except that the positive strip-like end on the curve carrying  $(H^{-}_{s,t}, J^{-}_{s,t})$ has image $[0,1] \times [2 +r,\infty)$, and the negative strip-like end on the curve carrying  $(H^{+}_{s,t}, J^{+}_{s,t})$ has image $[0,1] \times (-\infty, 2 + r]$. Setting $g'_{\{0\}}= g_{\{0\}} $, we then obtain the same curve after gluing (with the same strip-like ends).

\begin{figure}[h]
  \centering
  \begin{tikzpicture}[scale=0.85]
    \definecolor{Hmcol}{RGB}{40,90,200}      
    \definecolor{Hpcol}{RGB}{170,40,150}     
    \definecolor{glucol}{RGB}{210,130,40}    

    \def\logr{1.2}        
    \def\rval{0.30}       
    \def\U{0.45}          
    \def\fL{1.0}          
    \def\halfsep{3.5}    

    \pgfmathsetmacro{\sH}{\U}                          
    \pgfmathsetmacro{\tw}{\U}                          
    \pgfmathsetmacro{\stripend}{\sH + \tw}             
    \pgfmathsetmacro{\glend}{\stripend + \logr}        
    \pgfmathsetmacro{\dashend}{\glend + \fL}           
    \pgfmathsetmacro{\fendL}{\stripend + \fL}          


    \begin{scope}[yshift=2.4cm]
      \begin{scope}[shift={(-\halfsep,0)}]
        \draw[line width=1.6pt] (-\fendL,0) -- (-\stripend,0);
        \draw[thin] (-\stripend,0) -- (-\sH,0);
        \draw[Hmcol, line width=2.8pt] (-\sH,0) -- (\sH,0);
        \draw[thin] (\sH,0) -- (\stripend,0);
        \draw[line width=1.6pt] (\stripend,0) -- (\glend,0);
        \draw[line width=1.6pt, dashed] (\glend,0) -- (\dashend,0);
        \foreach \x/\lab/\xs in {-\stripend/{$-2$}/-2pt, -\sH/{$-1$}/0pt, \sH/{$1$}/0pt, \stripend/{$2$}/0pt}
          \draw (\x,-0.1) -- (\x,0.1) node[above, font=\tiny, xshift=\xs] {\lab};
        \draw (\glend,-0.1) -- (\glend,0.1);
        \node[below, font=\tiny] at (\glend,-0.18) {$2{+}\log r$};
        \node[Hmcol, font=\small, above=4pt] at (0,0.45) {$H^{-}$};
      \end{scope}

      \begin{scope}[shift={(\halfsep,0)}]
        \draw[line width=1.6pt, dashed] (-\dashend,0) -- (-\glend,0);
        \draw[line width=1.6pt] (-\glend,0) -- (-\stripend,0);
        \draw[thin] (-\stripend,0) -- (-\sH,0);
        \draw[Hpcol, line width=2.8pt] (-\sH,0) -- (\sH,0);
        \draw[thin] (\sH,0) -- (\stripend,0);
        \draw[line width=1.6pt] (\stripend,0) -- (\fendL,0);
        \foreach \x/\lab/\xs in {-\stripend/{$-2$}/-2pt, -\sH/{$-1$}/0pt, \sH/{$1$}/0pt, \stripend/{$2$}/0pt}
          \draw (\x,-0.1) -- (\x,0.1) node[above, font=\tiny, xshift=\xs] {\lab};
        \draw (-\glend,-0.1) -- (-\glend,0.1);
        \node[below, font=\tiny] at (-\glend,-0.18) {${-}2{-}\log r$};
        \node[Hpcol, font=\small, above=4pt] at (0,0.45) {$H^{+}$};
      \end{scope}

    \end{scope}

    \begin{scope}[yshift=0cm]
      \pgfmathsetmacro{\nh}{\logr}                                
      \pgfmathsetmacro{\suppI}{\logr + \tw}                        
      \pgfmathsetmacro{\suppO}{\logr + \tw + 2*\sH}                 
      \pgfmathsetmacro{\transO}{\logr + 2*\tw + 2*\sH}              
      \pgfmathsetmacro{\fendO}{\logr + 2*\tw + 2*\sH + \fL}         

      \draw[line width=1.6pt] (-\fendO,0) -- (-\transO,0);
      \draw[thin] (-\transO,0) -- (-\suppO,0);
      \draw[Hmcol, line width=2.8pt] (-\suppO,0) -- (-\suppI,0);
      \draw[thin] (-\suppI,0) -- (-\nh,0);
      \draw[line width=1.6pt] (-\nh,0) -- (\nh,0);
      \draw[thin] (\nh,0) -- (\suppI,0);
      \draw[Hpcol, line width=2.8pt] (\suppI,0) -- (\suppO,0);
      \draw[thin] (\suppO,0) -- (\transO,0);
      \draw[line width=1.6pt] (\transO,0) -- (\fendO,0);

      \draw (-\nh,-0.1) -- (-\nh,0.1) node[below, font=\tiny] {$-\log r$};
      \draw (0,-0.1) -- (0,0.1) node[above, font=\tiny] {$0$};
      \draw (\nh,-0.1) -- (\nh,0.1) node[below, font=\tiny] {$\log r$};
      \pgfmathsetmacro{\suppCL}{-(\suppI + \suppO)/2}
      \pgfmathsetmacro{\suppCR}{(\suppI + \suppO)/2}
      \node[Hmcol, font=\small, above=4pt] at (\suppCL,0.45) {$H^{-}$};
      \node[Hpcol, font=\small, above=4pt] at (\suppCR,0.45) {$H^{+}$};
      \draw[glucol, decorate, decoration={brace, amplitude=4pt}]
        (-\nh,0.85) -- (\nh,0.85)
        node[midway, glucol, font=\tiny, above=4pt] {neck of length $2\log r$};
    \end{scope}

    \begin{scope}[yshift=-2.4cm]
      \begin{scope}[shift={(-\halfsep,0)}]
        \pgfmathsetmacro{\stripendC}{\stripend + \rval}
        \pgfmathsetmacro{\glendC}{\stripendC + \logr}
        \pgfmathsetmacro{\dashendC}{\glendC + \fL}

        \draw[line width=1.6pt] (-\fendL,0) -- (-\stripend,0);
        \draw[thin] (-\stripend,0) -- (-\sH,0);
        \draw[Hmcol, line width=2.8pt] (-\sH,0) -- (\sH,0);
        \draw[thin] (\sH,0) -- (\stripendC,0);                   
        \draw[line width=1.6pt] (\stripendC,0) -- (\glendC,0);
        \draw[line width=1.6pt, dashed] (\glendC,0) -- (\dashendC,0);
        \foreach \x/\lab/\xs in {-\stripend/{$-2$}/-2pt, -\sH/{$-1$}/0pt, \sH/{$1$}/0pt, \stripend/{$2$}/0pt}
          \draw (\x,-0.1) -- (\x,0.1) node[above, font=\tiny, xshift=\xs] {\lab};
        \draw (\stripendC,-0.1) -- (\stripendC,0.1);
        \node[below, font=\tiny] at (\stripendC,-0.18) {$2{+}r$};
        \draw (\glendC,-0.1) -- (\glendC,0.1);
        \node[below, font=\tiny] at (\glendC,-0.40) {$2{+}r{+}\log r$};
        \node[Hmcol, font=\small, above=4pt] at (0,0.45) {$H^{-}$};
      \end{scope}

      \begin{scope}[shift={(\halfsep,0)}]
        \pgfmathsetmacro{\sleEnd}{-\stripend + \rval}            
        \pgfmathsetmacro{\glStart}{\sleEnd - \logr}               
        \pgfmathsetmacro{\dashStart}{\glStart - \fL}              

        \draw[line width=1.6pt, dashed] (\dashStart,0) -- (\glStart,0);
        \draw[line width=1.6pt] (\glStart,0) -- (\sleEnd,0);
        \draw[thin] (\sleEnd,0) -- (-\sH,0);                      
        \draw[Hpcol, line width=2.8pt] (-\sH,0) -- (\sH,0);
        \draw[thin] (\sH,0) -- (\stripend,0);
        \draw[line width=1.6pt] (\stripend,0) -- (\fendL,0);
        \foreach \x/\lab/\xs in {-\stripend/{$-2$}/-2pt, -\sH/{$-1$}/0pt, \sH/{$1$}/0pt, \stripend/{$2$}/0pt}
          \draw (\x,-0.1) -- (\x,0.1) node[above, font=\tiny, xshift=\xs] {\lab};
        \draw (\sleEnd,-0.1) -- (\sleEnd,0.1);
        \node[below, font=\tiny] at (\sleEnd,-0.18) {${-}2{+}r$};
        \draw (\glStart,-0.1) -- (\glStart,0.1);
        \node[below, font=\tiny] at (\glStart,-0.40) {${-}2{+}r{-}\log r$};
        \node[Hpcol, font=\small, above=4pt] at (0,0.45) {$H^{+}$};
      \end{scope}

    \end{scope}
  \end{tikzpicture}
  \caption{Two distinct charts $b$ and $b'$ giving the same glued curve.  Each continuation strip $[0,1]\times\bR$ is shown as a horizontal line; the support $[-1,1]$ of $H^{-}$ is drawn in blue and that of $H^{+}$ in magenta, while the strip-like ends are thick black.  Gluing removes the two dashed regions from the infinite ends, so that the gluing region $[2,2+\log r]$ on the positive end of $H^{-}$ and the gluing region $[-2-\log r,-2]$ on the negative end of $H^{+}$ are identified along their boundaries to form a neck of length exactly $2\log r$ and centered at the coordinate $0$.  In the alternative chart $b'$, the boundary of the positive strip-like end of $H^{-}$ is shifted to $2+r$ and that of the negative strip-like end of $H^{+}$ is shifted to $-2+r$; the gluing function is unchanged, so the resulting glued curve is the same.}
  \label{fig:gluing-non-uniqueness}
\end{figure}
\end{rem}

We ommit the proof of the next result which is exactly the same as that of \cite[Lemma 2.13]{Abouzaid2023}:
\begin{lem}
  The collections of sets $\{\mathrm{Floer}_n(\vec{\bfH},\bfH)\}$ are the underlying sets of $n$-cubes of a symmetric cubical set. \qed
\end{lem}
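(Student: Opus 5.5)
The plan is to exhibit the three structures that make up a symmetric cubical set: face maps, degeneracy maps, and the $S_n$-action. We then check the cubical identities. Everything is defined on the data of Definition \ref{dfCuMultMor}, namely the local models $(T_f,b_f,g_f)$ indexed by the faces $f\in F_n$.

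\textbf{Structure maps.}
\begin{itemize}
\item \emph{Faces.} For a coordinate face $\partial_i^\epsilon\colon [0,1]^{n-1}\to[0,1]^n$, we restrict the atlas. We use the identifications $F_n=(F_1)^n$ and $\cV_n=(\cV_1)^n$. Each face $f'$ of $[0,1]^{n-1}$ maps to a face $f=\partial_i^\epsilon(f')$ of $[0,1]^n$. Since $W_f\cap \partial_i^\epsilon([0,1]^{n-1})=\partial_i^\epsilon(W_{f'})$ compatibly with the product decomposition \eqref{eq:decomposition_W_f}, we set $(T_{f'},b_{f'},g_{f'}):=(T_f,b_f|,g_f|)$.
\begin{itemize}
\item The restricted data is again a local model of the right codimension. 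The condition that each $g_i$ either vanishes identically or is nowhere zero on open faces is inherited.
\item The compatibility condition \eqref{eqGluingCompatibility} is inherited because it is an identity of maps, which continues to hold after restriction.
\end{itemize}
\item \emph{Degeneracies.} Precompose every local model with the coordinate projection $[0,1]^{n+1}\to[0,1]^n$. Each $W_{f\times f_1}$ projects to $W_f$, so we take $T$, $b$ and $g$ pulled back. The conditions hold trivially.
\item \emph{Symmetric group action.} A permutation $\sigma\in S_n$ permutes the factors of $F_n=(F_1)^n$ and $\cV_n=(\cV_1)^n$. We act by precomposition, using the $S_n$-equivariance of the cover noted above.
\end{itemize}

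\textbf{Cubical identities.} The identities (face–face, face–degeneracy, degeneracy–degeneracy, and equivariance of faces and degeneracies under $S_n$) reduce to the corresponding identities for the maps of cubes. This works because every operation acts on charts by precomposition with maps of cubes that respect the product covers $\cV_n$. So the identities hold at the level of local models, not merely on the underlying maps \eqref{eqUnderlyingMap}. This distinction matters by Remark \ref{remNonUniqueCube}: distinct cubes can have the same underlying map, so an identity checked only on $b$ would not determine the cube.

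\textbf{Main obstacle.} The one point needing care is the excluded case in Definition \ref{dfCuMultMor}. When $\vec{\bfH}$ is a singleton equal to $\bfH$, the sets must be supplemented by the formal units, and we have to check that face and degeneracy maps never carry an admissible cube into this excluded situation. This holds because the input and output labels are constant over a cube, so no face of an admissible cube can become a unit. The unit cubes are then the degenerate cubes on the formal unit, which are closed under all the operations.

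The remaining argument follows \cite[Lemma 2.13]{Abouzaid2023} verbatim. The only new feature is that trees now carry $\mathrm{Floer}$ labels, and restricting or pulling back charts preserves these labels.
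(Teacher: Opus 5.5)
Your proposal is correct and follows the paper's route. The paper omits the proof and defers to \cite[Lemma 2.13]{Abouzaid2023}: faces act by restricting the charts $(T_f,b_f,g_f)$, degeneracies by pulling back along coordinate projections, and $S_n$ by permuting coordinates. The identities then hold at the level of atlases, because $\cV_n=(\cV_1)^n$ is compatible with faces and $S_n$-equivariant. One quibble: your ``main obstacle'' is not really one, since the formal units are only adjoined later in Definition~\ref{def-Floer-no-reg}, and the structure maps trivially preserve $(\vec{\bfH},\bfH)$.
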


The next definition is the generalization of Definition \ref{def:mult-dimens-0} to higher dimensional cubes. In its statement, we use the fact that $F_{n_1 +  n_2} = F_{n_1} \times F_{n_2} $, i.e. that every face of an $(n_1 + n_2)$-cube consists of a product of faces of the $n_1$-cube and an $n_2$-cube. 
\begin{defin}
For each integer $i$ between $1$ and the length of a sequence $\vec{\bfH}_2$ of Hamiltonians, define the multicomposition map 
\begin{align}\label{eqHMultComp}
  \circ_i:\mathrm{Floer}_{n_1}(\vec{\bfH}_2;\bfH_{1,i})\times \mathrm{Floer}_{n_2}(\vec{\bfH}_1;\bfH) & \to\mathrm{Floer}_{n_1+n_2}(  \vec{\bfH}_1 \circ_i \vec{\bfH}_2  ;\bfH) \\
  (\fd_2,\fd_1) & \mapsto \fd_1 \circ_i \fd_2
\end{align}
as follows: writing $(T_{f_1}, b_{f_1},g_{f_1})$ to be the data associated by $\fd_1$ to the faces of the $n_1$-cube, and similarly for $\fd_2$, we set the data which determines $\fd_1 \circ_i \fd_2 $ to be:
\begin{itemize}
\item The tree $T_{f_1 \times f_2}$ is the result of attaching the root of $T_{f_2}$ to the $i$th leaf of $T_{f_1}$. Thus, the (interior) edges of $T_{f_1 \times f_2} $ are the union of the edges $E_1$ and $E_2$ of $T_{f_1}$ and $T_{f_2}$ with an additional edge arising from identifying the root of $T_{f_2}$ with the $i$th leaf of $T_{f_1}$.
  \item The (family) $b_{f_1 \times f_2}$  of broken curves are given by the composition
    \begin{equation}
      \label{eq:composition_Floer_data_family}
      \begin{tikzcd}[column sep=large]
        W_{f_1^\circ \times f_2^\circ} \ar[r, "\sim"] & W_{f_2^\circ} \times W_{f_1^\circ} \ar[d, "b_{f_2} \times b_{f_1}"] \\
        \mathrm{Floer}_{0,T_1 \circ_i T_2}(\vec{\bfH}_1 \circ_i \vec{\bfH}_2 ,\bfH) & \mathrm{Floer}_{0,T_2}(\vec{\bfH}_2,\bfH_{1,i}) \times \mathrm{Floer}_{0,T_1}(\vec{\bfH}_1,\bfH) \ar[l, "\circ_i"']
      \end{tikzcd}
    \end{equation}
where the composition in the bottom row is given as in Definition \ref{def:mult-dimens-0}.
\item The gluing map $g_{f_1 \times f_2}$ vanishes on the edge of $T_{f_1 \times f_2}$ along which gluing takes place, and it otherwise assigns to each edge coming from $T_i$ the value of $g_{f_i}$.
\end{itemize}
\end{defin}

It is straightforward to check that  boundaries and degeneracies are defined in the same way on the two sides in terms of inclusions and projections of cubes. We conclude:
\begin{lem} 
  The map given in Equation \eqref{eqHMultComp} determines a map of  symmetric cubical sets:
  \begin{equation} \label{eq:H_multicomposition_cubical_sets}
       \circ_i:\mathrm{Floer}_{\bullet}(\vec{\bfH}_2;\bfH_{1,i})\otimes \mathrm{Floer}_{\bullet}(\vec{\bfH}_1;\bfH)\to\mathrm{Floer}_{\bullet}(  \vec{\bfH}_1 \circ_i \vec{\bfH}_2  ;\bfH) .
  \end{equation} \qed
\end{lem}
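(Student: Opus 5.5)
The plan is to check that the multicomposition is a well-defined cube of multimorphisms in each dimension, and then that it intertwines the face maps, the degeneracies and the coordinate permutations. I will write $\otimes$ for the Day-type tensor product of symmetric cubical sets. It then suffices to define the map on pairs $(\fd_2,\fd_1)$ of cubes, which the preceding definition does, and to check compatibility with the cubical structure maps in each factor separately. Transpositions mixing the two blocks are part of the product structure of $\otimes$, not constraints on the map.

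\textbf{Step 1: the output is an $(n_1+n_2)$-cube.} I will verify that $\fd_1\circ_i\fd_2$ satisfies Definition \ref{dfCuMultMor}.
\begin{itemize}
\item For each face $f=f_1\times f_2$, the triple $(T_{f_1\times f_2},b_{f_1\times f_2},g_{f_1\times f_2})$ is a local model in the sense of Definition \ref{dfLocModMult}. Under $F_{n_1+n_2}=F_{n_1}\times F_{n_2}$ one has $W_{f_1\times f_2}=W_{f_1}\times W_{f_2}$ and $U_{f_1\times f_2}=U_{f_1}\times U_{f_2}$.
\item The gluing components coming from $T_1$ or $T_2$ vanish on the maximal-breaking locus and have the required face-wise vanishing behaviour, because the original ones do.
\item The new attaching edge has gluing parameter identically $0$. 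This is consistent, since that edge is broken on every face.
\item The weight matching at the new edge is arranged by rescaling representatives, using the equivalence $(e^rH,J,e^{-r}\alpha)\sim(H,J,\alpha)$, exactly as in Definition \ref{def:mult-dimens-0}.
\item Condition \eqref{eq:weight_bound} is vertex-local, so it is preserved.
\end{itemize}

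\textbf{Step 2: the gluing compatibility \eqref{eqGluingCompatibility}.} For $f_0=f_{0,1}\times f_{0,2}\subset f_1\times f_2$, the set of collapsed edges is the disjoint union $E^{(1)}\sqcup E^{(2)}$; the attaching edge is never collapsed. Gluing along edges supported in $T_1$ and along edges supported in $T_2$ commutes with attaching the root of one tree to the $i$th leaf of the other. The reason is that gluing (Definition \ref{def-gluing-general}) only modifies the components adjacent to the glued edge. Hence $\Gamma_{(g^{E^{(1)}},g^{E^{(2)}})}(b_{f_0})$ is the composite of $\Gamma_{g^{E^{(2)}}}(b_{f_{0,2}})$ and $\Gamma_{g^{E^{(1)}}}(b_{f_{0,1}})$. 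By the hypotheses on $\fd_1$ and $\fd_2$, this composite equals $b_{f_1\times f_2}$ restricted to $W_{f_0}$.

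\textbf{Step 3: the structure maps.}
\begin{itemize}
\item A face map of the $(n_1+n_2)$-cube in a coordinate of the first block is restriction to a face $f\times[0,1]^{n_2}$. The local models are indexed by faces contained in it, so restricting then composing agrees with composing then restricting. The same holds for the second block.
\item Degeneracies pull back local models along the coordinate projection. They therefore commute with the product construction.
\item Permutations within a block permute the factor $W_{f_1}$ or $W_{f_2}$ equivariantly, because $\cV_n=(\cV_1)^n$.
\end{itemize}

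\textbf{Main obstacle.} The main risk is the interaction with the formal units and degenerate cubes, where one input of the composition is excluded by the hypothesis of Definition \ref{dfCuMultMor}. These cases have to be defined separately: composition with the formal unit is the identity. Their compatibility with the structure maps then holds tautologically.

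A second, subtler point is that the cube structure on $\fd_1\circ_i\fd_2$ is determined by its charts, not only by its underlying map, since by Remark \ref{remNonUniqueCube} distinct chart families can have the same underlying map. I will therefore work throughout with equality of charts and not of the underlying maps $b$. With that convention, Steps 1 to 3 are chart-level identities and the lemma follows.
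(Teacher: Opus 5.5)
Your proposal is correct and is essentially the paper's argument. The paper's proof consists only of the observation that faces and degeneracies on both sides are given by inclusions and projections of cubes, with the passage from pairs of cubes to the shuffled product cubes of the tensor product deferred to \cite[Remark 2.16]{Abouzaid2023}; this is your Step 3 together with your Day-convolution reduction, while your Steps 1--2 make explicit the well-definedness that the paper builds into the definition of $\circ_i$. The formal-unit case you flag does not arise in this lemma: units are adjoined only afterwards in Definition~\ref{def-Floer-no-reg}, and strict monotonicity guarantees that no composite of cubes has a singleton input equal to its output.
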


\begin{rem}
   We warn the reader that the set of $n$-cubes of the tensor product $\mathrm{Floer}_{\bullet}(\vec{\bfH}_2;\bfH_{1,i})\otimes \mathrm{Floer}_{\bullet}(\vec{\bfH}_1;\bfH) $ is not the naive one given by the union over $n_1 + n_2 = n$ of the left hand sides of Equation \eqref{eqHMultComp}, i.e. by pairs $(\fd_2,\fd_1)$ of $n_1$ and $n_2$ cubes; we must allow, in addition, all possible shufflings of such product cubes.  See \cite[Remark 2.16]{Abouzaid2023} for more precise details how Equation \eqref{eqHMultComp} induces the asserted map.
\end{rem}

At this stage, one can easily check that the compositions we have just defined satisfy the associativity relations of a multicategory. See \cite[Definition B.8]{Abouzaid2023}.

To complete the construction of the multicategory $\mathrm{Floer}$, recall that we imposed, in Definition \ref{def:pre-multimorphism-H}, the monotonicity condition which implies that the input and output of each multimorphism with domain a strip must be different. This implies that our construction so far has the property that morphisms from a Floer datum to itself would be empty, had we not excluded this case from Definition~\ref{dfCuMultMor}. The next definition includes a description of this special case, which ensures that the multicategory we obtain is unital:

\begin{defin}\label{def-Floer-no-reg}
The multicategory $\mathrm{Floer}$ is the following multicategory enriched in symmetric cubical sets.  
\begin{itemize}
\item The objects are boundary conditions $\bfH = (\bfL_0, (J_{01},H_{01}), \bfL_1)$ as in Section \ref{sec:regular-floer-data-1}.
\item The multimorphisms are given by the symmetric cubical sets $\mathrm{Floer}_\bullet(\vec{\bfH},\bfH)$ whenever $\vec{\bfH} $ is not a singleton that agrees with $\bfH$, and defining $\mathrm{Floer}_\bullet(\bfH,\bfH) $ to be a singleton.
    \item The multicompositions are given by Equation \eqref{eq:H_multicomposition_cubical_sets}, and the singleton in $ \mathrm{Floer}_\bullet(\bfH,\bfH)$ acts as a strict unit. 
    \end{itemize}
\end{defin}

The above definition makes sense because of the strict monotonicity condition: before adding the unit, all possible compositions which could have output in $ \mathrm{Floer}_\bullet(\bfH,\bfH) $ have empty domain. After adding the unit, the only such composition involves composing the unit with itself.

\subsection{Multimorphisms and the forgetful map}
Denote by $\overline{\cR}_{m+1}$ the moduli space of stable discs with $m+1$ boundary marked points, see Section \ref{sec:stab-moduli-discs} for details. Denote by $\cR_{m+1}$ its interior, consisting of smooth discs. Let $\mathbf{\Box}_\bullet(\overline{\cR}_{m+1})$ denote the set of cubes in $\overline{\cR}_{m+1}$, considered as a symmetric cubical set. Given any composable sequence of boundary conditions $\vec{\bfH}=(\bfH_{01},\ldots,\bfH_{m-1,m})$ and output boundary condition $\bfH_{0m}$, there is a natural map
\begin{equation}\label{eq-forget-mult}
\fF:\mathrm{Floer}_\bullet(\vec{\bfH},\bfH_{0m})\to \mathbf{\Box}_\bullet(\overline{\cR}_{m+1})
\end{equation}
which forgets the Floer data, the 1-form and the strip like ends, and collapses the unstable components of the underlying disc. 


The map $\fF$ is not in general a homotopy equivalence. The following definition identifies a condition which will ensure it:
\begin{defin}\label{defHamOrdering}
Let $\vec{\bfH}=(\bfH_{01},\dots,\bfH_{m-1,m})$ be a composable sequence of boundary conditions and let $\bfH_{0m}$ be a horizontal composition of these boundary conditions. We say that $\bfH_{0m}>\vec{\bfH}$ if for every $x \in M$, and every $1\leq i\leq m$, we have
\begin{equation}
\min_{t\in[0,1]} H_{0m}(t,x)
>
\max\left\{
2^{m-1}\max_{t\in[0,1]} H_{i-1,i}(t,x),
\max_{t\in[0,1]} H_{i-1,i}(t,x)
\right\}.
\end{equation}
\end{defin}
For the next result, we consider a punctured disc $\Sigma$ with $m$ inputs and $1$ output:
\begin{lemma}\label{lmExMonH}
If $\bfH_{0m}>\vec{\bfH}$, then for any closed $1$-form $\alpha$ on $\Sigma$ vanishing on $\partial\Sigma$ which agrees with $w_idt$ near the $i$th input and with $dt$ near the output and such that $w_i\geq 2^{1-m}$, there exists a function
\begin{equation}
 H_\Sigma:\Sigma\times M\to\bR 
\end{equation}
so that $H_{\Sigma}\alpha$ pulls back to $H_{i-1,i}dt$ with respect to the strip-like end for the $i$th input ($1\leq i\leq m$), and $H_{\Sigma}\alpha$ pulls back to $H_{0m}dt$ with respect to the strip-like end near the output, and so that $H_{\Sigma}$ satisfies the monotonicity inequality \eqref{eq:strict-monotonicity}.
\end{lemma}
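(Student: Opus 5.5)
The plan is to build $H_\Sigma$ by hand: on each strip-like end interpolate in the $s$-direction between the prescribed end datum and an auxiliary function $c\colon M\to\bR$, and take $H_\Sigma=c$ on the rest of $\Sigma$. The first step is to note that the weights are at most $1$. Since $\alpha$ is closed and vanishes on $\partial\Sigma$, Stokes' theorem applied to $\Sigma$ truncated along its ends gives $\sum_i w_i = 1$, so each $w_i\le 1$. Together with $w_i\ge 2^{1-m}$, this yields, for each $x\in M$ and each $i$,
\begin{equation*}
\max_{t} \frac{H_{i-1,i}(t,x)}{w_i} < \min_t H_{0m}(t,x).
\end{equation*}
To check this, split on the sign of $\max_t H_{i-1,i}(t,x)$. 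If it is positive, use $1/w_i\le 2^{m-1}$ and the first term in the maximum in Definition~\ref{defHamOrdering}. If it is nonpositive, use $1/w_i\ge 1$, so that $H_{i-1,i}/w_i\le H_{i-1,i}$, and the second term. I then fix a smooth function $c\colon M\to\bR$ lying strictly between the two sides for every $x$ and every $i$, for instance the midpoint of the continuous functions $\max_i\max_t H_{i-1,i}(t,x)/w_i$ and $\min_t H_{0m}(t,x)$, smoothed. Compactness of $M$ gives a uniform gap $\eta>0$ on both sides.

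Next comes the construction. In strip-like coordinates $(s,t)$ on the $i$th input end, where $\alpha=w_i\,dt$, set
\begin{equation*}
H_\Sigma = (1-\varphi(s))\,\frac{H_{i-1,i}(t,x)}{w_i} + \varphi(s)\,c(x),
\end{equation*}
where $\varphi$ is a monotone cutoff that equals $0$ far out in the end and $1$ near where the end meets the rest of $\Sigma$. On the output end, where $\alpha=dt$, set $H_\Sigma=(1-\psi(s))c(x)+\psi(s)H_{0m}(t,x)$, with $\psi$ monotone and equal to $1$ far out in the end. On the complement of the ends set $H_\Sigma=c(x)$. By construction $H_\Sigma\alpha$ pulls back to $H_{i-1,i}\,dt$ and $H_{0m}\,dt$ in the required regions. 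I will need to check that the orientation convention for $s$ on input versus output ends makes both cutoffs increase toward the output.

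For monotonicity, compute $d_\Sigma H_\Sigma(\cdot,x)\wedge\alpha$ pointwise. Off the ends it vanishes because $H_\Sigma$ is independent of $z$ there. On the ends, the $dt$-component of $d_\Sigma H$ wedges to zero against $\alpha$, leaving
\begin{equation*}
\varphi'(s)\bigl(c(x)-H_{i-1,i}(t,x)/w_i\bigr)\,w_i\,ds\wedge dt \quad\text{and}\quad \psi'(s)\bigl(H_{0m}(t,x)-c(x)\bigr)\,ds\wedge dt.
\end{equation*}
Both are $\ge \eta\,w_i\varphi'\,ds\wedge dt$, respectively $\ge\eta\,\psi'\,ds\wedge dt$, uniformly in $x$. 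Hence $\min_x$ of the integrand is nonnegative everywhere and bounded below by a positive form on the nonempty open set where $\psi'>0$, so the integral in \eqref{eq:strict-monotonicity} is strictly positive. The main point to get right is the sign and orientation bookkeeping in this step, namely that $ds\wedge dt$ is positively oriented and the cutoffs increase in the correct direction on each type of end; the rest is routine.
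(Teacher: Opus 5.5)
Your argument is correct and is essentially the paper's proof: Stokes' theorem gives $\sum_i w_i=1$, hence $1\le w_i^{-1}\le 2^{m-1}$. Via exactly your sign case split, this places a smooth function strictly between $\max_t H_{i-1,i}/w_i$ and $\min_t H_{0m}$. $H_\Sigma$ is that function on the body of $\Sigma$ and a monotone $s$-interpolation on the ends, so that $d_\Sigma H_\Sigma(\cdot,x)\wedge\alpha\ge 0$ pointwise, with strict positivity on an open set.

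The one adjustment is where the interpolation happens. The pullback identity $\epsilon_p^*(H_\Sigma\alpha)=H_{i-1,i}\,dt$ is required on the whole strip-like end, not only far out. The paper therefore extends each end to $(-\infty,\delta]\times[0,1]$ (respectively $[-\delta,\infty)\times[0,1]$ at the output), still with $\alpha$ pulling back to $w_i\,dt$. It then interpolates in the added collar rather than inside the given end.
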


\begin{proof}
  Extend each negative strip-like end to a smooth embedding of $ (-\infty,\delta] \times [0,1]$ for a small positive constant $\delta$, and the positive strip-like end to an embedding of $[-\delta, \infty) \times [0,1]$, so that the images remain disjoint, and so that the pullback of $\alpha$ agrees with $w_i dt$. We call the images of these larger domains, the \emph{extended strip-like ends}.

  Since $\alpha$ is closed and vanishes on $\partial\Sigma$, Stokes'
theorem gives $\sum_{i=1}^{m} w_i=1$, and hence
$1\le w_i^{-1}\le 2^{m-1}$. Let $H: M\to\bR$ be a smooth function so that for any $x\in M$, and for all $1\leq i\leq m$, we have
\begin{equation}
\begin{aligned}
\min_{t\in[0,1]} H_{0m}(t,x)
&> H(x) \\
&> \max\left\{
2^{m-1}\max_{t\in[0,1]} H_{i-1,i}(t,x),
\max_{t\in[0,1]} H_{i-1,i}(t,x)
\right\} \\
&\ge \max_{t\in[0,1]} \frac{H_{i-1,i}(t,x)}{w_i}.
\end{aligned}
\end{equation}
We define the function $H_{\Sigma}$ piecewise as follows:
\begin{enumerate}
\item In the image of the strip-like ends, $H_{\Sigma}$ agrees with $\frac{H_{i-1,i}}{w_i}$ at the $i$th input ($1\leq i\leq m$) and with $H_{0m}$ at the output.
\item Away from the images of the extended strip-like ends, $H_{\Sigma}\equiv H$.
\item In the strips $[0,\delta] \times [0,1]$ (or $[-\delta, 0] \times [0,1]$), we define $H_{\Sigma}$ by monotonically interpolating, along the $s$ direction, between $\frac{H_{i-1,i}}{w_i}$ and $H$ at the $i$th input ($1\leq i\leq m$), and between $\frac{H_{0m}}{w_0}$ and $H$ at the output, where $w_0=1$.
\end{enumerate}
See Figure~\ref{fig:HSigma-construction}.

\begin{figure}[h]
  \centering
  \begin{tikzpicture}[scale=1.0]
    \definecolor{stripcol}{RGB}{210,225,250}
    \definecolor{trancol}{RGB}{252,220,180}
    \definecolor{intcol}{RGB}{215,240,215}

    \def\stripshift{0.6}
    \def\semishift{0.2}
    \def\deltaval{0.4}

    \pgfmathsetmacro{\inlo}{2 + \stripshift}        
    \pgfmathsetmacro{\intop}{\inlo - \deltaval}     
    \pgfmathsetmacro{\outhi}{-\stripshift}          
    \pgfmathsetmacro{\outbot}{\outhi + \deltaval}   
    \pgfmathsetmacro{\dipouter}{1 + \semishift}     
    \pgfmathsetmacro{\dipinner}{1 + \semishift}     

    \fill[intcol]
      (-0.5, 0)
        .. controls (-0.5, \dipouter) and (-2.5, \dipouter) .. (-2.5, 2)
        -- (-2.5, 4) -- (-1.5, 4) -- (-1.5, 2)
        .. controls (-1.5, \dipinner) and (-0.5, \dipinner) .. (-0.5, 2)
        -- (-0.5, 4) -- (0.5, 4) -- (0.5, 2)
        .. controls (0.5, \dipinner) and (1.5, \dipinner) .. (1.5, 2)
        -- (1.5, 4) -- (2.5, 4) -- (2.5, 2)
        .. controls (2.5, \dipouter) and (0.5, \dipouter) .. (0.5, 0)
        -- (0.5, -2) -- (-0.5, -2) -- (-0.5, 0)
      -- cycle;

    \fill[stripcol] (-2.5, \inlo) rectangle (-1.5, 4);
    \fill[stripcol] (-0.5, \inlo) rectangle (0.5, 4);
    \fill[stripcol] (1.5, \inlo) rectangle (2.5, 4);
    \fill[stripcol] (-0.5, -2) rectangle (0.5, \outhi);

    \fill[trancol] (-2.5, \intop) rectangle (-1.5, \inlo);
    \fill[trancol] (-0.5, \intop) rectangle (0.5, \inlo);
    \fill[trancol] (1.5, \intop) rectangle (2.5, \inlo);
    \fill[trancol] (-0.5, \outhi) rectangle (0.5, \outbot);

    \draw[thick] (-2.5, 4) -- (-2.5, 2)
        .. controls (-2.5, \dipouter) and (-0.5, \dipouter) .. (-0.5, 0)
        -- (-0.5, -2);
    \draw[thick] (0.5, -2) -- (0.5, 0)
        .. controls (0.5, \dipouter) and (2.5, \dipouter) .. (2.5, 2)
        -- (2.5, 4);
    \draw[thick] (-1.5, 4) -- (-1.5, 2)
        .. controls (-1.5, \dipinner) and (-0.5, \dipinner) .. (-0.5, 2)
        -- (-0.5, 4);
    \draw[thick] (0.5, 4) -- (0.5, 2)
        .. controls (0.5, \dipinner) and (1.5, \dipinner) .. (1.5, 2)
        -- (1.5, 4);

    \draw[dashed] (-2.5, \intop) -- (-1.5, \intop);
    \draw[dashed] (-0.5, \intop) -- (0.5, \intop);
    \draw[dashed] (1.5, \intop) -- (2.5, \intop);
    \draw[dashed] (-0.5, \outbot) -- (0.5, \outbot);

    \node[font=\small] at (-2, 3.3) {$\frac{H_{0,1}}{w_1}$};
    \node[font=\small] at (0, 3.3) {$\frac{H_{1,2}}{w_2}$};
    \node[font=\small] at (2, 3.3) {$\frac{H_{2,3}}{w_3}$};
    \node[font=\small] at (0, -1.3) {$H_{0,m}$};

    \node at (0, \dipouter) {$H_\Sigma\equiv H$};

    \draw[<->, thin] (-2.7, \intop) -- (-2.7, \inlo);
    \node[font=\tiny, anchor=east] at (-2.75, {(\intop+\inlo)/2}) {$\delta$};
  \end{tikzpicture}
  \caption{The disc $\Sigma$ with $m=3$ inputs and one output, illustrating the piecewise definition of $H_\Sigma$ in the proof of Lemma~\ref{lmExMonH}. On the strip-like ends (blue, item~(1)), $H_\Sigma$ agrees with $H_{i-1,i}/w_i$ at the $i$th input ($1\leq i\leq m$) and with $H_{0,m}$ at the output. On the transition zones (orange, item~(3))---the depth-$\delta$ extensions of the strip-like ends into the disc body, parametrized by $[0,\delta]\times[0,1]$ at each input and $[-\delta,0]\times[0,1]$ at the output---$H_\Sigma$ interpolates monotonically along the $s$-direction. On the rest of the disc body (green, item~(2)), $H_\Sigma\equiv H$.}
  \label{fig:HSigma-construction}
\end{figure}

Note that the following condition holds for each $x\in M$:
\begin{equation} \label{eq:monotonicity_HSigma}
dH_{\Sigma}(x)\wedge\alpha\geq 0,
\end{equation} with strict inequality at some points in the strips from (3).
This implies \eqref{eq:strict-monotonicity}.

\end{proof}
The following is the main result of this section. 
\begin{prop}\label{lmFrgtHtpyEq}
If $\bfH_{0m}>\vec{\bfH}=(\bfH_{01},\dots,\bfH_{m-1,m})$, then the map
\begin{equation}
\fF:\mathrm{Floer}_{\bullet}(\vec{\bfH},\bfH_{0m})\to \mathbf{\Box}_\bullet(\overline{\cR}_{m+1})
\end{equation}
is a homotopy equivalence of symmetric cubical sets.
\end{prop}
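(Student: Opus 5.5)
The approach is to follow the strategy of the analogous result for Hamiltonian Floer theory in \cite{Abouzaid2023}: show that $\fF$ is a Kan-type fibration of symmetric cubical sets with contractible fibres. Concretely, we prove that every lifting problem of the following form has a solution. We are given an $n$-cube $\sigma$ in $\overline{\cR}_{m+1}$, and a lift of $\partial\sigma$ (or of a union of faces) to $\mathrm{Floer}_\bullet(\vec{\bfH},\bfH_{0m})$. We must extend this to a lift of the whole cube that is compatible with the given data. Since $\overline{\cR}_{m+1}$ is contractible (a closed ball), this lifting property gives a homotopy equivalence by the standard argument for cubical sets with connections. Equivalently, it shows that $\mathrm{Floer}_\bullet(\vec{\bfH},\bfH_{0m})$ is contractible, because the target is.

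The first step is the case of a single point, $n=0$. Given a stable disc, we choose strip-like ends and weights, and we must produce compatible Floer data on every component. For each vertex $v$ of the underlying tree, the horizontal compositions on the internal edges are not yet specified. We choose them inductively from the leaves, using the ordering hypothesis $\bfH_{0m}>\vec{\bfH}$: on each internal edge we place a Hamiltonian squeezed strictly between $2^{k-1}\max H_{i-1,i}$ over the relevant inputs and $\min H_{0m}$, together with regular $J$ chosen generically. Admissibility and regularity are open dense conditions, so they can be achieved by a small perturbation. We then apply Lemma~\ref{lmExMonH} at each vertex, noting that the ordering inequality is inherited by each vertex because of the weight bound \eqref{eq:weight_bound}. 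To get $\alpha$, we choose a closed $1$-form vanishing on the boundary with the prescribed weights; harmonic forms with these residues exist. Unstable components (strips) are handled in the same way, with monotonicity coming from the interpolation.

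The second step is the inductive extension over higher cubes. Here we use that the space of admissible choices at each stage is convex, or a contractible open condition:
\begin{itemize}
\item the $1$-forms with prescribed ends form an affine space;
\item the Hamiltonians satisfying monotonicity with fixed ends form a convex set once $\alpha$ is fixed, since \eqref{eq:strict-monotonicity} is linear in $H$ and strict;
\item almost complex structures form a contractible space;
\item strip-like ends vary in a contractible space.
\end{itemize}
Given a lift on the boundary of the cube, we first extend it over the collar charts $W_f$ using the local models: near each face we keep the same broken data $b_f$ and gluing maps $g_f$, extended by the product structure \eqref{eq:decomposition_W_f}. We then fill the interior by a partition of unity, interpolating linearly in $(H\alpha,\alpha)$ and choosing $J$ via a contractible retraction. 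The gluing compatibility \eqref{eqGluingCompatibility} is preserved because the data is held fixed on the collar neighbourhoods. Internal-edge objects are chosen locally constant on strata, which is allowed because they are discrete labels.

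The main obstacle is near the boundary strata, where the cube in $\overline{\cR}_{m+1}$ meets broken discs. There we must produce local models $(T_f,b_f,g_f)$ whose glued map projects under $\fF$ exactly to the prescribed cube of stable discs. The difficulty is twofold. First, the gluing parameters must match the moduli-space coordinates. Second, monotonicity must survive gluing, which needs the $2^{n-1}2^{m-1}$ weight bookkeeping. I would first try to use the cubulation of $\overline{\cR}_{m+1}$ by ribbon metric trees, so that gluing parameters are given directly by edge-length coordinates. Then the lift near a codimension-$k$ face is the product of lifts for the vertex components with the gluing map $g$ equal to those coordinates. The remaining issue is non-uniqueness of charts (Remark~\ref{remNonUniqueCube}); this should be harmless, since we only need existence of extensions, not uniqueness.
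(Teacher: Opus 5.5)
\textbf{There is a genuine gap.} Your strategy needs a strict lifting property for $\fF$ against arbitrary cubes of $\overline{\cR}_{m+1}$. That property is false, so the obstacle you flag in your last paragraph is not a technicality.

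By Definition~\ref{dfCuMultMor}, on the open interior of each face of a cube in $\mathrm{Floer}_\bullet(\vec{\bfH},\bfH_{0m})$ the local model has a fixed tree $T_f$, with gluing parameters vanishing identically. So the stabilized image under $\fF$ has constant combinatorial type on each open face. It is also built from smooth families $b_f$. Two kinds of target cubes therefore have no preimage at all, let alone one extending a prescribed boundary lift:
\begin{itemize}
\item a cube of $\overline{\cR}_{m+1}$ meeting a boundary stratum at an interior point, e.g.\ a path in $\overline{\cR}_4\cong[0,1]$ touching an endpoint only at time $1/2$;
\item a merely continuous, non-smooth cube.
\end{itemize}
Hence $\fF$ is not a Kan-type trivial fibration, and contractibility of $\overline{\cR}_{m+1}$ cannot be transported by lifting. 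Even on the smooth stratum, exactly matching a prescribed family of conformal structures after gluing with strip-like ends is an equation you never solve.

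Your interior filling also fails. Interpolating linearly in $(H\alpha,\alpha)$ does not generally give a $1$-form of the shape $H_t\alpha_t$, let alone a monotone one. The condition \eqref{eq:strict-monotonicity} is concave, not linear, in $H$, and it defines a convex set only once $\alpha$ is fixed.

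The paper never lifts a target cube. It first shows (Lemma~\ref{lmsmincHoEq}) that gluing all nodes deformation retracts $\mathrm{Floer}_\bullet$ onto smooth cubes in the space $\mathrm{Floer}^{sm}_0$ of multimorphisms with unbroken domain. The gluing parameters are modified by a partition of unity over the collars $W_f$. This removes the broken strata from the problem; your own reformulation, proving contractibility of $\mathrm{Floer}_\bullet$ directly, would lead you to exactly this step.

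On $\mathrm{Floer}^{sm}_0$ (Lemma~\ref{lmForSmHoEq}), the forgetful map to $\cR_{m+1}$ factors as $(\Sigma,H,J,\alpha,\epsilon)\mapsto(\Sigma,\alpha,\epsilon)\mapsto(\Sigma,\epsilon)\mapsto\Sigma$. Each step is a fibre bundle: the first two have convex fibres, nonempty by Lemma~\ref{lmExMonH} and de Rham theory, and the last has contractible fibres (spaces of strip-like ends). The argument closes with Eilenberg-type smooth-versus-continuous approximation and the homotopy equivalence $\cR_{m+1}\hookrightarrow\overline{\cR}_{m+1}$.

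A minor point: your $n=0$ interval for intermediate Hamiltonians is not sufficient as stated. Each vertex $v$ needs its output to exceed $\max\{2^{|E_{in}(v)|-1}a,\,a\}$ of its own inputs, so the choice must be nested from leaves to root. This is fixable, and unnecessary in the paper's route, which never constructs data over nodal discs.
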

The proof will be given at the end of this section, after a few preparatory lemmas. Denote by $\mathrm{Floer}^{sm}_{0}(\vec{\bfH},\bfH_{0m})$ the topological space of $0$-dimensional multimorphisms whose underlying disc is smooth. 
\begin{lemma}\label{lmForSmHoEq}
The map
\begin{equation}
\fF:\mathrm{Floer}^{sm}_{0}(\vec{\bfH},\bfH_{0m})\to \cR_{m+1}
\end{equation}
is a homotopy equivalence of topological spaces.
\end{lemma}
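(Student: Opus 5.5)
The plan is to show that $\fF\colon\mathrm{Floer}^{sm}_{0}(\vec{\bfH},\bfH_{0m})\to \cR_{m+1}$ is a fibration, or at least a map with a continuous section, whose fibres are nonempty and contractible. Since $\cR_{m+1}$ is a contractible manifold (an open cell), it then suffices to show that the total space is contractible as well. We do this by exhibiting a section and then a fibrewise contraction onto it.

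Fix $[\Sigma]\in\cR_{m+1}$. The fibre over $[\Sigma]$ consists of equivalence classes, under the rescaling $(e^rH,J,e^{-r}\alpha)$, of tuples
\[
(\text{strip-like ends},\ \alpha,\ H_\Sigma,\ J)
\]
satisfying the conditions of Definition~\ref{def:pre-multimorphism-H}. We first kill the rescaling by normalizing the output weight $w_{p_{out}}=1$. We then analyse the remaining data in stages, each of which is a fibration with convex or contractible fibre.

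\textbf{Strip-like ends.} The space of strip-like ends on a fixed disc is contractible; this is standard, since it is a space of disjoint holomorphic embeddings of half-strips.

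\textbf{The closed $1$-form $\alpha$.} Given the ends, $\alpha$ is a closed $1$-form vanishing on $\partial\Sigma$ with prescribed asymptotic form $w_p\,dt$. The weights are subject to $\sum_i w_i=1$ (by Stokes) together with the linear bounds \eqref{eq:weight_bound}. Both the allowed weights and the forms with given weights form convex sets, and these are nonempty since $w_i=1/m\geq 2^{1-m}$ works.

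\textbf{The almost complex structures $J$.} The space of $\Sigma$-families of $\omega$-tame $J$ with the prescribed boundary and end values is contractible, by the usual argument.

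\textbf{The Hamiltonian $H_\Sigma$.} Given $\alpha$, the set of $H_\Sigma$ with the prescribed end behaviour $\epsilon_p^*(H_\Sigma\alpha)=H^e\,dt$ is an affine space. The strict monotonicity inequality \eqref{eq:strict-monotonicity} cuts out a subset, and the key point is that this subset is convex. The functional $H\mapsto \int_\Sigma \min_x (d_\Sigma H(\cdot,x)\wedge\alpha)$ is concave, because a minimum of linear functionals is concave, so its strict superlevel set is convex. Lemma~\ref{lmExMonH}, using $\bfH_{0m}>\vec{\bfH}$, shows that this set is nonempty for every admissible $\alpha$.

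Combining the stages, the fibre over $[\Sigma]$ is an iterated bundle of contractible pieces, hence contractible. To globalize, I will use a continuous slice for $\cR_{m+1}$: a family of discs, ends and $\alpha$ depending continuously on the modulus. This is possible because $\cR_{m+1}$ is a cell. The construction of Lemma~\ref{lmExMonH} is explicit, and can be carried out continuously in families, provided the auxiliary function $H$ is chosen once and for all. This gives a section $s$ of $\fF$.

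The contraction of the total space onto the section is then built stage by stage. For the ends, $\alpha$ and $J$ we use the contractions already described. For $H_\Sigma$ we use linear interpolation, which is legitimate by the convexity above, after first moving $\alpha$. At that step I intend to reparametrize $H_\Sigma$ via the product $H_\Sigma\alpha$, so that the end conditions stay linear as $\alpha$ varies.

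I expect the main obstacle to be this coupling between $\alpha$ and $H_\Sigma$. The end condition is stated in terms of $H_\Sigma\alpha$, and monotonicity involves $d H_\Sigma\wedge\alpha$, so convexity in $H_\Sigma$ holds only for fixed $\alpha$. The plan is therefore to avoid interpolating in $H_\Sigma$ and $\alpha$ simultaneously. First, we deform $\alpha$ to $s(\Sigma)$'s form while modifying $H_\Sigma$ by the explicit construction of Lemma~\ref{lmExMonH}. This construction depends continuously on $\alpha$, through the weights $w_i$ and the cutoff regions. Second, with $\alpha$ now fixed, we interpolate $H_\Sigma$ linearly. Concretely, this realizes the fibre over fixed $(\Sigma,\text{ends},\alpha)$ as a convex set, with a continuously chosen basepoint supplied by Lemma~\ref{lmExMonH}.

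A secondary point is the topology on $\mathrm{Floer}^{sm}_0$. I will take it to be the $C^\infty_{loc}$ topology on data, with ends varying, and check that the contractions above are continuous in it.
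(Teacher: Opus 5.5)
Your proposal is correct and follows essentially the same route as the paper: you normalize the output weight to $1$ and factor $\fF$ through forgetting $(H_\Sigma,J)$, then $\alpha$, then the strip-like ends, using convexity (concavity of the integrated monotonicity functional, and the weight constraints with $w_i=1/m$), Lemma~\ref{lmExMonH} for nonemptiness, and contractibility of the space of ends. Your explicit section-plus-fibrewise-contraction argument only replaces the paper's bare assertion that these forgetful maps are fibre bundles, but at the $(\alpha,H_\Sigma)$ stage you have the order backwards, since as stated the deformation is discontinuous at time $0$: first, at fixed $\alpha$, take the straight-line homotopy of $H_\Sigma$ to the basepoint supplied by Lemma~\ref{lmExMonH}, and only then move $\alpha$, carrying that basepoint along.
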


\begin{proof} 

We can find a unique element of the form $({\Sigma},e^rH_\Sigma,e^{-r}\alpha,\{\epsilon^{\pm}_p\})$ in any equivalence class $[({\Sigma},H_\Sigma,J,\alpha,\{\epsilon^{\pm}_p\})]\in \mathrm{Floer}^{sm}_{0}(\vec{\bfH},\bfH_{0m})$ such that the output weight is $1$. Let us assume that the representatives that we use below satisfy this property.

The map $\fF$ factors as 
\begin{equation} \label{eq:factor-forgetful-map}
[({\Sigma},H_\Sigma, J,\alpha,\{\epsilon^{\pm}_p\})]\mapsto [({\Sigma},\alpha,\{\epsilon^{\pm}_p\})]\mapsto [({\Sigma},\{\epsilon^{\pm}_p\})]\mapsto [\Sigma].
\end{equation}
It is easy to see that all of these maps are fiber bundles.

The fibres of the first two maps are convex, so it suffices to show that they are non-empty: for the first map, this  follows from Lemma \ref{lmExMonH}, while for the middle map, this is a consequence of the de-Rham isomorphism for the relative cohomology of $(\Sigma,\partial\Sigma)$. We also used the fact that $w_i\geq 2^{1-m}$ is a convex condition and that there are solutions of $w_1+\ldots+w_m=1$ satisfying this property for all $1\leq i\leq m$, e.g $w_i=1/m$.

It remains to show that the  map forgetting the strip-like ends has contractible fibres. Arguing as in \cite[Lemma 2.21]{Abouzaid2023}, one shows that the space of strip-like ends at a given puncture deformation retracts to the space of holomorphic embeddings of a fixed neighborhood of the origin in the upper half plane into itself which preserve the origin and have non-vanishing derivative at the origin. We can then define a contracting flow on these embeddings whose fixed points are the positive real scalings. This finishes the proof.

\end{proof}

For the next result, we denote by $\Box^{sm}_{\bullet} M$ the smooth singular cubes of a differentiable manifold $M$.

\begin{lemma}\label{lmsmincHoEq}
 
  The inclusion   \begin{equation} \label{eq:include_smooth_gluable_cubes}
   \iota_*: \Box^{sm}_{\bullet}(\mathrm{Floer}^{sm}_{0}(\vec{\bfH},\bfH_{0m}) )  \hookrightarrow \mathrm{Floer}_{\bullet}(\vec{\bfH},\bfH_{0m}).
 \end{equation}
  is a homotopy equivalence of symmetric cubical sets.
\end{lemma}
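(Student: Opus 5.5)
The plan is to follow the proof of the analogous statement in \cite{Abouzaid2023}. The approach is to build a deformation retraction of $\mathrm{Floer}_{\bullet}(\vec{\bfH},\bfH_{0m})$ onto the subcomplex of smooth cubes, by "pushing cubes into the interior" through gluing.

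First I would identify the image of $\iota_*$. A smooth singular cube $c\colon [0,1]^n\to \mathrm{Floer}^{sm}_0$ defines an element of $\mathrm{Floer}_n$ by taking, for every face $f$, the trivial local model. Here $T_f$ is the corolla, $b_f=c|_{W_f}$, and there are no interior edges, so $g_f$ is empty. Compatibility \eqref{eqGluingCompatibility} is then vacuous, and faces and degeneracies commute with this assignment. So $\iota_*$ is an injective map of symmetric cubical sets, and its image consists exactly of the cubes all of whose local models have corolla trees. A general cube instead has local models $(T_f,b_f,g_f)$ with $b_f$ taking broken values on $C\times\{0\}$.

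Second, I would define a homotopy $h\colon \mathrm{Floer}_\bullet \otimes \Box^1 \to \mathrm{Floer}_\bullet$ that interpolates between the identity and a map $\rho$ landing in smooth cubes. The idea is to \emph{increase the gluing parameters} uniformly. Fix a continuous function $\phi\colon[0,1)\times[0,1]\to[0,1)$ with the following properties: $\phi(r,0)=r$; $\phi(r,s)>0$ for $s>0$; $\phi$ is monotone in $r$; and $\phi(\cdot,s)$ is smooth and compatible with the order-independence of gluing.

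Given an $n$-cube $\fd$ with local models $(T_f,b_f,g_f)$, the $(n+1)$-cube $h(\fd)$ keeps the tree $T_f$ and the broken family $b_f$ (pulled back along the projection $[0,1]^{n+1}\to[0,1]^n$), and replaces the gluing data by $\phi\circ g_f$. On the face $s=1$ every interior edge is glued with a positive parameter. The glued family $\Gamma_{\phi(g_f,1)}(b_f)$ is then a smooth family of smooth multimorphisms, so this face lies in the image of $\iota_*$; this defines $\rho(\fd)$. On the face $s=0$ we recover $\fd$. On a cube already in the image of $\iota_*$ there are no edges, so $h$ is constant there, i.e. it is a homotopy relative to the subcomplex.

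I must check four things. The new data again satisfy Definition~\ref{dfLocModMult}: components are nowhere zero on open faces with $s>0$. Compatibility \eqref{eqGluingCompatibility} holds for $h(\fd)$, using independence of the order of gluing. The construction commutes with faces, degeneracies, and the symmetric group action, which follows because $\phi$ is applied coordinatewise. And $\rho\circ\iota_*=\id$ while $\iota_*\circ\rho\simeq\id$ via $h$. Together these give a homotopy inverse.

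The main obstacle is well-definedness. A cube is a choice of local models, but, as Remark~\ref{remNonUniqueCube} shows, distinct charts can give the same underlying map. I need $h$ to be defined on cubes as data, not on underlying maps, so this is acceptable provided $h$ respects the overlap conditions between charts for $f_0\subset f_1$. Two charts $g_{f_0}$ and $g_{f_1}$ are related on overlaps by splitting the edges of $T_{f_0}$ into those collapsed in $T_{f_1}$ and the survivors. Reparametrizing all gluing parameters by $\phi$ must then be consistent with the identity $\Gamma_{g^E_{f_0}}(b_{f_0})=b_{f_1}$. This forces me to define $h$ on $b_{f_1}$ as $\Gamma_{\phi(g^E_{f_0})}(b_{f_0})$ near $W_{f_0}$, and to verify independence of the chart using Lemma~\ref{lmGluingMonotonicity}.

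If this is too rigid, the fallback is a two-step argument. First show that $\mathrm{Floer}_\bullet$ is a Kan-type cubical set whose geometric realization retracts onto the stratum of smooth discs. Then use Lemma~\ref{lmForSmHoEq} and the contractibility of collar neighbourhoods (gluing charts near the boundary strata) to conclude by a Whitehead-type argument.
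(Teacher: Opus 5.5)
Your overall strategy matches the paper's, which follows \cite[Lemma~2.22]{Abouzaid2023}: retract onto smooth cubes by increasing gluing parameters until every node is glued, with the homotopy constant on cubes that are already smooth. However, the concrete recipe for $h$ (keep $T_f$ and $b_f$, replace $g_f$ by $\phi\circ g_f$ chart by chart) does not define an $(n+1)$-cube, and your proposed fix does not repair it.

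To see the failure, take $f_0\subset f_1$ and $x\in W_{f_0}\cap W_{f_1}$. Let $E$ be the edges of $T_{f_0}$ collapsed in $T_{f_1}$, and let $S$ be the surviving edges. Since $b_{f_1}=\Gamma_{g^E_{f_0}}(b_{f_0})$ and the surviving coordinates of $g_{f_1}$ and $g_{f_0}$ agree, the two charts give:
\begin{itemize}
\item chart $f_1$: $\Gamma_{(g^E_{f_0},\,\phi(g^{S}_{f_0},s))}(b_{f_0})$;
\item chart $f_0$: $\Gamma_{(\phi(g^E_{f_0},s),\,\phi(g^{S}_{f_0},s))}(b_{f_0})$.
\end{itemize}
On the overlap, $g^E_{f_0}$ takes arbitrarily small positive values, while $\phi(0,s)>0$. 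So $\phi(g^E_{f_0},s)\neq g^E_{f_0}$ there. By Lemma~\ref{lmGluingMonotonicity}, gluing is injective in the parameters for a fixed source, so the two glued curves genuinely differ. That lemma can only identify surviving-edge coordinates between charts; it cannot make a chart-dependent prescription chart-independent.

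Your remedy is to define the new $b_{f_1}$ as $\Gamma_{\phi(g^E_{f_0})}(b_{f_0})$ near $W_{f_0}$. This just moves the problem. At the edge of $W_{f_0}$ inside $W_{f_1}$, the family jumps from reparametrized to unreparametrized gluing of the edges in $E$. With several nested $f_0$, you also need a rule for which chart governs.

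The missing ingredient is the partition of unity subordinate to the collar cover $\{W_f\}$. The paper (via \cite[Equation~(2.40)]{Abouzaid2023}) proceeds as follows:
\begin{itemize}
\item it defines modified parameters $\widetilde g_{f,e}$ by a partition-of-unity interpolation;
\item it sets $\widehat g_{f,e}(t,x)=\widetilde g_{f(x),e}(t,x)$, with $f(x)$ the smallest face whose collar contains $x$;
\item it multiplies the extra gluing of any edge whose collapse pattern changes by partition-of-unity functions that vanish near the transition.
\end{itemize}
Only after this damping is Lemma~\ref{lmGluingMonotonicity} used, to match surviving-edge coordinates, which yields smoothness and compatibility of $\widehat g_f,\widehat b_f$.

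Three smaller points. On faces $f\times(0,1]$ you cannot keep the tree $T_f$: since $\phi(0,s)>0$, the gluing functions do not vanish on the face, violating Definition~\ref{dfLocModMult}. Those faces must carry the fully collapsed tree, with the glued family as $b$; you do this implicitly only at $s=1$. Next, $\phi$ must be jointly smooth in $(r,s)$, not merely continuous. Finally, the Kan/Whitehead fallback is too vague to count as an argument.
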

\begin{proof}

The proof is the same as in the closed-string case
\cite[Lemma~2.22]{Abouzaid2023}. The almost complex structure, which
is included here as part of the Floer data although suppressed from
the notation, is simply carried along and does not affect the
argument. We spell out one compatibility point which is left implicit
in the cited proof.

Given an $n$-cube with gluing atlas
$(T_f,b_f,g_f)_{f\in F_n}$, the cited proof constructs a pointwise
homotopy by modifying the gluing parameters using a partition of
unity subordinate to the collar neighbourhoods $W_f$. At the initial
endpoint this recovers the given cube, while at the terminal endpoint
all nodes have been glued, yielding a smooth family. The construction
is constant on cubes which are already smooth.

The main point is to verify that this pointwise homotopy is itself an
$(n+1)$-cube. In the notation of the cited proof,
$\widetilde g_{f,e}(t,x)$ is the modified gluing parameter associated
to an edge $e$ in the chart indexed by $f$, obtained from
$g_{f,e}(x)$ by the partition-of-unity interpolation of
\cite[Equation~(2.40)]{Abouzaid2023}. The local gluing parameters near
$\{0\}\times f$ are then defined by
\[
   \widehat g_{f,e}(t,x)
   =
   \widetilde g_{f(x),e}(t,x),
\]
where $f(x)$ is the smallest face whose collar contains $x$; the
corresponding broken data are denoted by $\widehat b_f$.

When $f(x)$ changes, the same surviving edge may be described using
the gluing coordinates of two different charts. The well-definedness
argument following Equation~\eqref{eqLocModMult}, which uses
Lemma~\ref{lmGluingMonotonicity}, shows that these coordinates agree.
The terms associated to edges whose collapse pattern changes are
multiplied by partition-of-unity functions which vanish near the
corresponding transition. Together with the compatibility of the
broken data in Equation~\eqref{eqGluingCompatibility}, this shows that $\widehat g_f$ and
$\widehat b_f$ are smooth and satisfy the compatibility conditions
for a gluing atlas.

The remaining verifications that the construction is compatible with
faces, degeneracies and symmetries, and that it satisfies the endpoint
conditions for a deformation retraction, are unchanged from the
cited proof.

\end{proof}

We now provide the proof of the main result in this section:
\begin{proof}[Proof of Proposition \ref{lmFrgtHtpyEq}]
Consider the diagram
\begin{equation}
  \begin{tikzcd}
    \Box^{}_{\bullet}(\mathrm{Floer}^{sm}_{0}(\vec{\bfH},\bfH_{0m}))\ar[d] &\Box^{sm}_{\bullet}(\mathrm{Floer}^{sm}_{0}(\vec{\bfH},\bfH_{0m}))\ar[d]\ar[r] \ar[l] &\mathrm{Floer}_{\bullet}(\vec{\bfH},\bfH_{0m})\ar[d,"\fF"]\\
        \Box^{}_{\bullet}(\cR_{m+1}) &\Box^{sm}_{\bullet}(\cR_{m+1})\ar[l] \ar[r]& \mathbf{\Box}_\bullet(\overline{\cR}_{m+1})
  \end{tikzcd}
\end{equation}
That $\fF$ is a homotopy equivalence will follow when we show that all other arrows in the diagram are homotopy equivalences. Indeed, the upper right horizontal arrow is a homotopy equivalence by Lemma \ref{lmsmincHoEq}. The left vertical one is a homotopy equivalence by Lemma \ref{lmForSmHoEq}, and the fact that a homotopy equivalence of topological spaces induces a homotopy of the associated symmetric cubical sets. The smooth approximation argument for families of functions and $1$-forms parametrized by a cube then implies that the two horizontal maps on the left are homotopy equivalences, hence so is the middle vertical map (the use of such approximation arguments to show that the inclusion of smooth chains into continuous chains is a homotopy equivalence goes back to Eilenberg \cite{Eilenberg1947}).

The bottom right horizontal map factors as 
\begin{equation}
\Box^{sm}_{\bullet}(\cR_{m+1})\to\Box_{\bullet}(\cR_{m+1})\to \mathbf{\Box}_\bullet(\overline{\cR}_{m+1}).
\end{equation}
The map on the right of the last equation is induced from the inclusion of the interior of $\overline{\cR}_{m+1}$. Since $\overline{\cR}_{m+1}$ is a manifold with corners, this inclusion is a homotopy equivalence. The map on the left is a homotopy equivalence, again by the cubical analogue of \cite{Eilenberg1947}.

\end{proof}

We now put all these forgetful maps together into a forgetful multifunctor. We start by defining a multicategory, enriched in symmetric cubical sets, associated to a $\mathbb{Z}_2$- or $\mathbb{Z}$-graded symplectic manifold $M$ and a bulk $\textbf{b}$:

\begin{defin} \label{def:mult-forg-map}
The objects of $\mathrm{Lag}$ are pairs of Lagrangian branes $(\bfL_0, \bfL_1)$. The multimorphism cubical sets are empty except possibly in the following situation: we are given a sequence  $(\bfL_i, \bfL_{i+1})$ of input objects, $i=0,\ldots,k-1$ with $k\geq 1$, and the output object is $(\bfL_0, \bfL_k)$. In that case, we define the multimorphism space to be $\mathbf{\Box}_\bullet(\overline{\cR}_{k+1})$. The multi-composition structure is inherited from the operad structure of $\{\mathbf{\Box}_\bullet(\overline{\cR}_{k+1})\}_{k=1,\ldots}$.  
\end{defin}

We note that the above definition makes sense because we set $\overline{\cR}_{2}$ to be a point, despite the fact that there is no stable disc with two marked points.   In this way, there is a natural forgetful functor
\begin{equation}
\fF:\mathrm{Floer}\to \mathrm{Lag},
\end{equation}which sends each boundary condition to its underlying pair of Lagrangian branes on objects and uses Equation \eqref{eq-forget-mult} on multimorphism spaces. In Section~\ref{sec:def-fuk-with-support} we shall use this to define Fukaya categories.


\section{The Floer functor}

\subsection{Moduli spaces}
\label{sec:moduli_spaces}

Associated to each cube $\fd$ in $ \mathrm{Floer}_{n}(\vec{\bfH},\bfH_{0k})$ is a topological space
\begin{equation}
  \label{eq:moduli_space}
  \Mbar(\fd)
\end{equation}
whose elements are equivalence classes of stable solutions to the Floer equation with datum $b_{\fd}(x)$ associated by $\fd$ to a point $x \in [0,1]^n$. Concretely, if $x$ lies in a neighbourhood of a stratum $f$, then $b_{\fd}(x)$, which is valued in $\mathrm{Floer}_{0}(\vec{\bfH},\bfH_{0k}) $, is obtained by gluing the data $b_f$ according to the gluing parameters associated to this point by $x$. Part of the datum of an element of this set thus consists of a pre-stable disc $\Sigma_x$ with $k+1$ boundary marked points. The domains of the solutions that we consider are pre-stable discs $\Sigma$ with $k+1$ boundary marked points equipped with an embedding
\begin{equation}
\Sigma_x \to \Sigma
\end{equation}
whose complement is unstable, and which is compatible with the labelling of the marked points in the sense that the marked points labelled by $i$ in $\Sigma_x$ and $\Sigma$ are connected by chains of components in the complement of $\Sigma_x$.  We partition the components of $\Sigma \setminus \Sigma_x$ into \emph{Floer strips}, which separate the marked point labelled by $i$ in $\Sigma_x$ from $\Sigma$, or the two sides of a node in $\Sigma_x$, and \emph{bubbles} which may be either discs or spheres. We thus consider maps
\begin{equation}
  u \co \Sigma \to M
\end{equation}
satisfying the inhomogeneous Cauchy-Riemann equation
\begin{equation}
  \left(du - X_{H_\Sigma} \otimes \alpha_{\Sigma}\right)^{0,1} = 0
\end{equation}
where the function $H_{\Sigma}$, the $1$-form $\alpha_\Sigma$, and the almost complex structure are determined as follows:
\begin{itemize}
\item On a component of $\Sigma_x$, we use the data determined by $\fd$
\item On a Floer strip, we use the data determined by the associated node of $\Sigma_x$ (i.e. with respect to the identification with $\bR \times [0,1]$ which is canonical up to translation, we set $H_{\Sigma} = H_e$ for the edge $e$ labelling this node, the $1$-form $\alpha_\Sigma$ to be given by $dt$, and the almost complex structure by $J_e$).
  \item On a bubble, the function $H_{\Sigma}$ identically vanishes (as does $\alpha_\Sigma$), and the almost complex structure is given by the almost complex structure on $M$ associated by $\fd$ to the point in $\Sigma_x$ to which this bubble is attached in the case of a sphere bubble, or by the almost complex structure $J_{L_i}$ for a disc bubble attached to the $i$-th componenent of the boundary of $\Sigma_x$.
  \end{itemize}
  We associate to each solution of this equation the topological energy (c.f. \cite[Lemma 8.1.6]{McDuff2012})
\begin{align}
  E(u) & = \int u^* \omega + \tilde{u}^* dH \wedge \alpha \\
  & = \int |du - X_H \otimes \alpha|^2 d \mathrm{vol}_{\Sigma} + d H_{u(z)} \wedge \alpha.
\end{align}
  
The moduli space $  \Mbar(\fd) $ is then defined to be the space of such finite energy maps, with the property that the asymptotic conditions on the two sides of each node agree, modulo the equivalence relation which identifies two maps that are intertwined by a biholomorphism over $M$ preserving the Floer data. We have the following standard consequence of Gromov compactness whose proof we omit:
\begin{lem}
 The energy functional is proper on $  \Mbar(\fd) $.\qed
\end{lem}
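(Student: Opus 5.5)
The plan is to reduce properness to Gromov compactness by showing that each sublevel set $\{u \in \Mbar(\fd) : E(u)\le E_0\}$ is compact. This requires a uniform geometric energy bound together with control of the parameter space.

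First, control the parameters. The domain data of $\fd$ vary over the compact cube $[0,1]^n$. Near each face, the map $b_\fd$ is described by the local models $(T_f,b_f,g_f)$ and the gluing maps $\Gamma$. So a sequence $u_\nu$ with $E(u_\nu)\le E_0$ has parameters $x_\nu$, and after passing to a subsequence these converge to some $x_\infty$ in the closure of a collar $W_f$. Along this subsequence the Floer data $b_\fd(x_\nu)$ converge, in the gluing sense, to the (possibly broken) datum $b_f(x_\infty)$ glued with parameters $g_f(x_\infty)$. Where a gluing parameter tends to $0$, the Floer data on the neck converge on compact subsets to the translation-invariant datum $(H_e,J_e,dt)$. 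This is exactly the datum used on Floer strips at that node, and so it matches the definition of $\Mbar(\fd)$.

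Second, convert the topological bound into a geometric one. By the identity $E(u)=\int|du-X_H\otimes\alpha|^2+\int dH_{u(z)}\wedge\alpha$, the geometric energy equals $E(u)$ minus a curvature term. The monotonicity inequality (Appendix~\ref{sec:monotonicity-inequality} and Equation~\eqref{eq:strict-monotonicity}) says $\min_x d_\Sigma H(\cdot,x)\wedge\alpha$ has positive integral. The curvature term is bounded below by $-C$, where $C$ depends continuously on the datum and is therefore uniform over the compact cube. In fact, on the strip-like ends and Floer strips the term vanishes because the data there are $s$-independent. This gives $E_{\mathrm{geo}}(u_\nu)\le E_0+C$.

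Third, apply Gromov--Floer compactness for inhomogeneous Cauchy--Riemann equations on surfaces with strip-like ends and varying, possibly degenerating, conformal structure. Bounded geometric energy yields a subsequence converging to a stable configuration. This configuration consists of a main part over $\Sigma_{x_\infty}$, Floer strips breaking off at the ends and at necks, and sphere and disc bubbles with zero Hamiltonian term and with the almost complex structures $J^v(z)$ or $J_{L_i}$ respectively. This is precisely the class of objects allowed in $\Mbar(\fd)$. Energy is lower semicontinuous and in fact conserved in the limit, so the limit lies in the same sublevel set. The asymptotic matching of chords at nodes follows from nondegeneracy, which holds because the $H_e$ are admissible.

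The main obstacle is the uniformity of the curvature bound and the convergence of data across the gluing charts. Where several charts overlap, one must check that the limit is independent of the chart, and I would use the well-definedness argument following Equation~\eqref{eqLocModMult} for this. Beyond that, the argument is standard, for instance \cite[Lemma 8.1.6]{McDuff2012} together with the usual compactness theorems. I would therefore present it citing those, noting that bubbles carry no Hamiltonian perturbation and so need no extra input.
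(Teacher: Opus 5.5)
Your proposal is correct and follows the route the paper has in mind. The paper omits the proof as a standard consequence of Gromov compactness, and your three steps are what that argument consists of: compactness of the cube controlled through the gluing charts, a geometric energy bound, and Gromov--Floer compactness producing limits of exactly the type allowed in $\Mbar(\fd)$.

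One simplification: you do not need a uniform constant $C$ for the curvature term. The integrated monotonicity inequality \eqref{eq:strict-monotonicity}, applied pointwise as in Appendix~\ref{sec:monotonicity-inequality}, already shows that this term is positive, so $E_{\mathrm{geo}}(u)\le E(u)$ holds directly. This also removes the continuity-over-the-cube argument, which would be delicate near broken data with long necks.
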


\subsection{Regular Floer data}
\label{sec:regular_data}
We have already discussed the notion of regularity for objects of $\mathrm{Floer}$ in Section \ref{sec:prel-sympl-manif} and made this a requirement in Definition \ref{def-Floer-no-reg}. 
We now extend this notion to multimorphisms in $\mathrm{Floer}$. The linearization of the Cauchy-Riemann operator satisfied by each point in $ \Mbar(\fd) $ is a Fredholm map
\begin{equation} \label{eq:linearisation_CR}
C^{\infty}(\Sigma; u^* TM)/ \mathfrak{aut} \, \Sigma \to \Omega^{0,1}(\Sigma; u^* TM),
\end{equation}
where $ \mathfrak{aut}\, \Sigma$ is the tangent space of the space of biholomorphisms of $\Sigma$ which preserve the Floer data on $\Sigma$ (i.e. preserve the equivalence class of this point in $ \Mbar(\fd) $). Assuming that the corresponding point in $[0,1]^n$ lies in the interior of a face $[0,1]^k$, the deformation of the operator associated to moving the point within this stratum defines a map
\begin{equation} \label{eq:normal_deformation_CR}
 T [0,1]^k \to \Omega^{0,1}(\Sigma; u^* TM).
\end{equation}
We call the sum of the operators in Equation \ref{eq:linearisation_CR} and \ref{eq:normal_deformation_CR} the \emph{extended linearization} operator.

We associate to each cube $\fd$ a pair of evaluation maps
\begin{equation}
\Mbar_{1,0}(\fd) \to   M  \leftarrow \Mbar_{1}(J_{\fd})
\end{equation}
where the left hand side is the moduli space of solutions to Floer's equation, with $1$ interior marked point, and the right hand side is the moduli space of pseudo-holomorphic spheres of Chern number $0$, parametrized by the universal curve over $[0,1]^n$ underlying $\fd$, also with one marked point. 


\begin{defin}
  The family of Floer data $\fd$ is \emph{regular} if the following properties hold:
  \begin{enumerate}
  \item  the extended linearization operator is surjective for all elements of $\Mbar_1(J_{\fd})$ which are simple pseudo-holomorphic spheres of Chern class $0$,
  \item  the extended linearization operator is surjective for all elements of $\Mbar_{1,0}(\fd) $, whose underlying curve does not contain any Floer strip, disk bubble or sphere bubble, and for which this index is strictly smaller than $2$, and
    \item the evaluation maps from these two parametrized spaces to the product of $M$ with the universal curve over $[0,1]^n$ are transverse (and hence disjoint).
    
  \end{enumerate}
\end{defin}

\begin{lem}
  If the family of Floer data $\fd$ are regular, then the extended linearization operator is surjective for each element of $ \Mbar(\fd) $ whose virtual dimension is strictly less than $2$. \qed
\end{lem}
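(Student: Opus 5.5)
The plan is to reduce the statement to a stratum-by-stratum analysis of the compactified moduli space $\Mbar(\fd)$. Each element $u\in\Mbar(\fd)$ has a domain $\Sigma\supset\Sigma_x$. We decompose $\Sigma$ into four kinds of pieces: the main components (those of $\Sigma_x$), the Floer strips inserted at nodes and punctures, the disc bubbles, and the sphere bubbles. The extended linearization at $u$ is the linearized operator on the normalization of this nodal curve, together with the parameter directions from $T[0,1]^k$, subject to the matching conditions at the nodes. The standard fact we use is this: if each piece is cut out transversely, and the evaluation maps at the nodes are mutually transverse, then the operator on the nodal configuration is surjective. This is a fibre-product statement, proved by the usual linear algebra for Fredholm operators with matching conditions. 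It therefore suffices to show that every configuration of virtual dimension $<2$ is built from transversely cut pieces meeting transversely.

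The key steps, in order, are as follows.

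\textbf{Step 1 (sphere bubbles).} Spherical nonnegativity (Definition~\ref{def:weakly-monotone_M}) gives that every sphere class of positive area has $c_1\ge0$. Replacing multiply covered spheres by their underlying simple spheres lowers the dimension count without raising the index. Condition (1) of regularity makes the simple Chern-$0$ spheres transverse. Condition (3) makes them disjoint from the interior evaluation of main components, since there the expected codimension count fails by at least $2$. Spheres of positive Chern number raise the index by at least $2$, so they cannot appear once the total virtual dimension is $<2$. Chern-$0$ spheres are excluded by the disjointness. Hence configurations of virtual dimension $<2$ contain no sphere bubbles.

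\textbf{Step 2 (disc bubbles).} Since each brane is weakly tautologically unobstructed, every disc bubble has Maslov index $\ge2$. A bubble of Maslov index $\ge 2$ attached to a component of nonnegative index forces the ambient component to have virtual dimension $\le -1$ or so. For components cut out transversely, this forces the configuration to be empty, except in the case where the bubble sits on a constant or $s$-invariant piece. We treat that case by means of the regularity of Maslov-$2$ discs away from the irregular locus, together with the admissibility of chords (Definition~\ref{def:admissible_Hamiltonian}), which keeps evaluation points off $Z(L_i;J_i)$.

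\textbf{Step 3 (Floer strips).} These are regular because the objects of $\mathrm{Floer}$ are required to be regular boundary conditions. Constant (trivial) strips are treated as nodes rather than as components.

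\textbf{Step 4 (main components).} These are covered by condition (2) of the definition of regularity.

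\textbf{Step 5 (gluing).} With all pieces transverse and no bubbles present, the dimension formula is additive. Every piece then has index $<2$, so the hypotheses of conditions (1)--(3) apply to each piece, and the fibre-product argument gives surjectivity for the glued configuration.

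The main obstacle is Step 2. Disc bubbles of Maslov index exactly $2$ can attach to a piece of index $-1$, and transversality of such a piece is not provided by condition (2), which only concerns configurations without bubbles. The first thing to try is an index count: pieces of negative virtual dimension that are regular are empty. This argument needs surjectivity to hold already for negative index, which condition (2) does supply, since it covers all indices $<2$. The remaining subtle case is a Maslov-$2$ disc attached to a constant or $s$-invariant strip. This is precisely the configuration that produces the $\rho$-terms, and there we must use the assumption that the evaluation points avoid the critical values and the irregular locus.
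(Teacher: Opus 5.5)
Your outline follows the paper's route: spherical nonnegativity together with conditions~(1) and~(3) disposes of sphere bubbles, Maslov index at least $2$ disposes of disc bubbles, condition~(2) handles the components over $\Sigma_x$, and regularity of the objects handles the Floer strips. The gap is in Step~2. You leave open the case you call the main obstacle. The input you propose for it (admissibility of chords, with evaluation points avoiding $Z(L_i;J_i)$ and critical values) is not what closes it. That input is what makes the pure strip moduli spaces $\Mbar(x,x,\beta)$ regular and produces $d^2=(\rho_1-\rho_0)$. Those spaces have no principal component and are not among the $\Mbar(\fd)$.

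The missing observation is that a disc bubble on a constant Floer strip, or on an $s$-invariant continuation solution, never occurs in virtual dimension $<2$. Every element of $\Mbar(\fd)$ contains the components over $\Sigma_x$. Remove the strips and bubbles and keep that piece's own asymptotic chords: the result is a solution of the type covered by condition~(2). Its extended index $i_0+k$, where $k$ is the dimension of the face containing the point, is therefore nonnegative, since a surjective operator of negative index has empty zero set. By additivity of the index, the virtual dimension of the element is $i_0+k$, plus the parametrized indices of the Floer strips, plus the Maslov indices of the discs, plus twice the Chern numbers of the spheres.

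A constant Floer strip contributes $0$ to this sum, not $-1$. The $-1$ you are worried about comes from quotienting by a translation that acts trivially on the constant strip, and it plays no role in the glued class. Continuation strips are not quotiented by translation at all. Hence a single Maslov-$2$ disc already forces virtual dimension at least $2$. Equivalently, principal piece plus constant strip plus disc is a codimension-$2$ stratum of dimension $i_0+k\geq 0$. Note also that your convention of treating constant strips as nodes does not remove this case, because a constant strip carrying a disc is a stable component; it is the index count that excludes it.

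This is the content of the paper's remark that there are no disc bubbles in this range. With it, Step~2 is a one-line index count, and Steps~1 and~3--5 go through as you describe them.
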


\begin{proof}
  Our conditions for regularity imply that the solutions to continuation equations of virtual dimension strictly smaller than $2$ are given as follows: those of virtual dimension $0$ have domain a strip, and those of virtual dimension $1$ have domain either a strip or a concatenation of a pair of virtual dimension $0$ strips (one of which is a solution to Floer's equation). By Definition \ref{def:weakly-monotone_M}, every sphere component has non-negative Chern number. A positive-Chern sphere would force the principal component to have negative virtual dimension and is therefore excluded by regularity, while a Chern-zero sphere is excluded by the evaluation-transversality condition by the evaluation-transversality condition in the definition right above. Crucially, because of our minimum Maslov index $2$ assumption there are no disk bubbles in this virtual dimension range.

  The same index and transversality argument applies to general multimorphisms, with principal polygon components in place of strips and domain degenerations included among the boundary strata.
\end{proof}

We note that regularity is closed under taking boundaries, degeneracies, and symmetries, as well as multicompositions (products). This leads to the following:
\begin{defin}
  The multicategory of regular Floer data $\mathrm{Floer}^{\mathrm{reg}}$ is the sub-multicategory of $\mathrm{Floer}$ with the same objects, and whose multimorphisms are all multimorphisms given by regular Floer data.
\end{defin}

The following result shows that the homotopy type of the multimorphism spaces does not change when we restrict to regular Floer data.

\begin{lem}\label{lem-reg-correct}
  The inclusion of the multicategory of regular Floer data $\mathrm{Floer}^{\mathrm{reg}}$ into $\mathrm{Floer}$ induces a homotopy equivalence of multimorphism spaces.
\end{lem}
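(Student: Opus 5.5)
To show that the inclusion $\mathrm{Floer}^{\mathrm{reg}}_\bullet(\vec{\bfH},\bfH)\hookrightarrow \mathrm{Floer}_\bullet(\vec{\bfH},\bfH)$ is a homotopy equivalence of symmetric cubical sets, I would verify a relative lifting (Kan-type) property. Every cube $\fd\in\mathrm{Floer}_n(\vec{\bfH},\bfH)$ whose boundary $\partial\fd$ (all faces) already consists of regular data should be homotopic, relative to its boundary, to a regular cube. Equivalently, we need an $(n+1)$-cube in $\mathrm{Floer}$ with the following faces:
\begin{itemize}
\item one end equal to $\fd$;
\item the opposite end regular;
\item side faces given by degeneracies of the faces of $\fd$.
\end{itemize}
Together with compatibility with degeneracies and the $S_n$-symmetries, this gives the standard criterion for an inclusion of (fibrant-like) cubical sets to be a weak equivalence. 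For the unit component $\mathrm{Floer}_\bullet(\bfH,\bfH)$ there is nothing to prove, since objects are regular by definition.

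The key steps, in order, are the following.

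\textbf{Step 1 (parametric transversality).} Consider the space $\mathcal{J}$ of perturbations of the almost complex structures $J^v$ (and, if needed, of $H^v$ away from the strip-like ends) over the universal curve of the cube. The perturbations should be supported in the interior of $[0,1]^n$ and away from the strip-like ends and the boundary labels by $J_{L_i}$. They should also vanish on the collars $W_f$ of proper faces, so that the gluing-atlas structure $(T_f,b_f,g_f)$ of Definition~\ref{dfCuMultMor} is preserved verbatim near the boundary. A Sard--Smale argument in the style of \cite{McDuff2012}, applied to the parametrized moduli spaces, then shows that for generic perturbations conditions (1)--(3) of the regularity definition hold. The relevant parametrized moduli spaces are $\Mbar_{1,0}(\fd)$ of index $<2$ with principal component only, and $\Mbar_1(J_\fd)$ of simple Chern-zero spheres.

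\textbf{Step 2 (behaviour near the boundary).} Near the faces no perturbation is allowed, so there I would instead use openness of regularity. Regularity of the data over $\partial[0,1]^n$, combined with standard gluing estimates, implies regularity on a neighbourhood $\bigcup_f W_f$ of the boundary, after possibly shrinking $\epsilon_0$. The idea is that surjectivity of the extended linearization is an open condition. Moreover, the glued families near a face are modeled on products of regular lower-dimensional families with gluing parameters, so surjectivity persists for small gluing parameters. Transversality of the evaluation maps, which here means disjointness in the relevant dimensions, is similarly open because the moduli spaces are compact in the index range by the energy properness lemma.

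\textbf{Step 3 (constructing the homotopy cube).} Take a path from the zero perturbation to the generic one, linearly scaled by the new coordinate $t\in[0,1]$. This yields an $(n+1)$-cube whose local models are those of $\fd$ times $[0,1]$. The required conditions on this new cube hold for the following reasons:
\begin{itemize}
\item the monotonicity inequality \eqref{eq:strict-monotonicity} and the weight bound \eqref{eq:weight_bound} are untouched if we perturb only $J$, or are preserved under $C^1$-small perturbations of $H$ since they are strict/open;
\item the cube satisfies the compatibility of Equation~\eqref{eqGluingCompatibility};
\item only its $t=1$ face needs to be regular.
\end{itemize}
Choosing perturbations equivariantly, by averaging over the symmetric group or by choosing on the quotient, handles the symmetric structure. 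Degenerate cubes are handled by pulling back regular choices along projections.

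The main obstacle is Step 2: making sure that regularity propagates from the boundary faces into the gluing collars, uniformly enough to leave an interior region where the perturbation can be supported. I would first try to deduce it from compactness of the index $<2$ moduli spaces together with the openness of surjectivity in the Gromov topology, using that broken configurations at the faces are regular by hypothesis. If that fails, I would allow perturbations in the collars that are pulled back from the faces under the collar projection, which keeps the local models intact.
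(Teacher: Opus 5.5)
Your overall strategy matches the paper's in spirit: regularize by generic perturbation relative to faces that are already regular, and use the straight-line path to the perturbed data as the homotopy. The paper builds a deformation retraction inductively on the dimension. For every cube it takes an arbitrarily small perturbation $b'_f$ of \emph{all} the local models $b_f$, with $\fd'=\fd$ when $\fd$ is regular, chosen compatibly with faces and degeneracies.

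The genuine gap is in your Step~2, where you suspected it. You forbid any perturbation on the collars $W_f\cong f^o\times[0,\epsilon_0)^{\mathrm{codim}(f)}$ of proper faces, so regularity on them must come from regularity of $\partial\fd$ via openness. Openness only gives regularity on some neighbourhood of $\partial[0,1]^n$ of uncontrolled size. The collars, however, have the fixed width $\epsilon_0$, which is part of the definition of the cubical set $\mathrm{Floer}_\bullet$ and cannot be shrunk for an individual cube. Moreover, a local model $b_f$ is an arbitrary smooth family on all of $V_f\times U_f$, varying in the normal directions independently of any gluing, so it is not determined by its restriction to $f$. Concretely, modify a constant (degenerate) $1$-cube over a regular $0$-cube on a small interval inside $(0,\epsilon_0)\subset W_{\{0\}}$ so that it passes through irregular data. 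Both endpoints are regular, but your procedure is not allowed to touch the irregular points. Your fallback does not help either. A perturbation pulled back along the collar projection is determined by its value on the face, so keeping the (already regular) face fixed forces it to vanish.

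What is missing is permission to perturb the broken data $b_f$ of each chart on all of $W_f$. These perturbations should depend on the normal coordinates and vanish only on a small neighbourhood of $f$, where openness already gives regularity. They should be supported on the components away from the strip-like ends and gluing necks, so that they commute with the gluing maps $\Gamma$. Chosen inductively over the faces, they can then be made to satisfy the compatibility \eqref{eqGluingCompatibility} on overlaps. With that change, the relative Sard--Smale argument of your Step~1 goes through, and this is precisely the paper's ``arbitrarily small perturbation of the data $b_f$''.

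A smaller point: your ``standard criterion'' (homotopy rel boundary into the regular part) yields an equivalence only given a Kan-type box-filling property of $\mathrm{Floer}_\bullet$, which you have not established. The paper sidesteps that criterion by constructing the deformation retraction of the whole cubical set directly.
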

\begin{proof}
We construct a deformation retraction: for each cube $\fd$ in $ \mathrm{Floer}  $, we choose a regular cube $\fd'$ with the same underlying  data at the inputs and output, so that $\fd = \fd'$ if $\fd$ is regular, and a homotopy between them, compatibly with face and degeneracy maps. The construction is inductive on the dimension of the cube, and relies essentially on genericity of regular data, so that the data $b'_f$ underlying $\fd'$ can be chosen to be an arbitrarily small perturbation of the data $b_f$ underlying $\fd$, yielding a canonical path between them up to contractible choice, which provides the desired deformation retraction.
\end{proof}

\subsection{The Floer functor}

We now indicate how to use standard techniques in Floer theory to define a dg multifunctor \begin{equation}\label{eq-Floer-functor}
    \mathrm{CF}_\rho : C_*(\mathrm{Floer}^{\mathrm{reg}}_\rho)\to \mathrm{Ch}_{\Lambda_{0}},
\end{equation}
     for each $\rho\in \Lambda_{0}$. Here $\mathrm{Floer}^{\mathrm{reg}}_\rho$ is the full subcategory of $\mathrm{Floer}^{\mathrm{reg}}$ with objects $\bfH = (\bfL_0, (J_{01},H_{01}), \bfL_1)$ such that the superpotential values of both $\bfL_0$ and $\bfL_1$ as defined in Section \ref{sec-superpotential} are equal to $\rho$, and $C_*$ is the symmetric normalized cubical chains functor as described in \cite[Appendix B.1.3]{Abouzaid2023}, with value the dg-multicategory of chain complexes $\mathrm{Ch}_{\Lambda_{0}}$ ($\mathbb{Z}/2$ or $\mathbb{Z}$-graded, depending on the assumptions).

For an object $\bfH = (\bfL_0, (J_{01},H_{01}), \bfL_1)$ of $\mathrm{Floer}^{\mathrm{reg}}_\rho$, we define $$\mathrm{CF}_\rho(\bfH)=CF^*(\bfL_0,\bfL_1; H_{01},J_{01}),$$ which is a chain complex (rather than a matrix factorization as would happen if the potential values are different) due to Corollary \ref{cor-same-superpotential}.

For multimorphism spaces with more than one input, we associate to each $n$-cube $\fd$ a map of degree $-n$, obtained from the components of $\Mbar(\fd)$ of dimension $0$ (which are regular by our assumptions). Finally, considering those moduli spaces $\Mbar(\fd)$ of dimension $1$, we conclude the compatibility with differentials by observing that the boundaries of these moduli spaces are either associated to the faces of $\fd$ or to breaking of Floer trajectories. The only point worth noting is that, by our assumptions, none of these moduli spaces contain curves with disk and sphere bubbles, since we have assumed that there is more than one input (the case of $1$-dimensional moduli spaces of strips is the reason for Corollary \ref{cor-same-superpotential}).

 \section{The definition of Fukaya category with support}\label{sec:def-fuk-with-support}

\subsection{A review of the operadic left Kan extension}\label{sec-review-left-Kan}

We fix a commutative ring $R$ and the following data: \begin{itemize} \item a differential graded multicategory $\cM$
\item a differential graded multicategory $\cO$
\item a differential graded multi-functor $\pi: \cM\to \cO$
\item an $\cM$-algebra $A: \cM\to \Ch_R $
\end{itemize} 
We call this the \emph{abstract setup} in this section. 

Given a multicategory $\cM$,  the \emph{colors of $\cM$}, denoted $\Color(\cM)$, is the multicategory with the same objects as $\cM$, where the only multimorphisms are the endomorphism spaces which are all assumed to be the unit object of $\Ch_R$ (which is the ground ring considered as a complex concentrated in degree $0$). 

We have a forgetful functor ($U$ stands for underlying) \begin{equation}U: Alg_{\cM}\to Alg_{\Color \cM},\end{equation}which admits a left adjoint: \emph{the free algebra functor} $\mathbb{F}_{\cM}: Alg_{\Color \cM}\to Alg_{\cM},$ which we now construct.

Let $C=\{C_x\}_{x\in Ob(\cM)}$ be an object of $Alg_{\Color \cM}.$ For every $y\in Ob(\cM)$, we define
\begin{equation} \label{eq:formula_free_functor}
  \mathbb{F}_{\cM}(C)(y):= \bigoplus_{n=1}^\infty \bigoplus_{\vec{x}\in Ob(\cM)^n} C_{x_1}\otimes\ldots \otimes C_{x_n} \otimes \cM(\vec{x}; y).
\end{equation}

Using the multicompositions in $\cM$ one can define chain maps \begin{equation}\cM(\vec{x};y)\to Hom_{\Ch_R}(\mathbb{F}_{\cM}(C)(x_1)\otimes \ldots \otimes \mathbb{F}_{\cM}(C)(x_n), \mathbb{F}_{\cM}(C)(y)),\end{equation} which are compatible with multicompositions. There is a natural isomorphism \begin{equation}Alg_\cM(\bF_\cM(C),A)\simeq \prod_{x\in Ob(\cM)} Hom_{\Ch_R}(C_x, UA_x)\end{equation}certifying that indeed the free algebra functor is left adjoint to the forgetful functor.

Let us denote the category of simplicial objects in $Alg_\cM$ by $Alg_\cM^\Delta$.  Note that an $\cM$-algebra $A$ can be thought of as a simplicial $\cM$-algebra whose $n$-simplices are given by $A$ for all simplicial degrees $n\geq 0$ and all face and degeneracy maps being the identity. For the next statement, we also recall that the geometric realization $C_*:\Ch_R^\Delta\to \Ch_R$
 maps every simplicial chain complex to the direct sum of the (shifted) underlying complexes, equipped with the alternating sum of the face maps. 

\begin{lem}
  The Eilenberg-Zilber shuffle map determines a lift
  \begin{equation}
       C_*' : Alg_\cM^\Delta\to Alg_\cM 
     \end{equation}
     of the geometric realization functor to the category of algebras over $\cM$.

\end{lem}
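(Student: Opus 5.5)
The plan is to build the $\cM$-algebra structure on $C_*(A_\bullet)$ for a simplicial $\cM$-algebra $A_\bullet$ by composing two pieces of structure. The first is the Eilenberg--Zilber shuffle map, which makes geometric realization a lax symmetric monoidal functor. The second is the levelwise $\cM$-action.

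Concretely, I will first recall that for simplicial chain complexes $X_\bullet^1,\ldots,X_\bullet^n$ the shuffle map
\begin{equation}
\nabla: C_*(X^1_\bullet)\otimes\cdots\otimes C_*(X^n_\bullet)\to C_*(X^1_\bullet\otimes\cdots\otimes X^n_\bullet)
\end{equation}
is a natural chain map. Here the target is the diagonal (levelwise) tensor product. On $x_1\otimes\cdots\otimes x_n$ with $x_i$ of simplicial degree $p_i$, it is the signed sum over $(p_1,\ldots,p_n)$-shuffles of the iterated degeneracies applied to each $x_i$, landing in degree $\sum p_i$. I will use the standard facts that $\nabla$ is associative, unital, and commutes with the symmetric group action up to the Koszul sign; in other words, it is a lax symmetric monoidal structure.

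Next, for a multimorphism $\phi\in\cM(\vec{x};y)$ of degree $d$, I regard $\cM(\vec x;y)$ as a constant simplicial complex. The algebra structure of $A_\bullet$ gives a levelwise map
\begin{equation}
A_\bullet(x_1)\otimes\cdots\otimes A_\bullet(x_n)\otimes \cM(\vec x;y)\to A_\bullet(y)
\end{equation}
of simplicial complexes, since face and degeneracy maps are $\cM$-algebra maps. I then define the action on $C_*$ as the composite of three maps:
\begin{enumerate}
\item the shuffle map $\nabla$, applied to the $n+1$ factors (including the constant one);
\item the identification $C_*(\text{constant})\to\cM(\vec x;y)$, which projects onto simplicial degree $0$, or equivalently $\nabla$ against a constant object, where only the trivial shuffle survives after normalization;
\item $C_*$ applied to the levelwise action.
\end{enumerate}
To avoid a mismatch in step (2), I will instead tensor $\cM(\vec x;y)$ in simplicial degree $0$ and use that $\nabla$ with a degree-$0$ constant factor is just inclusion, so the action is simply $C_*(\mathrm{act})\circ(\nabla\otimes\mathrm{id})$.

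Finally, I will verify the three required properties in turn.
\begin{itemize}
\item It is a chain map: $\nabla$ and $C_*(\mathrm{act})$ are chain maps, and the levelwise action is a map of simplicial chain complexes.
\item It respects units: the unit of $\cM(y;y)$ acts levelwise by the identity.
\item It is compatible with multicomposition and symmetric actions: this follows from associativity and symmetry of $\nabla$ together with naturality of $\nabla$ with respect to the levelwise action maps.
\end{itemize}
The identity $C'_*(A)=A$ for constant $A$ (after normalization) shows that $C'_*$ lifts realization. Functoriality in maps of simplicial algebras is immediate from naturality of $\nabla$.

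The main obstacle I expect is the sign bookkeeping in the associativity check. I need the identity
\begin{equation}
\nabla\circ(\mathrm{id}\otimes\cdots\otimes\nabla\otimes\cdots\otimes\mathrm{id})=\nabla
\end{equation}
as maps out of iterated tensor products, combined with the degree shift in the realization and the Koszul signs arising from the position of the $\cM(\vec x;y)$ factor. I would handle this by proving the $n$-ary shuffle formula once, as a sum over multi-shuffles with sign, and deducing both associativity and equivariance from the combinatorial bijection between iterated shuffles and multi-shuffles.
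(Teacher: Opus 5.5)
Your proposal is correct and matches what the paper intends: the paper gives no proof beyond invoking the shuffle map, and your construction (lax monoidal $\nabla$ on realization, composed with the levelwise $\cM$-action, with $\cM(\vec x;y)$ placed in simplicial degree $0$, and naturality plus associativity of $\nabla$ giving compatibility with multicomposition) spells out exactly that standard argument. One small correction: the lift property holds by construction, because you put the $\cM$-algebra structure on $C_*(A_\bullet)$ itself, so your closing remark about constant $A$ is not what ``lift'' means here and is not needed (moreover, with the paper's unnormalized realization, a constant simplicial complex $A$ realizes to something only quasi-isomorphic to $A$, not equal to it).
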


Finally, we construct the functor $\mathbb{L}\pi_*: Alg_\cM\to Alg_\cO$ in the abstract setup. Let us first define a functor on algebras over multicategories of colors  $\Color(\pi)_*: Alg_{\Color \cM} \to Alg_{\Color \cO}$ which maps an algebra $x\mapsto C_x$ to the algebra 
\begin{equation} y\mapsto \bigoplus_{x\in \pi^{-1}(y)}C_x.\end{equation}

We define a simplicial object $(\Delta\pi)_*A_\bullet$ in $Alg_\cO$ by the formula
\begin{equation} \label{eq:left_Kan_extension_Formula}
  (\Delta\pi)_*A_n= (\bF_\cO\circ \Color(\pi)_*\circ U)\circ (\bF_{\cM}U)^{n}(A).\end{equation}
It is clear how all degeneracy maps and all but one of the face maps (for each $n\geq 1$) are defined by the functoriality of $\bF_\cO\circ \Color(\pi)_*$ using the unit and the counit of the free-forgetful adjunction. For those last maps we construct an algebra map \begin{equation} \bF_\cO\circ \Color(\pi)_*\circ U\circ \bF_{\cM}(D)\to  \bF_\cO\circ \Color(\pi)_*(D),\end{equation}where $D$ is an $Ob(\cM)$-indexed collection, using $\pi$ to turn multimorphisms in $\cM$ to multimorphisms in $\cO$ and then using multicompositions in $\cO$.

\begin{defin}
  The extension of $A$ along $\pi$ is the geometric realization
  \begin{equation}
        \mathbb{L}\pi_*A : = C_*'  (\Delta\pi)_*A_\bullet.
  \end{equation}
\end{defin}

\subsection{The complete definition}

For any superpotential value $\rho\in \Lambda_0$, we have constructed the diagram

\begin{equation} 
  \begin{tikzcd}
    C_*(\mathrm{Floer}_\rho^{\mathrm{reg}}) \ar[r,"\mathrm{CF}"] \ar[d,"\pi"] & \Ch_{\Lambda_{0}} \\
    C_*(\mathrm{Lag}_\rho) . & 
  \end{tikzcd}
\end{equation} Here $\mathrm{Lag}_\rho$ denotes the full subcategory of $\mathrm{Lag}$ with objects consisting of pairs of Lagrangian branes which both have superpotential value $\rho$. As before, $C_*$ refers to symmetric normalized cubical chains.

Given $K\subset M$ compact, we consider the full subcategory $ \mathrm{Floer}_{K,\rho}^{\mathrm{reg}}$ of $ \mathrm{Floer}_{\rho}^{\mathrm{reg}}$ consisting of objects  $\bfH = (\bfL_0, (J_{01},H_{01}), \bfL_1)$ such that $H_{01}<0$ on $K.$ By simply restricting the Floer functor, we obtain the diagram 

\begin{equation}\label{diagram-left-Kan}
  \begin{tikzcd}
    C_*(\mathrm{Floer}_{K,\rho}^{\mathrm{reg}}) \ar[r,"\mathrm{CF}_K"] \ar[d,"\pi"] & \Ch_{\Lambda_{0}} \\
    C_*(\mathrm{Lag}_\rho) . & 
  \end{tikzcd}
\end{equation}

We obtain from the discussion in the previous section the operadic left Kan extension dg-multifunctor \begin{equation}
    \mathbb{L}\pi_*\mathrm{CF}_K:  C_*(\mathrm{Lag}_\rho)\to \Ch_{\Lambda_{0}}.
\end{equation}

Then, crucially for applications, we compose this with the $T$-adic completion functor $\Ch_{\Lambda_{0}}\to \Ch_{\Lambda_{0}}$ to obtain the dg-multifunctor:

\begin{equation}
  \mathrm{Fuk}_{K,\rho}:  C_*(\mathrm{Lag}_\rho)\to \Ch_{\Lambda_{0}}.
\end{equation}

We are now exactly at the stage described in the introduction, just prior to Equation \eqref{eq:boardman_vogt} (which we refer to as a Boardman-Vogt model below): the operad $C_*(\mathrm{Lag}_\rho)$ consists of copies of the operad of chains on the stable moduli spaces of discs, which we denoted there by $\scrO$, with only additional discrete data to record composability of operations. 



\begin{defin}
  For  compact subset $K$ of $M$, and each potential value $\rho\in \Lambda_0,$ a choice of Boardman-Vogt model $f: \mathcal{A}\to \mathcal{O},$ determines the $A_\infty$-category $\mathrm{Fuk}_{K,\rho}^f$as follows:
  \begin{enumerate}
  \item Its objects are objects Lagrangian branes with superpotential value $\rho$,
  \item Its morphisms space are  $\mathrm{Fuk}_{K,\rho}(\mathbf{L}_0,\mathbf{L}_1)$,
   \item  for each $k\geq 2$ the operation  $$\mu_k:=\mathrm{Fuk}_{K,\rho}(f([\mu_k])):\mathrm{Fuk}_{K,\rho}(\mathbf{L}_{k-1},\mathbf{L}_{k})\otimes \ldots\otimes\mathrm{Fuk}_{K,\rho}(\mathbf{L}_{0},\mathbf{L}_1)\to \mathrm{Fuk}_{K,\rho}(\mathbf{L}_0,\mathbf{L}_k).$$
  \end{enumerate}
\end{defin}
\begin{rem}
    The objects and morphism spaces of the  choice of  $A_\infty$-category $\mathrm{Fuk}_{K,\rho}^f$ is by construction independent of the Boardman-Vogt model $f: \mathcal{A}\to \mathcal{O}$. We expect, but do not prove, that a homotopy equivalence between different choices leads to quasi-isomorphic $A_\infty$-categories. Note that the same choice of $f$ can be used for all $K\subset M$ pairs.
\end{rem}

\begin{proof}[Proof of Theorem \ref{thm-restriction-functors}]
  A Hamiltonian which is negative on $K'$ is is particular negative on $K$, so that $\mathrm{Floer}_{K',\rho}^{\mathrm{reg}}$ is a full subcategory of $\mathrm{Floer}_{K,\rho}^{\mathrm{reg}}$, and $\mathrm{CF}_{K'}$ is the restriction of $\mathrm{CF}_{K}$ under this inclusion, which defines a map of simplicial $C_*(\mathrm{Lag}_\rho)$-algebras
  \begin{equation}
    (\Delta\pi)_*\mathrm{CF}_{K',\bullet} \to (\Delta\pi)_*\mathrm{CF}_{K,\bullet}.
  \end{equation}
  Applying geometric realization and $T$-adic completion, we obtain a strict map of $C_*(\mathrm{Lag}_\rho)$-algebras $\mathrm{Fuk}_{K',\rho}\to\mathrm{Fuk}_{K,\rho}$. Since the operations of $\mathrm{Fuk}_{K,\rho}^f$ and $\mathrm{Fuk}_{K',\rho}^f$ are obtained by evaluating these algebras on the same chains $f([\mu_k])$, the functoriality of restriction maps immediately follows.

  The argument for equivariance is exactly the same: let $\phi$ be a symplectomorphism of $M$, which acts on bulk grading structures mapping $\mathbf{b}$ to $\phi \mathbf{b}$.   The pushforward of a Lagrangian brane $\bfL = (L, J_L, \ldots)$ for $\mathbf{b}$ is a brane
  \begin{equation}
    \phi(\bfL) := (\phi(L), \phi_* J_L, \ldots),
  \end{equation}
for $\phi \mathbf{b}$.  Composing every pseudo-holomorphic map with $\phi$ identifies the moduli spaces of discs defining the superpotential, preserving areas, Maslov indices, and weights, so that $\phi(\bfL)$ is weakly tautologically unobstructed whenever $\bfL$ is, and has the same superpotential value. Similarly, $\phi$ maps induces an isomorphism of multicategories
  \begin{equation}
    \mathrm{Floer}^{\mathrm{reg}}_{K,\rho} \to \mathrm{Floer}^{\mathrm{reg}}_{\phi(K),\rho},
  \end{equation}
  under which the functors $\mathrm{CF}_{K}$ and $\mathrm{CF}_{\phi(K)}$ are identified.  This isomorphism induces a strict isomorphism of $C_*(\mathrm{Lag}_\rho)$-algebras $\mathrm{Fuk}_{K,\rho}\to\mathrm{Fuk}_{\phi(K),\rho}$, and hence, evaluating both sides on the chains $f([\mu_k])$, a strict $A_\infty$ functor
  \begin{equation}
    \mathrm{Fuk}_{K,\rho}^f \to \mathrm{Fuk}_{\phi(K),\rho}^f.
  \end{equation}
Since we did not make any choice to define this functor, it is compatible with composition, and it strictly commutes with restriction functors.
\end{proof}

\section{Computing the morphism spaces using acceleration data}\label{sec:acceleration-data}
In the previous section we have constructed $A_\infty$-categories $\mathrm{Fuk}_{K,\rho}^f$ and restriction functors $\mathrm{Fuk}_{K',\rho}^f\to \mathrm{Fuk}_{K,\rho}^f$. The goal of this section is to show that for a fixed pair of arbitrary objects $\mathbb{L}_0,\mathbb{L}_1$, we can compute the morphism spaces and the map induced on these by the restriction functor, up to homotopy, using acceleration data. Let us fix the data of $\rho$ and $f$ throughout the section and omit them from the notation.

To state our goal more precisely, recall that an acceleration datum for a subset $K\subset M$ consists of a sequence $H_1< H_2<\ldots$ of admissible (for $(\mathbb{L}_0,\mathbb{L}_1)$) Hamiltonians $H_i: [0,1]\times M\to \bR$ satisfying $H_i|_{[0,1]\times K}<0$ and so that $$
H_i(t,x)\xrightarrow[i\to\infty]{}\begin{cases}
0,& x\in K,\\
+\infty,& x\notin K
\end{cases}
$$ for every $(t,x)\in [0,1] \times M$, together with a choice of monotone interpolations between $H_i$ and $H_{i+1}$ for all $i\geq 1$. 

For $R$ a commutative ring, let $\mathbb{N}_{R}$ be the category with objects the natural numbers $1,2,\ldots$ and morphisms \begin{equation}\operatorname{Hom}(i,j)=\begin{cases}  
R & i\leq j\\ 0  & i>j,
 \end{cases}\end{equation} and let $\star_R$ be the category with one object and morphism spaces equal to $R$. We can therefore denote the Floer theoretic diagram (considering the Lagrangian Floer cochains on the pair $(\mathbb{L}_0,\mathbb{L}_1)$) induced by an acceleration datum $H_K$ for $K$ as a functor $\mathcal{C}_{H_K}:\mathbb{N}_{\Lambda_0}\to \Ch_{\Lambda_0}$.

Below, we recall the telescope model for the homotopy colimit of such a diagram which admits canonical chain maps
\begin{equation}\label{tell}
\widehat{\operatorname{tel}}(\mathcal{C}_{H_K})\to \Fuk_{K}^*(\mathbb{L}_0,\mathbb{L}_1).
\end{equation} 

The main goal of this section is to prove the following two results.

\begin{prop}\label{prop-tel-quasi}
    The comparison maps from Equation \eqref{tell} are quasi-isomorphisms.
\end{prop}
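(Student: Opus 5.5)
The plan is to compare two homotopy colimits that differ only in the indexing data. The telescope $\operatorname{tel}(\mathcal{C}_{H_K})$ is the homotopy colimit of $\mathrm{CF}_K$ restricted along the ray $\mathbb{N} \to \mathrm{Floer}^{\mathrm{reg}}_{K}$ determined by the acceleration datum. The Kan extension $\mathbb{L}\pi_*\mathrm{CF}_K(\mathbf{L}_0,\mathbf{L}_1)$, on arity-one inputs, is the two-sided bar construction $B(\star, C_*\mathrm{Floer}^{\mathrm{reg}}_{K}(\mathbf{L}_0,\mathbf{L}_1), \mathrm{CF}_K)$ plus contributions from higher arity. We first construct the comparison map \eqref{tell} before completion, then prove it is a quasi-isomorphism after reducing modulo $T^E$ for each $E$. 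Finally we pass to the inverse limit over $E$ using Mittag-Leffler, since all terms are free and the transition maps are surjective.

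\emph{Step 1: reduce to a weighted homotopy colimit.} The plan is to identify $\mathbb{L}\pi_*\mathrm{CF}_K(\mathbf{L}_0,\mathbf{L}_1)$ with a weighted homotopy colimit over the $1$-ary part of the multicategory, following the discussion in Section~\ref{sec-weighted-leftkan}. The weight assigns to an object $\bfH$ the complex of all trees ending at $\bfH$, mapped to $C_*(\overline{\cR})$ via $\fF$. By Proposition~\ref{lmFrgtHtpyEq}, the forgetful maps are homotopy equivalences whenever the output dominates the inputs in the sense of Definition~\ref{defHamOrdering}. Because $K$-negative Hamiltonians can always be dominated, the subposet of dominating outputs is cofinal. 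Using this, the weight should be contractible in the appropriate homotopy-colimit sense, so the Kan extension becomes quasi-isomorphic to $\operatorname{hocolim}_{\mathrm{Floer}_K(\mathbf{L}_0,\mathbf{L}_1)}\mathrm{CF}_K$, the plain homotopy colimit over the cubically enriched category of strips. The acceleration ray gives a functor $\mathbb{N}\to\mathrm{Floer}_K(\mathbf{L}_0,\mathbf{L}_1)$, with continuation data chosen as $1$-cubes and composites filled by higher cubes; its induced map on hocolims is \eqref{tell}.

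\emph{Step 2: homotopy cofinality modulo $T^E$.} This is the step I expect to be the main obstacle, because the ray is \emph{not} cofinal in the uncompleted sense: a Hamiltonian negative on $K$ is generally not below any $H_i$, since $H_i\to 0$ on $K$ but the other Hamiltonian may be larger elsewhere. I would therefore argue only after tensoring with $R_E$, using an energy argument, as follows.

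For any object $\bfH$ with $H<0$ on $K$, compactness gives $H < H_i + \epsilon$ for large $i$ and any $\epsilon>0$. I would factor through $H'=H_i+\epsilon$, or through a small shift of it. The continuation map from $H_i$ to $H_i+\epsilon$, with constant interpolation shift, is multiplication by $T^{\epsilon}$ up to isomorphism. Modulo $T^E$ this is invertible only up to an error that shrinks with $\epsilon$. A more robust formulation is to use the ordering by $\le$ together with comma categories. For each $\bfH$, the comma category $\bfH\downarrow\mathbb{N}$ is nonempty and contractible once we restrict to monotone data, since the spaces of monotone homotopies are contractible cubical sets by convexity. This would give genuine cofinality of the ray in the subcategory of Hamiltonians bounded by some $H_i$. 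The remaining objects, which exceed every $H_i$ somewhere off $K$, are handled by the divergence $H_i\to+\infty$ off $K$: every $H$ is eventually below $H_i$ outside a small neighbourhood of $K$, and near $K$ the discrepancy is at most $\epsilon_i\to0$. I would then invoke Lemma~\ref{lmExMonH}-type interpolation with a shifted Hamiltonian $H_i+\epsilon_i$, together with the energy identity, to show that the resulting zigzag differs from the identity by terms of valuation at least a fixed fraction of $E$, and hence is nilpotent modulo $T^E$.

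\emph{Step 3: conclude.} Steps 1 and 2 give that \eqref{tell} is a quasi-isomorphism modulo $T^E$ for each $E$. Since both sides are $T$-torsion-free and complete, a standard derived-Nakayama and $\lim^1$ argument, with surjective transition maps, upgrades this to a quasi-isomorphism of the completions.
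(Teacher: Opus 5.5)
There is a genuine gap, and it is in Step~2, exactly where you place the main obstacle. Your premise that the acceleration ray is not cofinal is false.

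Take $H$ continuous with $H<0$ on $[0,1]\times K$. The open sets $\{H_i>H\}\subset[0,1]\times M$ increase with $i$, because $H_i<H_{i+1}$. They also cover $[0,1]\times M$, because $H_i\to 0>H$ on $[0,1]\times K$ and $H_i\to+\infty$ off it. By compactness, $H<H_i$ everywhere once $i$ is large.

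The same Dini-type argument works for $\min_t H_i$ against the continuous function $\max_j\max\bigl\{2^{m-1}\max_t H_{j-1,j},\,\max_t H_{j-1,j}\bigr\}$, which is negative on $K$. It shows that every composable input sequence $\vec{\bfH}$ of objects of $\mathrm{Floer}^{\mathrm{reg}}_{K,\rho}$ satisfies $a(i)>\vec{\bfH}$ in the sense of Definition~\ref{defHamOrdering} for all $i\gg 0$.

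Proposition~\ref{lmFrgtHtpyEq} then makes $C_*\mathrm{Floer}(\vec{\bfH},a(i))\to C_*\mathbf{\Box}(\overline{\cR}_{m+1})$ a quasi-isomorphism. That is exactly the hypothesis of Lemma~\ref{lem-quillen-thma}, applied to the dg-category $PC_*(\mathrm{Floer}^{\mathrm{reg}}_{K,\rho})$ of composable sequences with weight $w_{P\pi,b}$, via \eqref{eq-weighted-leftkan}. So the comparison map is already a quasi-isomorphism before completion, and $T$-adic completion preserves this for free $\Lambda_0$-complexes. This is the paper's argument; it needs no reduction modulo $T^E$ and no energy estimate.

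Your substitute for cofinality would also fail on its own terms if it were needed. The continuation map from $H_i$ to $H_i+\epsilon$ is multiplication by $T^{\epsilon}$, which modulo $T^E$ is nilpotent, not invertible up to a small error. Compare the proof of Proposition~\ref{prop-brane-disjoint-support}, where positive-energy continuation maps of exactly this kind are what make the telescope acyclic. So the claim that the zigzag differs from the identity by high-valuation terms has no content.

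Step~1 also already asserts that every $K$-negative datum can be dominated, which contradicts your premise in Step~2.

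Finally, your reduction in Step~1 to the plain homotopy colimit over strips, on the grounds that the weight ``should be contractible'', is unsupported and unnecessary. Lemma~\ref{lem-quillen-thma} treats input sequences of every length at once. What makes the higher-arity contributions to the Kan extension harmless is the case $m\geq 2$ of Proposition~\ref{lmFrgtHtpyEq}, not just contractibility of spaces of continuation data.
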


\begin{prop}\label{prop-tel-compare-main}Assume that $K'\supset K$ are compact subsets of $M$ and let $H_K$ and $H_{K'}$ be acceleration data such that $H'_i\leq H_i$ for all $i$. Then, we can construct a chain homotopy $\widehat{\operatorname{tel}}(\mathcal{C}_{H_{K'}}) \to \Fuk^*_{K}(\mathbb{L}_0,\mathbb{L}_1)[1] $ for the diagram:
   \begin{equation}\label{tel-square-comp}
        \begin{tikzcd}
\widehat{\operatorname{tel}}(\mathcal{C}_{H_{K'}}) \arrow{rr}{}\arrow{d}{} && \widehat{\operatorname{tel}}(\mathcal{C}_{H_{K}}) \arrow{d}{} \\
\Fuk^*_{K'}(\mathbb{L}_0,\mathbb{L}_1)  \arrow{rr}{} && \Fuk^*_{K}(\mathbb{L}_0,\mathbb{L}_1)
\end{tikzcd}
   \end{equation} where the top row is constructed as in \cite[Section 3.4]{Varolgunes2021} and the vertical arrows are the canonical comparison maps from Equation \eqref{tell}.

Moreover, given nested compact subsets $K''\supset K'\supset K$ and homotopy commutative squares as in Diagram \eqref{tel-square-comp}, then we can construct a chain homotopy  $\widehat{\operatorname{tel}}(\mathcal{C}_{H_{K''}}) \to \Fuk^*_{K}(\mathbb{L}_0,\mathbb{L}_1)[2] $ for the prismatic diagram 
\begin{center}
    \begin{tikzcd}[row sep=3em, column sep=1.5em]
\widehat{\operatorname{tel}}(\mathcal{C}_{H_{K''}}) \arrow[rr] \arrow[dr] \arrow[dd] 
  & & \widehat{\operatorname{tel}}(\mathcal{C}_{H_{K'}}) \arrow[dl] \arrow[dd] \\
  & \widehat{\operatorname{tel}}(\mathcal{C}_{H_{K}}) & \\
\Fuk^*_{K''}(\mathbb{L}_0,\mathbb{L}_1) \arrow[rr] \arrow[dr] 
  & & \Fuk^*_{K'}(\mathbb{L}_0,\mathbb{L}_1) \arrow[dl] \\
  & \Fuk^*_{K}(\mathbb{L}_0,\mathbb{L}_1) \arrow[uu, leftarrow, crossing over], &
\end{tikzcd}
\end{center}
 where the chain homotopy $\widehat{\operatorname{tel}}(\mathcal{C}_{H_{K''}}) \to\widehat{\operatorname{tel}}(\mathcal{C}_{H_{K}})[1]$ in the top triangle is constructed as in \cite[Section 3.4]{Varolgunes2021} and the one at the bottom triangle is the zero map. 

\qed
\end{prop}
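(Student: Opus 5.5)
The plan is to make both the square and the prism instances of one construction. A finite cubical diagram in $\mathrm{Floer}^{\mathrm{reg}}_{K}$ induces maps, homotopies, and higher homotopies between telescopes and the left Kan extension, and these are natural with respect to the diagram. First I will record what the comparison map \eqref{tell} is. An acceleration datum $H_K$ determines a functor from (cubical chains on) the poset $\bN$ to $C_*(\mathrm{Floer}^{\mathrm{reg}}_{K,\rho})$. Its objects are the boundary conditions $(\bfL_0,(J_i,H_i),\bfL_1)$, and its $1$-cubes are the chosen monotone continuation strips from $H_i$ to $H_{i+1}$. The unrolled telescope $\bigoplus_i CF(H_i)\oplus CF(H_i)[1]$ maps to the geometric realization $C_*'(\Delta\pi)_*\mathrm{CF}_K$ as follows: the $CF(H_i)$ summand goes to simplicial degree $0$, tensored with the unit of $\cO_1=C_*(\overline{\cR}_2)=\Lambda_0$. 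The $CF(H_i)[1]$ summand goes to simplicial degree $1$, as the element $x\otimes[c_{i,i+1}]\otimes 1$, where $[c_{i,i+1}]\in C_0(\mathrm{Floer}(\bfH_i;\bfH_{i+1}))$ is the continuation datum. The realization differential $d_0-d_1$ on this simplicial-degree-$1$ term reproduces exactly the telescope differential $x\mapsto c_{i,i+1}(x)-x$. The map is $\Lambda_0$-linear and preserves the $T$-adic filtration, so it passes to completions.

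For the square \eqref{tel-square-comp}, I will use the construction of \cite[Section 3.4]{Varolgunes2021}. There the top arrow is built from a $\bZ\times\{0<1\}$-shaped diagram consisting of continuation maps $H'_i\to H_i$ together with $2$-dimensional homotopies filling the squares $H'_i\to H'_{i+1}\to H_{i+1}$ versus $H'_i\to H_i\to H_{i+1}$. To make every edge a genuine multimorphism, I would first replace $H_i$ by $H_{i+1}$ in the vertical edges (so the edge is $H'_i\to H_{i+1}$), or add a small positive constant. This interleaving gives the strict inequality needed for monotonicity, and it does not change the telescope up to canonical homotopy. With that arranged, all the data are objects, $1$-cubes, and $2$-cubes of $\mathrm{Floer}^{\mathrm{reg}}_K$, because $H'_i<0$ on $K'\supset K$. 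By Lemma \ref{lem-reg-correct} they can be chosen regular.

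The two composites around the square then differ as follows. The lower path sends $x\in CF(H'_i)$ to $x\otimes 1$ in the $K'$-Kan extension and then includes it into the $K$-Kan extension; this inclusion is the identity on summands by the proof of Theorem \ref{thm-restriction-functors}. The upper path sends $x$ first to $c'_i(x)\in CF(H_{i+1})$ and then to $c'_i(x)\otimes 1$. The homotopy will be $x\mapsto x\otimes[c'_i]\otimes 1$ in simplicial degree $1$ on the $CF(H'_i)$ summand. On the $CF(H'_i)[1]$ summand it will be $x\mapsto x\otimes[\sigma_i]\otimes 1$, where $\sigma_i$ is the $1$-cube (homotopy of continuation data) filling the square. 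Equivalently, this second term can be written as $x\otimes[c'_{i}]\otimes[c_{i,i+1}]$ in simplicial degree $2$, with the cube absorbing the difference. I will verify the homotopy identity face by face: the realization faces $d_0,d_1,d_2$ account for the actions on Floer cochains, the composition in $\mathrm{Floer}$, and the forgetful map to $\cO$ (where every $1$-input multimorphism maps to the unit). The cube boundary $\partial[\sigma_i]$ supplies the remaining terms.

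For the prism, the same recipe is applied one dimension up, to the $\bZ\times[2]$-shaped diagram of $H''_i,H'_i,H_i$. This diagram comes with $3$-cubes of Floer data filling the prisms, which are exactly the data used in \cite{Varolgunes2021} for the top-triangle homotopy. The bottom triangle commutes strictly by Theorem \ref{thm-restriction-functors}, so its homotopy is $0$. The required $2$-homotopy is defined by inserting the $3$-cube chains, and the simplicial-degree-$2$ and degree-$3$ bar elements built from composable continuation data. Its boundary must equal the signed sum of the three square homotopies from the previous paragraph and the top-triangle homotopy transported along the vertical map.

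I expect the main obstacle to be the bookkeeping in this last identity. The terms coming from the top-triangle homotopy of \cite{Varolgunes2021} enter through the vertical comparison map, which places them in simplicial degree $0$. The square homotopies, by contrast, live in simplicial degrees $1$ and $2$. I would first fix all data to be compatible: the square data are taken as faces of the prism data, and normalized degenerate cubes vanish. I would then check that every mismatched term is cancelled by a realization face map (for example, $d_1$ of a degree-$2$ element composes two continuation $1$-cubes into their concatenation). Signs would be fixed by the Eilenberg–Zilber conventions for $C_*'$.
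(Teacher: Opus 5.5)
Your framework matches the paper's proof. Both use the bar-construction model of $\Fuk_K^*(\mathbb{L}_0,\mathbb{L}_1)$ as a weighted homotopy colimit, both strictify first by shifting the target acceleration sequence, and both use the comparison map $x\mapsto x\otimes e$, $sx\mapsto x\otimes\kappa_i\otimes e$ and the homotopy $h_1(x)=x\otimes c'_i\otimes e$ on unshifted generators. The gap is your formula on shifted generators.

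Neither $x\otimes\sigma_i\otimes e$ nor the $2$-simplex $x\otimes c'_i\otimes\kappa_i\otimes e$ satisfies the homotopy identity, and the two are not ``equivalent''. Write $\kappa_i,\kappa'_i$ for the internal continuations and $T(sx)=s(x\cdot c'_i)-x\cdot\sigma_i$ for the top arrow. Modulo terms in $dx$, the identity $D h_1(sx)+h_1(d_{tel}\,sx)=\iota_K T(sx)-\iota_{K'}(sx)$ forces
\[
D\,h_1(sx)=(x\cdot c'_i)\otimes\kappa_i\otimes e-(x\cdot\sigma_i)\otimes e-x\otimes\kappa'_i\otimes e-(x\cdot\kappa'_i)\otimes c'_{i+1}\otimes e+x\otimes c'_i\otimes e .
\]
The four simplicial-degree-one terms here lie in four distinct summands of the bar construction.

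The $1$-simplex $x\otimes\sigma_i\otimes e$ has $d_\Delta$-boundary $(x\cdot\sigma_i)\otimes e$ only. Its last face vanishes, because a $1$-cube maps to a degenerate cube of the point $\overline{\cR}_2$. Its internal boundary $\pm x\otimes\partial\sigma_i\otimes e$ involves the concatenated $0$-cubes $\kappa_i\circ c'_i$ and $c'_{i+1}\circ\kappa'_i$, which lie in yet other summands. The single $2$-simplex $x\otimes c'_i\otimes\kappa_i\otimes e$ produces, through its outer faces, only the first and last of the four terms.

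What is needed is the triangulation of $\Delta^1\times\Delta^1$ by both $2$-simplices, together with the cube filling the diagonal:
\[
h_1(sx)=x\otimes c'_i\otimes\kappa_i\otimes e-x\otimes\kappa'_i\otimes c'_{i+1}\otimes e-(-1)^{|x|}\,x\otimes\sigma_i\otimes e .
\]
The outer faces $d_0,d_2$ of the two $2$-simplices give the four terms. Their inner faces $d_1$, which are composition in $\mathrm{Floer}$, give $-x\otimes(\kappa_i\circ c'_i-c'_{i+1}\circ\kappa'_i)\otimes e$. With the appropriate Koszul signs, this is cancelled by the internal differential of the third term. The $d_0$ face of the third term supplies $-\iota_K(x\cdot\sigma_i)$, which is the correction term in $T(sx)$.

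The same defect propagates to the prism, and there your dimension count is also off. Let $c'_i,c_i$ be the continuations $K''\to K'$ and $K'\to K$, let $S'_i,S_i$ be the corresponding square homotopies, and let $S''_i$ be that of the composite. The filling is a $2$-cube $V_i$ of Floer data with $\partial V_i=-S_i\circ c'_i-c_{i+1}\circ S'_i+S''_i$; a $3$-cube would yield a map of degree $-3$.

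On unshifted generators, $h_2(x)=x\otimes c'_i\otimes c_i\otimes e$. On shifted generators, $h_2(sx)$ must be the signed sum of the following terms:
\begin{enumerate}
\item the three $3$-simplices indexed by the monotone paths through $\Delta^2\times\Delta^1$, namely $(c'_i,c_i,\kappa_i)$, $(c'_i,\kappa'_i,c_{i+1})$ and $(\kappa''_i,c'_{i+1},c_{i+1})$;
\item the two $2$-simplices $x\otimes c'_i\otimes S_i\otimes e$ and $x\otimes S'_i\otimes c_{i+1}\otimes e$;
\item the $1$-simplex $x\otimes V_i\otimes e$, whose internal boundary produces the $S''_i$-term needed for the third square face.
\end{enumerate}
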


\subsection{Background on homotopy colimits}
Throughout, all constructions are enriched over $\Ch_R$ for some arbitrary commutative ring $R$, which is omitted from the notation. In particular, we write $\star$ for $\star_R$, and $\bN$ for $\bN_R$. We assume that the morphism complexes and the values of all dg modules and weights under consideration are $K$-flat over $R$, so that the displayed bar constructions compute derived tensor products.
\subsubsection{Telescopes}
Given a diagram $F:\mathbb{N}\to \Ch$, or equivalently, $$F(1)\overset{\kappa_1}{\to} F(2)\overset{\kappa_2}{\to}\ldots,$$ where $\kappa_i$ is the image of the unit element, we can define the telescope chain complex $$\operatorname{tel}(F):= \mathrm{cone}\left(\bigoplus_{i=1}^\infty F(i)\overset{\kappa-\mathrm{id}}{\longrightarrow} \bigoplus_{i=1}^\infty F(i)\right),$$ where $\kappa$ is the linear map that sends the summand $F(i)$ to $\bigoplus_{i=1}^\infty F(i)$ by composing $\kappa_i$ with the canonical inclusion.

\subsubsection{Weighted homotopy colimits}

Let $\mathcal{A}$ be a dg-category with a dg-functor $w:\mathcal{A}^{op}\to \Ch.$ The homotopy colimit chain complex $\mathrm{hocolim}(F,w)$ of a dg-module $F:\mathcal{A}\to \Ch$ weighted by $w$ is the geometric realization of the semi-simplicial chain complex whose $n$-simplices are given by
\begin{equation}
  \bigoplus_{x_0, \ldots, x_n \in \Ob \mathcal{A} } F(x_0) \otimes \mathcal{A}(x_0, x_1) \otimes \cdots \otimes \mathcal{A}(x_{n-1}, x_n)\otimes w(x_n)     
\end{equation}
and whose face maps are given for $i=0$ by the action of $\mathcal{A}$ on $F$, for $0 < i < n$ by composition in $\mathcal{A}$, and for $i=n$ by the action of $\mathcal{A}$ on $w$.

An important special case is when $w$ is given as follows. Take a dg-functor $\pi:\mathcal{A}\to \mathcal{B}$ and $b\in \Ob\mathcal{B}$. Then, $w_{\pi,b}(a):=\mathcal{B}(\pi(a),b)$ defines a weight. For example, we can take $\mathcal{A}=\mathbb{N}$ and consider $\pi:\mathbb{N}\to \star$ which acts on each non-trivial morphism space by the identity map, then we obtain a canonical map  \begin{equation}\label{eq-tel-hocolim}
    \operatorname{tel}(F)\to \mathrm{hocolim}(F,w_{\pi,\star}),
\end{equation} which is well-known to be a quasi-isomorphism (see, e.g. \cite{Riehl2014} or \cite{Shulman2006}).

\begin{rem}
    It is important to keep in mind that, in the enriched context, there is no homotopy colimit that is defined without choosing a weight. Since the unit object in the symmetric monoidal category of chain complexes is not a final object, this seems particularly confusing in our discussion. In contrast, for the symmetric monoidal categories of topological spaces, simplicial sets, and symmetric cubical sets, the unit object is a final object, i.e. every topological space has a unique continuous map to a singleton. Therefore, categories enriched over these have a terminal weight obtained by their map down to a category with a single object and the unit object as its morphisms, and the corresponding weighted homotopy colimits are simply called homotopy colimits. Thus, if the dg-category at hand is obtained by taking chains on a topologically enriched category (or a variant such as simplicial or cubical complexes), then it comes equipped with a preferred weight as well. For example, $\mathcal{A}=\mathbb{N}$ above is obtained by taking chains on a category enriched over topological spaces, where all non-zero morphism spaces are a singleton.
\end{rem}

\subsubsection{Cofinality}
Let $\mathcal{A}$ be a dg-category with a weight of the form $w=w_{\pi,b}:\mathcal{A}^{op}\to \Ch$ for some dg-functor $\pi: \mathcal{A}\to \mathcal{B}$ and $b\in \Ob \mathcal{B}.$ Consider a dg-module $F:\mathcal{A}\to \Ch$. Given $a:\mathbb{N}\to \mathcal{A}$ with the property that $\pi(a(i))=b$ and $\pi(a(i\to i+1))=id_b$ for all $i\in \mathbb{N},$ we can immediately construct a map $$\iota_a: \operatorname{tel}(a^*F)\to \mathrm{hocolim}(F,w)$$ that sends the unshifted copy of $\bigoplus_{i=1}^\infty F(a(i))$ to the $0$-simplices by the canonical map  (where $e_b$ is the unit of $b$):  $$\bigoplus_{i=1}^\infty F(a(i))\otimes e_b\to \bigoplus_{x_0 \in \mathcal{A} } F(x_0)\otimes \mathcal{B}(\pi(x_0),b)$$ and the shifted copy to the 1-simplices by $$\bigoplus_{i=1}^\infty F(a(i))\otimes \mathcal{A}(a(i), a(i+1))\otimes e_b\to  \bigoplus_{x_0,x_1\in \Ob \mathcal{A} } F(x_0) \otimes \mathcal{A}(x_0, x_1)\otimes\mathcal{B}(\pi(x_1),b) .$$

We now give a well-known criterion for when $\iota_a$ is a quasi-isomorphism that is a variant of Quillen's Theorem A. 
\begin{lemma}\label{lem-quillen-thma}
    Assume that for every $x\in \Ob\mathcal{A},$ there exists a natural number  $n(x)$ such that the map induced by $\pi$: $$\mathcal{A}(x,a(i))\to \mathcal{B}(\pi(x),b)$$ is a quasi-isomorphism for every $i> n(x).$ Then, $\iota_a$ is a quasi-isomorphism.
\end{lemma}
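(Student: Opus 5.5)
We reduce the statement to a comparison of two bar constructions through an intermediate complex. Write $\mathrm{hocolim}(F,w)$ as the geometric realization of the semi-simplicial complex $B_\bullet(F,\mathcal{A},w)$, and let $G:=a^*F$ on $\bN$ with weight $w_{\star}:=w_{\pi\circ a,b}$, which equals the constant weight $R$ since $\pi(a(i))=b$ and $\pi(a(i\to i+1))=\mathrm{id}_b$. The map $\iota_a$ factors as
\[
\operatorname{tel}(G)\longrightarrow \mathrm{hocolim}(G,w_{\pi\circ a,b}) \longrightarrow \mathrm{hocolim}(F,w_{\pi,b}),
\]
where the first arrow is the quasi-isomorphism \eqref{eq-tel-hocolim} and the second is induced by $a$ on every simplicial level. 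It therefore suffices to show that the second arrow is a quasi-isomorphism.

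To do this I would use the standard two-sided bar resolution. First, identify $\mathrm{hocolim}(F,w)$ with the derived tensor product $F\otimes^{\mathbb{L}}_{\mathcal{A}} w$. Using the resolution of $F$ by representables, $F\simeq B(F,\mathcal{A},\mathcal{A}(-,\cdot))$, the hocolim becomes $B(F,\mathcal{A},\widetilde w)$, where $\widetilde w(x):=\mathrm{hocolim}_{i}\,\mathcal{A}(x,a(i))$ is the homotopy colimit over $\bN$ of the representable restricted along $a$. Concretely, this uses the associativity isomorphism of iterated bar constructions,
\[
\mathrm{hocolim}(G,R)\;\simeq\; B(F,\mathcal{A},B(\mathcal{A}(-,a(\cdot)),\bN,R)),
\]
which is valid under the standing $K$-flatness assumption. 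The target similarly becomes $B(F,\mathcal{A},w)$. The map between them is induced by the map of right $\mathcal{A}$-modules
\[
\widetilde w(x)=\operatorname{tel}_i\,\mathcal{A}(x,a(i)) \longrightarrow \mathcal{B}(\pi(x),b)=w(x),
\]
given by applying $\pi$. This map is compatible with the telescope because $\pi(a(i\to i+1))=\mathrm{id}_b$.

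The key step is the hypothesis. For fixed $x$, the diagram $i\mapsto \mathcal{A}(x,a(i))$ maps to the constant diagram $\mathcal{B}(\pi(x),b)$ by maps that are quasi-isomorphisms for $i>n(x)$. Since the telescope of a sequence is unchanged up to quasi-isomorphism by discarding finitely many initial terms, and the telescope of a constant sequence of identities is quasi-isomorphic to its value, the map $\widetilde w(x)\to w(x)$ is a quasi-isomorphism for every $x$. A levelwise quasi-isomorphism of weights induces a quasi-isomorphism on bar constructions. The argument is to filter by simplicial degree: the associated graded is a sum of tensor products with $K$-flat complexes, and the filtration is exhaustive and bounded below, so the spectral sequence converges. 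Hence the second arrow is a quasi-isomorphism.

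The main obstacle is making the associativity identification precise at the chain level, so that the composite really equals $\iota_a$ rather than merely being homotopic to it. I would handle this by writing down an explicit extra-degeneracy contraction for $B(\mathcal{A}(-,y),\mathcal{A},\mathcal{A}(y',-))$. Alternatively, one can check directly that both $\iota_a$ and the composite send the $0$- and $1$-simplices as described in the definition of $\iota_a$, and then use that a map agreeing on the telescope generators is determined.
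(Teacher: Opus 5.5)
Your argument is essentially the paper's. Both rewrite $\operatorname{tel}(a^*F)$ as $F\otimes^{\mathbb{L}}_{\mathcal{A}}\bigl(\mathcal{A}(-,a(-))\otimes^{\mathbb{L}}_{\bN}w_{\pi,\star}\bigr)$ using associativity of two-sided bar constructions. Both then use the hypothesis to see that the induced map of weights to $w_{\pi,b}$ is a pointwise quasi-isomorphism. You also supply details the paper leaves implicit: discarding finitely many initial terms of the telescope, and the filtration by simplicial degree.

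One correction: $w_{\pi\circ a,b}(i)=\mathcal{B}(b,b)$, which need not be $R$ in the abstract setting. As literally written, the first arrow of your factorization therefore need not be the quasi-isomorphism \eqref{eq-tel-hocolim}. The intermediate complex should be $\mathrm{hocolim}(G,w_{\pi,\star})$ for $\bN\to\star$, i.e.\ constant weight $R$, and the second arrow should apply $a$ and insert the unit $e_b$ in the last slot. This is what your bar formula actually uses, and with it $\iota_a$ factors strictly.

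Finally, the two augmentations out of the bisimplicial complex agree only up to the standard explicit chain homotopy. Homotopy commutativity is all you need to conclude that the second arrow is a quasi-isomorphism, so the ``main obstacle'' you mention is not a real one.
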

\begin{proof}
We use the notation $F \otimes_{\mathcal{A}}^{\mathbb{L}} w$ for $\operatorname{hocolim}(F, w)$ to make the following argument appear more familiar. We have the following sequence of quasi-isomorphisms:$$\begin{aligned} \operatorname{tel}(a^*F) &\simeq a^*F \otimes_{\mathbb{N}}^{\mathbb{L}} w_{\pi, \star} \\ &\simeq \left( F \otimes_{\mathcal{A}}^{\mathbb{L}} \mathcal{A}(-, a(-)) \right) \otimes_{\mathbb{N}}^{\mathbb{L}} w_{\pi, \star} \\ &\simeq F \otimes_{\mathcal{A}}^{\mathbb{L}} \left( \mathcal{A}(-, a(-)) \otimes_{\mathbb{N}}^{\mathbb{L}} w_{\pi, \star} \right). \end{aligned}$$ Here the first isomorphism was alluded to above in Equation \eqref{eq-tel-hocolim} and the last two are well-known properties of the derived tensor product.

By our hypothesis, for each object $x \in \operatorname{Ob}(\mathcal{A})$, the sequence of chain complexes $\mathcal{A}(x, a(i))$ is eventually quasi-isomorphic to $\mathcal{B}(\pi(x), b) = w_{\pi,b}(x)$. Therefore, the canonical map$$F \otimes_{\mathcal{A}}^{\mathbb{L}} \left( \mathcal{A}(-, a(-)) \otimes_{\mathbb{N}}^{\mathbb{L}} w_{\pi, \star} \right) \simeq F \otimes_{\mathcal{A}}^{\mathbb{L}} w_{\pi,b}$$is a quasi-isomorphism.

One can check that composing all these maps yields $\iota_a$, and hence $\iota_a$ is a quasi-isomorphism.
\end{proof}

\begin{rem}
    It would suffice to assume that the canonical map from the homotopy colimit of $F_x: \mathbb{N}\to \Ch$ defined by $F_x(i)= \mathcal{A}(x,a(i))$ (and in the obvious way for morphisms) with the preferred weight coming from $\mathbb{N}\to \star$ as above (or we can consider the telescope) to $\mathcal{B}(\pi(x),b)$ is a quasi-isomorphism for every $x.$
\end{rem}

\subsubsection{Homotopy left Kan extension evaluated at an object is a weighted homotopy colimit}\label{sec-weighted-leftkan}
Let $\mathcal{A}$ be a dg-category and $F:\mathcal{A}\to \Ch$ be a dg-module. Consider also a dg-functor $\pi: \mathcal{A}\to \mathcal{B}$. We can think of $\mathcal{A}$ and $\mathcal{B}$ as multicategories with all multiple input multimorphism spaces taken to be the zero chain complex, which makes $\pi$ a multi-functor and $F$ an $\mathcal{A}$-algebra. Using the discussion from Section \ref{sec-review-left-Kan}, we can define the homotopy left Kan extension $\mathbb{L}\pi_*F: \mathcal{B}\to \Ch$.

Unravelling the definitions, we see that for $b\in \Ob \mathcal{B},$ we have \begin{equation}\label{eq-weighted-leftkan}
    \mathbb{L}\pi_*F(b)=\mathrm{hocolim}(F,w_{\pi,b}).
\end{equation}

\subsection{Proof of Proposition \ref{prop-tel-quasi}}
\begin{defin}
Given a multicategory $\mathcal{M}$, define $P(\mathcal{M})$ to have
\begin{enumerate} \item Objects $\operatorname{Ob}(P(\mathcal{M}))$ given by (finite) ordered sequences of objects of $\mathcal{M}$. 

\item Morphism complexes between objects $\vec{a}$ and $\vec{b}$ of $P(\mathcal{M})$ are given by: \begin{equation}P(\mathcal{M})(\vec{a},\vec{b}):= \bigoplus_{f:[n]\to [m]}\bigotimes_{j=1}^m \mathcal{M}(\vec{a}|_{f^{-1}(j)}, b_j),\end{equation} where the direct sum is over all order preserving surjections $f: [n]\to [m]$.
\item The composition structure is defined using the composition in the multicategory.
\end{enumerate}
\end{defin}

In fact, this operation is functorial (sending multifunctors to functors). We can therefore apply it to Diagram \eqref{diagram-left-Kan} and obtain
\begin{equation}
  \begin{tikzcd}
   P C_*(\mathrm{Floer}_{K,\rho}^{\mathrm{reg}}) \ar[r,"\mathrm{CF}_K"] \ar[d,"P\pi"] & \Ch_{\Lambda_{0}} \\
   P C_*(\mathrm{Lag}_\rho) . & 
  \end{tikzcd}
\end{equation} For the horizontal arrow, we have also used the monoidal structure of $\Ch.$

The definition of the left Kan extension shows that 
\begin{equation}
\mathbb{L}\pi_*\mathrm{CF}_K(\mathbb{L}_0,\mathbb{L}_1)=\mathbb{L}P\pi_*\mathrm{CF}_K(\mathbb{L}_0,\mathbb{L}_1),
\end{equation} where the right hand side is to be understood as in Section \ref{sec-weighted-leftkan} and $(\mathbb{L}_0,\mathbb{L}_1)$ is seen as a length $1$ sequence object of $P C_*(\mathrm{Lag}_\rho)$.

Finally, we finish the proof by combining Equation \eqref{eq-weighted-leftkan}, Lemma \ref{lem-quillen-thma} and most importantly Proposition \ref{lmFrgtHtpyEq}, noting that $T$-adic completion preserves quasi-isomorphisms between free $\Lambda_0$-modules (see \cite[Corollary 2.3.6]{Varolgunes2021}).

\subsection{Proof of Proposition \ref{prop-tel-compare-main}}

\tikzset{
  floernode/.style={circle, draw, solid, thick, inner sep=1pt, minimum size=6mm},
  floeredge/.style={thick}
}
\newcommand{\lintree}[2][]{%
\begin{tikzpicture}[x=0.9cm, baseline=(current bounding box.center)]
  \node (n0) at (0,0) {$u$};
  \xdef\lintreelast{0}
  \foreach \lab [count=\i] in {#2} {
    \node[floernode] (n\i) at (\i,0) {$\lab$};
    \pgfmathtruncatemacro{\j}{\i-1}
    \draw[floeredge] (n\j) -- (n\i);
    \xdef\lintreelast{\i}
  }
  \pgfmathtruncatemacro{\lintreenext}{\lintreelast+1}
  \node (nend) at (\lintreenext,0) {};
  \draw[floeredge] (n\lintreelast) -- (nend);
  \ifx\\#1\\\else\node at ({\lintreenext/2},-0.6) {$#1$};\fi
\end{tikzpicture}}
We first insert a homotopy layer, using the continuation maps and homotopies of \cite[Section~3.4]{Varolgunes2021}, replacing the telescope part of the diagram by one with strict Hamiltonian inequalities. This is achieved by shifting the target acceleration sequence by one index. The vertical arrows between the original and replacement completed telescopes are quasi-isomorphisms by the sandwiching argument of \cite[Section~3.3.2]{Varolgunes2021}. We retain the original notation for the strict replacement; concatenating with this first layer recovers the required homotopy for the original diagram.

Let us define the dg-categories $\mathcal{A}_K = P C_*(\mathrm{Floer}^{\mathrm{reg}}_{K, \rho})$ and $\mathcal{A}_{K'} = P C_*(\mathrm{Floer}^{\mathrm{reg}}_{K', \rho})$. Because any admissible Hamiltonian strictly negative on $K'$ is also strictly negative on $K$, there is a fully faithful inclusion dg-functor $I \colon \mathcal{A}_{K'} \hookrightarrow \mathcal{A}_K$; we suppress the functor $I$ in the tree diagrams below for readability. The canonical restriction map $\Fuk_{K'}^*(\mathbb{L}_0, \mathbb{L}_1) \to \Fuk_{K}^*(\mathbb{L}_0, \mathbb{L}_1)$ is defined exactly by this inclusion via the strict functoriality of the operadic left Kan extension. 

Following Section \ref{sec-weighted-leftkan}, we compute the morphism complexes $\Fuk_K^*(\mathbb{L}_0, \mathbb{L}_1)$ using their respective weighted homotopy colimits. Explicitly, prior to the $T$-adic completion, the morphism complex is modeled by $\operatorname{hocolim}(F_K, w_K)$, where $F_K = \mathrm{CF}_K \colon \mathcal{A}_K \to \Ch_{\Lambda_0}$ is the Floer chain complex dg-module, and the weight is $w_K(x) = \mathcal{B}(\pi(x), b)$, with $\mathcal{B} = P C_*(\mathrm{Lag}_\rho)$, $\pi = P\pi$, and $b = (\mathbb{L}_0, \mathbb{L}_1)$. 

Let $a_{K'} \colon \mathbb{N} \to \mathcal{A}_{K'}$ and $a_K \colon \mathbb{N} \to \mathcal{A}_K$ be the dg-functors corresponding to the acceleration data $H_{K'}$ and $H_K$. Note that these functors intertwine the two Floer functors we have been considering, so that $\mathcal{C}_{H_K} = a_K^* F_K$. Let $\kappa'_i$ and $\kappa_i$ denote their respective internal continuation maps. The pointwise inequalities $H_{K'} < H_K$ provide regular continuation maps $c_i \colon I(a_{K'}(i)) \to a_K(i)$. Choosing regular 1-cubes of Floer data interpolating between these continuations yields degree $-1$ homotopies $S_i \colon I(a_{K'}(i)) \to a_K(i+1)$ satisfying the internal differential relation $d_{int}(S_i) = \kappa_i \circ c_i - c_{i+1} \circ I(\kappa'_i)$.

To explicitly track the terms in the weighted homotopy colimits, we introduce a graphical language. We build on the framework used in Figure \ref{fig:labelledtree}, but ``linear'' trees will suffice for the purposes of this section. Note that the left and right branes are fixed throughout the discussion, so we omit them from the pictures. 

A $k$-simplex chain $u \otimes m_1 \otimes m_2 \otimes \dots \otimes m_k \otimes e_b$ is represented as a linear tree with $k+2$ nodes, read from left to right:
\begin{center}
\begin{tikzpicture}[x=1.6cm, baseline=(current bounding box.center)]
  \node (u)  at (0,0) {$u$};
  \node[floernode] (m1) at (1,0) {$m_1$};
  \node[floernode] (m2) at (2,0) {$m_2$};
  \node (dots) at (3,0) {$\cdots$};
  \node[floernode] (mk) at (4,0) {$m_k$};
  \node[floernode] (e)  at (5,0) {$e_b$};
  \node (out) at (6,0) {};
  \draw[floeredge] (u) -- (m1);
  \draw[floeredge] (m1) -- (m2);
  \draw[floeredge] (m2) -- (dots);
  \draw[floeredge] (dots) -- (mk);
  \draw[floeredge] (mk) -- (e);
  \draw[floeredge] (e) -- (out);
\end{tikzpicture}
\end{center}

\begin{itemize}
    \item The leftmost node carries the Floer cochain $u \in F_{K}(a_K(i))$.
    \item The next $k$ nodes carry the morphisms $m_1, \ldots, m_k$ of $\mathcal{A}_K$, in order.
    \item The rightmost node carries the unit $e_b$. 
\end{itemize}

The semi-simplicial boundary operator $d_\Delta = \sum (-1)^j d_j$ evaluates on these trees as an alternating sum of three types of node-collapse operations:
\begin{enumerate}
    \item $d_0$: The node $m_1$ acts on $u$, removing the leftmost node, and replacing the label at the start of the tree by $u \cdot m_1$ (with Koszul signs).
    \item $d_j$ for $0<j<k$: The adjacent nodes $m_j$ and $m_{j+1}$ are merged into a node carrying their composition $m_{j+1} \circ m_j$.
    \item $d_k$: The node $m_k$ is merged with the node $e_b$ via the map to $\mathcal{O}$. Crucially, the forgetful functor maps regular continuation maps ($0$-cubes) to the operadic unit $e_b$, so $m_k \cdot e_b = e_b$ in that case. In contrast, if $m_k$ is a higher dimensional cube, $m_k \cdot e_b = 0$.
\end{enumerate}

The unshifted elements $u \in F_{K'}(a_{K'}(i))$ and shifted elements $su \in F_{K'}(a_{K'}(i))[1]$ generate the mapping telescope $\operatorname{tel}(\mathcal{C}_{H_{K'}})$, with the telescope differential $d_{tel}(su) = u \cdot \kappa'_i - u - s(du)$. 
The vertical comparison map $\iota_{K'}$ to the homotopy colimit acts as $\iota_{K'}(u) = u \otimes e_b$ and $\iota_{K'}(su) = u \otimes \kappa'_i \otimes e_b$. The horizontal telescope map $T \colon \operatorname{tel}(\mathcal{C}_{H_{K'}}) \to \operatorname{tel}(\mathcal{C}_{H_K})$ acts as $T(u) = u \cdot c_i$ and $T(su) = s(u \cdot c_i) - u \cdot S_i$.

We can now explicitly construct the degree $-1$ chain homotopy $h_1 \colon \operatorname{tel}(\mathcal{C}_{H_{K'}}) \to \operatorname{hocolim}(F_K, w_K)[1]$ which witnesses the fact that the square \eqref{tel-square-comp} is homotopy commutative. 

For an \emph{unshifted element} $u$, $h_1(u)$ is defined as the depth-1 tree:
\begin{center}
\lintree{c_i, e_b}
\end{center}

Applying the boundary operator $d_\Delta$ gives $(u \cdot c_i) \otimes e_b - u \otimes (c_i \cdot e_b)$. Since $c_i \cdot e_b = e_b$, this yields exactly $\iota_K(T(u)) - \iota_{K'}(u)$.

For a \emph{shifted element} $su$, we define $h_1(su)$ algebraically as a signed sum of two depth-2 trees (triangulating the square) and one depth-1 tree incorporating the homotopy $S_i$:
\begin{center}
\lintree[T_1]{c_i, \kappa_i, e_b} \ $-$ \ \lintree[T_2]{\kappa'_i, c_{i+1}, e_b} \ $- $ \ $\epsilon_u$\lintree[T_3]{S_i, e_b}
\end{center} Here and afterward, $\epsilon_u:=(-1)^{|u|}.$

Applying the total chain complex differential $D_{tot} = d_\Delta + d_{int}$ yields canceling terms: the internal differential of the third tree acts on $S_i$, yielding $d_{int}(S_i) = \kappa_i \circ c_i - c_{i+1} \circ I(\kappa'_i)$. When applying the internal edge-collapses ($d_1$) to the first two trees, we generate exactly $- u \otimes (\kappa_i \circ c_i) \otimes e_b$ and $+ u \otimes (c_{i+1} \circ I(\kappa'_i)) \otimes e_b$, which cancel against the $d_{int}(S_i)$ terms.

What survives are the top and bottom edge-collapses ($d_0$ and $d_2$) of $T_1$ and $T_2$, and the top collapse ($d_0$) of $T_3$:
\begin{itemize}
    \item $d_0(T_1) = (u \cdot c_i) \otimes \kappa_i \otimes e_b = \iota_K(s(u \cdot c_i))$.
    \item $-\epsilon_ud_0(T_3) = -(u \cdot S_i) \otimes e_b = -\iota_K(u \cdot S_i)$. Together, these form $\iota_K(T(su))$.
    \item $- d_2(T_2) = - u \otimes I(\kappa'_i) \otimes e_b = -\iota_{K'}(su)$.
    \item $d_2(T_1) = u \otimes c_i \otimes e_b = h_1(u)$.
    \item $- d_0(T_2) = - (u \cdot I(\kappa'_i)) \otimes c_{i+1} \otimes e_b = - h_1(u \cdot I(\kappa'_i))$.
\end{itemize}
Grouping the last two terms yields $-h_1(u \cdot I(\kappa'_i) - u) = -h_1(d_{tel}(su))$ (ignoring the natural boundaries associated with $du$). This verifies the strict chain homotopy equation $D_{tot} h_1(su) + h_1(d_{tel}(su)) = \iota_K T(su) - \iota_{K'} su$.

Now we come to the prismatic diagram corresponding to nested compact subsets $K'' \supset K' \supset K$. We similarly insert a first homotopy layer replacing the telescope part of the prism by a diagram with strict Hamiltonian inequalities, shifting the three acceleration sequences by zero, one, and two indices, respectively. Using the constructions of \cite[Section~3.4]{Varolgunes2021}, we choose this layer coherently over all faces, including the homotopy in the top triangle. Its vertical arrows are quasi-isomorphisms by the sandwiching argument of \cite[Section~3.3.2]{Varolgunes2021}. The construction below applies to the strict replacement, and concatenation with the first layer gives the required filling of the original prism. 

We use the continuation maps $c'_i \colon I(a_{K''}(i)) \to a_{K'}(i)$ and $c_i \colon I(a_{K'}(i)) \to a_K(i)$ as above. We complete this section by defining the degree $-2$ chain homotopy $h_2$ filling the prism.

For an \emph{unshifted element} $u \in F_{K''}(a_{K''}(i))$, $h_2(u)$ is defined as the depth-2 tree:
\begin{center}
\lintree{c'_i, c_i, e_b}
\end{center}
Applying the boundary $d_\Delta$ on this tree strips away one node at a time. The top collapse yields $h_1^{K'\to K}(u \cdot c'_i)$, the middle collapse yields $h_1^{K''\to K}(u)$ using the composed continuation map $c_i \circ c'_i$, and the bottom collapse yields $h_1^{K''\to K'}(u)$. These exactly map to the $1$-dimensional chain homotopies $h_1$ evaluated on the three bounding square faces of the prism. 

The prism has three square faces, corresponding to three square homotopies $S_i$, $S'_i$, and $S''_i$ (where $S''_i$ is the square homotopy for the composed map $c_i \circ c'_i$). Note that these $S$'s are defined using 1-cubes of continuation maps (which define $1$-chains). The three-dimensional homotopy $V_i$ filling the prism is then chosen using a 2-chain of continuation maps.

The homotopy map $h_2$ on a \emph{shifted element} $su \in F_{K''}(a_{K''}(i))[1]$ is a signed sum of six trees. This includes three depth-3 trees, two depth-2 trees incorporating the square homotopies $S_i$ and $S'_i$, and one depth-1 tree for the prism filling homotopy $V_i$:

\begin{center}
\lintree[T_1]{c'_i, c_i, \kappa_i, e_b} \quad $-$ \quad \lintree[T_2]{c'_i, \kappa'_i, c_{i+1}, e_b}

\medskip
$+$ \quad \lintree[T_3]{\kappa''_i, c'_{i+1}, c_{i+1}, e_b} \quad $-$ \quad $ \epsilon_u$\lintree[T_4]{c'_i, S_i, e_b}

\medskip
$- $ \quad $\epsilon_u$ \lintree[T_5]{S'_i, c_{i+1}, e_b} \quad $-$ \quad \lintree[T_6]{V_i, e_b}
\end{center}

Applying $D_{tot}$ follows a similar pattern. The internal differentials of the shorter trees cancel all $d_1$ and $d_2$ internal edge-collapses of the taller trees. Crucially, the 2-cube filling homotopy $V_i$ has an internal differential involving the three 1-cube square homotopies corresponding to the square faces of the prism: $d_{int}(V_i) = -S_i \circ c'_i - c_{i+1} \circ S'_i + S''_i$. The first two terms cancel the remaining internal edge-collapses of $T_4$ and $T_5$, while the third term provides the depth-1 tree component $\epsilon_u u \otimes S''_i \otimes e_b$ needed to form the third boundary square face $-h_1^{K'' \to K}(su)$. What remains are exactly the boundary face evaluations, so that the prism commutes up to homotopy. (The bottom triangle of the prism, corresponding to unshifted generators mapping between the identical sequence indices, strictly commutes due to the functoriality of restriction maps). 

Applying the $T$-adic completion yields the required degree $-1$ and degree $-2$ chain homotopies, completing the proof of Proposition \ref{prop-tel-compare-main}. 
\qed

\section{Mayer-Vietoris property}\label{sec:mayer-vietoris}

\subsection{The statement}\label{sec-statement-descent}
We first introduce a definition that will allow us to apply our main descent result (Theorem \ref{thm-descent}) to a broad class of examples. A toy example of this definition occurs when each $K_i$ is a codimension $0$ manifold with contact boundary, and all boundaries are pairwise disjoint. In that case, the functions $f_{m,i}$ below can be chosen to approximate the characteristic function of $K_i$:

\begin{defin}\label{def-weak-inv} A collection $\{ K_1,...,K_N \} $ of compact subsets of $M$ are \emph{weakly Poisson commuting} if there exist smooth functions $f_{m,i}:M\to \bR$ for $m=1,\ldots, N$ and $i=1,2,\ldots$ such that the Poisson brackets $\{f_{m,i},f_{m',i}\}$ vanish for each natural number $i$ and for each pair  $(m,m')$ and the following additional properties hold:
\begin{itemize}
\item each function $f_{m,i}$ is strictly negative on $K_m$, i.e.  $f_{m,i} |_{K_m}<0$,
\item the functions $f_{m,i}$ are strictly increasing in $i$, i.e.  $f_{m,i} < f_{m,i+1}$ , and
\item if $f_{m,i}(x)<0$ for all $i$, then $x\in K_m$.
\end{itemize}
If $K=K_1\cup\ldots\cup K_N$ and $K_1,\ldots, K_N$ are weakly Poisson commuting compact subsets, we call $K_1,\ldots, K_N$ a weakly involutive cover of $K$.
\end{defin}

\begin{lem}\label{lem-weak-Pois}
If $K_1,...,K_N$ are weakly Poisson commuting, then the collection of compact subsets consisting of all unions of their intersections is also weakly Poisson commuting.\end{lem}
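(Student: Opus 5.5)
The plan is to build the new functions from the given ones using $\max$ and $\min$, smoothed in a way that preserves Poisson commutativity. Fix functions $f_{m,i}$ as in Definition~\ref{def-weak-inv}. Every union of intersections has the form $K_S=\bigcup_{A\in S}\bigcap_{m\in A}K_m$ for a family $S$ of nonempty subsets of $\{1,\dots,N\}$. Naively one would set
\[
g_{S,i}=\min_{A\in S}\max_{m\in A}f_{m,i},
\]
which satisfies the three bullet conditions. Indeed, each $f_{m,i}$ with $m\in A$ is negative on $\bigcap_{m\in A}K_m$, so the max over $A$ is negative there, and the min over $S$ is then negative on $K_S$. Monotonicity in $i$ is inherited from the $f_{m,i}$.

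For the third condition, suppose $g_{S,i}(x)<0$ for all $i$. Since $S$ is finite, some single $A\in S$ realizes the minimum for infinitely many $i$. Monotonicity in $i$ then gives $\max_{m\in A}f_{m,i}(x)<0$ for all $i$. Hence $f_{m,i}(x)<0$ for all $i$ and all $m\in A$, so $x\in\bigcap_{m\in A}K_m\subset K_S$.

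Max and min are not smooth, so the key step is to replace them by $F_S(f_{1,i},\dots,f_{N,i})$, where $F_S:\bR^N\to\bR$ is a smooth function of the $N$ variables. The point is that for fixed $i$ the $f_{m,i}$ pairwise Poisson commute, so any two functions of the form $F(f_{1,i},\dots,f_{N,i})$ and $G(f_{1,i},\dots,f_{N,i})$ also Poisson commute, by the chain rule:
\[
\{F\circ f,G\circ f\}=\sum_{m,m'}\partial_mF\,\partial_{m'}G\,\{f_{m,i},f_{m',i}\}=0.
\]
This gives commutativity among all the $g_{S,i}$ with the same $i$, which is exactly what the definition requires.

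It remains to choose smoothings $F_S$ of $\min_{A\in S}\max_{m\in A}y_m$ that keep the three properties. The main obstacle is keeping strict negativity, strict monotonicity, and the limit condition simultaneously.

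\begin{itemize}
\item \emph{Monotonicity.} I would pick $F_S$ nondecreasing in each variable, with at least one positive partial derivative at every point; a smoothed max and min built by convolution with a nonnegative mollifier has this property. Then $f_{m,i}<f_{m,i+1}$ for all $m$ gives $g_{S,i}<g_{S,i+1}$ strictly.
\item \emph{Negativity on $K_S$.} I would choose $F_S$ lying within $\epsilon$ above the exact min-max expression, for example $F_S=\min{}'\max{}'+\epsilon$ using mollifications. On the compact set $K_S$ the exact expression is bounded above by some $-\delta_i<0$. To keep strict negativity I would let the smoothing scale depend on $i$, choosing $\epsilon_i<\delta_i$.
\item \emph{The limit condition.} An $\epsilon_i$-perturbation does not affect the argument above, provided $\epsilon_i$ is less than the smoothing error there, so that $F_S<0$ still forces the exact expression to be negative.
\item \emph{Monotonicity versus $i$-dependent smoothing.} Letting the smoothing scale depend on $i$ could break monotonicity in $i$. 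To avoid this, I would shrink $\epsilon_i$ quickly and compare using the strict gaps $f_{m,i+1}-f_{m,i}$. These gaps are positive on the compact $M$ and hence bounded below by some $\eta_i>0$, and I would require $\epsilon_{i}<\eta_i/2$ so that the smoothing errors cannot undo the strict increase.
\end{itemize}
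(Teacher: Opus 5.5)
Your proposal is correct and is essentially the paper's argument: you realize unions and intersections by smoothings of $\min$ and $\max$ applied, for each fixed $i$, to the pairwise commuting $f_{1,i},\dots,f_{N,i}$, and use the chain rule to see that smooth functions of pairwise Poisson commuting functions again Poisson commute. The paper phrases this one binary operation at a time as $\{\phi(f,g),h\}=0$, and leaves implicit the details you supply: the pigeonhole argument for the third condition, and the $i$-dependent smoothing scale. On the limit condition, the operative point is simply the one-sided bound $F_{S,i}\geq$ (exact $\min$--$\max$ expression), so that $g_{S,i}(x)<0$ for all $i$ forces the exact expression to be negative for all $i$.
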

\begin{proof}
    Unions and intersections can be realized by smoothings of $\min$ and $\max$ respectively. Observe now that if three functions $(f,g,h)$ pairwise Poisson commute, and $\phi:\bR^2\to\bR$ is smooth, then $\{\phi(f,g),h\}=0$. 
\end{proof}

If $J$ is a subset of $\{1, \ldots, N\}$, we shall write $K_J$ for the intersection $ \bigcap_{m\in J}K_m$.  
Our goal is to prove Theorem \ref{thm-descent}.

Let us first define the target of the functor, which is the \v{C}ech $A_\infty$-category associated to a cover $K=K_1\cup\ldots \cup K_N$. We start by defining the morphisms. For any two objects $(\mathbb{L}_0,\mathbb{L}_1)$ of $\Fuk_M(K)$, we define the morphism chain complex for $(\mathbb{L}_0,\mathbb{L}_1)$ by $$\bigoplus_{p\geq 0}\bigoplus_{|J|=p+1}\Fuk^*_M(K_J)(\mathbb{L}_0,\mathbb{L}_1)[p],$$where $[-]$ denotes the shift operator on chain complexes and $J$ ranges through all subsets of $\{1,\ldots ,N\}$, and with differential the sum of the \v{C}ech differential and the internal (Floer) differential. Here, we are using the ordered version of the  \v{C}ech complex, so that the outer direct sum is finite. 

In order to describe the $A_\infty$-structure it is better to first consider a more abstract approach. Let $\mathbb{N}^{inj}$ denote the category with objects $0,1,2,\ldots $ and morphisms from $p$ to $q$ the set of injective order-preserving maps $\{n\in \mathbb{Z}\mid 1\leq n\leq p+1\} \to \{n\in \mathbb{Z}\mid 1\leq n\leq q+1\}$. A contravariant (respectively covariant) functor $\mathbb{N}^{inj}\to Ch(\Lambda_0)$ is called a semi-simplicial (respectively semi-cosimplicial) chain complex. We shall use these notions for more general target categories than chain complexes.

Let $({\Fuk}^\bullet_M,d_{i})$ be the semi-cosimplicial $A_\infty$-category on the objects of the Fukaya category of $M$, which on morphisms is given by $$p\mapsto \bigoplus_{|J|=p+1}\Fuk^*_M(K_J)(\mathbb{L}_0,\mathbb{L}_1),$$ with coface maps $d_i$ obtained from the restriction maps in the standard way. 

Let us define the  semi-simplicial differential graded associative algebra
$$
p\mapsto NC^*(\Delta^p):= \bigoplus_{F\subset \Delta^p}\Lambda_{\geq 0}\cdot \delta_F$$ where the direct sum is over all faces of $\Delta^p$ (which are in one to one correspondence with all the morphisms in $\mathbb{N}^{inj}$ with target $n$) and $\delta_F$ is in degree equal to the dimension of $F.$ We think of the elements of $NC^*(\Delta^p)$ as assignments of scalars to each face of $\Delta^p$. We equip $NC^*(\Delta^p)$ with the differential that sends $\delta_F$ to the alternating sum of $\delta_{F'}$, where $F'$ is a face containing $F$ of dimension $1$ higher. The face maps of $NC^*(\Delta^\bullet)$ are those induced by the face inclusions $\Delta^p\to \Delta^{q}.$

We can define the totalization $A_\infty$-category $Tot(\Fuk^\bullet_M)$ as the category with the same objects as $\Fuk_M(K)$, with morphism spaces $$Tot(\Fuk^\bullet_M)(\mathbb{L}_0,\mathbb{L}_1):=\operatorname{eq} \left(\prod_{p=0}^\infty \Fuk^p_M(\mathbb{L}_0,\mathbb{L}_1)\otimes NC^*(\Delta^p)\rightrightarrows \prod_{r\to q} \Fuk^q_M(\mathbb{L}_0,\mathbb{L}_1)\otimes NC^*(\Delta^r)\right).$$ Here the second product is over all maps in $\mathbb{N}^{inj}$ and the symbol $\operatorname{eq}$ refers to the equalizer, i.e. the kernel of the difference of the two natural maps.  

\begin{prop}\label{prop-Tot-Cech}
If $(K_1, \ldots, K_N)$ is a weakly involutive cover of $K$, then for every pair of objects $(\mathbb{L}_0,\mathbb{L}_1)$ there is an isomorphism of chain complexes
$$Tot(\Fuk^\bullet_M)(\mathbb{L}_0,\mathbb{L}_1)\to \Cech(\Fuk_M; K_1,\ldots,K_N)(\mathbb{L}_0,\mathbb{L}_1)$$ which intertwines the canonical maps received from  $\Fuk_M^*(K)$.\qed
\end{prop}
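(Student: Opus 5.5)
The plan is to treat this as a purely combinatorial statement about totalizations of semi-cosimplicial objects, independent of Floer theory. The weak involutivity hypothesis enters only through the standing definition of the \v{C}ech complex. Fix $(\mathbb{L}_0,\mathbb{L}_1)$ and write $C^p=\bigoplus_{|J|=p+1}\Fuk^*_M(K_J)(\mathbb{L}_0,\mathbb{L}_1)$. We must identify the equalizer defining $Tot$ with $\bigoplus_p C^p[p]$ carrying the \v{C}ech plus Floer differential.

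\emph{Step 1: describe the equalizer explicitly.} An element of $\prod_p C^p\otimes NC^*(\Delta^p)$ is a family $x_p=\sum_{F\subset\Delta^p} c_{p,F}\otimes\delta_F$. For each face inclusion $\phi\colon r\to q$, the equalizer condition says that $(\phi_* \otimes \mathrm{id})(x_r)$ and $(\mathrm{id}\otimes\phi^*)(x_q)$ agree in $C^q\otimes NC^*(\Delta^r)$. Here $\phi^*$ pulls $\delta_F$ back to $\delta_{F'}$ when $F=\phi(F')$ and to $0$ otherwise. Comparing coefficients of $\delta_{F'}$ gives $c_{q,\phi(F')}=\phi_*c_{r,F'}$. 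Every face $F\subset\Delta^q$ is the image of the top face of $\Delta^{\dim F}$ under a unique $\phi_F$. Hence the entire compatible family is determined by the top-face coefficients $y_p:=c_{p,\Delta^p}\in C^p$, via $c_{q,F}=(\phi_F)_*y_{\dim F}$.

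Conversely, any family $(y_p)$ defines a compatible family by this formula, because composites of face maps are again face maps. So $y\mapsto x$ is a bijection between $\prod_p C^p$ and the equalizer. Since $|J|\le N$ forces $C^p=0$ for $p\ge N$, the product equals the direct sum $\bigoplus_p C^p$. This is also where the ordered \v{C}ech complex being finite is used.

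\emph{Step 2: identify the differential and the degree shift.} On the equalizer, the differential is the Floer differential on $C^q$ plus $\pm\,\mathrm{id}\otimes d_{NC}$. Take the top-face component at level $p$ of $d x$. The contribution of $d_{NC}$ to $\delta_{\Delta^p}$ is the alternating sum over the codimension-one faces $\partial_i\Delta^p$ of $c_{p,\partial_i\Delta^p}=(d_i)_*y_{p-1}$. This is exactly the \v{C}ech differential $\sum(-1)^i d_i y_{p-1}$ built from the restriction coface maps. The map is therefore a chain isomorphism, once we check that the Koszul sign convention in $C^p\otimes NC^*(\Delta^p)$ matches the sign convention in the \v{C}ech complex up to the standard rescaling $y_p\mapsto(-1)^{\epsilon(p,|y|)}y_p$. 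We fix such a rescaling once and for all. The degree of $\delta_{\Delta^p}$ is $p$, which accounts for the shift $[p]$.

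\emph{Step 3: compatibility with the maps from $\Fuk^*_M(K)$.} The map into $Tot$ sends $z$ to the family with $y_0=(\mathrm{res}_{K\to K_m}z)_m$ and $y_p=0$ for $p>0$. It is compatible because restrictions are strictly functorial, as in Theorem~\ref{thm-restriction-functors}. The \v{C}ech augmentation is the same restriction into $C^0$. Under the bijection of Step 1 these match, and the vertex coefficients $c_{q,v}$ are just the further restrictions to $K_J$, which is again strict functoriality.

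The only genuine obstacle I expect is the sign bookkeeping in Step 2, namely that the sign conventions are compatible. I would handle it by choosing the sign of $\delta_{F'}$ in $d_{NC}\delta_F$ to agree with the orientation induced on $\partial_i\Delta^p$. I would then absorb any remaining sign discrepancy into the rescaling $y_p\mapsto \pm y_p$, which does not affect Step 3 because that step only involves $p=0$.
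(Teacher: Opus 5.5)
Your proof is correct. The paper gives no proof of this proposition and treats it as standard, and your argument is exactly the routine verification being omitted: an element of the equalizer is determined by its top-face coefficients (a Yoneda-type fact, since $NC^s(\Delta^\bullet)$ is the linear dual of the representable functor $\mathbb{N}^{inj}(s,-)$), and the induced differential becomes the \v{C}ech-plus-Floer differential after the rescaling $y_p\mapsto(-1)^{p|y_p|}y_p$, which is the identity at $p=0$. The coaugmentation $z\mapsto \epsilon^p(z)\otimes\sum_v\delta_v$ then matches by strict functoriality of restriction; note only that the signs in $d_{NC}$ are fixed by the paper rather than chosen by you, and that the weak involutivity hypothesis is not actually used in this statement.
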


Finally, $Tot(\Fuk^\bullet_M)$ can be equipped with an $A_\infty$ structure. The key point is that if $C$ is a dga, then $D\mapsto D\otimes C$ defines a lax  monoidal functor $Ch(\Lambda_0)\to Ch(\Lambda_0)$ with the natural transformation $$(D_1\otimes C)\otimes (D_2\otimes C)\to (D_1\otimes D_2)\otimes C$$given by the product structure of $C.$ This implies that the category with the same objects as $\Fuk_M(K)$ and morphism spaces for $(\mathbb{L}_0,\mathbb{L}_1)$ equal to the chain complex $\Fuk^p_M(\mathbb{L}_0,\mathbb{L}_1)\otimes NC^*(\Delta^p)$ has an induced $A_\infty$-structure. The same is true for $\prod_{p=0}^\infty \Fuk^p_M(\mathbb{L}_0,\mathbb{L}_1)$ by simply taking the product structure and it is elementary to check that the structure descends to the equalizers.

\begin{rem}
    This discussion works equally well on any kind of non-symmetric operadic structure. There is nothing special about $A_\infty$-categories.
\end{rem}

We therefore obtain an $A_\infty$-structure on $\Cech(\Fuk_M; K_1,\ldots,K_N)$ using Proposition \ref{prop-Tot-Cech}. This structure can of course be described explicitly, see for example \cite[Equation (14.24)]{Bott82} for the binary product operation.

\subsection{Reducing to the construction of a special diagram}\label{subsec-reduce-special}

The proof of Theorem \ref{thm-descent} reduces to proving the following result, where we recall that $K=\bigcup_i K_i$:
\begin{thm}\label{thm-descent-linear}
  For any pair of objects $(\mathbb{L}_0,\mathbb{L}_1),$ the chain map on morphisms $$\Fuk^*_{M}(K)(\mathbb{L}_0,\mathbb{L}_1)\to \Cech(\Fuk_{M}; K_1,\ldots,K_N)(\mathbb{L}_0,\mathbb{L}_1)$$ is a quasi-isomorphism. 
\end{thm}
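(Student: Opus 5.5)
Since Theorem~\ref{thm-descent-linear} only involves a fixed pair $(\mathbb{L}_0,\mathbb{L}_1)$ and the linear (morphism-level) maps, I will reduce it, via Proposition~\ref{prop-tel-quasi} and Proposition~\ref{prop-tel-compare-main}, to a statement about completed telescopes of acceleration data, and then invoke the Mayer--Vietoris argument for involutive covers from \cite{Varolgunes2021}. The plan is to build a single ``special diagram'': a cube-shaped family of acceleration data indexed by the subsets $J\subset\{1,\ldots,N\}$, with $H_{K_J}$ an acceleration datum for $K_J$. I will choose the $H_{K_J}$ from the weakly Poisson commuting functions $f_{m,i}$ of Definition~\ref{def-weak-inv}: set $H_{K_J,i}$ to be a smoothing of $\max_{m\in J} f_{m,i}$ and $H_{K,i}$ a smoothing of $\min_m f_{m,i}$, using Lemma~\ref{lem-weak-Pois}. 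I will then perturb these slightly (small time-dependent perturbations supported near chords) so that they are admissible for $(\mathbb{L}_0,\mathbb{L}_1)$ and satisfy the inequalities $H_{K_{J'},i}\le H_{K_J,i}$ for $J\subset J'$ required by Proposition~\ref{prop-tel-compare-main}.

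Next, using Proposition~\ref{prop-tel-compare-main} together with its higher-dimensional analogue, I will obtain a homotopy-coherent map from the \v{C}ech cube of completed telescopes $J\mapsto\widehat{\operatorname{tel}}(\mathcal{C}_{H_{K_J}})$ to the \v{C}ech cube $J\mapsto\Fuk^*_{K_J}(\mathbb{L}_0,\mathbb{L}_1)$. The higher analogue needs $h_k$ for chains of length $k$ of inclusions. It is built exactly as $h_1,h_2$, namely as signed sums of linear trees whose nodes are continuation maps, with cubes $S,V,\ldots$ of Floer data filling the faces; these exist because the relevant multimorphism spaces in $\mathrm{Floer}^{\mathrm{reg}}$ are contractible by Proposition~\ref{lmFrgtHtpyEq} and Lemma~\ref{lem-reg-correct}. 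Totalizing gives a commutative-up-to-homotopy square whose vertical arrows are quasi-isomorphisms, again by Proposition~\ref{prop-tel-quasi}. Since a cube of quasi-isomorphisms induces a quasi-isomorphism of totalizations, and completion preserves quasi-isomorphisms of free $\Lambda_0$-modules, it then suffices to show that
\[
\widehat{\operatorname{tel}}(\mathcal{C}_{H_K})\to\bigoplus_{p\ge0}\bigoplus_{|J|=p+1}\widehat{\operatorname{tel}}(\mathcal{C}_{H_{K_J}})[p]
\]
is a quasi-isomorphism.

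This last telescope statement is the main obstacle. It is the Lagrangian version of the descent theorem of \cite{Varolgunes2021}. I would argue as there: work modulo $T^E$, prove acyclicity of the total \v{C}ech--telescope cone, and pass to the inverse limit using the Mittag-Leffler property. Modulo $T^E$, the key input is an energy estimate. Because the $f_{m,i}$ Poisson commute, the Hamiltonian flows of $\max$/$\min$ smoothings are generated by commuting flows. Consequently every Hamiltonian chord of $H_{K,i}$ from $L_0$ to $L_1$ is simultaneously a chord of the $H_{K_J,i}$ for those $J$ whose defining functions are active near it. Action considerations then show that the continuation maps between these complexes are, up to terms of energy $\ge E$ for $i$ large, ``identity'' on the chords shared by the complexes. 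This makes the \v{C}ech complex at each finite level an exact Mayer--Vietoris sequence of generators modulo $T^E$.

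To make this rigorous, I would first try to import \cite[Theorem~1.3.4 and Section~4]{Varolgunes2021} verbatim, replacing Hamiltonian orbits by chords and Floer cylinders by strips. The places needing care are the non-transverse case and the Lagrangian boundary. For the former, I would use the small perturbations from Remark~\ref{rem-disjoint-nontransverse}. For the latter, I expect the integral monotonicity bound of Appendix~\ref{sec:monotonicity-inequality} to supply the required bound of geometric energy by topological energy.
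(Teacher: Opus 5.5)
There is a genuine gap, and it is in the part you defer to \cite{Varolgunes2021}: the construction of the special diagram. Smoothings of $\max_{m\in J}f_{m,i}$ and $\min_m f_{m,i}$ do not have the property your mechanism needs. In the transition collar where a smoothing $\phi(f_1,f_2)$ of $\min(f_1,f_2)$ differs from both $f_1$ and $f_2$, its Hamiltonian vector field is $aX_{f_1}+bX_{f_2}$ with $0<a,b<1$. Its chords there are therefore generically chords of none of the other corner Hamiltonians. So your claim that every chord of $H_{K,i}$ is a chord of the active $H_{K_J,i}$ fails, and with it the exactness of each slice.

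The paper proceeds differently. It first reduces to $N=2$ (Lemma~\ref{lem-descent-2-to-N}), which also spares you the $N$-fold coherences $h_k$. It then \emph{tangentializes} (Lemma~\ref{lem-tang-equality-region}): the functions $\tilde f_1=x_g\circ F-\delta$ and $\tilde f_2=y_g\circ F-\delta$ coincide on the whole band $\{|f_1-f_2|\le g_-\}$. Consequently $\min(\tilde f_1,\tilde f_2)$ and $\max(\tilde f_1,\tilde f_2)$ are smooth with no smoothing at all, and each agrees locally with $\tilde f_1$ or $\tilde f_2$. The dominance regions have disjoint closures, and Poisson commutativity gives Separation.

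Admissibility also cannot be obtained by independent small perturbations near chords. Those would destroy $g_1=g_2$ on $W$ and the smoothness of $\min/\max$. They would also destroy Separation, since a generic time-dependent perturbation kills $\{g_1,g_2\}=0$. The proof of Lemma~\ref{lem-main-separated-admissible} instead adds the same perturbation to both functions on the interior of the equality locus, and pushes each function down separately in the dominance regions. It then recovers Separation from the quantitative bound $\|\{q,g_m-\tilde f_m\}\|_{C^0}<2a$, where $q=f_1-f_2$.

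Second, the ``identity on shared chords'' terms are constant continuation strips $u(s,t)=\gamma(t)$ at chords in the region where the corner Hamiltonians agree. There $\partial_sH=0$, and a variation of $J$ contributes only a term proportional to $\partial_su=0$, so regularity cannot be achieved by generic $J$ (cf.\ \cite{Seidel2008b}). Since \cite{Varolgunes2021} uses virtual techniques, importing it verbatim does not give a proof with classical transversality. What is needed is second-order vanishing of $\partial_sH$ along shared chords (Lemma~\ref{lem-const-reg}, Definition~\ref{def-acceptable-cont}, Proposition~\ref{prop-reg-monotone-cont}).

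These continuation data also violate the strict monotonicity \eqref{eq:strict-monotonicity} and the hypothesis $\bfH_{0m}>\vec{\bfH}$ of Proposition~\ref{lmFrgtHtpyEq}. So contractibility of multimorphism spaces does not supply coherences for the special diagram; the paper instead goes through Proposition~\ref{prop-tel-compare} and the sandwiching argument.

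Finally, the mechanism is not that non-identity terms have energy $\ge E$ for large $i$; nothing forces that. Rather, each slice is exactly acyclic at every fixed $i$ (Lemma~\ref{lem-var-711}). Roughly, the zero-energy constant strips give identities on shared chords and every other contribution has strictly positive valuation. The filtration argument over the telescope and \cite[Corollary~2.3.6]{Varolgunes2021} for the completion then finish the proof. Also, for $J\subset J'$ you need $H_{K_J,i}\le H_{K_{J'},i}$, not the reverse.
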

Let us fix the objects $(\mathbb{L}_0,\mathbb{L}_1)$ from now on.

\begin{lemma}\label{lem-descent-2-to-N}
    It suffices to prove Theorem \ref{thm-descent-linear} for $N=2.$
\end{lemma}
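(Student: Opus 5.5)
We argue by induction on $N$, in the standard way for Mayer--Vietoris statements, using Lemma~\ref{lem-weak-Pois} to keep every auxiliary cover weakly involutive. Assume Theorem~\ref{thm-descent-linear} holds for all weakly involutive covers with $2$ members, and, inductively, for all weakly involutive covers with fewer than $N$ members. Put $K' = K_1\cup\cdots\cup K_{N-1}$. By Lemma~\ref{lem-weak-Pois}, the collection of all unions of intersections of the $K_m$ is weakly Poisson commuting. Hence each of the following is a weakly involutive cover:
\begin{itemize}
\item $K = K'\cup K_N$;
\item $K' = K_1\cup\cdots\cup K_{N-1}$;
\item $K'\cap K_N = (K_1\cap K_N)\cup\cdots\cup(K_{N-1}\cap K_N)$.
\end{itemize}
(For the last, note that the intersections $K_j\cap K_N$ belong to the weakly Poisson commuting family produced by Lemma~\ref{lem-weak-Pois}.)

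The key algebraic step is the standard identification of the ordered \v{C}ech complex of $N$ sets as an iterated cone. Split the subsets $J\subset\{1,\dots,N\}$ according to whether $N\in J$. This identifies $\Cech(\Fuk_M;K_1,\dots,K_N)(\mathbb{L}_0,\mathbb{L}_1)$ with the total complex of the square
\[
\begin{array}{ccc}
\Cech(K_1,\dots,K_{N-1}) & \longrightarrow & \Cech(K_1\cap K_N,\dots,K_{N-1}\cap K_N) \\
 & & \uparrow \\
 & & \Fuk^*_M(K_N).
\end{array}
\]
Concretely, it is the cone of
\[
\Cech(K_1,\dots,K_{N-1})\oplus \Fuk^*_M(K_N)\;\longrightarrow\; \Cech(K_1\cap K_N,\dots,K_{N-1}\cap K_N)[\text{shift}],
\]
with the maps given by the restriction maps. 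In this identification the summands with $J=\{N\}$ give $\Fuk^*_M(K_N)$, and those with $J=J'\cup\{N\}$, $J'\neq\varnothing$, give the shifted \v{C}ech complex of the intersected cover. It must be checked that the signs and the \v{C}ech differential match the cone differential; this is routine.

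The canonical map from $\Fuk^*_M(K)$ factors through the binary \v{C}ech complex for $K=K'\cup K_N$, namely the cone of
\[
\Fuk^*_M(K')\oplus\Fuk^*_M(K_N)\to\Fuk^*_M(K'\cap K_N).
\]
It then maps into the square above via the canonical maps
\[
\Fuk^*_M(K')\to\Cech(K_1,\dots,K_{N-1}),\qquad \Fuk^*_M(K'\cap K_N)\to\Cech(K_1\cap K_N,\dots,K_{N-1}\cap K_N).
\]
These maps commute strictly with restriction, because restriction functors are strictly functorial (Theorem~\ref{thm-restriction-functors}). The composite is therefore the canonical map of Theorem~\ref{thm-descent-linear}. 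The first map is a quasi-isomorphism by the $N=2$ case. The map of cones is a quasi-isomorphism by the inductive hypothesis applied to the two $(N-1)$-member covers and the five lemma.

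I expect the main obstacle to be bookkeeping rather than geometry. One must verify that the canonical map into the $N$-fold \v{C}ech complex really equals this composite, with consistent signs, and that the intersected cover $\{K_j\cap K_N\}$ is ordered and indexed compatibly with the ordered \v{C}ech complex. Everything else is formal.
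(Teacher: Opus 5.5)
Your argument is correct and is the standard inductive inclusion--exclusion reduction that the paper invokes. Lemma~\ref{lem-weak-Pois} makes the auxiliary covers $\{K',K_N\}$, $\{K_1,\dots,K_{N-1}\}$ and $\{K_j\cap K_N\}_{j<N}$ weakly involutive, and strict functoriality of the restriction maps makes your comparison squares commute on the nose. The only difference is cosmetic: the paper's one-line proof phrases the reduction as merging two members of the cover into one, whereas you split off $K_N$ from $K'=K_1\cup\cdots\cup K_{N-1}$, using the binary case once and the inductive hypothesis twice. Both are routine variants of the same argument.
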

\begin{proof}
This follows from Lemma \ref{lem-weak-Pois} and the standard inclusion exclusion argument that reduces the case of a cover consisting of $N$ subsets to the case of a cover with $N-1$ subsets by taking the union of two of the sets.
\end{proof}

To proceed with the proof of Theorem \ref{thm-descent-linear}, we need to use explicit models for morphisms in the Fukaya categories with support. To this end, recall that an acceleration datum for a subset $K\subset M$ consists of a sequence $H_1< H_2<\ldots$ of admissible Hamiltonians $H_i: [0,1]\times M\to \bR$, for $(\mathbb{L}_0,\mathbb{L}_1)$, satisfying $H_i|_{[0,1]\times K}<0$  and so that $$
H_i(t,x)\xrightarrow[i\to\infty]{}\begin{cases}
0,& x\in K,\\
+\infty,& x\notin K
\end{cases}
$$  for every $(t,x)\in [0,1] \times M$, together with a choice of monotone interpolations between $H_i$ and $H_{i+1}$ for all $i\geq 1$. Let us denote the corresponding diagram of Floer chain complexes by $\mathcal{C}_H:\mathbb{N}\to Ch(\Lambda_0)$. As shown in Proposition~\ref{prop-tel-quasi},  there are quasi-isomorphisms
\begin{equation}\label{tel}
\widehat{\operatorname{tel}}(\mathcal{C}_H)\to \Fuk^*_{M}(K)(\mathbb{L}_0,\mathbb{L}_1).
\end{equation} 

We now continue with our proof of Theorem \ref{thm-descent-linear}, which will provide an alternative to the Mayer-Vietoris argument from \cite{Varolgunes2021}. The first step, for a nested inclusion of compact subsets, is to ensure that the maps from the telescope are functorial in a homotopy coherent way. Here is the specific statement that we need:

\begin{prop}\label{prop-tel-compare} If $K_1$ and $K_2$ are compact subsets of $M$, there is a homotopy commutative diagram:
   \[
    \begin{tikzcd}
\widehat{\operatorname{tel}}(\mathcal{C}_{K_1\cup K_2}) \arrow{rr}{}\arrow{d}{} && \operatorname{cocone}\left(\widehat{\operatorname{tel}}(\mathcal{C}_{K_1})\oplus\widehat{\operatorname{tel}}(\mathcal{C}_{K_2})\to \widehat{\operatorname{tel}}(\mathcal{C}_{K_1\cap K_2})\right)\arrow{d}{} \\
\Fuk^*_{M}(K_1\cup K_2)(\mathbb{L}_0,\mathbb{L}_1)  \arrow{rr}{} && \Cech(\Fuk_{M}; K_1,K_2)(\mathbb{L}_0,\mathbb{L}_1)
\end{tikzcd}
\]where the top row constructed using appropriate acceleration data as in \cite[Section 3.4]{Varolgunes2021} and the vertical arrows are quasi-isomorphisms.\qed
\end{prop}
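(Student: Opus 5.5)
The plan is to assemble the diagram from the comparison maps of Proposition~\ref{prop-tel-quasi} and the homotopies of Proposition~\ref{prop-tel-compare-main}. Lemma~\ref{lem-weak-Pois} lets us treat $K_1\cup K_2$ and $K_1\cap K_2$ on the same footing as $K_1,K_2$. Following \cite[Section 3.4]{Varolgunes2021}, we first choose four acceleration data $H_{K_1\cup K_2}$, $H_{K_1}$, $H_{K_2}$, $H_{K_1\cap K_2}$, together with monotone continuation maps realizing the pointwise inequalities
\[
H_{K_1\cup K_2,i}\le H_{K_j,i}\le H_{K_1\cap K_2,i}.
\]
These produce the top row: each completed telescope maps to the next by a telescope map $T$, and the composite square $K_1\cup K_2\to K_j\to K_1\cap K_2$ ($j=1,2$) commutes up to a homotopy. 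Together these data give a map into the cocone. Commutativity of the squares and triangles in this layer is only up to chosen homotopies, which are built from $1$-cubes of continuation data exactly as in loc.\ cit.

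The vertical maps are defined componentwise. The left arrow is the comparison map $\widehat{\operatorname{tel}}(\mathcal{C}_{K_1\cup K_2})\to \Fuk^*_M(K_1\cup K_2)$ of \eqref{tel}. The right arrow is the map of cocones built from:
\begin{itemize}
\item the comparison maps for $K_1$, $K_2$ and $K_1\cap K_2$;
\item a correction term in the shifted $K_1\cap K_2$ slot, given by the homotopies $h_1$ of Proposition~\ref{prop-tel-compare-main} for the inclusions $K_1\cap K_2\subset K_j$.
\end{itemize}
These homotopies are needed because the cocone differential on the top involves $T$, while on the bottom (the binary \v{C}ech complex) it involves the strict restriction maps. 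The right arrow is a quasi-isomorphism because a map of cocones that is a quasi-isomorphism on each of the three pieces is a quasi-isomorphism; this is the five lemma applied to the long exact sequences, with Proposition~\ref{prop-tel-quasi} supplying the componentwise statement. The left arrow is a quasi-isomorphism directly by Proposition~\ref{prop-tel-quasi}.

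The remaining step is the homotopy filling the big square. Its components are the $h_1$'s for $K_1\cup K_2\subset K_j$, which fill the unshifted \v{C}ech slots, and, in the shifted $K_1\cap K_2$ slot, the $h_2$'s of the prismatic part of Proposition~\ref{prop-tel-compare-main} for $K_1\cup K_2\supset K_j\supset K_1\cap K_2$. I would then verify that $D h+hD$ equals the difference of the two composites; by construction this reduces to the prism identities proved there, with signs coming from the cocone. The main obstacle is coherence. The two prisms, for $j=1$ and $j=2$, share the face given by the composed continuation $K_1\cup K_2\to K_1\cap K_2$, and the telescope-side homotopies in the top row must be exactly the ones used as top faces of the prisms. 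I would handle this by choosing all continuation data, including the shared diagonal $1$-cubes $S''_i$, simultaneously and before the Floer-side fillings, inserting the index-shift layer of the previous proof so that all inequalities are strict. The bottom triangles commute strictly by functoriality of restriction, so no further correction is needed there. Finally, $T$-adic completion preserves all the homotopies and quasi-isomorphisms, by \cite[Corollary 2.3.6]{Varolgunes2021}.
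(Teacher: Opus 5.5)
Your proposal is correct and is essentially the argument the paper intends. The paper records this proposition without a separate proof precisely because it follows by assembling two ingredients: the comparison quasi-isomorphisms of Proposition~\ref{prop-tel-quasi}, and the square homotopies $h_1$ and prism homotopies $h_2$ of Proposition~\ref{prop-tel-compare-main}, with the bottom triangles commuting strictly. Your coherence remark is the right one, since the paper's prism uses the composite $c_i\circ c'_i$ as its third edge, so gluing the $j=1$ and $j=2$ prisms requires the evident variant with a common, separately chosen diagonal continuation, whose $h_2$ is built from the same trees. The appeal to Lemma~\ref{lem-weak-Pois} is unnecessary: the statement concerns arbitrary compact $K_1,K_2$, and acceleration data exist for any compact set.
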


This reduces the desired result to the statement that the upper horizontal arrow is an equivalence. The diagram of Figure~\ref{fig-cubic-diagram}, in which each horizontal line corresponds to one of the mapping telescopes, encodes the arrows between the underlying Floer complexes.
\begin{figure}[ht]
\centering
\begin{tikzpicture}
  \matrix (m) [matrix of math nodes, row sep=1.5em,
    column sep=1em]{
    & \ldots & & CF(H_i^{K_1\cup K_2}) & & CF(H_{i+1}^{K_1\cup K_2}) & & \ldots   \\
    \ldots & & CF(H_i^{K_1})  & & CF(H_{i+1}^{K_1})  & & \ldots & \\
    &\ldots & & CF(H_i^{K_2})  & & CF(H_{i+1}^{K_2}) & & \ldots \\
    \ldots & & CF(H_i^{K_1\cap K_2})  & & CF(H_{i+1}^{K_1\cap K_2}) & &\ldots  & \\};
  \path[-stealth]
    (m-1-4) edge (m-1-6) edge (m-2-3)
            edge [densely dotted] (m-3-4) edge (m-2-5)
            edge [densely dotted] (m-4-3) 
            edge [densely dotted] (m-3-6)
            edge [densely dotted] (m-4-5)
    (m-1-6) edge [densely dotted] (m-3-6) edge (m-2-5) edge [densely dotted] (m-4-5)
    
    (m-2-3) edge [-,line width=6pt,draw=white] (m-2-5)
            edge (m-2-5) edge (m-4-3) edge (m-4-5)
    (m-3-4) edge [densely dotted] (m-3-6)
            edge [densely dotted] (m-4-3)
            edge [densely dotted] (m-4-5)
    (m-4-3) edge (m-4-5)
    (m-3-6) edge (m-4-5)
    (m-2-5) edge [-,line width=6pt,draw=white] (m-4-5)
            edge (m-4-5)
    (m-1-2) edge (m-1-4)
    (m-2-1) edge (m-2-3) 
    (m-3-2) edge [densely dotted](m-3-4) 
    (m-4-1) edge (m-4-3) 
    (m-1-6) edge (m-1-8) 
    (m-2-5) edge [-,line width=6pt,draw=white] (m-2-7) edge (m-2-7)
    (m-3-6) edge (m-3-8) 
    (m-4-5) edge (m-4-7);             
  \end{tikzpicture}
\caption{The diagram of Floer complexes determined by acceleration data for $K_1\cup K_2$, $K_1$, $K_2$, and $K_1\cap K_2$.}\label{fig-cubic-diagram}
\end{figure}

\noindent In this diagram, the Hamiltonians $H_i^K$ (for $K \in \{K_1,K_2,K_1 \cup K_2, K_1 \cap K_2\}$)  are the acceleration data, and we can visualise it as a diagram in $3$-space, consisting of a collection of cubes placed side by side. The edges of the cubes are chain maps. The diagonals along the square faces are chain homotopies and the big diagonal inside the cube is a null-homotopy of the sum (with signs) of 6 terms obtained by composing a square diagonal and an edge to go from the beginning to the end of the big diagonal (see \cite[Figure 8]{Varolgunes2021}). We have suppressed the Lagrangian branes and almost complex structures from the notation.

Having reduced the proof of the Mayer-Vietoris property to the claim that the top horizontal arrow in Proposition \ref{prop-tel-compare} is an equivalence, we now consider a pair $K_1,K_2$ of weakly Poisson commuting compact subsets, and show that the acceleration data can be chosen so that, in fact, each slice of the diagram above, namely
\begin{align}
\xymatrix{ 
CF(H_n^{K_1\cup K_2})\ar[r]\ar[d]\ar[dr]& CF(H_n^{K_1})\ar[d]\\ CF(H_n^{K_2})\ar[r] &CF(H_n^{K_1\cap K_2})}
\end{align}
is acyclic. A key input is the following lemma, which replaces each pair of Hamiltonians in the acceleration data for the pair $K_1,K_2$ by a nearby pair of Hamiltonians for which the minimum and maximum are admissible smooth functions in the sense of Definition \ref{def:admissible_Hamiltonian}, and whose $1$-chords satisfy a separation condition.

\begin{lemma}\label{lem-main-separated-admissible}
Let $f_1,f_2:M\to\mathbb{R}$ be smooth functions satisfying $\{f_1,f_2\}=0$. For every $\epsilon>0$ there exist time-dependent Hamiltonians $g_1,g_2:[0,1]\times M\to\mathbb{R}$ such that, setting
\[
U:=\{g_1<g_2\},\qquad V:=\{g_2<g_1\},\qquad W:=\{g_1=g_2\}
\]
as subsets of $[0,1]\times M$, the following properties hold:
\begin{enumerate}
\item\emph{(Approximation from below)} for $m=1,2$, the functions $f_m$ and $g_m$ satisfy $g_m\leq f_m$ and $\|g_m-f_m\|_{C^0([0,1]\times M)}<\epsilon$,
\item\emph{(Disjointness)} the closures of the subsets $U$ and $V$ are disjoint: $\overline{U}\cap\overline{V}=\varnothing$,
\item\emph{(Smooth Matching)} the complements $([0,1]\times M)\setminus U$ and $([0,1]\times M)\setminus V$ are each the closure of an open set,
\item\emph{(Separation)} no $1$-chord of $X_{g_1}$, $X_{g_2}$, $X_{\min(g_1,g_2)}$, or $X_{\max(g_1,g_2)}$ from $\mathbb{L}_0$ to $\mathbb{L}_1$ has a graph\footnote{The graph of $\gamma:[0,1]\to M$ is the image of $\mathrm{id}\times\gamma:[0,1]\to[0,1]\times M$.} that intersects both $U$ and $V$, and
\item\emph{(Admissibility)} $g_1$, $g_2$, $\min(g_1,g_2)$, and $\max(g_1,g_2)$ are admissible (and in particular non-degenerate) Hamiltonians for $(\mathbb{L}_0,\mathbb{L}_1)$.
\end{enumerate}

\end{lemma}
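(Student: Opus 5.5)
We first build, from $f_1,f_2$, a pair of Poisson-commuting smooth functions whose min and max are smooth, and only then perturb to achieve admissibility. Since $\{f_1,f_2\}=0$, the flows of $X_{f_1}$ and $X_{f_2}$ commute and preserve both functions. Hence $h:=f_2-f_1$ is invariant under both flows and Poisson commutes with $f_1$ and $f_2$.

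Fix a small $\delta>0$ and a smooth function $\chi:\bR\to[-\delta,\delta]$ with the following properties: $\chi(s)=\delta$ for $s\geq 2\delta$, $\chi(s)=-\delta$ for $s\le -2\delta$, and $\chi$ is nondecreasing. Set
\[
\tilde f_1 = f_1 - \delta + \phi_1(h),\qquad \tilde f_2=f_2-\delta+\phi_2(h),
\]
where $\phi_1,\phi_2$ are chosen so that $\tilde f_1=\tilde f_2$ exactly on the slab $\{|h|\le \delta\}$, while $\tilde f_m=f_m-\delta'$ (with $\delta'\approx\delta$) away from the slab $\{|h|\le 3\delta\}$. One way to do this is to replace $h$ by $\tilde h:=h-\psi(h)$, with $\psi$ flattening $[-\delta,\delta]$ to $0$, and to set $\tilde f_1=\min$-free combinations such as $\tilde f_1=\tfrac12(f_1+f_2)-\tfrac12\tilde h-c$ and $\tilde f_2=\tfrac12(f_1+f_2)+\tfrac12\tilde h-c$. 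Every function here is of the form $\Phi(f_1,f_2)$, so the pair $\{\tilde f_1,\tilde f_2\}$ still Poisson commutes, which is the observation in Lemma~\ref{lem-weak-Pois}. With $c$ chosen large compared to $\delta$ but still below $\epsilon$, Approximation from below holds. The sets $U=\{\tilde h>0\}\subset\{h>\delta\}$ and $V\subset\{h<-\delta\}$ have disjoint closures. The level set $W\supset\{|h|\le\delta\}$ has nonempty interior, and Smooth Matching holds provided we choose $\psi$ so that $\{\tilde h\ge0\}$ is the closure of its interior; this requires care only along $\{h=\pm\delta\}$. Moreover $\min(\tilde f_1,\tilde f_2)$ and $\max(\tilde f_1,\tilde f_2)$ are smooth, since near $\overline U$ they agree with a single smooth function and on the interior of $W$ the two functions coincide.

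Separation at the unperturbed level follows from invariance. Each of $X_{\tilde f_1}$, $X_{\tilde f_2}$, $X_{\min}$ and $X_{\max}$ is, on each region, a combination of $X_{f_1}$ and $X_{f_2}$ with coefficients that are functions of $(f_1,f_2)$. Along a trajectory these coefficients are constant, so $h$ is constant along every trajectory. A chord therefore stays in one level set of $h$ and cannot meet both $U$ and $V$. This holds robustly: chords of small $C^2$ perturbations stay in $\{h>\delta/2\}$ or $\{h<-\delta/2\}$, or in a neighborhood of $W$, by continuity of the flow, given a uniform bound on the time-1 flow displacement of $h$.

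The main obstacle is Admissibility. The autonomous functions above are typically degenerate, and a small perturbation will break both the exact equality on $W$ and Poisson commutation. We plan to perturb by a single time-dependent function $k:[0,1]\times M\to\bR$, small in $C^2$, with $k\le 0$, and to set $g_m=\tilde f_m+k$ for both $m$. This preserves $U$, $V$ and $W$ exactly, so properties (2) and (3) persist. It also gives $\min(g_1,g_2)=\min(\tilde f_1,\tilde f_2)+k$, and similarly for the max. We must then make the four Hamiltonians $\tilde f_1+k$, $\tilde f_2+k$, $\min+k$ and $\max+k$ simultaneously admissible. Each admissibility condition is open and dense in $k$: nondegeneracy of chords is a standard Sard--Smale genericity statement for time-dependent perturbations, and the remaining avoidance conditions (disjointness from the bulk, pinless loci, irregular loci and critical values) are codimension at least one conditions on a finite set of chords. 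Intersecting four residual sets yields such a $k$. Separation then holds for the perturbed flows by the robustness noted above, provided $\|k\|_{C^2}$ is small relative to $\delta$.
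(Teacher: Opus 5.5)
Your proof is correct. It shares its first and last steps with the paper but takes a genuinely different route through the perturbation step.

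\textbf{Shared steps.} Your $\tilde f_m=\tfrac12(f_1+f_2)\mp\tfrac12\tilde h-c$ is, up to the choice of profile, the tangentialization of Lemma~\ref{lem-tang-equality-region}. Your robustness argument for Separation is the paper's Step~3: along a time-one trajectory of any of the four Hamiltonians, $h$ drifts by at most $\|\{h,k\}\|_{C^0}$, and you make this smaller than $2\delta$.

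\textbf{The paper's perturbation.} The paper perturbs in two stages. First it adds a common perturbation supported in the interior $E^\circ$ of the equality locus, using flow-invariance of $E^\circ$ and Lemma~\ref{lem-pert-nondeg}, so that chords meeting $E$ stay in $E$. Then it makes separate downward perturbations of the two functions via Corollary~\ref{cor-pert-global}. These are arranged so that the regular sublevel sets $\{q<-a\}$ and $\{q>a\}$ become the new dominance regions, and this is where the paper obtains Smooth Matching.

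\textbf{Your perturbation.} You add a single global time-dependent $k$ to both functions. This freezes $g_1-g_2=\tilde f_1-\tilde f_2$, hence $U$, $V$ and $W$. It also gives $\min(g_1,g_2)=\min(\tilde f_1,\tilde f_2)+k$ and $\max(g_1,g_2)=\max(\tilde f_1,\tilde f_2)+k$. So admissibility of all four Hamiltonians reduces to intersecting four residual sets. Each is supplied by the standard universal-chord argument, namely Lemma~\ref{lem:varolgunes} with the open set taken to be all of $M$.

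\textbf{Comparison.} Your route is shorter and avoids the invariant-region lemmas. It also treats $\min$ and $\max$ directly, rather than through their local agreement with $g_1$ or $g_2$. The paper's route buys localized, sign-controlled perturbations and keeps overlap chords inside the overlap. Neither is needed by the stated conclusions, and hence not by their use in Lemmas~\ref{lem-var-711} and~\ref{lem-fill-2cube}.

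\textbf{One point to make explicit.} Because your $U,V,W$ are fixed at the tangentialization stage, Smooth Matching must be settled there. The ``care'' you allude to is concretely two choices. First, choose $\pm\delta$ to be regular values of $h$ (by Sard). Second, take $s-\psi(s)$ to have the sign of $s$ for $|s|>\delta$. Then the complements of $U$ and $V$ are $[0,1]\times\{h\le\delta\}$ and $[0,1]\times\{h\ge-\delta\}$, each the closure of its interior.
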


The motivation for introducing these conditions is the following lemma. 

\begin{lemma}[{\cite[Proposition~4.1.1]{Varolgunes2021}}]\label{lem-var-711}
    Let $g_1,g_2:[0,1]\times M\to\mathbb{R}$ be admissible Hamiltonians satisfying the disjointness, smooth matching, separation, and admissibility conclusions of Lemma~\ref{lem-main-separated-admissible}. Then, for any regular $2$-cube Floer datum (i.e. continuation maps between the Floer groups at the corners and a homotopy for the square),  extending $g_1,g_2,\min(g_1,g_2),\max(g_1,g_2)$ at the corners, the square
    \begin{align}\label{eq-acyclic-square}
    \xymatrix{
    CF(\min(g_1,g_2))\ar[r]\ar[d]\ar[dr]& CF(g_1)\ar[d]\\
    CF(g_2)\ar[r] &CF(\max(g_1,g_2))
    }
    \end{align}
    is acyclic.
    \end{lemma}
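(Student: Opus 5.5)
Since the input and output of a continuation map sit in different actions, I plan to split each of the four Floer complexes in the square \eqref{eq-acyclic-square} according to where the graph of a chord lies, and to show that the total complex of the square splits, up to filtration, into two pieces that are each cones of isomorphisms. By the separation property, every $1$-chord of $g_1$, $g_2$, $\min(g_1,g_2)$, $\max(g_1,g_2)$ has graph contained either in $\overline{U}\cup W$ (call these type $U$) or in $\overline{V}\cup W$ (type $V$). Near $\overline U$ we have $\min=g_1$ and $\max=g_2$, while near $\overline V$ we have $\min=g_2$ and $\max=g_1$. The smooth matching property makes these identifications of Hamiltonians hold on open neighbourhoods, and disjointness of $\overline U,\overline V$ keeps the two regions apart. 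So the chords of type $U$ of $\min$ coincide with those of $g_1$, and similarly for the other pairings. Admissibility ensures all four complexes are defined.

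Next I would filter the total complex of the square (a three-step iterated cone) by topological action, or equivalently work modulo $T^E$ and use the energy filtration. The aim is to show that the associated graded is the cone of the horizontal arrow $CF(\min)\to CF(g_1)$ on type-$U$ generators, and the cone of the vertical arrow $CF(\min)\to CF(g_2)$ on type-$V$ generators, with the corresponding edges to $CF(\max)$ entering in the same way. The key geometric input is a low-energy analysis. For the continuation map along an edge where the two Hamiltonians agree near the chord, the constant solution at that chord is rigid and regular (compare Lemma~\ref{lem-const-reg}). Every other contributing solution either has energy bounded below by some fixed $\hbar>0$, by a monotonicity argument, or changes chord type. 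I would then argue that solutions changing chord type are forbidden or have positive energy, using the fact that the monotone interpolation on the edge from $\min$ to $g_1$ is stationary on $U$.

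With this in hand, on the associated graded each edge map becomes upper triangular with identity diagonal blocks on the appropriate type. The square therefore decomposes as a sum of two squares, each having a pair of parallel sides that are isomorphisms, and such a square has acyclic total complex. A spectral sequence argument, or equivalently a $T$-adic completeness argument showing that a map which is an isomorphism modulo $T^{\hbar}$ is an isomorphism, then gives acyclicity of the full square for any choice of the regular $2$-cube, including the diagonal homotopy. This step needs the complexes to be free and of finite rank over $\Lambda_0$.

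The main obstacle is the energy estimate. One must show that continuation strips between chords of the same type, other than the constant one, have energy uniformly bounded below. One must also show that strips from a type-$U$ chord to a type-$V$ chord either do not exist or carry positive energy. Since everything is time-dependent, I would treat this using the integral monotonicity bound of Appendix~\ref{sec:monotonicity-inequality}, combined with a Sikorav- or Hein-type estimate for strips crossing the collar between $\overline U$ and $\overline V$, as in \cite[Proposition~4.1.1]{Varolgunes2021}.
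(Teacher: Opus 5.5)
\paragraph{Assessment.} Your overall architecture matches the argument behind \cite[Proposition~4.1.1]{Varolgunes2021}, which the paper cites rather than reproves. That architecture is: sort chords by the side of $W$ on which they live, show that at leading order in $T$ each edge of \eqref{eq-acyclic-square} is the identity on matching chords, and lift acyclicity using that the complexes are free of finite rank over $\Lambda_0$.

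\paragraph{The step you call the main obstacle.} This step is mis-diagnosed, and the tool you propose for it would fail. The continuation data here are monotone in the pointwise sense $\partial_sH\geq 0$, i.e.\ the acceptable data of Definition~\ref{def-acceptable-cont}. The strict integrated inequality of Appendix~\ref{sec:monotonicity-inequality} cannot hold on these edges. For the edge $\min(g_1,g_2)\to g_1$ and any type-$U$ chord $\gamma$, one has $\int\!\!\int\min_x\partial_sH\,ds\,dt\leq\int_0^1(g_1-\min(g_1,g_2))(t,\gamma(t))\,dt=0$. If that inequality did hold, every contribution, including the constant strips, would be divisible by a fixed positive power of $T$. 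That would kill exactly the identity terms you need.

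\paragraph{What to use instead.} Use the energy identity $E_{\mathrm{top}}(u)=E_{\mathrm{geo}}(u)+\int\partial_sH(s,t,u)\,ds\,dt\geq E_{\mathrm{geo}}(u)\geq 0$. Equality holds only if $\partial_su\equiv 0$ and $\partial_sH$ vanishes along the chord.
\begin{itemize}
\item The zero-energy rigid solutions are therefore exactly the constant strips at chords along which the two end Hamiltonians agree. For $\min(g_1,g_2)\to g_1$ these are the chords in $U\cup W$. Pointwise monotonicity gives $\min(g_1,g_2)\leq H_s\leq g_1$, hence $H_s=g_1$ on $U\cup W$. The $x$-derivatives also agree there, since $H_s-g_1\leq 0$ attains its maximum. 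So these chords are chords of every $H_s$, and the constant strips contribute the identity; they are regular by hypothesis, cf.\ Lemma~\ref{lem-const-reg}.
\item Every other rigid solution has strictly positive energy. This covers type-changing strips, non-constant strips between equal chords, and $s$-independent strips at common chords where $\partial_sH\neq 0$.
\item There are finitely many generators and the energy is proper, so there is automatically an $\varepsilon>0$ below which nothing else appears.
\item No Sikorav- or Hein-type estimate for strips crossing a collar is needed.
\item The diagonal homotopy has no leading-order term. The constant strips have index $0$, so they are not rigid in a one-parameter family.
\end{itemize}

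\paragraph{Two smaller corrections.} First, chords whose graph lies in $W$ are of both of your types, so your ``sum of two squares'' double counts them. Put them in one piece, or treat them as a third summand on which all four edges are the identity. Second, $\min(g_1,g_2)=g_1$ holds only on the open set $([0,1]\times M)\setminus\overline{V}$, which need not contain a type-$U$ chord touching $\partial V$. What makes the chord sets agree is that $g_1-g_2$ vanishes to infinite order along $W$, which is exactly what Disjointness and Smooth Matching provide.

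\paragraph{Conclusion.} With these fixes, modulo $\Lambda_{>0}$ the square is a direct sum of cones of identity maps and a square of identities. The finite-rank Nakayama argument then finishes the proof: lift a contraction $h$; then $dh+hd=1+N$ with $N\equiv 0$ mod $T^{\varepsilon}$, which is invertible by $T$-adic completeness.
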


We now prove Lemma~\ref{lem-main-separated-admissible} in detail. The proof proceeds in two stages: first tangentialization (Section~\ref{sec-tangentialization}) yields a pair of approximating autonomous functions $(\tilde f_1,\tilde f_2)$ approximating $(f_1, f_2)$ from below, and satisfying both Disjointness, and Separation; Smooth Matching is achieved later. Then the perturbation lemmas of Section~\ref{subsec-perturb} are applied: we perturb jointly on the interior of the overlap to achieve non-degeneracy there, then perturb each function separately (downward, each in the region where the other dominates) so that the region of equality becomes the complement of two disjoint open sets, yielding Smooth Matching and Admissibility.

\subsection{Tangentialization}\label{sec-tangentialization}

\begin{defin}\label{def-tangentialization}
A pair $(\tilde f_1,\tilde f_2)$ approximating $(f_1,f_2)$ from below is a \emph{tangentialization} of $(f_1,f_2)$ if:
\begin{itemize}
    \item $\min(\tilde f_1,\tilde f_2)$ and $\max(\tilde f_1,\tilde f_2)$ are smooth;
    \item (Disjointness) the dominance regions $\{\tilde f_1<\tilde f_2\}$ and $\{\tilde f_2<\tilde f_1\}$ are open with disjoint closures;
    \item (Separation) the functions $\tilde f_1$ and $\tilde f_2$ Poisson commute.
\end{itemize}
\end{defin}
To see how separation, in our earlier sense, follows from the last condition, observe that the Hamiltonian flow of $\tilde{f}_i$ preserves the level sets of both $\tilde{f}_i$ and $\tilde{f}_j$, hence its chords cannot intersect both regions.

\begin{figure}[h!]
\centering
\begin{tikzpicture}[scale=0.9, baseline=(g.base)]
  \begin{scope}[local bounding box=g]
    \draw[->] (-2.8,0) -- (2.8,0) node[right] {$t$};
    \draw[->] (0,-1.25) -- (0,1.5) node[above] {$g(t)$};
    \draw[thick,domain=-2.5:-1.0,samples=20] plot (\x, \x);
    \draw[thick] (-0.5,0) -- (0.5,0);
    \draw[thick,domain=1.0:2.5,samples=20] plot (\x, \x);
    \draw[thick,domain=-1.0:-0.5,samples=25] plot (\x, {-1.0+1.0*((\x+1.0)/0.5)*((\x+1.0)/0.5)});
    \draw[thick,domain=0.5:1.0,samples=25] plot (\x, {1.0*((\x-0.5)/0.5)*((\x-0.5)/0.5)});
    \draw[dashed] (-1.0,-1.0) -- (-1.0,0) (1.0,0) -- (1.0,1.0);
    \draw[dashed] (-0.5,0) -- (-0.5,-0.7) (0.5,0) -- (0.5,-0.7);
    \node[below] at (-1.0,-0.05) {$-g_+$};
    \node[below] at (1.0,-0.05) {$g_+$};
    \node[below] at (-0.5,-0.75) {$-g_-$};
    \node[below] at (0.5,-0.75) {$g_-$};
  \end{scope}
\end{tikzpicture}
\caption{The auxiliary function $g$ used in tangentialization.  It agrees with the identity for $|t|\geq g_+$ and vanishes on $[-g_-,g_-]$, with $g'(t)>0$ for $|t|>g_-$.}
\label{fig-g}
\end{figure}

Let us denote the coordinates on $\mathbb{R}^2$ by $x$ and $y.$
Consider a smooth function $g:\mathbb{R}\to \mathbb{R}$ that satisfies\begin{enumerate}
        \item $g(t)=t$ for $|t|\geq g_+$ for some $g_+\in \mathbb{R}_{>0}$,
        \item $g(t)=0$ for $|t|\leq g_-$ for some $g_-\in \mathbb{R}_{>0}$ with $g_- < g_+$, and 
        \item $g'(t)>0$ for $|t|> g_-$.
    \end{enumerate}
%
%

The reader might want to think about the following constructions in the coordinates $(u,v)$ with $u=x-y$ and $v=x+y$. We will continue using the original coordinate system as that will be most convenient for later use.

We define smooth functions $x_g,y_g:\mathbb{R}^2\to\mathbb{R}$ by
\[
x_g(x,y):=\frac{x+y+g(x-y)}{2},\qquad
y_g(x,y):=\frac{x+y-g(x-y)}{2}.
\]

\begin{lemma}\label{lem-tang-equality-region}
Let $M$ be a smooth manifold and $F=(f_1,f_2):M\to\mathbb{R}^2$ a smooth map.  Fix $g$ as above and, for some $\delta\ge 0$, set
\[
\tilde f_1:=x_g\circ F-\delta,\qquad
\tilde f_2:=y_g\circ F-\delta.
\]
Then:
\begin{enumerate}
    \item The equality locus
    \[
    E:=\{\tilde f_1=\tilde f_2\}
    \]
    is the closed subset $F^{-1}(\{|x-y|\le g_-\})$.  Its complement decomposes as a disjoint union
    \[
    M\setminus E \;=\; U\sqcup V,\qquad
    U:=\{\tilde f_1<\tilde f_2\},\; V:=\{\tilde f_2<\tilde f_1\},
    \]
    and $U,V$ are open with disjoint closures.
    \item The functions $\min(\tilde f_1,\tilde f_2)$ and $\max(\tilde f_1,\tilde f_2)$ are smooth on $M$.
    \item For every $\epsilon>0$ one can choose $g_+$ and $\delta$ so that $\tilde f_i\le f_i$ and $\|\tilde f_i-f_i\|_{C^0}<\epsilon$ for $i=1,2$.
    \end{enumerate}
    Moreover, if $M$ is symplectic, and $(f_1,f_2)$ Poisson commute, then so do $(\tilde f_1,\tilde f_2)$, and the pair $(\tilde f_1,\tilde f_2)$ is a tangentialization of $(f_1,f_2)$ in the sense of Definition~\ref{def-tangentialization}.
\end{lemma}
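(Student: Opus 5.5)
The argument is a direct computation in the coordinates $u=x-y$, $v=x+y$. Then $x_g=(v+g(u))/2$ and $y_g=(v-g(u))/2$, so
\[
\tilde f_1-\tilde f_2=g(f_1-f_2),\qquad \tilde f_1+\tilde f_2=f_1+f_2-2\delta .
\]
Every claim reduces to properties of $g$.

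\emph{Part (1).} By the assumptions on $g$, $g(t)=0$ exactly when $|t|\le g_-$. Indeed, $g$ is strictly increasing on $(g_-,\infty)$ and on $(-\infty,-g_-)$, and by continuity it vanishes at $\pm g_-$; so $g>0$ for $t>g_-$ and $g<0$ for $t<-g_-$. Hence
\[
E=F^{-1}(\{|x-y|\le g_-\}),\qquad U=F^{-1}(\{x-y<-g_-\}),\qquad V=F^{-1}(\{x-y>g_-\}).
\]
These sets are open, and $E$ is closed. For the closures:
\[
\overline U\subset F^{-1}(\{x-y\le -g_-\}),\qquad \overline V\subset F^{-1}(\{x-y\ge g_-\}).
\]
Since $g_->0$, these two sets are disjoint.

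\emph{Part (2).} We have
\[
\min(\tilde f_1,\tilde f_2)=\tfrac12\bigl(f_1+f_2-|g(f_1-f_2)|\bigr)-\delta,
\]
and similarly for $\max$ with a plus sign. So it suffices that $|g|$ is smooth. This holds because $|g|$ coincides with $g$ on $\{t>-g_-\}$ and with $-g$ on $\{t<g_-\}$, and these open sets cover $\bR$ (on the overlap $g\equiv 0$).

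\emph{Part (3).} Compute
\[
x_g(x,y)-x=\tfrac12\bigl(g(u)-u\bigr),\qquad y_g(x,y)-y=-\tfrac12\bigl(g(u)-u\bigr).
\]
Now $g(u)-u$ vanishes for $|u|\ge g_+$ and is bounded by a constant $C\,g_+$ on $[-g_+,g_+]$. One can choose $g$ with $|g(t)|\le |t|$; then the constant is $2g_+$, since $|g(u)-u|\le |g(u)|+|u|\le 2|u|$. Thus
\[
|x_g\circ F-f_1|,\ |y_g\circ F-f_2|\le g_+ .
\]
Take $\delta=g_+$ and $g_+<\epsilon/2$. This gives $\tilde f_i\le f_i$ and $\|\tilde f_i-f_i\|_{C^0}\le 2g_+<\epsilon$. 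The one point to check is that such a $g$ exists for arbitrarily small $g_+$, and scaling $t\mapsto g_+\,g_0(t/g_+)$ of a fixed model handles this. This is the only step where one must be careful about choices, so I regard it as the mild main obstacle.

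\emph{Moreover part.} If $\{f_1,f_2\}=0$, then for smooth $\phi,\psi:\bR^2\to\bR$ the chain rule gives
\[
\{\phi\circ F,\psi\circ F\}=(\partial_x\phi\,\partial_y\psi-\partial_y\phi\,\partial_x\psi)\circ F\,\{f_1,f_2\}=0 .
\]
So $\tilde f_1$ and $\tilde f_2$ Poisson commute; subtracting the constant $\delta$ is irrelevant. Together with parts (1)–(3), every condition of Definition~\ref{def-tangentialization} is verified.
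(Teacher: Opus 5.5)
Your proof is correct and follows the paper's argument essentially verbatim: you use the substitution $u=x-y$, $v=x+y$ to get $\tilde f_1-\tilde f_2=g(f_1-f_2)$, the estimate $|x_g-x|,\,|y_g-y|\le g_+$, and bilinearity of the Poisson bracket. Your formula $\min(\tilde f_1,\tilde f_2)=\tfrac12\bigl(f_1+f_2-|g(f_1-f_2)|\bigr)-\delta$ is just a more explicit version of the paper's gluing argument for item (2). The extra normalization $|g(t)|\le|t|$ you impose is unnecessary: $g$ is increasing on $[g_-,g_+]$ from $0$ to $g(g_+)=g_+$, and symmetrically on $[-g_+,-g_-]$, so $|g(u)|\le g_+$ on $[-g_+,g_+]$ holds automatically, which is exactly what the paper uses.
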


\begin{proof}
Write $u=x-y$ and $v=x+y$, so that $x_g=(v+g(u))/2$ and $y_g=(v-g(u))/2$.  Then
\[
x_g-y_g=g(u),
\]
so $x_g=y_g$ if and only if $g(u)=0$, i.e.\ $|u|\le g_-$, which gives item~(1) after composing with $F$.  Since $\tilde f_1$ and $\tilde f_2$ are smooth and agree on $E$, the functions $\min(\tilde f_1,\tilde f_2)$ and $\max(\tilde f_1,\tilde f_2)$ are obtained by gluing together smooth functions that coincide on the closure of an open region, hence are smooth, proving item~(2).  For item~(3), observe that when $g_+$ is small the map $(x,y)\mapsto(x_g,y_g)$ is $C^0$-close to the identity: indeed $g(u)=u$ for $|u|\ge g_+$, while $|g(u)|\le g_+$ for $|u|\le g_+$, so $|x_g-x|=|g(u)-u|/2\le g_+$ and likewise $|y_g-y|\le g_+$.  Composing with $F$ and subtracting $\delta>g_+$ therefore gives the result. Finally, we have
\[
u=f_1-f_2,\qquad v=f_1+f_2,
\]
so $\{u,v\}=2\{f_1,f_2\}=0$, and therefore any two functions of $(u,v)$ Poisson commute; in particular $\{\tilde f_1,\tilde f_2\}=0$. 
\end{proof}

\begin{figure}[htbp]
    \centering
    \includegraphics{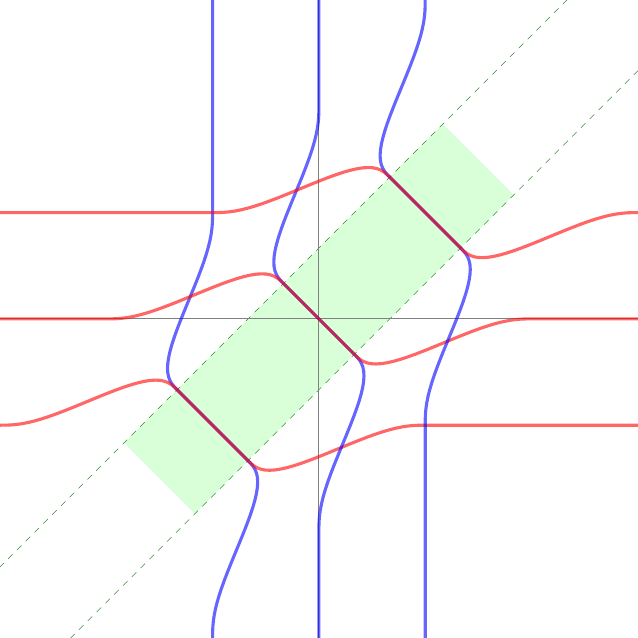}
    \caption{The blue and red curves represent the level curves of $x_g$ and $y_g$. Curves that are tangentially overlapping in the green region take the same value.}
\end{figure}

\subsection{Perturbation lemmas }\label{subsec-perturb}

Let $\mathbb{L}_0,\mathbb{L}_1$ be Lagrangian branes with underlying Lagrangian submanifolds $L_0,L_1$, and let $H:[0,1]\times M\to \mathbb{R}$ be a Hamiltonian. Recall that a $1$-chord $\gamma:[0,1]\to M$ of $H$ from $\mathbb{L}_0$ to $\mathbb{L}_1$ is \emph{non-degenerate} if the image of $T_{\gamma(0)}L_0$ under the time-$1$ map of $X_H$ is transverse to $T_{\gamma(1)}L_1$.

Admissibility (Definition \ref{def:admissible_Hamiltonian}) requires in addition that the image of $\gamma$ avoid the bulk $B$ and that the endpoints avoid the irregular loci $W_2(L_i)$ and $Z(L_i;J_i)$ associated to the branes $\mathbb{L}_i$.
We begin by recalling a local submersivity property for Hamiltonian flows from \cite{Varolgunes2021}.

\begin{lemma}[{\cite[Lemma 4.2.2]{Varolgunes2021}}] \label{lem:varolgunes}
Let $H:[0,T]\times M\to \mathbb{R}$ be a Hamiltonian and $x:[0,T]\to M$ a flow line of $X_H$. If there exists a time $\tau \in (0,T)$ such that $x(\tau) \in U$ for an open set $U \subset M$, then the time-$T$ evaluation map $h \mapsto \phi^T_{H+h}(x(0))$, where $h$ varies in the space of smooth functions supported in $ (0,T) \times U$, is a submersion at $h=0$. Moreover, every vector in $T_{x(T)}M$ is obtained from a path $s\mapsto h_s$ with $h_s\geq0$. \qed
\end{lemma}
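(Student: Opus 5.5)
We will prove this by linearizing the flow of $X_{H+h}$ along $x$ and computing the derivative of the endpoint map explicitly. Write $\phi^{s,t}_H$ for the flow of $X_H$ from time $s$ to time $t$. For a variation $h$ supported in $(0,T)\times U$, the standard first-variation (Duhamel) formula gives
\[
\frac{d}{d\lambda}\Big|_{\lambda=0}\phi^T_{H+\lambda h}(x(0))=\int_0^T d\phi^{t,T}_H\big(X_{h_t}(x(t))\big)\,dt .
\]
Because $d\phi^{t,T}_H$ is a linear isomorphism $T_{x(t)}M\to T_{x(T)}M$, it suffices to produce, near the time $\tau$, variations whose Hamiltonian vectors at $x(t)$ span $T_{x(t)}M$.

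First we fix a small interval $I=(\tau-\eta,\tau+\eta)\subset(0,T)$ on which $x(t)\in U$; this is possible because $U$ is open. Then we choose a bump $\beta\geq0$ supported in $I$ with $\int\beta=1$. Given any $v\in T_{x(T)}M$, we set $w_t:=(d\phi^{t,T}_H)^{-1}v\in T_{x(t)}M$. We then choose a function $k_t$ supported in $U$, depending smoothly on $t$, with $X_{k_t}(x(t))=w_t$. Concretely, one can take $k_t$ to be a cutoff times a linear function in a Darboux chart centered at $x(t)$ with differential $\omega(w_t,\cdot)$, up to the sign convention. With $h_t=\beta(t)k_t$, the formula above returns exactly $v$. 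This proves surjectivity of the derivative, i.e.\ the submersion claim. Smoothness of the endpoint map in $h$ is the standard smooth dependence of ODE solutions on parameters.

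For the positivity refinement, we need the path $h_s$ to satisfy $h_s\geq0$. The key observation is that only the differential of $k_t$ at $x(t)$ matters to first order. We therefore add to $k_t$ a large constant $c$, multiplied by a cutoff $\chi$ that equals $1$ near $x(t)$ and is supported in $U$. This makes $k_t+c\chi\geq0$ everywhere, and since $d\chi=0$ near $x(t)$, it does not change $X_{k_t}(x(t))$. We then take $h_s:=s\,\beta(t)(k_t+c\chi_t)\geq0$ for $s\geq0$. The curve $s\mapsto\phi^T_{H+h_s}(x(0))$ then has initial velocity $v$. If a path defined for all $s$ is wanted rather than a ray, one reparametrizes, or notes that any $v$ is the velocity at $s=0$ of this one-sided family.

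The main obstacle is bookkeeping rather than conceptual. We must choose $k_t$ smoothly in $t$ along the arc $x(I)$, while keeping the supports inside $U$ uniformly in $t$. We will handle this by working in a single Darboux chart containing $x(\bar I)$, after shrinking $\eta$, and taking one cutoff $\chi$ equal to $1$ on a neighborhood of $x(\bar I)$. The functions $k_t$ are then $\chi$ times linear functions whose coefficients depend smoothly on $t$. With these choices all supports lie in $(0,T)\times U$ and all the claims follow.
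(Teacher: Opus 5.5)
Your argument is correct. The variation-of-constants formula $\int_0^T d\phi^{t,T}_H\bigl(X_{h_t}(x(t))\bigr)\,dt$, a time bump $\beta$ near $\tau$, and Hamiltonians $\chi(\ell_t+c)$ built in a single Darboux chart around $x(\bar I)\subset U$ show that already the cone of nonnegative directions supported in $(0,T)\times U$ maps onto $T_{x(T)}M$. This is the standard proof of the fact; the paper gives no argument of its own and imports the lemma from \cite[Lemma 4.2.2]{Varolgunes2021}.

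The one thing to drop is your aside about reparametrizing to get a path defined for all $s$. If $h_0=0$ and $h_s\ge 0$ for $s$ of both signs, then $\partial_s h_s|_{s=0}=0$ pointwise, so the velocity vanishes. The one-sided family over $s\ge 0$ that you end with is therefore the only meaningful reading of the positivity clause.
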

This local property yields the global surjectivity of the evaluation maps on the universal moduli space of chords.

For a sufficiently large integer $k$, let $\mathcal{P}_\delta$ denote the space of perturbations 
\[
\mathcal{P}_{\delta} := \Bigl\{h\in C^k([0,1]\times M)\ \Big|\ \operatorname{supp}(h)\subset [0,1]\times U,\ \|h\|_{C^k}<\delta\Bigr\}.
\]
We define the universal set of $1$-chords as $\mathcal{Y} := \{ (h, x_0) \in \mathcal{P}_\delta \times L_0 \mid \phi^1_{H+h}(x_0) \in L_1 \}$, writing $x_0$ also for the chord $t \mapsto \phi^t_{H+h}(x_0)$ that a pair $(h,x_0)$ determines, and its subset
\begin{equation}
  \label{eq:universal_chords_through_U}
  \mathcal{Y}_U := \{ (h, x_0) \in \mathcal{Y} \mid \textrm{the image of } x_0 \textrm{ meets } U \},
\end{equation}
which is a smooth manifold by the implicit function theorem.
\begin{cor}\label{cor:univ-submersion}
The endpoint evaluation maps $Ev_0: \mathcal{Y}_U \to L_0$ and $Ev_1: \mathcal{Y}_U \to L_1$ are submersions. Furthermore, the total interior evaluation map
\[
Ev: \mathcal{Y}_U \times (0,1) \to M, \qquad (h,x_0,t) \mapsto \phi^t_{H+h}(x_0)
\]
is a submersion on the subset consisting of triples $(h,x_0,t_0)$ such that the chord $x_0$ meets $U$ before and after $t_0$.
\end{cor}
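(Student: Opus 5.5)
The plan is to derive Corollary~\ref{cor:univ-submersion} from Lemma~\ref{lem:varolgunes} through a tangent-space computation on the universal chord space. Fix $(h,x_0)\in\mathcal{Y}_U$. The chord $x_0$ meets $U$ at some time $\tau\in(0,1)$. If it met $U$ only at the endpoints, then by openness of $U$ and continuity it would also meet $U$ at nearby interior times, so we may take $\tau$ interior. First I describe $T_{(h,x_0)}\mathcal{Y}_U$ as the set of pairs $(\dot h,\xi)\in T\mathcal{P}_\delta\times T_{x_0}L_0$ whose linearized endpoint
\[
D\phi^1_{H+h}(\xi)+\partial_{\dot h}\phi^1_{H+h+s\dot h}(x_0)\big|_{s=0}
\]
lies in $T_{x_0(1)}L_1$. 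The smoothness of $\mathcal{Y}_U$ was asserted in the paper. Indeed, by Lemma~\ref{lem:varolgunes}, applied with $T=1$, the map $(\dot h,\xi)\mapsto$ (linearized endpoint) is already onto $T_{x_0(1)}M$ from the $\dot h$ directions alone, and this gives transversality to $L_1$.

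For $Ev_0$, given any $\xi\in T_{x_0}L_0$, I use Lemma~\ref{lem:varolgunes} to choose $\dot h$ supported in $(0,1)\times U$ whose effect on the endpoint cancels $D\phi^1(\xi)$ modulo $T L_1$; for instance, I cancel it exactly. Then $(\dot h,\xi)$ is a tangent vector with $dEv_0=\xi$, so $Ev_0$ is a submersion. For $Ev_1$, I take any $\eta\in T_{x_0(1)}L_1$ and choose $\dot h$ with $\xi=0$ so that the endpoint variation equals $\eta$. This vector lies in $T\mathcal{Y}_U$ and $dEv_1$ maps it to $\eta$.

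For the interior evaluation at $(h,x_0,t_0)$, where the chord meets $U$ at times $\tau_1<t_0<\tau_2$, the key step is to split the time interval. Given a target $w\in T_{x_0(t_0)}M$, I first apply Lemma~\ref{lem:varolgunes} on $[0,t_0]$ with perturbation $\dot h_1$ supported near $\tau_1$ inside $(0,t_0)\times U$, producing variation $w$ at time $t_0$. This variation propagates to some vector $D\phi^{1}_{t_0}(w)$ at time $1$, which may not be tangent to $L_1$. I then correct it with a second perturbation $\dot h_2$ supported near $\tau_2$ in $(t_0,1)\times U$. Applying Lemma~\ref{lem:varolgunes} to the flow on $[t_0,1]$ after time-reparametrization, $\dot h_2$ produces any prescribed endpoint variation, in particular $-D\phi^1_{t_0}(w)$, while leaving the variation at time $t_0$ unchanged because its support lies after $t_0$. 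The sum $(\dot h_1+\dot h_2,0)$ is therefore tangent to $\mathcal{Y}_U$ and maps to $w$, and the $\partial_t$ direction is not even needed.

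The main obstacle is making the Lemma~\ref{lem:varolgunes} argument work on subintervals and with fixed support windows. That lemma is stated for $[0,T]$ and perturbations supported in $(0,T)\times U$, so I will apply it with $T=t_0$ on the first segment and use a shifted-time version for the second. I will also have to check that the resulting $C^k$ directions are admissible in $\mathcal{P}_\delta$; this is an open condition, so arbitrary directions are allowed.
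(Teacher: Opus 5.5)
Your proposal is correct and follows the paper's argument. Both derive submersivity of $Ev_0$ and $Ev_1$ from Lemma~\ref{lem:varolgunes}, using perturbations $\dot h$ supported in $(0,1)\times U$ to cancel or prescribe the endpoint variation, and both treat the interior evaluation by applying that lemma separately on $[0,t_0]$ and $[t_0,1]$; your tangent-space bookkeeping spells out what the paper's terse proof leaves implicit.
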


\begin{proof}
  Submersivity of the endpoint maps follows immediately from Lemma \ref{lem:varolgunes}. Indeed, variations of $h$ supported in $U$ ensure that, for any pair $(h,x_0)$ with the property that the time-$1$ chord of $H+h$ starting at $x_0$ passes through $U$, the evaluation map $\mathcal{P}_\delta \to M$ given by $h \mapsto \phi^1_{H+h}(x_0)$ is a submersion onto $M$. Restricting the domain to the inverse image of $L_1$ yields the claim that the time-$1$ map is a submersion to $L_1$, and letting $x_0$ vary over points in $L_0$ yields the same for the time-$0$ map. To establish surjectivity of the interior evaluation map at a time $t_0 \in (0,1)$ such that there are times $t_- < t_0 < t_+$ with the property that $x_0(t_\pm) \in U$, we apply the same result to the intervals $[0,t_0]$ and $[t_0,1]$.

\end{proof}

It is not clear, in general, how to ensure the condition that a chord intersect $U$ both before and after $t_0$; there is, however, a situation in which this is straightforward, namely when $U$ is invariant under the Hamiltonian flow of $H$. In our applications, we shall start with an autonomous function, and $U$ will be defined in terms of the level sets of Poisson commuting functions.

\begin{lemma}\label{lem-pert-nondeg}
Let $M$ be a symplectic manifold and $\mathbb{L}_0,\mathbb{L}_1$ Lagrangian branes. Let $H: [0,1]\times M\to \mathbb{R}$ be a Hamiltonian, $k$ a positive integer, $\delta>0$, $\tilde{g}$ a Riemannian metric on $M$, and $U\subset M$ the interior of a codimension $0$ submanifold with boundary which is invariant under the Hamiltonian flow of $H$. There exists a Hamiltonian $H':[0,1]\times M\to \mathbb{R}$ such that
\begin{itemize}
\item $\operatorname{supp}(H'-H)\subset [0,1]\times \bar{U}$,
\item $\|H'-H\|_{C^k([0,1]\times M)}<\delta$, relative to the metric $g=\tilde{g}+dt^2$,
\item $H'\leq H$, with equality precisely on the complement of $[0,1]\times U$,
\item every $1$-chord of $X_{H'}$ from $\mathbb{L}_0$ to $\mathbb{L}_1$ which intersects $U$ is non-degenerate and admissible (all such chords in fact lie in $U$ by the invariance of $U$).
\end{itemize}
\end{lemma}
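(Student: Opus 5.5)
Since $U$ is invariant under the flow of $H$, every $1$-chord of any perturbation $H+h$ with $\operatorname{supp}(h)\subset[0,1]\times U$ either lies entirely in $U$ or entirely in $M\setminus U$; on the complement the flows of $H$ and $H+h$ coincide. The plan is therefore a parametric transversality argument over the perturbation space $\mathcal{P}_\delta$, restricted to non-positive perturbations. The universal chord space $\mathcal{Y}_U$ of Equation \eqref{eq:universal_chords_through_U} is a Banach manifold, and the projection $\mathcal{Y}_U\to\mathcal{P}_\delta$ is Fredholm of index $0$. Non-degeneracy of the chord $(h,x_0)$ is exactly regularity of this projection at that point, so Sard--Smale gives a residual set of $h$ for which all chords meeting $U$ are non-degenerate.

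\textbf{Step 1: admissibility.} The bad loci are the bulk $B$ (codimension $2$), the sets $W_2(L_i)$ and $Z(L_i;J_i)$ (codimension $2$ in $L_i$, stratified), and the critical values of the Maslov-$2$ evaluation maps (measure zero, hence covered by Sard). By Corollary~\ref{cor:univ-submersion}, the maps $Ev_0$ and $Ev_1$ are submersions. Their preimages of these loci are therefore codimension $\geq 1$ (in fact $\geq 2$, except for the critical-value set) subsets of $\mathcal{Y}_U$. Projecting to $\mathcal{P}_\delta$, with $\mathcal{Y}_U$ of index $0$ over $\mathcal{P}_\delta$, shows that for generic $h$ no chord has endpoints in these loci.

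For the bulk I use the interior evaluation $Ev$. Since $U$ is invariant, a chord through $U$ stays in $U$, so it meets $U$ before and after every $t_0\in(0,1)$. The second part of Corollary~\ref{cor:univ-submersion} then gives submersivity of $Ev$ on all of $\mathcal{Y}_U\times(0,1)$. Hence $Ev^{-1}(B)$ has codimension $2$ in a space of relative dimension $1$ over $\mathcal{P}_\delta$, and its projection is meagre. Intersecting these countably many residual conditions with the non-degeneracy condition gives a generic $h$. Working in $C^k$ and passing to smooth $h$ by the standard Taubes argument (or simply density, since all these conditions are open near a compact set of chords by Gromov-type compactness of chords) produces a smooth $H+h$.

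\textbf{Step 2: sign and support conditions.} I need $h\le 0$, with $h<0$ exactly on $[0,1]\times U$. I will restrict to perturbations of the form $h=-\chi+h'$, where:
\begin{itemize}
\item $\chi\ge0$ is a fixed smooth function, positive exactly on $[0,1]\times U$, vanishing to infinite order at $\partial U$, and small in $C^k$;
\item $h'$ is supported in a compact subset of $[0,1]\times U$ and satisfies $|h'|<\chi$.
\end{itemize}
Such an $h$ satisfies $H'=H+h\le H$, with equality precisely off $[0,1]\times U$, has support in $[0,1]\times\bar U$, and is $C^k$-small. The difficulty is that the allowed $h'$ form an open cone condition, which shrinks near $\partial U$. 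Transversality must still be achievable for chords near the boundary, and that boundary region is the step I expect to be the main obstacle.

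To handle it, I first choose $\chi$ itself generic in a suitable class, so that chords of $H-\chi$ lying near $\partial U$ are already non-degenerate and admissible, using the submersion statement with $\chi$-variations. The nonnegativity clause of Lemma~\ref{lem:varolgunes} is exactly what allows these variations to be taken with a fixed sign. Alternatively, I can exhaust $U$ by compact subsets $U_n$ and perform the perturbation inductively with shrinking size $\delta_n$, so that the full sum converges in $C^k$.

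Chords of $H'$ in $M\setminus U$ are not required to satisfy anything. Chords meeting $U$ lie in $U$ by invariance, since $H'=H$ off $U$ and $U$ remains invariant under $H'$: the flows agree outside $U$ and $\partial U$ is invariant. These chords are handled by the genericity above, which completes the argument.
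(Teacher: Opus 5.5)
Your proposal has a genuine gap, and it sits exactly at the step you flag as the main obstacle. Step~1 runs Sard--Smale over $\mathcal{P}_\delta$, that is, over perturbations of both signs, and its residual set cannot be transported into the constrained class of Step~2.

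In that class, $h=-\chi+h'$ with $h'$ compactly supported in $[0,1]\times U$ and $|h'|<\chi$. The allowed $h'$ do not form an open subset of a Banach space whose tangent directions reach $\partial U$:
\begin{itemize}
\item on a fixed compact $K\subset U$ the condition is open, but then chords of $H-\chi$ lying in $U\setminus K$, which may accumulate on $\partial U$, are never perturbed;
\item as $K$ exhausts $U$, the allowed amplitude $\inf_K\chi$ goes to $0$;
\item in any $C^k$-type space of functions vanishing off $U$, the set $\{|h'|<\chi\}$ is not even a neighbourhood of $0$, since $\chi$ vanishes to infinite order at $\partial U$.
\end{itemize}

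Neither remedy is carried far enough to close this. ``Choose $\chi$ generic in a suitable class'' leaves the class unspecified. The nonnegativity clause of Lemma~\ref{lem:varolgunes} is not the relevant tool: Sard--Smale needs an open parameter domain, and one-signed variations give only a cone. Moreover, the real problem is not the sign of the variation but that the positivity margin $\chi$ tends to $0$ at $\partial U$. The exhaustion scheme needs non-degeneracy and admissibility of already-treated chords to survive all later perturbations and the limit. Non-degeneracy of chords meeting a compact set is an open condition. Admissibility, however, involves avoiding $Z(L_i;J_i)$, which is only a countable union of submanifolds, so it is not evidently open, and you give no argument for this.

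The missing idea is to make the sign constraint uniform by a multiplicative parametrization, which is what the paper does.
\begin{itemize}
\item Fix a smooth $\rho\ge 0$ on $M$ vanishing exactly on $M\setminus U$, and choose $\epsilon>0$ with $\|\epsilon\rho\|_{C^k}<\delta/2$.
\item Set $H'=H-\rho(\epsilon-b)$, where $b$ ranges over the open set $\{\|b\|_{C^k}<\epsilon/2,\ \|\rho b\|_{C^k}<\delta/2\}$ of the whole Banach space $C^k([0,1]\times M)$, with no support condition on $b$.
\end{itemize}
This gives the following, for every such $b$:
\begin{itemize}
\item Since $\epsilon-b>\epsilon/2$, we have $H'<H$ on $[0,1]\times U$ and $H'=H$ elsewhere, so the sign and equality conditions cost nothing.
\item Since $\rho>0$ on $U$, every variation compactly supported in $[0,1]\times U$ has the form $\rho\dot b$. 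Hence Corollary~\ref{cor:univ-submersion} applies to every chord in $U$, however close to $\partial U$.
\item Since $\rho$ attains its minimum $0$ on $M\setminus U$, both $d\rho$ and $d(\rho b)$ vanish there, so $U$ stays invariant under $X_{H'}$.
\end{itemize}
Your Step~1 argument then goes through verbatim over this parameter space. In your notation this amounts to $\chi=\rho(\epsilon-b)$ with $b$ generic. That is presumably what your first remedy was reaching for, but the parametrization must be stated explicitly for the argument to be complete.
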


\begin{proof}
We first shift the Hamiltonian strictly downwards to create room for a generic perturbation. Choose a non-negative smooth bump function $\rho: M \to \mathbb{R}$ vanishing precisely on the complement of $U$, and regard it as a time-independent function on $[0,1]\times M$. Let $\epsilon \in (0, \delta)$ be sufficiently small so that $\|\epsilon \rho\|_{C^k([0,1]\times M)} < \delta/2$. Define the shifted Hamiltonian $H_{\mathrm{shift}} := H - \epsilon \rho$. Note that $H_{\mathrm{shift}} \leq H$, and that the complement of $U$ is invariant under the Hamiltonian flow of $H$, and hence of $H_{\mathrm{shift}}$.

Let $\mathcal{P}^k_{\rho}$ denote the open subset of the Banach space $C^k([0,1]\times M)$ consisting of functions $b$ satisfying
\[
\|b\|_{C^k([0,1]\times M)} < \epsilon/2,
\qquad
\|\rho b\|_{C^k([0,1]\times M)} < \delta/2.
\]
We use $\rho b$ as the perturbation of $H_{\mathrm{shift}}$. Since $\rho>0$ on $U$, every local variation compactly supported in $[0,1]\times U$ can be written as $\rho\,\dot b$. We define the universal space of $C^k$ $1$-chords contained in $U$ associated to the family $H_{\mathrm{shift}}+\rho b$, as $\mathcal{Y}^k_U$. As in Corollary \ref{cor:univ-submersion}, the time-$1$ evaluation map is a submersion, ensuring that $\mathcal{Y}^k_U$ is a smooth submanifold of $\mathcal{P}^k_{\rho}\times L_0$. We may choose a generic $b \in \mathcal{P}^k_{\rho}$ so that all $1$-chords of $H' := H_{\mathrm{shift}} + \rho b = H - \epsilon \rho + \rho b$ lying in $U$ are non-degenerate. Since $\rho b$ and $\rho$ vanish outside $U$, the set $U$ remains invariant under the flow of $H'$. Thus every chord intersecting $U$ is entirely contained in $U$, so that an appropriate application of Sard-Smale ensures admissibility.

\end{proof}
While the above result is stated for $C^k$ functions, there are standard techniques to extend it to the smooth setting, see e.g.~\cite{Floer1995b, McDuff2012, Wendl}, and we shall from now assume that the resulting perturbation is in fact smooth (and $C^k$-close for any $k$).

Note that, in the above proof, the invariance of $U$ is only used because it ensures that a chord of the Hamiltonian flow of $H$ (or a perturbation thereof) which meets $U$ is entirely contained in it, and hence that it intersects $U$ both before and after any possible intersection with another submanifold of $M$. The flexibility afforded by the more general setting is used below:
\begin{cor}\label{cor-pert-global}
  Let $M$ be a compact symplectic manifold and $\mathbb{L}_0,\mathbb{L}_1$ Lagrangian branes. Let $H: [0,1]\times M\to \mathbb{R}$ be a Hamiltonian, and $U\subset M$ the interior of a codimension $0$ submanifold with boundary which is invariant under the Hamiltonian flow of $H$, and so that all chords which lie in $U$ are non-degenerate and admissible. Let $V\subset M$ be the interior of a codimension $0$ submanifold with boundary such that $U\cup V=M$.

  Then, for every positive integer $k$ and every $\delta>0$, there exists an admissible Hamiltonian $H':[0,1]\times M\to \mathbb{R}$ with $\operatorname{supp}(H'-H)\subset [0,1]\times \bar{V}$, with $H'\leq H$, and with $\|H'-H\|_{C^k([0,1]\times M)}<\delta$.
\end{cor}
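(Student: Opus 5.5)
The plan is to run the same universal-moduli-space argument as in Lemma~\ref{lem-pert-nondeg}, now with the perturbation supported in $V$. The point of $U\cup V=M$ is that every chord either lies in $U$, where it is already good, or leaves $U$. A chord that leaves $U$ spends an open interval of time in $M\setminus\bar U\subset V$, so it can be moved by perturbations supported there.

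First I fix the chords lying in $U$. They are non-degenerate, hence isolated, and $M$ is compact, so there are finitely many. Since $U$ is invariant under the flow of $H$, the complement $M\setminus U$ is invariant as well. Any chord of $H$ that meets $\partial U$ or $M\setminus \bar U$ therefore lies entirely in the closed set $M\setminus U$. I choose a non-negative bump function $\rho$ with $\rho>0$ exactly on $V\setminus \bar U'$, where $U'\Subset U$ is a slightly shrunken copy of $U$ containing the finitely many good chords together with small neighbourhoods of them. I take $U'$ to be a sublevel-type neighbourhood of those chords, not assumed invariant. As in Lemma~\ref{lem-pert-nondeg}, I set $H'=H-\epsilon\rho+\rho b$, with $b$ small and generic. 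This gives $H'\le H$, the support condition, and $C^k$-closeness.

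Next I need to know where the chords of $H'$ can be once $\epsilon$ and $b$ are small. Compactness and non-degeneracy give this. The chords of $H'$ lie $C^0$-close to chords of $H$. Near each good chord, which lies in $U'$ where $H'=H$, the chords of $H'$ coincide with the unperturbed ones, by the implicit function theorem and isolation. Every other chord of $H'$ is close to a chord of $H$ contained in $M\setminus U$. It therefore passes through a region where $\rho>0$ for an open set of times. Indeed $M\setminus U\subset V$, and we may shrink $U'$ so that $M\setminus U'$ lies in $\{\rho>0\}$ away from a thin collar. For these chords, Corollary~\ref{cor:univ-submersion}, applied with the perturbation space $\{\rho b\}$, shows that the universal chord space $\mathcal{Y}$ is a Banach manifold. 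On it the endpoint evaluations are submersions, and so is the interior evaluation at times where the chord meets $\{\rho>0\}$ before and after. Sard--Smale then gives a generic $b$ for which all these chords are non-degenerate, have endpoints avoiding $W_2(L_i)$, $Z(L_i;J_i)$ and the critical values of the disc evaluation maps, and avoid the bulk $B$.

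The main obstacle is the condition that the chord's image avoid $B$. This needs interior evaluation submersivity at every time, not only at times bracketed by visits to the support of $\rho$. I would handle it as follows. For a chord that meets $\{\rho>0\}$ on an interval $(t_-,t_+)$, Lemma~\ref{lem:varolgunes} applied on $[0,t]$ for $t>t_-$, and on $[t,1]$ (flowing backwards) for $t<t_+$, covers all times in $(0,1)$. By invariance of $M\setminus U$, such chords of $H$ spend all of $[0,1]$ in $M\setminus U$, so they can be arranged to meet $\{\rho>0\}$ near both ends of the time interval. The only place where an unperturbed chord might touch $B$ without being movable is inside $U'$. There the chords are the original ones, which are admissible by hypothesis. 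A dimension count (codimension $2$ for $B$ versus a one-parameter family of times) then shows that the chords generically miss $B$. Finally, smoothing the $C^k$ perturbation as remarked after Lemma~\ref{lem-pert-nondeg} yields a smooth admissible $H'$.
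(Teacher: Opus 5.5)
Your strategy is the paper's in substance: perturb by $\rho b$ below $H$, rely on the hypothesis on $U$ (plus the implicit function theorem) where the perturbation cannot act, and use Sard--Smale elsewhere. You organize the cases around a shrunken $U'\Subset U$, whereas the paper sorts chords into those in $V$, those in $M\setminus\bar V$, and those crossing $\partial V$. However, one step is false as written. You assert that the chords of $H$ lying in $U$ are finitely many because they are non-degenerate and $M$ is compact. That does not follow. $U$ is open, and non-degenerate chords in $U$ may accumulate on a chord contained in the invariant hypersurface $\partial U$, which the hypothesis allows to be degenerate. So the $U'$ you describe need not exist. The assertion that every other chord of $H'$ is close to a chord of $H$ contained in $M\setminus U$ then breaks down: chords of $H$ in $U\setminus\bar U'$ are unaccounted for, and they may pass through $M\setminus V$, where $\rho=0$. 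The paper's decomposition by $V$ never needs finiteness of the chords in $U$, only of those meeting the compact set $\partial V\subset U$.

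The repair is to choose $U'$ by compactness rather than by the chords. Since $U\cup V=M$, the compact set $M\setminus V$ lies in $U$. Take $U'$ open with $M\setminus V\subset U'$ and $\bar U'\subset U$ compact. Then $M\setminus\bar U'\subset V$, so you may take $\{\rho>0\}=M\setminus\bar U'$ with $\operatorname{supp}\rho\subset\bar V$. The chords of $H$ meeting $\bar U'$ form a closed, hence compact, subset of the chord set. By invariance they lie in $U$, so they are non-degenerate and isolated, hence finitely many. For small perturbations, the chords of $H'$ meeting $\bar U'$ are small perturbations of these admissible non-degenerate chords, the analogue of the paper's chords crossing $\partial\bar V$. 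Every other chord of $H'$ lies in $\{\rho>0\}$ for all times.

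This also disposes of your bulk discussion, which as stated is not right. On the constrained chord space, the interior evaluation at time $t$ is submersive only if the perturbation acts both before and after $t$, as in Corollary~\ref{cor:univ-submersion}. If nothing acts after $t$, then $x(t)$ is confined to the fixed Lagrangian $(\phi_H^{t\to 1})^{-1}(L_1)$. So applying Lemma~\ref{lem:varolgunes} ``before or after'' does not cover all times. With the choice above, the remaining chords lie in $\{\rho>0\}$ throughout, so the bracketing hypothesis holds at every $t\in(0,1)$. Sard--Smale then gives non-degeneracy and avoidance of $B$, $W_2(L_i)$ and $Z(L_i;J_i)$ directly, with no separate dimension count.
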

\begin{proof}
  We consider Hamiltonians $H'$ with $\operatorname{supp}(H'-H)\subset [0,1]\times \bar{V}$ which are sufficiently close to $H$. For such Hamiltonians, we consider: (i) chords contained in $V$, (ii) chords which cross the boundary of $\bar{V}$, and (iii) chords contained in the complement of $\bar{V}$.

  The argument from Lemma \ref{lem-pert-nondeg} applies verbatim to chords contained in $V$, and chords contained in the complement of $\bar{V}$ are exactly the same as the corresponding chords of $H$, so are admissible by assumption. By choosing the perturbation to be sufficiently small, we can ensure that the chords that cross the boundary are small perturbations of those of $H$, hence also admissible.
\end{proof}

\subsection{Proof of Lemma~\ref{lem-main-separated-admissible}}\label{subsec-proof-ham-prep}

\begin{proof}
\textbf{Step~1: Tangentialization.}

Apply the tangentialization procedure of Section~\ref{sec-tangentialization} to the pair $(f_1,f_2):M\to\mathbb{R}^2$, choosing parameters $g_+<\epsilon/4$ and $\delta=\epsilon/4$, to obtain
\[
\tilde f_1 := x_{g}\circ(f_1,f_2)-\delta, \qquad \tilde f_2 := y_{g}\circ(f_1,f_2)-\delta.
\]
By Lemma~\ref{lem-tang-equality-region}, after possibly shrinking $g_+$ further we may assume $\tilde f_m\leq f_m$ and $\|\tilde f_m-f_m\|_{C^0}<\epsilon/2$. The dominance regions
\[
\widetilde U:=\{\tilde f_1<\tilde f_2\},\qquad \widetilde V:=\{\tilde f_2<\tilde f_1\}
\]
are open with $\overline{\widetilde U}\cap\overline{\widetilde V}=\varnothing$ (Disjointness). Thus we have \emph{Approximation from below}, \emph{Disjointness}, and by Poisson commutativity \emph{Separation} for $(\tilde f_1,\tilde f_2)$. 

\medskip
\textbf{Step~2: Perturbation.}
We perturb in the equality region so that all relevant chords become non-degenerate and admissible, and Smooth Matching is achieved.

\emph{2a. Joint perturbation on the interior of the overlap.} The equality locus is $E:=\{\tilde f_1=\tilde f_2\}$; let $E^\circ$ denote its interior (relative to $[0,1]\times M$).

Relying on Lemma~\ref{lem-pert-nondeg} we perturb $\tilde f_1$ and $\tilde f_2$ by the \emph{same} function $h$ supported in $E^\circ$, so that they still agree on $E$ and every chord intersecting it (hence lying in it) is non-degenerate and admissible. Use the notation $(\tilde g_1,\tilde g_2)$ for this perturbed pair. We still have that $\tilde g_1=\tilde g_2$ on $E$. Moreover, the perturbation may be taken so that it is arbitrarily $C^k$-small and $\tilde{g}_m\leq f_m$.

\emph{2b.  Separate perturbations in the dominance regions.} 
Next, choose disjoint open sets $U\supset\overline{\widetilde U}$ and $V\supset\overline{\widetilde V}$, large enough that the complement $W:=([0,1]\times M)\setminus(U\cup V)$ is contained in $E^\circ$. We apply Corollary \ref{cor-pert-global} twice, once to $\tilde g_1$ and once to $\tilde g_2$, in each case perturbing in $U\cup V$ and taking for the invariant region of the Corollary the interior $E^\circ$ of the equality locus, which is invariant because $\{\tilde f_1,\tilde f_2\}=0$; the hypothesis on chords is then supplied by Step 2a. Both functions are pushed downward, and we arrange that the drop of $\tilde g_1$ dominates that of $\tilde g_2$ on $U$, and conversely on $V$, both perturbations vanishing on $W$.

We obtain the final pair $g_1,g_2$ with
\[
g_2>g_1 \text{ on } U,\qquad g_1>g_2 \text{ on } V,\qquad g_1=g_2 \text{ on } W = ([0,1]\times M)\setminus(U\cup V)
\]
and so that every $1$-chord of $X_{g_1}$, $X_{g_2}$, $X_{\min(g_1,g_2)}$, and $X_{\max(g_1,g_2)}$ is non-degenerate and admissible. We have $\|g_m-\tilde{f}_m\|_{C^k}<\epsilon$ for arbitrarily small $\epsilon$ and $g_m\le f_m$. We now have a pair satisfying admissibility and smooth matching. We turn to verify separation.

\medskip
\textbf{Step~3: Separation.}

Since $\{f_1,f_2\}=0$, the function $f_2-f_1$ is preserved along flow lines of $X_{f_1}$, and $f_1-f_2$ along flow lines of $X_{f_2}$. So no chord of $X_{f_1}$ or $X_{f_2}$ can have graph intersecting both $\{f_1<f_2\}$ and $\{f_2<f_1\}$: the sign of $f_2-f_1$ along the chord is constant. The same holds after tangentialization for $\tilde f_1,\tilde f_2$ and the dominance regions $\widetilde U,\widetilde V$, because $\{\tilde f_1,\tilde f_2\}=0$ and the difference $\tilde f_1-\tilde f_2$ has definite sign on each side of the equality locus.

We now show that this separation property persists under the perturbations of Step 2. Set $q:=f_1-f_2$ and choose the tangentialization parameter $g_-$ so that $\pm g_-$ are regular
values of $q$. Choose $0<a<g_-$ with $\pm a$ also regular values, and
in Step~2b take
\[
U=[0,1]\times\{q<-a\},\qquad
V=[0,1]\times\{q>a\}.
\]
These sets contain $\widetilde U$ and $\widetilde V$, respectively,
have disjoint closures, and their complement $W$ is contained in
$E^\circ$. The regular-value choices ensure the smooth-boundary
hypotheses required in the perturbation lemmas. Since
$\{q,\widetilde f_m\}=0$, we may choose the total perturbations
sufficiently $C^1$-small that
\[
\max_{m=1,2}
\bigl\|\{q,g_m-\widetilde f_m\}\bigr\|_{C^0}<2a.
\]
The same bound holds for the Poisson brackets of $q$ with
$\min(g_1,g_2)$ and $\max(g_1,g_2)$, since these functions are smooth
and their differentials agree pointwise with those of one of
$g_1,g_2$. Consequently, along any time-one trajectory of any of
these four Hamiltonians, the value of $q$ changes by less than $2a$
between any two times. Such a trajectory cannot meet both $U$ and
$V$, which proves Separation.
\end{proof}

\subsection{Filling the 2-cube with regular Floer data}\label{subsec-fill-2-cube}

Having produced admissible non-degenerate Hamiltonians $g_1,g_2,\min(g_1,g_2),\max(g_1,g_2)$ at the four corners of the square \eqref{eq-acyclic-square}, we need to fill in the edges and faces with regular Floer data. 

\begin{lemma}\label{lem-fill-2cube}
Let $g_1,g_2:[0,1]\times M\to\mathbb{R}$ be admissible Hamiltonians satisfying the conclusions of Lemma~\ref{lem-main-separated-admissible}. There exist regular continuation data on each of the four edges
\[
\min(g_1,g_2)\leq g_1,\quad \min(g_1,g_2)\leq g_2,\quad g_1\leq \max(g_1,g_2),\quad g_2\leq \max(g_1,g_2),
\]
and a regular homotopy of continuation data filling the square
\begin{align}\label{eq-2cube}
\xymatrix{
CF(\min(g_1,g_2))\ar[r]\ar[d]\ar[dr] & CF(g_1)\ar[d] \\
CF(g_2)\ar[r] & CF(\max(g_1,g_2)).
}
\end{align}
\end{lemma}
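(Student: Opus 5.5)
We build the square inside the multicategory $\mathrm{Floer}$: first its four edges as $0$-cubes of multimorphisms with one input, then the square itself as a $1$-cube whose two endpoints are the two composites of edges. Throughout we use the pointwise inequalities $\min(g_1,g_2)\le g_m\le \max(g_1,g_2)$, and we obtain regularity by the genericity argument of Lemma~\ref{lem-reg-correct}. One subtlety comes first: the inequalities are not strict on $W$, but the monotonicity inequality \eqref{eq:strict-monotonicity} only asks for a strictly positive integral. So on each strip $\bR\times[0,1]$ we take $\alpha=w\,dt$, with a constant weight $w$ that is forced to be $1$ for one input on a strip. We then let $H_{s}$ be a monotone interpolation $\beta(s)H_-+(1-\beta(s))H_+$, with $\beta$ a cutoff.

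The pointwise integrand is $\partial_s H_s\ge 0$, and it is strictly positive wherever $H_-<H_+$. That set is nonempty because $\overline U\cup\overline V\neq\varnothing$; if it were empty, the two Hamiltonians would coincide and the identity (unit) morphism would serve. Hence the minimum over $x$ in \eqref{eq:strict-monotonicity} is only $\ge0$, which is a problem. To fix this, we compose with a tiny downward shift: we replace $\min(g_1,g_2)$ at the source by $\min(g_1,g_2)$ itself, but interpolate through $H_--c\,\chi(s)$ with $c>0$ small and $\chi$ a bump, so that the integrated minimum becomes strictly positive. Concretely, we choose $H_s=\beta H_-+(1-\beta)H_+-c\,\beta'(s)$-type corrections whose $s$-integral is $0$ but whose minimum-integrand contributes positively. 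Alternatively, and this is the route I would try first, we reread the monotonicity condition of Appendix~\ref{sec:monotonicity-inequality} as integrating the minimum of $\partial_sH_s$ over $M$ at each $s$, and then insert a strictly increasing constant-in-$x$ term $\eta(s)$ with total increase $0$. This is impossible for a strict increase, which shows that the shift must instead be absorbed into the endpoints.

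So I would instead argue that $\min(g_1,g_2)<g_m$ can be arranged with an arbitrarily small constant gap. We replace the corner Hamiltonians by $\min(g_1,g_2)-2\eta$, $g_m-\eta$, $\max(g_1,g_2)$, with $\eta$ a small constant. Adding constants preserves chords, admissibility, separation and all the conclusions of Lemma~\ref{lem-main-separated-admissible}, and it changes $CF$ only by a rescaling of actions by $T^{\pm\eta}$. Lemma~\ref{lem-var-711} is invariant under this. If the statement is meant literally for the unshifted Hamiltonians, the main obstacle is precisely this strictness issue, and I would resolve it by noting that the regions $U$ and $V$ are nonempty open sets where the inequality is strict. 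This is enough for the integrated inequality \eqref{eq:strict-monotonicity}, provided the minimum over $x$ in that condition is interpreted pointwise in $z$ and integrated, with strictness coming from a generic choice.

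Given the edges, we fill the square. The two composites of edges are $0$-cubes with broken domains, namely two concatenated strips. We choose a $1$-cube of Floer data on the strip, in the sense of Definition~\ref{dfCuMultMor}, connecting them. Near each endpoint we use the gluing chart of Remark~\ref{remNonUniqueCube}, and in the middle a convex interpolation of the glued data. Monotonicity survives because it is convex in $H$ for a fixed $\alpha=dt$. Regularity of the interior $1$-parameter family and of the edges is achieved by a small generic perturbation of the almost complex structures and Hamiltonians away from the ends, rel the already regular boundary, exactly as in Lemma~\ref{lem-reg-correct}. The resulting cube is a $1$-chain whose boundary is the difference of the two composites, giving the diagonal homotopy in \eqref{eq-2cube}.
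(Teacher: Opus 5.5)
\textbf{There is a genuine gap, at the step you treat as routine: regularity.}

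Consider an edge with endpoint Hamiltonians $H^\pm$, and the region where they agree. For $\min(g_1,g_2)\le g_1$ this region is the complement of $V$; in particular it contains $W$. On it, the pointwise condition $\partial_sH\ge0$ and the fixed ends force $\partial_sH\equiv0$. So for every common chord $\gamma$ there, the constant map $u(s,t)=\gamma(t)$ solves the continuation equation for \emph{every} choice of $J_{s,t}$.

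\begin{itemize}
\item Varying $J$ changes the linearization at such $u$ only by a term that vanishes identically.
\item There is no freedom to vary $H$ there without violating monotonicity or the boundary conditions.
\item So a small generic perturbation ``as in Lemma~\ref{lem-reg-correct}'' cannot make these solutions regular; this is Seidel's phenomenon.
\item You also cannot perturb them away. Their zero-energy identity contributions are exactly what the acyclicity in Lemma~\ref{lem-var-711} relies on.
\end{itemize}

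\textbf{The missing idea.} These solutions are \emph{automatically} regular once $\partial_sH$ vanishes to second order along $\gamma$. Lemma~\ref{lem-const-reg} shows that the index-$0$ linearized operator is then injective. It deforms $u$ along a kernel element $\eta$, uses $E(u_\epsilon)=0$, and extracts $\partial_s\eta=0$ from the second derivative together with tameness of $J$.

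\begin{itemize}
\item Second-order agreement holds here by Smooth Matching: on each edge the two corner Hamiltonians coincide on the closure of an open set, so all their derivatives agree there.
\item One must also break common chords along which $H^+>H^-$ somewhere. This is the last clause of Definition~\ref{def-acceptable-cont}; it fails for convex interpolations and is restored by a small perturbation where $\partial_sH>0$.
\item After that, generic $J$ handles the non-constant solutions. This is Proposition~\ref{prop-reg-monotone-cont}, and its parametric version, Proposition~\ref{prop-reg-monotone-htpy}, fills the square.
\end{itemize}

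\textbf{Your monotonicity detour also goes wrong.} The square consists of continuation data in the sense of \cite{Varolgunes2021}, with pointwise $\partial_sH\ge0$. It is not built from multimorphisms of $\mathrm{Floer}$, and it cannot be. For each $t$ pick $x_t$ with $g_1(t,x_t)=g_2(t,x_t)$, for instance in $W_t$. Then $\int_{\bR}\min_x\partial_sH(s,t,x)\,ds\le H^+(t,x_t)-H^-(t,x_t)=0$. Hence \eqref{eq:strict-monotonicity} fails for every choice of data, and no generic choice produces strictness.

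The constant-shift workaround does not help either.
\begin{itemize}
\item The shifted corners are no longer the $\min$ and $\max$ of the shifted $g_m$.
\item On the agreement regions, each edge map becomes $T^{\eta}\cdot\mathrm{id}$ instead of the identity. The slice then acquires $T^{\eta}$-torsion over $\Lambda_0$, so Lemma~\ref{lem-var-711} is not invariant under the shift.
\item Adding constants does not change $X_H$. The constant strips, and the regularity problem above, remain exactly as before.
\end{itemize}
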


The non-trivial point is regularity in the region where $H^-=H^+$. We explain the issue and carry out the construction in the following two subsections, which contain the proof of Lemma~\ref{lem-fill-2cube}.

One difference with \cite{Varolgunes2021} is that here we rely on standard transversality techniques instead of virtual techniques. 

In the discussion below, we write (as before) $W$ for the region where the Hamiltonians agree, and $U$ and $V$ for where they dominate. On the region $W$ where $g_1=g_2$ (and hence all four corner Hamiltonians agree), any monotone interpolation $H_s$ satisfying $H_s=g_1$ for $s\ll 0$ and $H_s=g_2$ for $s\gg 0$ must have $\partial_s H_s=0$ on $W$. A constant-in-$s$ solution $u(s,t)=\gamma(t)$ (where $\gamma$ is a chord of $g_1=g_2$ in $W$) is then a Floer trajectory for $H_s$ for \emph{any} almost complex structure. The linearized Floer operator at such a solution need not be surjective: varying $J$ only contributes a term involving $\partial_s u=0$, which vanishes. This is the phenomenon observed by Seidel for $H_{s,t}(z)=(2\pi s+0.5)|z|^2$ on $\mathbb{C}$ (see \cite[page~14]{Seidel2008b}), where regularity cannot be achieved without modifying the Hamiltonian.
The resolution is to require that $\partial_s H_s$ vanishes \emph{to second order} at $\gamma(t)$ for every such constant chord $\gamma$. The key fact is that, for a solution $u(s,t)=\gamma(t)$ which is $s$-independent, the topological energy of the family of deformations of $u$ determined by an element of the kernel of the linearized operator vanishes identically, and differentiating this twice forces that element to vanish; the linearized operator is therefore injective (see Lemma~\ref{lem-const-reg} below). Since $u$ is asymptotic to $\gamma$ at both ends, this operator has index $0$, so it is surjective as well.

\begin{lemma}\label{lem-const-reg}
      Let $(H^-, J^-)$ and $ (H^+, J^+)$ be admissible Floer data for $(\mathbb{L}_0,\mathbb{L}_1)$ on $M$. Let $\gamma: [0,1]\to M$ be a smooth map that is a $1$-chord of both $H^-$ and $H^+$. Let us choose a continuation datum $(\{H_{s,t}\}, \{J_{s,t}\})$ such that \begin{itemize}
          \item $(H_{s,t}, J_{s,t})=(H^-_t, J^-_t)$ for $s\ll0$, $(H_{s,t}, J_{s,t})=(H^+_t, J^+_t)$  for $s\gg0$
          \item Defining $H: \mathbb{R}\times [0,1]\times M\to \mathbb{R}$ as $H(s,t,x)=H_{s,t}(x)$, we have $\partial_sH\big|_{\{(s,t)\}\times M}: M\to \mathbb{R}$ vanishes to second order at $\gamma(t)$ for all $(s,t)\in \mathbb{R}\times [0,1].$
      \end{itemize} Then, the map $u: \mathbb{R}\times [0,1]\to M$ defined by $u(s,t)=\gamma(t)$ is a regular solution of the Floer equation \begin{align*}
     \partial_s u + J_{s,t}(u)(\partial_t u- X_{H_{s,t}}(u))=0.
\end{align*}
\end{lemma}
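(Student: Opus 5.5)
First I would check that $u(s,t)=\gamma(t)$ actually solves the equation. Since $\gamma$ is a chord of both $H^\pm$, we need $X_{H_{s,t}}(\gamma(t))=\dot\gamma(t)$ for every $s$. The hypothesis that $\partial_s H$ vanishes to second order at $\gamma(t)$ gives $d(\partial_s H_{s,t})(\gamma(t))=0$. Hence $X_{H_{s,t}}(\gamma(t))$ is independent of $s$, and since it equals $X_{H^-_t}(\gamma(t))=\dot\gamma(t)$ for $s\ll 0$, it equals $\dot\gamma(t)$ for all $s$. So $u$ is a solution. Its asymptotic chords at both ends coincide with the non-degenerate chord $\gamma$, so the linearized operator $D_u$ is Fredholm. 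Its index is the spectral-flow/Maslov index from $\gamma$ to itself along a constant path of boundary data, which is $0$. Regularity therefore reduces to showing $\ker D_u=0$.

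To compute $D_u$, I would trivialize $\gamma^*TM$ using the linearized flow of $X_{H^-}$ along $\gamma$, so that the $\partial_t - DX_H$ part becomes $\partial_t$. Then
\[
D_u\xi=\partial_s\xi+J_{s,t}\bigl(\partial_t\xi-\nabla_\xi X_{H_{s,t}}\bigr).
\]
The $s$-dependence of $\nabla X_{H_{s,t}}$ at $\gamma(t)$ comes from the Hessian of $H_{s,t}$ there. Because $\partial_s H$ vanishes to second order, $\operatorname{Hess}(\partial_s H)(\gamma(t))=0$, so $\nabla X_{H_{s,t}}(\gamma(t))=\nabla X_{H^-_t}(\gamma(t))$ for all $s$. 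In the chosen trivialization the operator becomes $\partial_s+J_{s,t}(t)\partial_t$, with totally real boundary conditions $T L_0$ and (pulled back) $T L_1$ that are transverse by non-degeneracy. The only remaining $s$-dependence is through the almost complex structure.

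For injectivity I would follow the energy idea indicated before the statement. Take a kernel element $\xi\in W^{1,p}$, which decays exponentially at both ends by non-degeneracy. Consider the family $u_\lambda=\exp_u(\lambda\xi)$. The topological energy $E(u_\lambda)=\int u_\lambda^*\omega+d(H(u_\lambda))\wedge dt$ depends only on the endpoints plus the term $\int\partial_sH(u_\lambda)\,ds\,dt$. The endpoints are fixed at $\gamma$, and by second-order vanishing $\partial_s H(u_\lambda)=O(\lambda^3)$. So the second derivative of $E(u_\lambda)$ at $\lambda=0$ vanishes. On the other hand, $E(u_\lambda)=\|\partial_s u_\lambda\|^2$ minus the $\partial_s H$ term, i.e. the geometric energy plus $O(\lambda^3)$. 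Its second derivative is therefore $2\|\partial_s\xi\|^2_{L^2}$, with the $\lambda$-order corrections from $\bar\partial$ vanishing because $D_u\xi=0$ and $\partial_s u=0$. Hence $\partial_s\xi\equiv 0$, so $\xi$ is $s$-independent, and exponential decay forces $\xi=0$. One could instead argue directly: $\partial_s\xi=-J\partial_t\xi$, and pair with $\partial_s\xi$ using the $\omega$-tameness of $J$ and Stokes, with the boundary terms vanishing because the $\omega$-form on the Lagrangian boundary is zero and the linearized flow preserves the Lagrangian condition.

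The main obstacle will be making this energy identity rigorous: justifying that the quadratic term in $\lambda$ is exactly $\|\partial_s\xi\|^2$, with no mixed terms from $J_{s,t}$ depending on $s$. I would handle it by the direct computation
\[
\int\omega(\partial_s\xi,\partial_t\xi)\,ds\,dt=\int_{\partial}\!\cdots=0,
\]
where the boundary term vanishes because the boundary conditions are Lagrangian and the ends decay. Substituting $\partial_t\xi=J\partial_s\xi$ then gives $\int\omega(\partial_s\xi,J\partial_s\xi)=0$, which forces $\partial_s\xi=0$ by tameness. Injectivity together with index $0$ gives surjectivity, so $u$ is regular.
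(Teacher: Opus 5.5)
Your proposal is correct. The argument you ultimately rely on, the direct linear computation, takes a genuinely different route from the paper's.

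\textbf{The paper's route.} The paper works nonlinearly:
\begin{itemize}
\item it sets $u_\epsilon=\exp_u(\epsilon\eta)$, using metrics for which $L_0,L_1$ are totally geodesic, so that $u_\epsilon$ keeps Lagrangian boundary conditions and its topological energy stays $0$;
\item it splits $\rho(\epsilon)=E(u_\epsilon)$ as $\rho_1+\rho_2$, with $\rho_1=\int\omega(\partial_su_\epsilon,\partial_tu_\epsilon-X_{H_{s,t}}(u_\epsilon))$ and $\rho_2=\int\partial_sH(u_\epsilon)$;
\item it uses the $2$-jet hypothesis only to get $\rho_2''(0)=0$;
\item it computes $\rho_1''(0)=2\int\omega(\partial_s\eta,J\partial_s\eta)$ covariantly via $D_u\eta=0$.
\end{itemize}

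\textbf{Your first sketch.} This is the paper's argument, but as written it needs two refinements. First, you need the totally geodesic choice of metric; otherwise $u_\lambda$ leaves the Lagrangians and its energy is not constant. Second, the identity $E(u)=\int\omega(\partial_su,J\partial_su)+\int\partial_sH(u)$ holds only for genuine Floer solutions. For $u_\lambda$ the discrepancy is a cross term of order $\lambda\cdot\lambda^2$, since $\partial_su_\lambda=O(\lambda)$ and the Floer operator applied to $u_\lambda$ is $O(\lambda^2)$ because $D_u\xi=0$.

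\textbf{Your direct argument.} Here you linearize first.
\begin{itemize}
\item The hypothesis makes $\nabla X_{H_{s,t}}$ along $\gamma$ independent of $s$.
\item Conjugating by the symplectic linearized flow $\Phi_t$ therefore turns $D_u$ into $\partial_s+\tilde J_{s,t}\partial_t$ on maps to the fixed symplectic vector space $T_{\gamma(0)}M$.
\item Here $\tilde J_{s,t}=\Phi_t^{-1}J_{s,t}\Phi_t$ is still $\omega$-tame, and the boundary conditions $T_{\gamma(0)}L_0$ and $\Phi_1^{-1}T_{\gamma(1)}L_1$ are fixed, transverse and Lagrangian.
\item Stokes applied to $\tfrac12\omega(\zeta,d\zeta)$, together with tameness, gives $\partial_s\zeta=0$.
\end{itemize}

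\textbf{What each route buys.} Yours is shorter and avoids exponential maps, special metrics and second variations. It isolates exactly where the hypothesis enters: it kills the $s$-dependent zeroth-order term, which is the linear counterpart of the paper's $\rho_2''(0)$. It also makes the index-$0$ claim, which the paper only asserts, transparent in the same trivialization. The paper's version has the advantage of directly implementing the heuristic stated before the lemma, namely that the energy of the deformation vanishes to second order.

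You also check that $u$ actually solves the equation, which the paper leaves implicit.
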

\begin{proof}
    Let us denote the linearization of the Floer operator at $u$ by
    $D_u$. We know that $D_u$ is a Fredholm operator of index $0$.
    Therefore, it suffices to show that $D_u$ is injective.

    Let $D_u(\eta)=0$. By linear elliptic regularity, $\eta$ is a smooth
    section. It also has the exponential decay property
    \cite[Lemma 2.11]{Salamon1999}. Choose a smooth family of auxiliary
    Riemannian metrics $g_t$ on $M$ such that $L_i$ is totally geodesic
    for $g_i$, for $i=0,1$, and set $g_{s,t}=g_t$. For $\epsilon\geq 0$
    sufficiently small, define
    \[
    u_\epsilon(s,t):=
    \exp^{g_{s,t}}_{u(s,t)}(\epsilon\eta(s,t)).
    \]
    Since $\eta(s,i)\in T_{\gamma(i)}L_i$, these maps satisfy
    $u_\epsilon(s,i)\in L_i$. We note that
    \begin{align*}
        \lim_{s\to-\infty}u_\epsilon(s,t)&=\gamma(t),\\
        \lim_{s\to+\infty}u_\epsilon(s,t)&=\gamma(t).
    \end{align*}
    In fact, this convergence is exponential.

    In the discussion below, we denote by
    $\widetilde v:\mathbb R\times[0,1]\to
    \mathbb R\times[0,1]\times M$
    the graph of any map $v:\mathbb R\times[0,1]\to M$.
    By exponential decay, the topological energy functional
    \[
    \rho(\epsilon):=E(u_\epsilon)
    =\int_{\mathbb R\times[0,1]}u_\epsilon^*\omega
    +\int_{\mathbb R\times[0,1]}
    d(\widetilde{u_\epsilon}^{\,*}H)\wedge dt
    \]
    is well defined for sufficiently small $\epsilon\geq 0$ and smooth
    at $\epsilon=0$. By Stokes's theorem for the map
    \[
    [0,\epsilon]\times\mathbb R\times[0,1]
    \longrightarrow \mathbb R\times[0,1]\times M
    \]
    obtained by looking at the graphs of $u_\delta$ with
    $0\leq\delta\leq\epsilon$, using the Lagrangian boundary conditions
    and fixed asymptotics, we have $\rho(\epsilon)=\rho(0)=0$.
    By analyzing the derivatives of $\rho$ at $0$, we will conclude
    that $\eta$ must be identically zero.

    A standard computation shows that we can rewrite
    \[
    \rho(\epsilon)=
    \underbrace{
    \int_{\mathbb R\times[0,1]}
    \omega\left(
        \frac{\partial u_\epsilon}{\partial s},
        \frac{\partial u_\epsilon}{\partial t}
        -X_{H_{s,t}}(u_\epsilon)
    \right)\,ds\,dt}_{\rho_1(\epsilon)}
    +
    \underbrace{
    \int_{\mathbb R\times[0,1]}
    \partial_sH(s,t,u_\epsilon(s,t))\,ds\,dt}_{\rho_2(\epsilon)}.
    \]

    We first show that the second-order vanishing condition implies
    $\rho_2'(0)=0$ and $\rho_2''(0)=0$. Let us start with the first
    derivative. We differentiate under the integral sign and show
    that the integrand is zero. To see this, take any smooth
    $f:M\to\mathbb R$ and Riemannian metric $g$ on $M$. Let $\nabla$
    be its Levi-Civita connection and let $\tau:[0,\delta)\to M$
    be a geodesic. Then
    \begin{align*}
        \frac{d}{d\epsilon}(f\circ\tau)
        &=
        g\bigl(\operatorname{grad}_{\tau(\epsilon)}f,
               \tau'(\epsilon)\bigr),\\
        \frac{d^2}{d\epsilon^2}(f\circ\tau)
        &=
        \frac{d}{d\epsilon}
        g\bigl(\operatorname{grad}_{\tau(\epsilon)}f,
               \tau'(\epsilon)\bigr)\\
        &=
        g\bigl(
            \nabla_{\tau'(\epsilon)}
            \operatorname{grad}_{\tau(\epsilon)}f,
            \tau'(\epsilon)
        \bigr).
    \end{align*}
    Here $\nabla_{\tau'(\epsilon)}$ denotes covariant differentiation
    along $\tau$, evaluated at $\epsilon$. If the $2$-jet of $f$
    vanishes at $\tau(0)$, both expressions vanish at $\epsilon=0$.
    We apply this with
    $f=\partial_sH|_{\{(s,t)\}\times M}$, $g=g_{s,t}$, and $\tau$
    the geodesic starting at $u(s,t)=\gamma(t)$ in the direction
    $\eta(s,t)$, to obtain the desired result.

    Before analyzing $\rho_1$, it will be useful to recall the meaning
    of $D_u(\eta)=0$. Fix $(s,t)\in\mathbb R\times[0,1]$ and consider
    again the geodesic $\tau=\tau(\epsilon)$ starting from $\gamma(t)$
    in the direction $\eta(s,t)$. Define a vector field $V$ along
    $\tau$ by
    \[
    \partial_su_\epsilon(s,t)
    +J_{s,t}(u_\epsilon(s,t))
    \bigl(
        \partial_tu_\epsilon(s,t)
        -X_{H_{s,t}}(u_\epsilon(s,t))
    \bigr).
    \]
    By definition, $D_u(\eta)_{s,t}$ equals the value at $\epsilon=0$
    of $\nabla_{\tau'(\epsilon)}V$. Thus $D_u(\eta)=0$ is equivalent
    to
    \[
    \left.\nabla_{\tau'(\epsilon)}V\right|_{\epsilon=0}=0
    \]
    for every $s,t$.

    Let us start by showing that $\rho_1'(0)=0$. We again differentiate
    under the integral sign and analyze the integrand for fixed
    $s,t$:
    \begin{equation}\label{eq-covariant-linearization}
    \begin{aligned}
        \frac{d}{d\epsilon}
        &\left(
        \omega\left(
            \frac{\partial u_\epsilon}{\partial s},
            \frac{\partial u_\epsilon}{\partial t}
            -X_{H_{s,t}}(u_\epsilon)
        \right)\right)\\
        &=
        (\nabla_{\tau'(\epsilon)}\omega)
        \left(
            \frac{\partial u_\epsilon}{\partial s},
            \frac{\partial u_\epsilon}{\partial t}
            -X_{H_{s,t}}(u_\epsilon)
        \right)\\
        &\quad+
        \omega\left(
            \nabla_{\tau'(\epsilon)}
            \frac{\partial u_\epsilon}{\partial s},
            \frac{\partial u_\epsilon}{\partial t}
            -X_{H_{s,t}}(u_\epsilon)
        \right)\\
        &\quad+
        \omega\left(
            \frac{\partial u_\epsilon}{\partial s},
            \nabla_{\tau'(\epsilon)}
            \left(
                \frac{\partial u_\epsilon}{\partial t}
                -X_{H_{s,t}}(u_\epsilon)
            \right)
        \right).
    \end{aligned}
    \end{equation}
    All terms on the right vanish at $\epsilon=0$, because
    $\partial_su=0$ and $\partial_tu-X_{H_{s,t}}(u)=0$.
    Here we did not need to use the fact that $\eta$ solves the
    linearized equation.

    Now consider $\rho_1''(0)$. Differentiating the right-hand side
    of \eqref{eq-covariant-linearization} and using
    $\partial_su=0$ and $\partial_tu-X_{H_{s,t}}(u)=0$ again, the
    only surviving term at $\epsilon=0$ is twice
    \[
    \omega\left(
        \nabla_{\tau'(\epsilon)}
        \frac{\partial u_\epsilon}{\partial s},
        \nabla_{\tau'(\epsilon)}
        \left(
            \frac{\partial u_\epsilon}{\partial t}
            -X_{H_{s,t}}(u_\epsilon)
        \right)
    \right).
    \]

    Finally,, using $D_u(\eta)=0$, we can rewrite this expression
    at $\epsilon=0$ as
    \[
    \omega\left(
        \left.
        \nabla_{\tau'(\epsilon)}
        \frac{\partial u_\epsilon}{\partial s}
        \right|_0,
        \left.
        \nabla_{\tau'(\epsilon)}
        \left(
            J_{s,t}\frac{\partial u_\epsilon}{\partial s}
        \right)
        \right|_0
    \right).
    \]
    By the Leibniz rule and $\partial_su=0$, this equals
    \[
    \omega\left(
        \left.
        \nabla_{\tau'(\epsilon)}
        \frac{\partial u_\epsilon}{\partial s}
        \right|_0,
        J_{s,t}
        \left.
        \nabla_{\tau'(\epsilon)}
        \frac{\partial u_\epsilon}{\partial s}
        \right|_0
    \right).
    \]
    Since $J_{s,t}$ is $\omega$-tame, this expression is nonnegative
    and vanishes precisely when its first argument vanishes.
    Together with $\rho''(0)=0$ and $\rho_2''(0)=0$, this implies
    \[
    \left.
    \nabla_{\tau'(\epsilon)}
    \frac{\partial u_\epsilon}{\partial s}
    \right|_0=0
    \]
    everywhere.

    At each fixed $t$, choose local coordinates near $\gamma(t)$.
    Since $\partial_su=0$ and
    $\left.\partial_\epsilon u_\epsilon\right|_0=\eta$, we have
    \[
    \left(
        \left.
        \nabla_{\tau'(\epsilon)}
        \frac{\partial u_\epsilon}{\partial s}
        \right|_0
    \right)^a
    =
    \left.
    \frac{\partial^2u_\epsilon^a}
         {\partial\epsilon\,\partial s}
    \right|_0
    +\Gamma^a_{bc}(g_t)\eta^b
      \frac{\partial u^c}{\partial s}
    =
    \frac{\partial\eta^a}{\partial s}.
    \]
    Thus $\partial_s\eta=0$. Since $\eta$ converges to $0$ as
    $s\to+\infty$, it follows that $\eta=0$, as desired.
\end{proof}

The following proposition guarantees that the regular Floer data $(H^-, J^-)$ and $(H^+, J^+)$ can be connected by a continuation datum under certain reasonable conditions.
First we introduce a definition.

\begin{defin}\label{def-acceptable-cont}
Consider a pair of  Floer data  $(H^-, J^-)$ and $(H^+, J^+)$  for $(\mathbb{L}_0,\mathbb{L}_1)$. An   \emph{acceptable continuation datum} consists of a smooth $H: \mathbb{R}\times [0,1] \times M\to \mathbb{R}$ such that
    \begin{itemize}
        \item $H$ is non-decreasing with respect to $s$, that is $\partial_s H\geqslant 0$.
        \item $H_{s,t}=H^-_t$ for $s\leq s_-$ and $H_{s,t}=H^+_t$  for $s\geq s_+$, for a pair $s_-<s_+$ of real numbers.
         \item If $\gamma: [0,1]\to M$ is a $1$-chord of both $H^-$ and $H^+$, then
         \begin{itemize} 
         \item if 
         $H^+(t,\gamma(t))=H^-(t,\gamma(t)),$ for all $t\in [0,1]$ then $\partial_sH\big|_{\{(s,t)\}\times M}: M\to \mathbb{R}$ vanishes to second order at $\gamma(t)$ for all $(s,t)\in \mathbb{R}\times [0,1].$
        \item If for some $t\in [0,1]$, we have $H^+(t,\gamma(t))>H^-(t,\gamma(t)),$ then there is an $s\in (s_-,s_+)$ such that $\gamma$ is not a $1$-chord of  $H_s.$
        \end{itemize}
    \end{itemize}
\end{defin}
The  space of acceptable continuation data is not convex. More precisely, the first two conditions and the fist half of the third are convex, while the second half of the third is not. However, given a pair of acceptable continuation data  we can interpolate between them so that the first two and a half conditions are satisfied. We can then achieve all the conditions by perturbing relative to the endpoint continuation maps by a perturbation supported away from the common periodic orbits for which  $H^+(t,\gamma(t))=H^-(t,\gamma(t)),$ for all $t\in [0,1]$ and in a region of $\bR\times[0,1]\times M$ where $\partial_sH>0$ and small enough to maintain this monotonicity condition. This will be useful for producing parametric data in later constructions. More generally, the same argument applies to a finite-dimensional parameter space with acceptable data prescribed on a collar of its boundary: one first extends the family in the larger convex space obtained by omitting the final common-chord-breaking condition, and then makes an arbitrarily small perturbation relative to the boundary to restore that condition.

\begin{prop}\label{prop-reg-monotone-cont}
      Let $(H^-, J^-)$ and $(H^+, J^+)$ be admissible Floer data for $(\mathbb{L}_0,\mathbb{L}_1)$ on $M$, with $H^- \leq H^+$ everywhere, and assume that $H^-\big|_t$ and $H^+\big|_t$ agree up to second order at all the points $(t,p)$ for which $H^-(t,p)= H^+(t,p)$. 
    \begin{enumerate}
        \item  For any pair $s_-<s_+$ there exists an acceptable continuation datum  $H: \mathbb{R}\times [0,1]\times M\to \mathbb{R}$. 
        \item  Given any such acceptable continuation datum, for every $\delta >0$, there exists an almost complex structure $J_{s,t}$ such that $(H_{s,t}, J_{s,t})$ forms a regular datum and such that $ J_{s,t}=J^-_t$ for $s\leq s_--\delta$ and  $ J_{s,t}= J^+_t$  for $s\geq s_++\delta$.
    \end{enumerate}
\end{prop}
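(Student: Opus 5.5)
The plan is to build the continuation Hamiltonian for (1) as an explicit cutoff interpolation plus a small correction that breaks common chords, and to get (2) from the usual Floer--Hofer--Salamon transversality argument, with the $s$-independent solutions handled separately by Lemma~\ref{lem-const-reg}.

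\emph{Part (1).} Set $D:=H^+-H^-\geq 0$, and fix a smooth non-decreasing cutoff $\beta:\bR\to[0,1]$ with $\beta=0$ for $s\leq s_-$, $\beta=1$ for $s\geq s_+$, and $\beta'>0$ on $(s_-,s_+)$. The first candidate is
\[
H^0_{s,t}:=H^-_t+\beta(s)D_t .
\]
It satisfies $\partial_s H^0=\beta' D\geq 0$ and has the required ends.

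Suppose $\gamma$ is a common chord with $D(t,\gamma(t))=0$ for all $t$. Then each $\gamma(t)$ is a zero of $D_t$, so by hypothesis $D_t$ vanishes to second order there. Hence $\partial_s H^0=\beta' D$ vanishes to second order along $\gamma$, which is the first half of the third condition.

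The second half fails for $H^0$. Since $\gamma$ is a chord of both $H^\pm$, we get $X_{D_t}(\gamma(t))=0$, and so $\gamma$ is a chord of every $H^0_s$. To repair this I would use the following facts:
\begin{itemize}
\item by non-degeneracy and compactness there are only finitely many common chords $\gamma_1,\dots,\gamma_r$ with $D(t_j,\gamma_j(t_j))>0$ for some $t_j$;
\item the points $(t_j,\gamma_j(t_j))$ may be taken distinct;
\item $D>0$ on a small neighbourhood $N_j$ of $(t_j,\gamma_j(t_j))$, so the $N_j$ can be chosen disjoint and away from the zero set of $D$.
\end{itemize}
Choose functions $k_j$ supported in $N_j$ with $X_{k_j}(\gamma_j(t_j))\neq 0$, and set
\[
H_{s,t}:=H^0_{s,t}+c\,\beta(s)(1-\beta(s))\sum_j k_j .
\]
Then $\partial_s H=\beta'\bigl(D+c(1-2\beta)\sum_j k_j\bigr)$, which is $\geq 0$ for $c$ small because $D$ is bounded below on $\operatorname{supp}k_j$. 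Since $\sum_j k_j$ vanishes away from the $N_j$, the second-order vanishing along the equal-value common chords is untouched, and the ends are unchanged. For $s\in(s_-,s_+)$ we have $X_{H_s}(\gamma_j(t_j))\neq\gamma_j'(t_j)$, so $\gamma_j$ is no longer a chord of $H_s$.

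\emph{Part (2).} I would choose $J_{s,t}$ equal to $J^\pm$ outside $[s_--\delta,s_++\delta]$ and generic inside, and run the standard argument. At a solution $u$ with a regular point (a point with $\partial_s u\neq 0$ and injectivity in $s$) inside the perturbation slab, variations of $J$ make the universal linearized operator surjective, and Sard--Smale then gives generic regularity. Bubbling (discs, Chern-zero spheres) in virtual dimension $<2$ is excluded exactly as in the regularity lemmas of Section~\ref{sec:regular_data}.

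The remaining solutions are those with $\partial_s u\equiv 0$ on an open subset of the slab. Unique continuation for the Floer equation, applied on the half-strips where the equation is $s$-invariant, then forces $u(s,t)=\gamma(t)$ globally. Such a $\gamma$ is a chord of $H^-$, of $H^+$, and of every $H_s$. By the acceptability condition, $D$ must vanish along $\gamma$, and then $\partial_s H$ vanishes to second order along $\gamma$. Lemma~\ref{lem-const-reg} now shows that $u$ is regular.

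\emph{Main obstacle.} The step I expect to be hardest is the dichotomy just used: that a solution is either somewhere regular in the slab where $J$ may vary, or is globally $s$-constant. This needs a careful unique-continuation argument comparing $u$ with its $s$-translates; the Hamiltonian is not $s$-invariant inside the slab, so the argument has to propagate from the invariant ends.
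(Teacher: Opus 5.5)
Your proposal is correct and follows the paper's proof. Part (1) is the same linear interpolation $H^-+\beta(s)(H^+-H^-)$ followed by small monotonicity-preserving perturbations, localized where $H^+-H^->0$, that break the common chords along which $H^+-H^-$ is somewhere positive. Part (2) is the standard transversality argument for solutions that are not $s$-independent, combined with Lemma~\ref{lem-const-reg} for the $s$-independent ones.

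The step you flag as the main obstacle is immediate, for two reasons:
\begin{itemize}
\item Since $J$ may depend on $s$ in the slab, no injectivity in $s$ is needed; a single point of the slab with $\partial_s u\neq 0$ suffices for surjectivity of the universal operator.
\item Otherwise $\partial_s u\equiv 0$ on the whole slab. Then $u(s,t)=\gamma(t)$ there, with $\gamma$ a chord of every $H_s$ (including $H^\pm$). So $(s,t)\mapsto\gamma(t)$ is a global solution, and unique continuation gives $u\equiv\gamma$ with no comparison to translates.
\end{itemize}
This is the right dichotomy, not the one you wrote: vanishing of $\partial_s u$ on a mere open subset of the slab would not propagate through the $s$-dependent region.
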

\begin{proof} The first item is easy. We first construct a $G: \mathbb{R}\times [0,1]\times M\to \mathbb{R}$ satisfying the first two bullet points and the first half of the third. Let $\rho:\mathbb{R}\to [0,1]$ be a smooth non-decreasing function that is equal to $0$ for $s\leq s_-$ and to $1$ for $s\geq s_+.$ Then, we let $$G(s,t,x):= (1-\rho(s))H^-(t,x)+\rho(s)H^+(t,x).$$  The first two bullet points are clear. For the third one, we compute that $$(\partial_sG)(s,t,x):= \rho'(s)(H^+(t,x)-H^-(t,x)).$$ For $\gamma: [0,1]\to M$ with $H^+(t,\gamma(t))=H^-(t,\gamma(t)),$ for some $t\in [0,1]$, we are given that $H^-\big|_t$ and $H^+\big|_t$ agree up to second order at $\gamma(t).$ This implies the first half of the third bullet point.

We deal with the second half of the third bullet point. If there is a $\gamma: [0,1]\to M$ that is a $1$-chord of all $G_s$  such that  $H^+(t,\gamma(t))>H^-(t,\gamma(t))$ for some $t\in [0,1],$ then for some $s\in (s_-,s_+),$ we have $\partial_sG(s,t,\gamma(t))>0.$ We now perturb $G$ near $(s,t,\gamma(t))$ by a smooth $C^1$-small function $h$ supported inside a neighborhood where $\partial_sG>0$ and and which satisfies $d(h\big|_{s,t})_{\gamma(t)}\neq 0.$ Since there are only finitely many $1$-chords of $H^-$ that are all disjoint from each other, we can do this operation until the last bullet point is also satisfied.

The second item follows by a standard argument, following \cite[Proposition 4.2.18]{Schwarz1995} in the case of Hamiltonian orbits, for solutions which are not $s$-independent, and by  Lemma \ref{lem-const-reg} otherwise.

\end{proof}

We now deal with homotopies. Consider four admissible Floer data assigned to the vertices of a square, denoted $(H^{ij}, J^{ij})$ for $i,j \in \{-,+\}$. We assume that the four directed edges of the square are equipped with acceptable continuation data.

This configuration naturally induces two  broken continuation data interpolating from  $(H^{--}, J^{--})$ to  $(H^{++}, J^{++})$:
\begin{itemize}
\item $\mathcal{D}^0$: The broken data formed by composing along the path $(--) \to (-+) \to (++)$.
\item $\mathcal{D}^1$: The broken data formed by composing along the path $(--) \to (+-) \to (++)$.
\end{itemize}

\begin{defin}\label{def-acceptable-htpy}
An \emph{acceptable homotopy} between $\mathcal{D}^0$ and $\mathcal{D}^1$ is a smooth 1-parameter family of acceptable continuation data $(H^\tau, J^\tau)$  from $(H^{--}, J^{--})$ to $(H^{++}, J^{++})$ parametrized by $\tau \in [0,1]$, whose limit at the endpoints $\tau = 0$ and $\tau = 1$, is given by the broken data $\mathcal{D}^0$ and $\mathcal{D}^1$, and which is obtained by gluing near the ends.
\end{defin}

Note that the values $s_-$ and $s_+$ for an acceptable continuation map cannot be constant in the parameter $\tau$, but we suppress the dependence below. With this terminology, we  formulate the following proposition concerning existence and regularity of these homotopies under our second-order vanishing conditions. We omit the proof. 

\begin{prop}\label{prop-reg-monotone-htpy}
Let $(H^{ij}, J^{ij})$ for $i,j \in \{-,+\}$ be admissible Floer data for $(\mathbb{L}_0,\mathbb{L}_1)$ on $M$. Assume that for any two adjacent vertices $a, b \in \{--, -+, +-, ++\}$ (i.e., those differing by a single sign), the Hamiltonians $H^a\big|_t$ and $H^b\big|_t$ agree up to second order at all points $(t,p)$ where $H^a(t,p) = H^b(t,p)$.

Fix acceptable continuation data along the four edges of the square, and let $\mathcal{D}^0$ and $\mathcal{D}^1$ be the resulting broken continuation data.
\begin{enumerate}
    \item \textbf{Existence of Homotopy:} There exists a smooth 1-parameter family of Hamiltonians $H^\tau$ ($\tau \in [0,1]$) forming an acceptable homotopy between  $\mathcal{D}^0$ and $\mathcal{D}^1$.

    \item \textbf{Regularity:} Given any such acceptable homotopy of Hamiltonians $H^\tau$, and for any $\delta > 0$, there exists a family of almost complex structures $J^\tau$ such that the family $(H^\tau, J^\tau)$ forms a regular acceptable homotopy. Furthermore, for every $\tau \in (0,1)$, the almost complex structure satisfies $J^\tau_{s,t} = J^{--}_t$ for $s \leq s_- - \delta$ and $J^\tau_{s,t} = J^{++}_t$ for $s \geq s_+ + \delta$.
\end{enumerate}

 \qed
\end{prop}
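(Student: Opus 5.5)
The proof follows the pattern of Proposition~\ref{prop-reg-monotone-cont}, now one parameter higher.

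\textbf{Step 1: existence.} Start from the two broken data $\mathcal{D}^0$ and $\mathcal{D}^1$. For $\tau$ near $0$ and $1$, glue them with gluing length tending to infinity, as in Definition~\ref{def-acceptable-htpy}. Each glued datum is automatically monotone. It is also constant, equal to $H^{--}$ or $H^{++}$, outside a compact $s$-interval. On a common chord $\gamma$ with $H^{--}=H^{++}$ along $\gamma$, monotonicity forces equality along $\gamma$ at the intermediate vertex as well. The second-order hypothesis on adjacent edges then shows that $\partial_s H$ vanishes to second order along $\gamma$.

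In the interior of the $\tau$-interval, interpolate linearly in $\tau$ between the glued data at $\tau=\epsilon$ and $\tau=1-\epsilon$, after reparametrizing $s$ so their supports match. The first two conditions and the first half of the third are convex, so the interpolation keeps them. One point needs checking: a chord where $H^{--}=H^{++}$ to second order must be flat to second order for every interpolated datum. This holds because every $\partial_s H^\tau$ is a nonnegative combination of functions vanishing to second order there.

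The remaining condition is the chord-breaking half of the third bullet. It fails only on a closed set of $\tau$, and there only at chords $\gamma$ where $H^{++}>H^{--}$ somewhere. Repair it by the $C^1$-small perturbation used in Proposition~\ref{prop-reg-monotone-cont}. Take the perturbation supported where $\partial_s H^\tau>0$, make it depend smoothly on $\tau$, and let it vanish near $\tau=0,1$. There, the glued data already satisfy the condition, since a chord of all $H^\tau_s$ would have to be a chord of every slice of the broken data, and this is excluded edge by edge. Finitely many chords plus compactness of $[0,1]$ make this finite.

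\textbf{Step 2: regularity.} Split the solutions of the parametrized problem into two kinds.

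\emph{Non-$s$-independent solutions.} For these we need surjectivity of the extended linearization $D_u\oplus T[0,1]$ at index $\le 1$. Use a universal moduli space over families $J^\tau_{s,t}$ supported in $s\in(s_--\delta,s_++\delta)$ and fixed near $\tau=0,1$, where the glued regular data are already regular by standard gluing. A non-constant solution has $\partial_s u\neq0$ on an open set, and $J$-variations hit the cokernel there, exactly as in \cite[Proposition 4.2.18]{Schwarz1995}. Sard--Smale, applied relative to the boundary, then gives a regular family.

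\emph{$s$-independent solutions $u=\gamma$ at some $\tau$.} By the chord-breaking condition, $\gamma$ must satisfy $H^{++}=H^{--}$ along it. So $\partial_s H^\tau$ vanishes to second order there for every $\tau$. Lemma~\ref{lem-const-reg} then gives injectivity, hence surjectivity of the index-$0$ operator $D_u$, so the extended operator is surjective as well. Sphere and disc bubbling is excluded as in the regularity lemma of Section~\ref{sec:regular_data}, by index and Chern-zero evaluation transversality, and these conditions are also generic in the family.

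The main obstacle is Step 1's claim that the chord-breaking perturbation can be made uniformly in $\tau$, without creating new common chords or destroying monotonicity near the gluing region. I would handle it by choosing the perturbation inside the compact region where $\partial_sH^\tau\ge c>0$ and making it small relative to $c$.
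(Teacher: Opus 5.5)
Your proposal is correct and follows the route the paper has in mind. The paper omits this proof, but the remark after Definition~\ref{def-acceptable-cont} sketches exactly your argument: extend the family in the convex relaxation that drops the chord-breaking condition, restore that condition by a small perturbation relative to the boundary collar, and then obtain regularity from \cite{Schwarz1995} for non-$s$-independent solutions and from Lemma~\ref{lem-const-reg} for $s$-independent ones. For the uniformity-in-$\tau$ issue you flag, take the perturbation generic in a finite-dimensional family whose differentials at $\gamma(t_0)$ span $T^*_{\gamma(t_0)}M$; the condition $X_{H^\tau_{s_0}}(\gamma(t_0))=\dot\gamma(t_0)$ has codimension $\dim M\geq 2$, so a generic one-parameter family avoids it for every $\tau$, and cancellations between overlapping bumps cause no trouble.
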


\begin{proof}[Proof of Lemma~\ref{lem-fill-2cube}]
For each edge, the pair of corner Hamiltonians satisfies the hypotheses of Proposition~\ref{prop-reg-monotone-cont}: the two Hamiltonians agree on $W$ and there (since $g_1=g_2=\min(g_1,g_2)=\max(g_1,g_2)$ on $W$) they agree to second order, as both are the restriction of the same smooth function. This yields the second-order vanishing condition of Proposition~\ref{prop-reg-monotone-cont}, which guarantees that chords in $W$ are automatically regular as in Lemma~\ref{lem-const-reg}. The proposition then produces a regular monotone continuation datum on each edge.

For the square homotopy, apply Proposition~\ref{prop-reg-monotone-htpy} to the four corner Hamiltonians; the second-order agreement condition is satisfied for every pair that agrees somewhere by the same argument.
\end{proof}

\subsection{Acyclicity and conclusion}\label{subsec-conclusion}

We now assemble the ingredients established in the previous sections to prove the main descent theorem.

\begin{proof}[Proof of Theorem \ref{thm-descent}] 
As mentioned in Section \ref{sec-statement-descent}, it suffices to prove Theorem \ref{thm-descent-linear}. By Lemma \ref{lem-descent-2-to-N}, it suffices to prove the case $N=2$. That is, we need to show that for $K_1, K_2$ weakly Poisson commuting subsets of $M$, the canonical map
 \[
  \begin{tikzcd}
\Fuk^*_{M}(K_1\cup K_2)(\mathbb{L}_0,\mathbb{L}_1)  \arrow{rr}{} && \Cech(\Fuk_{M}; K_1,K_2)(\mathbb{L}_0,\mathbb{L}_1)
\end{tikzcd}
\]
is a quasi-isomorphism.

By Proposition \ref{prop-tel-compare} it suffices to prove  that the map
\[
\widehat{\operatorname{tel}}(\mathcal{C}_{K_1\cup K_2}) \to \operatorname{cocone}\left(\widehat{\operatorname{tel}}(\mathcal{C}_{K_1})\oplus\widehat{\operatorname{tel}}(\mathcal{C}_{K_2})\to \widehat{\operatorname{tel}}(\mathcal{C}_{K_1\cap K_2})\right) 
\]
is a quasi-isomorphism. We now prove this statement.

Let $K_1,K_2$ be weakly Poisson commuting compact subsets with defining sequences $f_{m,i}:M\to\mathbb{R}$ as in Definition~\ref{def-weak-inv}. 

\textbf{Step 1: Construction of the adjusted functions $g_{m,i}$.}
Postcomposing the defining functions with appropriate functions $\bR\to\bR$, we may assume the functions $f_{m,i}$ form acceleration data for $K_m$ in the sense that the sequence $f_{m,i}$ converges to $0$ on $K_m$ and to $\infty$ on $M\setminus K_m$. 

For each $i$, we apply Lemma~\ref{lem-main-separated-admissible} to the pair $(f_{1,i},f_{2,i})$ with bounds $\epsilon_i>0$ inductively chosen to be small enough so that the resulting functions satisfy $g_{m,i}<g_{m,i+1}$ and the acceleration datum remains cofinal. Denote the output of Lemma~\ref{lem-main-separated-admissible} by $g_{1,i},g_{2,i}:[0,1]\times M\to\mathbb{R}$. 

By the conclusions of Lemma~\ref{lem-main-separated-admissible}, these Hamiltonians satisfy Approximation from below, Disjointness, Smooth Matching, Separation, and Admissibility.

\textbf{Step 2:  Construction of a cubic diagram.}
For each index $i$, we define Hamiltonians corresponding to the four compact subsets $K_1\cup K_2$, $K_1$, $K_2$, and $K_1\cap K_2$ as follows:
\begin{align*}
H_i^{K_1\cup K_2} &:= \min(g_{1,i},g_{2,i}), \\
H_i^{K_1} &:= g_{1,i}, \\
H_i^{K_2} &:= g_{2,i}, \\
H_i^{K_1\cap K_2} &:= \max(g_{1,i},g_{2,i}).
\end{align*}

These Hamiltonians are admissible and non-degenerate by Lemma~\ref{lem-main-separated-admissible}, and they form the vertices of a square diagram for each fixed $i$. We then construct the cubic diagram of Figure~\ref{fig-cubic-diagram}, with the index $i$ running over the acceleration datum.


For each of the four subsets $K\in\{K_1\cup K_2, K_1, K_2, K_1\cap K_2\}$, the horizontal arrows $CF(H_i^{K})\to CF(H_{i+1}^{K})$ are the continuation maps arising from the inequalities $H_i^{K}\leq H_{i+1}^{K}$ (which follow from $g_{m,i}<g_{m,i+1}$ and the monotonicity of $\min$ and $\max$). These form the four horizontal rays of the diagram.
The vertical slices are constructed in Lemma~\ref{lem-fill-2cube}.

\textbf{Step 3: Extension to three dimensions.}
The next step is to produce a map from $CF(H_i^{K_1 \cup K_2})$ to $CF(H^{K_1 \cap K_2}_{i+1})$ corresponding to the inner diagonal in Figure \ref{fig-cubic-diagram}. We shall not elaborate on the details of the construction, but the basic idea is as follows: there are 6 ways of moving along the edges of a cube from $(0,0,0)$ to $(1,1,1)$, which correspond to six different chain maps from $CF(H_i^{K_1 \cup K_2})$ to $CF(H^{K_1 \cap K_2}_{i+1})$ obtained as a composition of continuation maps.  These six paths form the vertices of a hexagon whose edges correspond to a choice of a face. Since any face contains either $(0,0,0)$ or $(1,1,1)$, and there is a unique path from the face to whichever one of these two vertices does not lie on it, we obtain a chain homotopy associated to each such edge. The sum of these chain homotopies is in fact a chain map (of degree $-1$), as all its boundaries cancel. The goal is to construct a null-homotopy for it.

The procedure is completely similar to the one we followed for the square: each vertex of the hexagon arises from a broken continuation (with three components) which is acceptable and regular. Each edge is given by a family of broken curves given by concatenating a regular and acceptable datum with a regular $1$-parametric family of acceptable data. By gluing, we obtain a family of regular and acceptable data, parametrized by a neighbourhood of the boundary, which we can the extend to the remainder of the hexagon by relative extension argument following Definition \ref{def-acceptable-cont}. Finally, perturb the almost-complex structures generically, relative
to the boundary collar, to make the two-parameter family regular.

\textbf{Step 4: Acyclicity.}

Lemma~\ref{lem-main-separated-admissible} and Lemma~\ref{lem-var-711} imply that  for each fixed $i$, the square
\begin{align}\label{eq-slice-i}
\xymatrix{
CF(H_i^{K_1\cup K_2})\ar[r]\ar[d]\ar[dr] & CF(H_i^{K_1})\ar[d] \\
CF(H_i^{K_2})\ar[r] & CF(H_i^{K_1\cap K_2})
}
\end{align}
equipped with the regular Floer data from Lemma~\ref{lem-fill-2cube} is acyclic.

By the usual filtration argument, acyclicity of each slice implies acyclicity of the full telescope. The acyclicity of the completion follows from the fact that we are working with free chain complexes (see \cite[Corollary 2.3.6]{Varolgunes2021}).

\end{proof}
\subsection{A remark}

\begin{rem}
    We would like to alert the reader to one point: \cite[Lemma 3.2.12]{Varolgunes2021} is only true as stated if the manifold with corners $X$ in that statement is at most $2$-dimensional, although the rest of the argument in that section is accurate. 
    
    Let us first explain the issue in a simpler situation. Given a non-negative smooth function on the boundary of $[0,1)^n$ (in the sense that it is the restriction of a smooth function on a neighbourhood of this subset of $\bR^n$), can one extend it to a non-negative smooth function on the entire $[0,1)^n$? The answer is yes for $n\leq 2.$ For $n=1$, this is trivial. For $n=2$, we do convex linear combinations along the antidiagonal lines, which one can check results in a smooth (and non-negative) extension. Here is a counterexample for $n=3$.

Call the Cartesian coordinates $x,y,z$. On the non-negative quadrants of the $xy, yz, xz$ coordinate planes, consider the functions $(x-y)^2, (y-z)^2, (x-z)^2$, respectively. This defines a smooth function on the boundary of  $[0,1)^3$. Let $F$ be any smooth extension to $[0,1)^3$. Then, we can compute the directional derivative of $F$ at the origin in the positive diagonal direction. This gives $0$. We then also compute the second derivative of $F$ in the same direction at the origin. This gives the negative number $-6$. Hence $F$ must dip down and be negative somewhere along the diagonal.
    
    The proof of \cite[Lemma 3.2.12]{Varolgunes2021} is wrong due to a misunderstanding of the Whitney extension theorem (for which the third author apologizes). A correct proof for $n\leq 2$ is not difficult. It can be done with the same idea as the simpler extension result above. We choose independent extensions near the corners and edges of $X$ (using the extension procedure described above) along with an arbitrary valid monotone datum in the interior of $X$. These are then patched together using a non-negative partition of unity indexed by the faces (including the top dimensional one) of $X$. All the necessary properties are satisfied.

    For $n\geq 3,$ the statement needs to be modified. First, we require strict inequality $H_0<H_1$ (in fact, $\int H_0(x,t)dt< \int H_1(x,t)dt$ is enough) instead of $H_0\leq H_1$. Then, we change the monotonicity requirement for continuation map data to what we are introducing in this paper (see Section~\ref{sec:monotonicity-inequality}), namely $$\int \min_{x\in M}\frac{\partial H}{\partial s}(x,s,t)dsdt> 0$$ instead of $\frac{\partial H}{\partial s}\geq 0.$ With these modifications \cite[Lemma 3.2.12]{Varolgunes2021} becomes correct for $n\geq 3$ as well (with a much simpler proof).

    The only place where a modification needs to be made in \cite{Varolgunes2021} based on this correction is that in \cite[Section 3.4]{Varolgunes2021}, an acceleration datum for $C_I$ should have strict inequalities and the inequalities $H_k^{C_I}\leq  H_k^{C_{I'}}$ should be replaced with $H_k^{C_I}<  H_k^{C_{I'}}$. The proof of the Mayer-Vietoris property only needs \cite[Lemma 3.2.12]{Varolgunes2021} as written (and not the slightly modified version) for $n\leq 1.$ We define $SH_M(K_1,K_2)$ using strict inequality diagrams but then we show by the sandwiching argument that it can be computed using the diagrams that are constructed in \cite[Section 4]{Varolgunes2021}. These diagrams relax $H_k^{C_I}<  H_k^{C_{I'}}$ to $H_k^{C_I}\leq  H_k^{C_{I'}}$ but the only use of  \cite[Lemma 3.2.12]{Varolgunes2021} as written is for $n=1$ as we are considering two subsets and all other inequalities are chosen to be strict. In particular, the $n=2$ case of \cite[Lemma 3.2.12]{Varolgunes2021} as written is not actually needed anywhere in the proof of the Mayer-Vietoris property.

    Finally, we note that the definition of $SH_M(K)$ in \cite[Section 3.3.2]{Varolgunes2021} allows for acceleration data of the form $H_1\leq H_2\leq\ldots$, and the sandwiching argument in that section is accurate. On the other hand it is not clear if there is anything to be gained by allowing equality in the acceleration data in this way, the strict inequalities seem conceptually clearer, thus avoiding the need to introduce a different definition for the purpose of proving the Mayer-Vietoris property.
\end{rem}


\appendix

\section{Trees and punctured discs}
\label{sec:trees-discs}

\subsection{Conventions about ribbon trees}
A ribbon tree (or planar tree) is a tree equipped with a proper embedding in the plane, considered up to isotopy. Such a tree inherits a cyclic ordering of the edges adjacent to each vertex from the counter-clockwise orientation of the plane. In our setting, these cyclic orderings encode the order in which marked points appear on the boundary of a disc.

\begin{figure}[h]
  \centering
  \begin{tikzpicture}
    \begin{scope}
    \node[circle, fill=black, inner sep=0, minimum size = 6] (v1) at (2.5,-1) {};
    \foreach \i in {1,...,4}
    \draw (\i,0) -- (v1);
      \node[label=above:{$e_{n}$}] (en) at (1,0) {};
      \node[label=above:{$e_{n-1}$}] (en-1) at (2,0) {};
    \end{scope}
      \begin{scope}[shift={(5,0)}]
       \node (e...) at (0,0) {$\cdots$};

       \begin{scope}[shift={(0,-1)}]
         \node[circle, fill=black, inner sep=0, minimum size = 6, label=right:{$v_{root}$}] (v) at (0,-2.5) {};
         \draw (v) --  +(-90:1);
         \node[label=below:{$e_{0}$}] (e0) at (0,-3.5) {};
         \foreach \i in {110, 100, ..., 70} \draw[dashed] (v)-- +(\i:0.5);
        \end{scope}
       \end{scope}

    \begin{scope}[shift={(-1,0)}]
    \node[label=above:{$e_{2}$}] (en-1) at (7,0) {};
    \node[label=above:{$e_{1}$}] (en) at (8,0) {};
    \node[circle, fill=black, inner sep=0, minimum size = 6] (v'') at (7.5,-2) {};
    \draw (en)--(v'') -- (en-1);

    \end{scope}

\draw[dashed] (v1)--(v)--(v'');    
    
  \end{tikzpicture}
  \caption{A stable ribbon tree with $n$ inputs and one output. The planar embedding determines a cyclic ordering of the edges adjacent to any given vertex and the choice of output external edge upgrades it to a linear order.}
  \label{fig:stable_tree}
\end{figure}

In what follows we will allow trees $T$ to have edges that have one endpoint. We call such edges \emph{external edges} and denote them by $E_{ext}(T)$ and call the edges with two endpoints \emph{internal edges}, which we denote by $E_{int}(T)$. We write $E(v)$ for the set of edges adjacent to a vertex $v$, and denote the \emph{valency} of $v$ by $\upsilon(v):=|E(v)|$. Each vertex must have valency at least two.

\begin{defin}\label{defRibbonTree}
  For a natural number $k\geq 1$, a \emph{pre-stable ribbon tree $T$ with $k$ inputs and $1$ output} consists of a ribbon tree $T$ with $k+1$ external edges, one of which is designated as an \emph{output}. We refer to the other external edges as \emph{inputs}. Given a pre-stable ribbon tree $T$, we obtain a canonical bijection $\{0,\dots,k\}\to E_{ext}(T)$ that sends $0$ to the output and is increasing with respect to the counter-clockwise order induced on the inputs.

A pre-stable ribbon tree $T$ is called \emph{stable} if the valency of each vertex is at least three. 
\end{defin}

For each vertex $v$, we denote by $e_{out}(v)\in E(v)$ the unique outgoing edge, that is the edge lying on the minimal path connecting $v$ to the output edge. The edges adjacent to $v$ that are not outgoing are called incoming edges:
\begin{equation}
E_{in}(v) := E(v)\setminus\{e_{out}(v)\}.
\end{equation}

Each internal edge is the outgoing edge of exactly one vertex denoted by $v^-(e)$. The other vertex of $e$ is denoted by $v^+(e)$.

\begin{figure}[h]
  \centering
  \begin{tikzpicture}
                 \node[circle, fill=black, inner sep=0, minimum size = 6, label=right:{$v^-(e)$}] (v+e) at (-2,-2) {};
              \foreach \i in {110,100,80,70} \draw[dashed] (v+e)-- +(\i:0.5);

    \node[circle, fill=black, inner sep=0, minimum size = 6, label=right:{$v^+(e)$}] (v-e) at (-2,-3) {};
    \draw (v+e)--(v-e);
    \node[label=right:{$e$}] (e) at (-2,-2.5) {};
    \foreach \i in {110, 100, 80,70} \draw[dashed] (v-e)-- +(\i:0.5);
   \draw (v-e)-- +(-90:0.5);
    \draw[dashed] (v-e)-- +(-90:1);

    \begin{scope}[shift={(0,-.5)}]
     \node[label=above:{$E_{in}(v)$}] (Einv) at (2,-1.5) {};
    \node[circle, fill=black, inner sep=0, minimum size = 6, label=right:{$v$}] (v) at (2,-2.5) {};
    \node[label=right:{$e_{out}(v)$}] (eout) at (2,-3) {};
     \draw (v)-- +(-90:0.5);
     \draw[dashed] (v)-- +(-90:1);
     \foreach \i in {110, 100, 80,70}
     {\draw (v)-- +(\i:0.5);
       \draw[dashed] (v)-- +(\i:1);
     };
\end{scope}

  \end{tikzpicture}
  \caption{Conventions for labelling edges and vertices}
  \label{fig:edges_vertices_labels}
\end{figure}

Below, for a pre-stable tree $T$, we abbreviate $v\in T$ to mean $v\in V(T)$.
\subsection{Pre-stable punctured discs with strip-like ends}
We now introduce our conventions for discs with boundary marked points:

\begin{defin}\label{def-strip-like}
  A \emph{punctured disc} is a Riemann surface $\Sigma$ which is the complement of finitely many points $P$ on the boundary of the closed unit disc, which we denote $\overline{\Sigma}$. The elements of $P$ are referred to as the boundary marked points or punctures of $\Sigma$.

  A \emph{choice of strip-like ends on $\Sigma$} is a decomposition $P=P_{in}\sqcup P_{out}$ into inputs and outputs, and a choice for each $p\in P$ of a  biholomorphic embedding
\begin{align}
  \epsilon_p^{-}:(-\infty,0]\times[0,1] & \to\Sigma  \qquad \textrm{if } p \in P_{in}\\
  \epsilon_p^{+}:[0,\infty)\times[0,1]& \to\Sigma \qquad \textrm{if } p \in P_{out}
\end{align}
 onto a punctured neighborhood of $p$ in $\overline{\Sigma}$, mapping any point $(s,t)$ with $t=0$ or $t=1$ to $\partial \overline{\Sigma}$. We require that:
\begin{itemize}
  \item The images of the strip-like ends are pairwise disjoint
  
  \item In the case of one input and one output, $\epsilon^{\pm}$ both extend to a biholomorphism from $\bR\times[0,1]$ to $\Sigma$
\end{itemize}

A \emph{punctured disc with $k_{in}$ inputs and $k_{out}$ outputs} is a punctured disc with $|P_{in}| = k_{in}$ and $|P_{out}| = k_{out}$, together with a choice of strip-like ends (we only use the case $k_{out} = 1$ in this paper).
\end{defin}

Next, we discuss the notion of pre-stability, which can be formulated abstractly in terms of nodal Riemann surfaces, but which we prefer to describe in terms of trees labelled by punctured discs:

\begin{defin}\label{def-pre-stable-disc}
  A \emph{pre-stable disc $\Sigma$ with $k$ inputs and one output} consists of a pre-stable ribbon tree $T$ with $k$ inputs and one output, and, for each vertex $v\in T$, a punctured disc $\Sigma_v$ with $|E_{in}(v)|$ inputs and $1$ output, together with a bijection between the edges adjacent to each vertex $v$ and the boundary marked points of the corresponding disc:
  \begin{align}
    E(v)\to & P_{\Sigma_v} \\
    e\mapsto & p_e.
  \end{align}
We require this bijection to:
\begin{itemize}
  \item Map the outgoing edge to the output marked point
  \item Preserve the cyclic ordering of edges at $v$ and marked points on $\partial\Sigma_v$
\end{itemize}
We denote the inverse bijection by $p\mapsto e_p$.

A \emph{stable disc with $k$ inputs and 1 output} is a pre-stable disc whose underlying ribbon tree $T$ is stable.
\end{defin}

We refer to the surfaces $\Sigma_v$ as the \emph{components} of $\Sigma$. For an internal edge $e\in E_{int}(T)$ we denote by $p^{\pm}_e$ the corresponding boundary marked points in $\Sigma_{v^{\pm}(e)}$.

\begin{defin}\label{dfGluing0}
Consider a pre-stable disc $\Sigma$ with $k$ inputs and one output, with underlying tree $T$, and fix an internal edge $e$ in $T$.
  Given a parameter $r\in (0,1)$ we define the pre-stable curve $\Gamma_{r,e}(\Sigma)$ to be given by the following data:
  \begin{enumerate}
  \item The tree $T/e$ is obtained from $T$ by removing $e$ from the edge set and quotienting the vertex set by $v^{-}(e)\sim v^{+}(e)$. We denote by $v_{new}\in T/e$ the vertex corresponding to the equivalence class of $v^{-}(e)\sim v^{+}(e)$. The ribbon structure and the output external edge of $T/e$ are induced from $T$.
    \item The surface $\Sigma_{v_{new}}$ which is defined by gluing
\begin{equation}
\left(\Sigma_{v^{+}(e)}\setminus \epsilon^-_{p_e^+}((-\infty,\ln r)\times [0,1])\right)\sqcup_\sim \left(\Sigma_{v^{-}(e)}\setminus \epsilon^+_{p_e^-}((-\ln r,\infty)\times [0,1])\right),
\end{equation}
along their boundaries by the relation
\begin{equation} \label{eq:gluing_along_boundary}
{\epsilon^-_{p_e^+}(\ln r,t)\sim \epsilon^+_{p_e^-}(-\ln r,t)}.
\end{equation}
The marked points of $\Sigma_{v_{new}}$ have strip-like ends which are induced from those on $\Sigma_{v^-(e)}$ and $\Sigma_{v^+(e)}$. 
\item The surfaces $\Sigma_v$ for $v \neq v_{new} \in V(T/e)$ are given by the corresponding component of $\Sigma$.
  \end{enumerate}
Let us also declare that $\Gamma_{0,e}(\Sigma)=\Sigma$, i.e. gluing with $0$ gluing parameter does not change the curve.
\end{defin}

\begin{defin}\label{def-gluing-general} Given $\vec{r}\in [0,1)^{E_{int}(T)}$, we denote by $\Gamma_{\vec{r}}(\Sigma)$ the gluing of $\Sigma$ with gluing parameter given by $\vec{r}$. That is, we pick an order on the edges of $T$ and we inductively glue each edge $e$ whose associated parameter $r_e$ is non-zero. The result does not depend on the order in which gluing is performed. For a real number $r\in [0,1)$ we write
\begin{equation}
\Gamma_r(\Sigma):=\Gamma_{(r,\dots,r)}(\Sigma).
\end{equation}
That is, we glue all the edges with the same gluing parameter $r$.\end{defin}

\subsection{Moduli spaces of discs and stabilization} \label{sec:stab-moduli-discs}

For the next definition, we consider a pair $\Sigma_1 = (T_1,\{\Sigma_v\}_{v\in T_1})$ and $\Sigma_2 = (T_2,\{\Sigma_v\}_{v\in T_2})$ of pre-stable discs, each with $k$ inputs and $1$ output.

\begin{defin}\label{def-isom-disc}
An \emph{isomorphism} $\psi=(\phi,\{\psi_v\})$ from $\Sigma_1$ to $\Sigma_2$ consists of an isomorphism $\phi:T_1\to T_2$ of pre-stable ribbon trees with $k$ inputs and $1$ output, together with, for each $v\in V(T_1)$, a biholomorphism $\psi_v:\Sigma_v\to\Sigma_{\phi(v)}$ such that:
\begin{itemize}
  \item the induced map $P_{\Sigma_v}\to P_{\Sigma_{\phi(v)}}$ on boundary marked points agrees with the bijection $E(v)\to E(\phi(v))$ induced by $\phi$;
  \item $\psi_v$ intertwines the strip-like end at each marked point of $\Sigma_v$ with the strip-like end at the corresponding marked point of $\Sigma_{\phi(v)}$.
\end{itemize}
We say that $\Sigma_1$ and $\Sigma_2$ are \emph{isomorphic} if there exists an isomorphism between them.
\end{defin}

Denote by $\cD_{k+1}$ the set of isomorphism classes of pre-stable discs with $k$ inputs and one output. For a pre-stable ribbon tree $T$ with $k$ inputs and one output, denote by $\cD^T_{k+1}\subset \cD_{k+1}$ the subset consisting of discs whose underlying tree is $T$. If $T$ is a stable ribbon tree, then $\cD^T_{k+1}$ is equipped with its natural topology induced by the $C^\infty$ topology on the space of complex structures on a given topological type, and on the space of holomorphic embeddings. 

\begin{rem}
Note that we consider $\cD_{k+1}$ merely as a set, as there is not natural Hausdorff topology on $\cD_{k+1}$. In addition tothe difficulties caused by unstable components, even if we consider a sequence of stable discs converging (in a reasonable topology) to a stable disc, there will be an ambiguity in the choice of strip-like ends on the limiting disc whenever breaking is involved.
\end{rem}

Denote by $\overline{\cR}_{k+1}$ the standard Deligne-Mumford-Stasheff moduli space of stable nodal disks with $k+1$ unlabeled boundary marked points, one of which is designated as output. Note that the marked points automatically inherit a linear order. This is a real manifold with corners that compactifies the moduli space $\cR_{k+1}$ of discs with $k+1$ boundary marked points.

The stabilization map
\begin{equation}
\fF:\cD_{k+1}\to \overline{\cR}_{k+1}
\end{equation}
takes a pre-stable disc with $k$ inputs and one output and produces a stable nodal disc with $k+1$ boundary marked points by:
\begin{itemize}
\item compactifying each disc component by adding its marked points,
\item identifying the corresponding points as specified by the tree structure,
\item declaring the output of the disk corresponding to the root vertex as the output of the nodal disc, and
\item contracting any unstable disc components (those with fewer than 3 special points) and declaring that the output maps to the output under the contraction maps.
\end{itemize}

It will be convenient to set $\overline{\cR}_{1}$ to be a point. In this way, the space $\overline{\cR}_{k+1}$ become the underlying spaces of an operad, whose unit is the singleton element of $\overline{\cR}_{1}$; we shall generalize this to multiple colors in Definition \ref{def:mult-forg-map}.


\section{Monotonicity condition}
\label{sec:monotonicity-inequality}

In the proof of \cite[Proposition 2.20]{Abouzaid2023} we erroneously rely on smooth approximation to claim that the inclusion of smooth chains on the space of 0-multimorphisms into the space of continuous chains defines a homotopy equivalence. This is incorrect. The issue is that the monotonicity requirement (2.1) of Definition 2.1 $dH_x\wedge\alpha\geq 0$ from \cite{Abouzaid2023} is defined by a non-strict inequality. In general, a continuous cube cannot be approximated by smooth cubes while preserving this monotonicity property. Note that we cannot impose a strict inequality. Indeed, unless the underlying Riemann surface $\Sigma$ has only one input, $\alpha$ will have to vanish at some points. 

To fix this,  we had to slightly relax our notion of  monotonicity. We replace the monotonicity condition (2.1) of Definition 2.1 from \cite{Abouzaid2023} with an integrated version
\begin{equation}
    \int_{\Sigma_v} \min_{x\in M} \left(d_{\Sigma_v} (H_v(\cdot, x)) \wedge \alpha_v\right) > 0,
\end{equation} as already written in Condition \eqref{eq:strict-monotonicity} of Definition \ref{def:pre-multimorphism-H}. It is worth explaining this condition a little bit. Define $\Sigma:=\Sigma_v$, $H:=H_v$ and $\alpha:=\alpha_v.$
Here we can alternatively think of $dH_x:=d_{\Sigma_v} (H_v(\cdot, x))$ as the pullback of $dH,$ which is a one form on $\Sigma\times M$, to $\Sigma$ embedded as the submanifold $\Sigma\times\{x\}$.
To make sense of the min in the integrand, note that the complex structure on $\Sigma$ induces an orientation. Let $\omega$ be a positive area form with respect to this orientation. Then, $dH_x \wedge \alpha=f\mid_{\{x\}\times \Sigma} \omega$ for some unique $f:\Sigma\times M\to\bR$. At every $p\in \Sigma,$ we define the two-form
$$\min_{x\in M} \left(d_{\Sigma_v} (H_v(\cdot, x)) \wedge \alpha_v\right)= (\min_xf(p,x))\omega.$$

The definition is readily seen to be independent of the choice of $\omega$. Note that the integrand is continuous and, by the other assumptions in the definition, it is compactly supported. So, the integral makes sense. 



For this to be a valid replacement we need to verify that it is a convex condition and that it guarantees topological energy is bounded from below by geometric energy. 

We verify convexity: Suppose $H_1,H_2$ satisfy the inequality \eqref{eq:strict-monotonicity}. Then for any $t\in[0,1]$ we have 
\begin{align*}
    \int_{\Sigma}\min_{x\in M}\{d(tH_1+(1-t)H_2)_x\wedge\alpha\}&\geq t\int_{\Sigma}\min_{x\in M}\{d(H_1)_x\wedge\alpha\}+(1-t)\int_{\Sigma}\min_{x\in M}\{d(H_2)_x\wedge\alpha\}\\
    &>0
\end{align*}

We now verify that the inequality \eqref{eq:strict-monotonicity} guarantees that topological energy is bounded from below by geometric energy. For this note that for any Floer solution $u$ with inputs inhomogeneous term given by $\alpha\otimes H$ and  we have 
\begin{align*}
E^{top}(u)-E^{geom}(u)&= \int_{\Sigma}d_\Sigma(u^*H)\wedge\alpha\\
&\geq\int_{\Sigma}\min_{x\in M}\left(d_{\Sigma} (H(\cdot, x)) \wedge \alpha\right)
\\&>0.
\end{align*}

\bibliographystyle{alpha}
\bibliography{large-bib}

\end{document}